\DeclareMathOperator{\dive}{div} 
\numberwithin{equation}{section}
\newcolumntype{C}{>{$\displaystyle} c <{$}}
\def\env@dmatrix{\hskip -\arraycolsep
	\let\@ifnextchar\new@ifnextchar
	\def\arraystretch{2}%
	\array{*{\c@MaxMatrixCols}{>{\displaystyle}c}}%
}
\DeclareFontShape{OMX}{cmex}{m}{n}{
	<-7.5> cmex7
	<7.5-8.5> cmex8
	<8.5-9.5> cmex9
	<9.5-> cmex10
}{}
\begin{document}

	\title{Quantization of the Willmore Energy in Riemannian Manifolds}
	
	\author{Alexis Michelat\footnote{Mathematical Institute, University of Oxford (UK)\hspace{.5em}\href{mailto:michelat@maths.ox.ac.uk}{michelat@maths.ox.ac.uk}/ \href{mailto:alexis.michelat@normalesup.org}{alexis.michelat@normalesup.org}}
		 $\,$ and Andrea Mondino\footnote{Mathematical Institute, University of Oxford (UK)\hspace{.5em}\href{Andrea.Mondino@maths.ox.ac.uk}{Andrea.Mondino@maths.ox.ac.uk}}}    
	\date{\today}
	
	\maketitle
	
	\vspace{-0.5em}
	
	\begin{abstract}
		We show that the quantization of energy for Willmore spheres into closed 
		Riemannian manifolds holds provided that the Willmore energy 		and the area are uniformly bounded. The analogous energy quantization result holds for Willmore surfaces of arbitrary genus, under the additional assumptions that the immersion maps weakly converge to a limiting (possibly branched, weak immersion) map from the same surface, and that the conformal structures stay in a compact domain of the moduli space.  
			\end{abstract}
	
	\tableofcontents
	\vspace{0cm}
	\begin{center}
		{Mathematical subject classification : \\
		53C42, 53A30, 49Q10}
	\end{center}

	\theoremstyle{plain}
	\newtheorem*{theorem*}{Theorem}
	\newtheorem{theorem}{Theorem}[section]
	\newenvironment{theorembis}[1]
	{\renewcommand{\thetheorem}{\ref{#1}$'$}%
		\addtocounter{theorem}{-1}%
		\begin{theorem}}
		{\end{theorem}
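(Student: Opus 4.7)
The statement to prove is the energy quantization for a sequence $\Phi_k\colon S^2\to (N,g)$ of Willmore spheres into a closed Riemannian manifold with uniformly bounded Willmore energy and area. My plan is to adapt the Bernard--Rivi\`ere neck analysis from the Euclidean setting and treat the Riemannian corrections as perturbative error terms, then run a bubble-tree argument that exhausts the energy.

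The first step is the bubble-tree blow-up. Using an $\varepsilon$-regularity statement for Willmore immersions in $(N,g)$, I would identify the finite set of concentration points in $N$ where $|A_{\Phi_k}|^2\, d\mathrm{vol}$ concentrates. At each such point, I work in normal coordinates on $N$ and rescale the immersion so that the concentrated energy lives at unit scale; as $r\to 0$ the rescaled metric converges smoothly to the Euclidean metric on compact sets, so the Willmore PDE with Riemannian corrections converges to the Euclidean Willmore PDE, and the rescaled immersions converge, up to subsequence, to a (possibly branched) weak Willmore bubble in $\mathbb{R}^n$. Iterating at finer scales produces a bubble tree, which is finite thanks to the uniform area bound combined with the Li--Yau lower bound $4\pi$ on the energy of each non-trivial sphere.

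The crucial step is the no-neck-energy estimate on the dyadic annuli separating consecutive bubbles. Writing the Willmore equation in Rivi\`ere's divergence form, the conservation laws pick up ambient curvature error terms; I would control these on thin necks using Hardy spaces and Lorentz $L^{2,1}$--$L^{2,\infty}$ duality, and then propagate the decay of the modulating one-forms via a Pohozaev-type identity in normal coordinates combined with a three-annulus iteration. The uniform area bound is used here to control the conformal factor along the neck and prevent the neck from collapsing degenerately.

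The hard part will be the Riemannian error terms on thin necks: in Euclidean space the Bernard--Rivi\`ere conservation laws are exact, while here they hold only modulo quadratic curvature remainders that must be absorbed into the same Lorentz/Wente functional framework that drives the Euclidean argument, and absorbed uniformly in the dyadic scale. A secondary difficulty is handling branched limits, for which I would need a removable-singularities statement for weak Willmore immersions in $(N,g)$ that is robust enough to guarantee that branched bubbles contribute the same quantized energy amounts as in Euclidean space; otherwise a branch point appearing in the limit could leak energy into the final identity.
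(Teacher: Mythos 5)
Your high-level architecture (bubble-neck decomposition from Bernard--Rivi\`ere, $\varepsilon$-regularity in the curved ambient, Lorentz $\mathrm{L}^{2,1}$--$\mathrm{L}^{2,\infty}$ duality, Euclidean bubbles recovered by normal-coordinate rescaling) matches the paper's. But the plan has a real gap at its core step, which is exactly where the paper's new work lies.

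You write that in divergence form the conservation laws ``pick up ambient curvature error terms'' that you would ``control using Hardy spaces and Lorentz duality'' and propagate via ``a Pohozaev-type identity \ldots combined with a three-annulus iteration.'' This is too optimistic. In the curved setting the Rivi\`ere system is not a pure Jacobian: the first-order relation becomes a complex $\bar\partial$-type system $\D_z\vec{L}_k=\vec{Y}_k$ where $\D_z=\partial_z+f_k(\cdot)$ carries Christoffel-symbol coefficients. Because you cannot prescribe the real part of the boundary data for a $\bar\partial$-equation, you cannot translate $\vec{L}_k$ to normalise its mean as in the Euclidean case; you must instead build $\vec{L}_k$ by hand from the explicit Cauchy kernel. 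Doing so, the natural a priori bound on the auxiliary potential $\vec{V}_k$ acquires a multiplicative factor $\log(R/|z|)$ (see \eqref{vk}), and this $\log$ is not removed by Hardy-space compensation compactness or by the usual $\mathrm{L}^{2,1}$--$\mathrm{L}^{2,\infty}$ pairing alone. The paper removes it by introducing Orlicz--Lorentz spaces $\mathrm{L}^{2,\infty}_{\log^\beta}$, proving a sharp improvement for holomorphic (and harmonic) maps from an $\mathrm{L}^{2,\infty}_{\log^\beta}$ bound to $\mathrm{W}^{1,1}\cap\mathrm{L}^{2,1}$ on smaller balls (Lemma \ref{lemme_holomorphe1}), and then running an averaging argument on circles for $\Re(\vec{V}_k)$ keyed to the asymptotic exponent $d_k\to d>-1$ of the conformal factor. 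Nothing in your sketch anticipates this obstruction; a generic Pohozaev/three-annulus strategy (which works for the Dirichlet-energy-type problems where the nonlinearity is Jacobian) would stall on the non-Jacobian Christoffel terms, whose scaling is critical, not subcritical.

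There is a secondary omission: you treat the uniform area bound only as a tool to ``control the conformal factor along the neck,'' but its role is structurally prior to that. The perturbed conservation laws of \cite{mondinoriviere} only exist when a suitable smallness holds; the paper must first prove the no-neck-area property \eqref{eq:NoNeckArea} and the $\mathrm{L}^{2,1}$ smallness of $e^{\lambda_k}$ in necks \eqref{neck_lambda21} (together with $d>-1$, Lemma \ref{lem:d>-1}) just to \emph{set up} the potentials $\vec{L}_k,\vec{S}_k,\vec{R}_k$. Your plan invokes the conservation laws as if they came for free and then estimates errors, which reverses the logical order. Your remarks on branched bubbles and removable singularities are fine but peripheral; the decisive difficulty you would hit is the $\log$-loss described above, and the paper's resolution of it is the genuinely new ingredient.
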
}
	\renewcommand*{\thetheorem}{\Alph{theorem}}
	\newtheorem{lemme}[theorem]{Lemma}
	\newtheorem*{lemme*}{Lemma}
	\newtheorem{propdef}[theorem]{Definition-Proposition}
	\newtheorem*{propdef*}{Definition-Proposition}
	\newtheorem{prop}[theorem]{Proposition}
	\newtheorem{cor}[theorem]{Corollary}
	\theoremstyle{definition}
	\newtheorem*{definition}{Definition}
	\newtheorem{defi}[theorem]{Definition}
	\newtheorem*{definition*}{Definition}
	\newtheorem{assumption}[theorem]{Assumption}
	\newtheorem{rem}[theorem]{Remark}
	\newtheorem*{rem*}{Remark}
	\newtheorem{rems}[theorem]{Remarks}
	\newtheorem{remimp}[theorem]{Important Remark}
	\newtheorem{exemple}[theorem]{Example}
	\newtheorem{defi2}{Definition}
	\newtheorem{propdef2}[defi2]{Proposition-Definition}
	\newtheorem{remintro}[defi2]{Remark}
	\newtheorem{remsintro}[defi2]{Remarks}
	\newtheorem{conj}{Conjecture}
	\newtheorem{question}{Open Question}
	\renewcommand\hat[1]{%
		\savestack{\tmpbox}{\stretchto{%
				\scaleto{%
					\scalerel*[\widthof{\ensuremath{#1}}]{\kern-.6pt\bigwedge\kern-.6pt}%
					{\rule[-\textheight/2]{1ex}{\textheight}}
				}{\textheight}%
			}{0.5ex}}%
		\stackon[1pt]{#1}{\tmpbox}
	}
	\parskip 1ex
	\newcommand{\totimes}{\ensuremath{\,\dot{\otimes}\,}}
	\newcommand{\vc}[3]{\overset{#2}{\underset{#3}{#1}}}
	\newcommand{\conv}[1]{\ensuremath{\underset{#1}{\longrightarrow}}}
	\newcommand{\A}{\ensuremath{\vec{A}}}
	\newcommand{\B}{\ensuremath{\vec{B}}}
	\newcommand{\C}{\ensuremath{\mathbb{C}}}
	\newcommand{\Sp}{\ensuremath{\mathbb{S}}}
	\newcommand{\D}{\ensuremath{\nabla}}
	\newcommand{\Disk}{\ensuremath{\mathbb{D}}}
	\newcommand{\E}{\ensuremath{\vec{E}}}
	\newcommand{\I}{\ensuremath{\mathbb{I}}}
	\newcommand{\Q}{\ensuremath{\vec{Q}}}
	\newcommand{\loc}{\ensuremath{\mathrm{loc}}}
	\newcommand{\z}{\ensuremath{\bar{z}}}
	\newcommand{\hh}{\ensuremath{\mathscr{H}}}
	\newcommand{\h}{\ensuremath{\vec{h}}}
	\newcommand{\vol}{\ensuremath{\mathrm{vol}}}
	\newcommand{\hs}[3]{\ensuremath{\left\Vert #1\right\Vert_{\mathrm{H}^{#2}(#3)}}}
	\newcommand{\hsdot}[3]{\ensuremath{\left\Vert #1\right\Vert_{\mathrm{\dot{H}}^{#2}(#3)}}}
	\newcommand{\R}{\ensuremath{\mathbb{R}}}
	\renewcommand{\P}{\ensuremath{\mathbb{P}}}
	\newcommand{\N}{\ensuremath{\mathbb{N}}}
	\newcommand{\Z}{\ensuremath{\mathbb{Z}}}
	\newcommand{\p}[1]{\ensuremath{\partial_{#1}}}
	\newcommand{\Res}{\ensuremath{\mathrm{Res}}}
	\newcommand{\lp}[2]{\ensuremath{\mathrm{L}^{#1}(#2)}}
	\renewcommand{\wp}[3]{\ensuremath{\left\Vert #1\right\Vert_{\mathrm{W}^{#2}(#3)}}}
	\newcommand{\wpn}[3]{\ensuremath{\Vert #1\Vert_{\mathrm{W}^{#2}(#3)}}}
	\newcommand{\np}[3]{\ensuremath{\left\Vert #1\right\Vert_{\mathrm{L}^{#2}(#3)}}}
	\newcommand{\znp}[4]{\ensuremath{\left\Vert #1\right\Vert_{\mathrm{L}_{#3}^{#2}(#4)}}}
	\newcommand{\mixdotHL}[4]{\ensuremath{\left\Vert #1\right\Vert_{\dot{\mathrm{H}}^{#2}+\mathrm{L}^{#3}(#4)}}}
	\newcommand{\hp}[3]{\ensuremath{\left\Vert #1\right\Vert_{\mathrm{H}^{#2}(#3)}}}
	\newcommand{\ck}[3]{\ensuremath{\left\Vert #1\right\Vert_{\mathrm{C}^{#2}(#3)}}}
	\newcommand{\hardy}[2]{\ensuremath{\left\Vert #1\right\Vert_{\mathscr{H}^{1}(#2)}}}
	\newcommand{\lnp}[3]{\ensuremath{\left| #1\right|_{\mathrm{L}^{#2}(#3)}}}
	\newcommand{\npn}[3]{\ensuremath{\Vert #1\Vert_{\mathrm{L}^{#2}(#3)}}}
	\newcommand{\nc}[3]{\ensuremath{\left\Vert #1\right\Vert_{C^{#2}(#3)}}}
	\renewcommand{\Re}{\ensuremath{\mathrm{Re}\,}}
	\renewcommand{\Im}{\ensuremath{\mathrm{Im}\,}}
	\newcommand{\dist}{\ensuremath{\mathrm{dist}}}
	\newcommand{\diam}{\ensuremath{\mathrm{diam}\,}}
	\newcommand{\leb}{\ensuremath{\mathscr{L}}}
	\newcommand{\supp}{\ensuremath{\mathrm{supp}\,}}
	\renewcommand{\phi}{\ensuremath{\vec{\Phi}}}
	\renewcommand{\H}{\ensuremath{\vec{H}}}
	\renewcommand{\L}{\ensuremath{\vec{L}}}
	\renewcommand{\lg}{\ensuremath{\mathscr{L}_g}}
	\renewcommand{\ker}{\ensuremath{\mathrm{Ker}}}
	\renewcommand{\epsilon}{\ensuremath{\varepsilon}}
	\renewcommand{\bar}{\ensuremath{\overline}}
	\newcommand{\s}[2]{\ensuremath{\langle #1,#2\rangle}}
	\newcommand{\pwedge}[2]{\ensuremath{\,#1\wedge#2\,}}
	\newcommand{\bs}[2]{\ensuremath{\left\langle #1,#2\right\rangle}}
	\newcommand{\scal}[2]{\ensuremath{\langle #1,#2\rangle}}
	\newcommand{\sg}[2]{\ensuremath{\left\langle #1,#2\right\rangle_{\mkern-3mu g}}}
	\newcommand{\n}{\ensuremath{\vec{n}}}
	\newcommand{\ens}[1]{\ensuremath{\left\{ #1\right\}}}
	\newcommand{\lie}[2]{\ensuremath{\left[#1,#2\right]}}
	\newcommand{\g}{\ensuremath{g}}
	\newcommand{\dzeta}{\ensuremath{\det\hphantom{}_{\kern-0.5mm\zeta}}}
	\newcommand{\e}{\ensuremath{\vec{e}}}
	\newcommand{\f}{\ensuremath{\vec{f}}}
	\newcommand{\ig}{\ensuremath{|\vec{\mathbb{I}}_{\phi}|}}
	\newcommand{\ik}{\ensuremath{\left|\mathbb{I}_{\phi_k}\right|}}
	\newcommand{\w}{\ensuremath{\vec{w}}}
	\newcommand{\hooklongrightarrow}{\lhook\joinrel\longrightarrow}
	\renewcommand{\tilde}{\ensuremath{\widetilde}}
	\newcommand{\vg}{\ensuremath{\mathrm{vol}_g}}
	\newcommand{\im}{\ensuremath{\mathrm{W}^{2,2}_{\iota}(\Sigma,N^n)}}
	\newcommand{\imm}{\ensuremath{\mathrm{W}^{2,2}_{\iota}(\Sigma,\R^3)}}
	\newcommand{\timm}[1]{\ensuremath{\mathrm{W}^{2,2}_{#1}(\Sigma,T\R^3)}}
	\newcommand{\tim}[1]{\ensuremath{\mathrm{W}^{2,2}_{#1}(\Sigma,TN^n)}}
	\renewcommand{\d}[1]{\ensuremath{\partial_{x_{#1}}}}
	\newcommand{\dg}{\ensuremath{\mathrm{div}_{g}}}
	\renewcommand{\Res}{\ensuremath{\mathrm{Res}}}
	\newcommand{\un}[2]{\ensuremath{\bigcup\limits_{#1}^{#2}}}
	\newcommand{\res}{\mathbin{\vrule height 1.6ex depth 0pt width
			0.13ex\vrule height 0.13ex depth 0pt width 1.3ex}}
	\newcommand{\antires}{\mathbin{\vrule height 0.13ex depth 0pt width 1.3ex\vrule height 1.6ex depth 0pt width
			0.13ex}}
	\newcommand{\ala}[5]{\ensuremath{e^{-6\lambda}\left(e^{2\lambda_{#1}}\alpha_{#2}^{#3}-\mu\alpha_{#2}^{#1}\right)\left\langle \nabla_{\vec{e}_{#4}}\vec{w},\vec{\mathbb{I}}_{#5}\right\rangle}}
	\setlength\boxtopsep{1pt}
	\setlength\boxbottomsep{1pt}

	\allowdisplaybreaks
	\newcommand*\mcup{\mathbin{\mathpalette\mcapinn\relax}}
	\newcommand*\mcapinn[2]{\vcenter{\hbox{$\mathsurround=0pt
				\ifx\displaystyle#1\textstyle\else#1\fi\bigcup$}}}
	\def\Xint#1{\mathchoice
		{\XXint\displaystyle\textstyle{#1}}%
		{\XXint\textstyle\scriptstyle{#1}}%
		{\XXint\scriptstyle\scriptscriptstyle{#1}}%
		{\XXint\scriptscriptstyle\scriptscriptstyle{#1}}%
		\!\int}
	\def\XXint#1#2#3{{\setbox0=\hbox{$#1{#2#3}{\int}$ }
			\vcenter{\hbox{$#2#3$ }}\kern-.58\wd0}}
	\def\ddashint{\Xint=}
	\newcommand{\dashint}[1]{\ensuremath{{\Xint-}_{\mkern-10mu #1}}}
	\newcommand\ccancel[1]{\renewcommand\CancelColor{\color{red}}\cancel{#1}}
	\newcommand\colorcancel[2]{\renewcommand\CancelColor{\color{#2}}\cancel{#1}}
	\newcommand{\abs}[1]{\left\lvert #1 \right \rvert}
	\newcommand{\norm}[1]{\ensuremath{\left\Vert #1 \right\Vert}}
	\newcommand{\genorm}[2]{\ensuremath{\left\Vert #1 \right\Vert_{#2}}}
	
	\renewcommand{\thetheorem}{\thesection.\arabic{theorem}}

	\section{Introduction}

		Let $m\geq 3$ and $(M^m,h)$ be a compact Riemannian manifold. For every smooth immersion $\phi:\Sigma\rightarrow M^m$,  the (conformal) Willmore energy is defined by 
	\begin{align*}
		W(\phi)=\int_{\Sigma}\left(|\H|^2+K_h(\phi_{\ast}T\Sigma)\right)d\vg,
	\end{align*}
    where $g=\phi^{\ast}h$ is the induced metric by $\phi$ on $\Sigma$, $K_h(\phi_{\ast}T\Sigma)$ is the sectional curvature of the two-plan induced by $\phi$, and $\H$ is the mean curvature 
    vector, defined by 
    \begin{align*}
    	\H=\frac{1}{2}\sum_{i,j=1}^{2}g^{i,j}\, \vec{\I}_{i,j}\, ,
    \end{align*}
    where $\vec{\I}_{i,j}$ is the second fundamental form. This functional, first introduced in the Euclidean space by Poisson in 1814 (\cite{poisson}, see also the work of Sophie Germain \cite{germain3}) in the context of non-linear elasticity, was rediscovered by Blaschke and Thomsen \cite{blaschke} in the 1920's  in the 
    framework of conformal geometry and by Willmore \cite{willmore1} in $1965$.    
    
    A key property of such a Lagrangian is the conformal invariance.
    Furthermore, it has been recently proved in \cite{mondinonguyen} that the Willmore functional is the {unique} (up to linear combinations with topological terms) \emph{conformally invariant} integral curvature energy for surfaces (in $\R^3$, it was already known that $(H^2-K_g)d\vg$ is up to scaling the only \emph{pointwise} conformally invariant $2$-form). 

   In addition to the aforementioned strong connection with conformal geometry, the Willmore functional in curved ambient spaces has remarkable links with other topics in mathematics and physics. For instance, the Willmore energy is the main term of the Hawking mass \cite{hawking} in the framework of general relativity (see for instance \cite{lammmetzgerschulze, eichmairkorber, brownemondino}), moreover it corresponds to the 
main term of the Nambu-Got\={o} action
 in string theory \cite{polyakov} 
 and the renormalised area functional in the AdS/CFT correspondence 
  \cite{alexakismazzeo1, alexakismazzeo2}.

From the point of view of calculus of variations, as well as motivated by the aforementioned connections to physics, it is natural to investigate the existence and the properties of the \emph{Willmore immersions} that are by definition the critical points of the Willmore energy. The standard first variation formulae show that critical points of $W$ satisfy the Euler-Lagrange equation
    \begin{align}\label{el1}
    	\Delta_g^{\perp}\H-2|\H|^2\H+\mathscr{A}(\H)+\mathscr{R}_1^{\perp}(\H)-2\,\tilde{K}_h\,\H+2\,\mathscr{R}_2(d\phi)+(DR)(d\phi)=0\, ,
    \end{align}
    where $\mathscr{A}$ is the Simons operator, and the other terms are curvature functionals defined by 
    \begin{align*}
    	\mathscr{A}(\w)&=-\frac{1}{2}\sum_{i,j=1}^2\left(\s{\e_i}{\D_{\e_j}\w}+\s{\e_j}{\D_{\e_i}\w}\right)\vec{\I}(\e_i,\e_j)\\
    	\mathscr{R}_1^{\perp}(\w)&=\left(\sum_{i=1}^2R(\w,\e_i)\e_i\right)^{\perp}\\
    	\tilde{K}_h&=K_h(\phi_{\ast}T\Sigma)\\
    	\mathscr{R}_2(d\phi)&=\sum_{i=1}^{2}\left(\s{R(\vec{\I}(\e_i,\e_1),\e_2)\e_2}{\e_1}+\s{R(\e_1,\vec{\I}(\e_i,\e_2))\e_2}{\e_1}\right)\\
    	(DR)(d\phi)&=\sum_{i=1}^{m}\s{(\D_{\vec{v}_j}R)(\e_1,\e_2)\e_2}{\e_1}\vec{v}_j \, ,
    \end{align*}
    where $(\e_1,\e_2)$ is a orthonormal moving frame of $\phi_{\ast}(T\Sigma)$,  $(\vec{v}_1,\cdots,\vec{v}_m)$ is a local orthonormal frame of $TM^m$, and $R$ is the Riemann curvature tensor 
    of the ambient space $(M^{m}, h)$.
    
        The literature about Willmore immersions in Riemannian manifolds (other than $\R^{n}$ or, equivalently by conformal invariance, $S^{n}$) is relatively recent 
    and in expansion. The first existence results for Willmore spheres have been obtained in perturbative settings  by the second author  \cite{mon1, mon2}.  Under the area-constraint condition, the existence  and the geometric properties of Willmore-type spheres have been investigated by Lamm-Metzger-Schulze \cite{lammmetzgerschulze}, Lamm-Metzger \cite{lammmetzger, lammmetzger2}, Laurain and the second author
     \cite{laurainmondino} and Eichmair-Körber \cite{eichmairkorber}.  Area-constrained Willmore tori of small area have been recently constructed by Ikoma, Malchiodi and the second author \cite{IMM1, IMM2}. All the aforementioned results are perturbative in nature, \emph{i.e.} either the surfaces have sufficiently small area, or the ambient Riemannian metric is sufficiently close to either the Euclidean or the spherical metrics.

The global problem to study the existence of smooth immersed spheres
minimising quadratic curvature functionals in Riemannian manifolds
was  studied by the second author in collaboration with
Kuwert and Schygulla \cite{KMS}  adapting Simon's ambient approach \cite{simon}, and by the second author with Rivière \cite{mondinoriviere, mondinoriviereACV} via a parametric approach, proving 
the existence of area‐constrained Willmore spheres in homotopy
classes as well as the existence of Willmore spheres under various assumptions and constraints. Also, Chen-Li \cite{chenli} proved the existence of stratified weak branched immersions of arbitrary genus minimising quadratic curvature functionals under various constraints (for weak immersions, refer to \cite{muller, toro} and to \cite{rivierecrelle, kuwertli} for works more in relationship with Willmore surfaces).

The main goal of this article is to generalise the quantization result of Bernard-Rivière \cite{quanta} to the case for Willmore immersions in Riemannian manifolds. This result should be seen as the first step to generalise Rivière's min-max theory for Willmore spheres \cite{eversion} to immersions with values into closed Riemannian manifolds. This extension is natural since, by conformal invariance of the Willmore energy, the quantization result in Euclidean spaces is equivalent to the energy quantization in  the sphere $S^n$ (for $n\geq 3$) equipped with its standard round metric.

    \renewcommand{\thetheorem}{\Alph{theorem}}
    \begin{theorem}\label{thm:MainThm}
    	Let $(M^m,h)$ be a smooth compact Riemannian manifold 
    	of dimension $m\geq 3$, and let $\{\phi_k\}_{k\in\N}\subset \mathrm{Imm}(S^{2},M^m)$ be a sequence of Willmore immersions.  
    	Assume that
    	\begin{align}\label{hyp0}
    		\left\{\begin{alignedat}{1}
    		&\limsup\limits_{k\rightarrow \infty}W(\phi_k)<\infty\\
    		& \limsup_{k\rightarrow \infty}\mathrm{Area}(\phi_k)<\infty.
    		\end{alignedat}\right.
    	\end{align}
       Then, up to a subsequence, the following energy identity holds
        \begin{align}\label{eq:QuantWSp2}
        	\lim\limits_{k\rightarrow \infty}W_{(M^{m},h)}(\phi_k)&=W_{(M^{m},h)}(\phi_{\infty})+\sum_{j=1}^{u} W_{(M^{m},h)}(\vec{\Psi}_{j})  +\sum_{s=1}^{p}W_{\R^{m}}(\vec{\eta}_s)+\sum_{t=1}^{q}\left(W_{\R^{m}}(\vec{\zeta}_t)-4\pi\theta_{0,t}\right)\, ,
        \end{align}
        where: 
      \begin{itemize}
      \item[\rm{(1)}]  The map $\phi_{\infty}$ is a smooth Willmore immersion of $S^{2}$ into $(M^{m},h)$, possibly branched at finitely many points $a_1,\cdots,a_N\in S^{2}$.
      \item[\rm{(2)}] 
      For any $j=1,\ldots, u\in \N,$ the maps  $\vec{\Psi}_{j}$ are smooth, possibly branched, Willmore immersions of $S^{2}$ into $(M^{m},h)$.
      \item[\rm{(3)}]  
      For any $s=1,\ldots, p\in \N, \, t=1,\ldots, q\in \N,$    the maps $\vec{\eta}_{s}: S^2\rightarrow \R^m$ and $\vec{\zeta}_t: S^2\rightarrow \R^m$ are smooth, possibly branched, Willmore immersions in $\R^{m}$ and $\theta_{0,t}=\theta_0(\vec{\zeta}_t,x_t)\in \N$ is the multiplicity of $\vec{\zeta}_t$ at some point $x_t\in \R^m$.
      \item[\rm{(4)}] 
      The map $\phi_{\infty}:S^{2}\to M^{m}$ is obtained as follows: there exist a sequence of diffeomorphisms $\{f_{k}\}_{k\in\N}$ of $S^{2}$ such that $\phi_{k}\circ f_{k}$ is conformal and 
        \begin{equation*}
        \phi_k\circ f_k\conv{k\rightarrow \infty} \phi_{\infty}\quad \text{ in }C^l_{\mathrm{loc}}(S^{2}\setminus\ens{a_1,\cdots,a_N}), \quad \forall  l\in \N.
        \end{equation*}
        Furthermore, it holds
         \begin{equation*}
       \lim_{k\to \infty}W_{(M^{m},h)}(\phi_{k})= W_{(M^{m},h)}(\phi_{\infty})\;  \Longleftrightarrow  \; \phi_k\circ f_k\conv{k\rightarrow \infty} \phi_{\infty} \text{ in }C^l(\Sigma),  \quad \forall l\in \N.
        \end{equation*}
        
          \end{itemize}
       Moreover, if  $\lim_{k\rightarrow \infty}\mathrm{Area}(\phi_k)=0$, then the first two terms in the right hand side of \eqref{eq:QuantWSp2} are not present, \emph{i.e.} using the same notation as above for $\vec{\eta}_s, \, \vec{\zeta}_t$ and $\theta_{0,t}$, it holds
        \begin{align*}
        	\lim\limits_{k\rightarrow \infty}W_{(M^{m},h)}(\phi_k)&=\sum_{s=1}^{p}W_{\R^{m}}(\vec{\eta}_s)+\sum_{t=1}^{q}\left(W_{\R^{m}}(\vec{\zeta}_t)-4\pi\theta_{0,t}\right)\, .
        \end{align*}

     \end{theorem}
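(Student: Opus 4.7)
The plan is to adapt Bernard-Rivière's Euclidean quantization argument \cite{quanta} to the Riemannian setting, treating the ambient curvature terms in the Euler-Lagrange equation \eqref{el1} as lower-order perturbations that vanish under blow-up for microscopic bubbles while remaining active for macroscopic bubbles that live inside $(M^{m},h)$.

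\textbf{Step 1 (gauge fixing and weak limit).} Since $S^{2}$ has trivial Teichmüller space, one first invokes uniformization to pick diffeomorphisms $f_{k}$ of $S^{2}$ so that $\phi_{k}\circ f_{k}$ is conformal. An $\varepsilon$-regularity statement for the Willmore equation \eqref{el1} --- in which the curvature terms $\mathscr{R}_{1}^{\perp}$, $\mathscr{R}_{2}$ and $(DR)$ are absorbed as lower order controlled by the uniform area bound --- identifies a finite set of concentration points $\{a_{1},\ldots,a_{N}\}\subset S^{2}$. Off these points, Hélein-Müller-Šverák estimates on the conformal factor combined with Rivière's weak immersion theory give $C^{l}_{\mathrm{loc}}$ convergence of $\phi_{k}\circ f_{k}$ on $S^{2}\setminus\{a_{1},\ldots,a_{N}\}$ to a limiting $\phi_{\infty}$; a removability of isolated singularities argument then extends $\phi_{\infty}$ across each $a_{i}$ as a smooth, possibly branched, Willmore immersion of $S^{2}$ into $M^{m}$.

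\textbf{Step 2 (bubble tree extraction).} Around each $a_{i}$ one performs a blow-up at a scale $\lambda_{k}^{i}$ capturing the highest concentration of $|\vec{\I}|^{2}$. If the rescaled immersion remains in a fixed neighbourhood of a point of $(M^{m},h)$ without collapsing, one extracts a smooth, possibly branched, Willmore immersion $\vec{\Psi}_{j}:S^{2}\to M^{m}$. If instead the rescaling is carried out in normal coordinates at an image point with $\lambda_{k}^{i}\to 0$, the pulled-back metric converges to the Euclidean metric and all curvature terms in \eqref{el1} drop out in the limit, producing a Willmore immersion into $\R^{m}$. The distinction between compact bubbles $\vec{\eta}_{s}$ and ``ended'' bubbles $\vec{\zeta}_{t}$ corresponds to whether one or more points of $S^{2}$ are sent to infinity after a suitable inversion; the topological defect $-4\pi\theta_{0,t}$ is then supplied by the Li-Yau inequality at an end of multiplicity $\theta_{0,t}$. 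Since each bubble carries at least a universal amount $\varepsilon_{0}>0$ of Willmore energy, the bubble tree terminates in finitely many steps.

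\textbf{Step 3 (neck analysis --- the main obstacle).} The core difficulty is to rule out energy loss on the dyadic neck annuli $A_{k}^{i}\subset S^{2}$ interpolating between $\phi_{\infty}$ and each bubble and between successive bubbles in the tree. The Bernard-Rivière strategy exploits the conservation laws attached to the Noether symmetries of the Euclidean Willmore Lagrangian, which put \eqref{el1} in divergence form and, via Lorentz-space duality ($L^{2,1}$-$L^{2,\infty}$) together with a Hodge decomposition on the annulus, deliver decay of $\int_{A_{k}^{i}}|\nabla\vec{n}_{k}|^{2}$. In the Riemannian case these conservation laws acquire source terms pointwise bounded by the ambient curvature of $(M^{m},h)$ times the area element of $\phi_{k}$; I would show that their $L^{1}$ norms over $A_{k}^{i}$ vanish in the limit because $\mathrm{Area}(\phi_{k}(A_{k}^{i}))\to 0$, so that the Riemannian neck analysis reduces to an $o(1)$ perturbation of the Euclidean one. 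Summing the contributions of $\phi_{\infty}$, of the $\vec{\Psi}_{j}$, and of the Euclidean bubbles with their defects then yields \eqref{eq:QuantWSp2}. The equivalence between $C^{l}$ convergence and the absence of energy defect drops out of the same estimates, while the degenerate case $\mathrm{Area}(\phi_{k})\to 0$ forces $\phi_{\infty}$ to be trivial and forbids manifold-bubbles $\vec{\Psi}_{j}$ (which would require positive asymptotic area in $M^{m}$), leaving only the Euclidean bubbles.
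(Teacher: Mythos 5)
Your Steps~1 and~2 match the paper's overall structure (pre-compactness in the weak immersion class via Theorem~\ref{thm:compS2}, $\epsilon$-regularity Theorem~\ref{eps_reg} to upgrade to $C^l_{\mathrm{loc}}$, dichotomy between Riemannian bubbles $\vec{\Psi}_j$ and Euclidean bubbles $\vec{\eta}_s,\vec{\zeta}_t$ depending on whether the blown-up conformal factor collapses). The $-4\pi\theta_{0,t}$ defect and the finite termination of the bubble tree are also as you describe.

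The gap is concentrated in Step~3, where you claim that the Riemannian neck analysis \emph{reduces to an $o(1)$ perturbation of the Euclidean one} because the ambient curvature terms have $L^1$ norm controlled by $\mathrm{Area}(\phi_k(A_k^i))\to 0$. This misrepresents where the difficulty actually lies, and as stated the argument would not close. First, the conservation-law structure itself is perturbed: the operator is $D_z = \partial_z + f_k(\cdot)$ where $f_k$ encodes the Christoffel symbols, so the system \eqref{systemL} that produces $\vec{L}_k$ (and likewise $S_k$, $\vec{R}_k$) only exists after a fixed-point argument (Lemmas~\ref{A1} and~\ref{A2}), which requires an $L^{2,1}$ smallness of $e^{\lambda_k}$ that in turn relies on the area quantization in necks together with the $d_k>-1$ analysis of Lemma~\ref{lem:d>-1}. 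Second, and more seriously, the $\bar\partial$-type equation $D_z\vec{L}_k=\vec{Y}_k$ is \emph{not} translation-invariant — one cannot shift $\vec{L}_k$ by a constant to normalize its mean, which is exactly the trick Bernard--Rivière use to obtain the $L^{2,\infty}$ estimate \emph{a posteriori}. One therefore needs an \emph{a priori} pointwise bound on $\vec{L}_k=\vec{V}_k+\vec{W}_k$ (Theorem~\ref{thm:L2infLW}), and the natural estimate on $\vec{V}_k$ (see \eqref{vk}) contains a $\log(R/|z|)/|z|$ factor that places it outside $L^{2,\infty}$ and into the weaker Orlicz--Lorentz space $L^{2,\infty}_{\log}$. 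Removing this logarithm — Subsection~\ref{SSS:logz}, built on the duality theory for $N(\Lambda_\beta)/M(\Lambda_\beta)$ and the holomorphic improvement Lemma~\ref{lemme_holomorphe1} — is the paper's central technical innovation and is entirely absent from your sketch. Finally, your remark that the curvature source terms have vanishing $L^1$ norm is insufficient on its own: an $L^1$ bound on $\Delta\Im(\vec{V}_k)$ does not by itself yield the $W^{1,(2,\infty)}$ control one needs (the paper notes explicitly that one cannot extract elliptic estimates from an $L^{1,\infty}$ bound on the Laplacian), which is why the careful decomposition into $\vec{u}_k+\vec{v}_k+\vec{w}_k$, the averaging lemmas (Lemma~\ref{averaging_lemma}), and the Harnack estimates are all needed.
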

     
     \renewcommand{\thetheorem}{\thesection.\arabic{theorem}}
     \begin{rems}  
     Let us denote  $\vec{\xi}_k=\phi_k\circ f_k$, where $f_k$ is given by (4) in Theorem \ref{thm:MainThm}.
       \begin{itemize}
       \item      The \emph{Riemannian Willmore bubbles} $\vec{\Psi}_j:S^{2}\to M^{m}$ are obtained as follows:  for any $j\in \{1,\ldots,u\}$, there exist a sequence of positive Möbius transformations $\psi^{j}_{k}$ of $S^{2}$  concentrating at one of $\ens{a_1,\cdots,a_N}$ such that:
        \begin{equation*}
       \vec{\xi}_k \circ \psi^{j}_{k} \conv{k\rightarrow \infty} \vec{\Psi}_{j} \quad \text{ in }C^l_{\mathrm{loc}}(S^{2} \setminus \big\{a_1^{j},\cdots,a^{j}_{N_{j}}\big\}), \quad \forall  l\in \N,
        \end{equation*}    
      where   $\big\{a_1^{j},\cdots,a^{j}_{N_{j}}\big\}$ is a finite set of points in $S^{2}$.
   
\item   The \emph{Euclidean Willmore bubbles} $\vec{\eta}_s, \vec{\zeta}_t: S^{2}\to \R^{m}$ are obtained by the following blow up procedure: for any $s\in \{1,\ldots, p\}$ (resp. for any $t\in \{1,\ldots, q\}$), there exists a point $\bar{x}^{s}\in M$ (resp.  $\bar{x}^{t}\in M$),  there exist a sequence of positive Möbius transformations $\psi^{s}_{k}$ (resp.  $\psi^{t}_{k}$) of $S^{2}$  concentrating at one of $\ens{a_1,\cdots,a_N}$, a sequence of rescalings $\lambda^{s}_{k}\to +\infty$ (resp. $\lambda^{t}_{k}\to +\infty$) and inversions $\Xi^{t}_{k}$ of $\R^{m}$ such that:
     \begin{equation*}
       \lambda^{s}_{k} \cdot {\rm Exp}_{\bar{x}^{s}}^{-1}\circ \vec{\xi}_k \circ \psi^{s}_{k} \conv{k\rightarrow \infty} \vec{\eta}_{s} \quad \text{ in }C^l_{\mathrm{loc}}(S^{2} \setminus \big\{a_1^{s},\cdots,a^{s}_{N_{s}}\big\}), \quad \forall  l\in \N,
        \end{equation*}    
        and, respectively,
          \begin{equation*}
  \Xi^{t}_{k}\circ     \lambda^{t}_{k} \cdot {\rm Exp}_{\bar{x}^{t}}^{-1}\circ \vec{\xi}_k \circ \psi^{t}_{k} \conv{k\rightarrow \infty} \vec{\zeta}_{t} \quad \text{ in }C^l_{\mathrm{loc}}(S^{2} \setminus \ens{a_1^{t},\cdots,a^{t}_{N_{t}}}), \quad \forall  l\in \N,
        \end{equation*}   
        where   $\big\{a_1^{s},\cdots,a^{s}_{N_{s}}\big\}, \big\{a_1^{t},\cdots,a^{t}_{N_{t}}\big\}\subset S^{2}$ are finite sets of points.
         \end{itemize}
     \end{rems}

      Arguing along the lines of the proof of Theorem \ref{thm:MainThm}, one can prove the energy quantization for surfaces of arbitrary genus, under the assumption of $\mathrm{W}^{2,2}$ weak convergence to a limit map and a bound on the conformal structures; the reader is referred to Theorem \ref{thm:MainThm2} for the precise statement.

    \begin{rems}\label{hypothesis}
    	\begin{enumerate}
	  \item[(1)] Since the Gauss curvature is quantized as well 
	  (see \eqref{final_quanta2}, and refer to \cite[Lemma V.$1$]{quanta}), the quantization of energy stated in  Theorem \ref{thm:MainThm} and Theorem \ref{thm:MainThm2} also holds for a general quadratic curvature functional of the form
    	    \begin{align*}
    	    	F_{\lambda_1, \lambda_2}(\phi)=\lambda_1\int_{\Sigma}|\H|^2\, d\vg+\lambda_2\int_{\Sigma}|\h_0|^2_{WP\, }d\vg \, ,
    	    \end{align*}
    	    for some $\lambda_1,\lambda_2\in\R$, where $\h_0$ is the Weingarten tensor (see for example the form introduced by Calabi in \cite{calabi}) and $|\,\cdot\,|_{WP}$ the Weil-Petersson metric. Explicitly, we have in a conformal local chart $\h_0=2\,\pi_{\n}(\p{z}^2\phi)dz^2=2e^{2\lambda}\,\p{z}\left(e^{-2\lambda}\p{z}\phi\right)dz^2$, and for two $2$-forms $\alpha=\varphi(z)dz^2$ and $\beta=\psi(z)dz^2$, we have
    	    \begin{align*}
    	    \s{\alpha}{\beta}_{WP}=e^{-4\lambda}\varphi(z)\bar{\psi(z)}=g^{-2}\otimes \alpha\otimes \bar{\beta}.
    	    \end{align*}
             Indeed, the quantity 
            \begin{align*}
            	\int_{\Sigma}K_{g_k}d\mathrm{vol}_{g_k}
            \end{align*}
          is  equal to  $2\pi \chi(\Sigma)$ in the limit 
             by Gauss-Bonnet Theorem and the smooth convergence of the conformal structures.

    		\item[(2)] In \cite{quanta} the  boundedness of the area is not explicitly assumed  because the authors work in $\R^n$; but making a stereographic projection and using the conformal invariance of the Willmore energy, their result is equivalent to the quantization of energy for $S^n$-valued maps, where $S^n$ is equipped with its standard round metric. Indeed, for all immersion $\phi:\Sigma\rightarrow S^n$, the Willmore energy in the sphere is defined by  
    	    		\begin{align*}
    			W_{S^n}(\phi)=\int_{\Sigma}\left(1+|\H|^2\right)d\vg,
    		\end{align*}
    		which shows in particular that a uniform bound on the Willmore energy implies a uniform bound on the area.  		
				
    		\item[(3)] Consider the following assumption. 
    		\begin{assumption}\label{Assum:SecCurv}
		   The sectional curvature $K_h$ of the ambient manifold $(M,h)$ is bounded below by  $\kappa_0>0$, \emph{i.e.}
    			there exists $\kappa_0>0$ such that $K_h(P)\geq \kappa_0$ for any  $2$-dimensional non-isotropic tangent plane $P\in \mathscr{G}_2(TM)$, where 
    			\begin{align*}
    				K_h(P)=\frac{\s{R(\vec{v},\vec{w})\vec{w}}{\vec{v}}}{|\vec{v}|^2|\vec{w}|^2-\s{\vec{v}}{\vec{w}}^2},
    			\end{align*}
    			where $P=\vec{v}\wedge\vec{w}$. 
    		\end{assumption}
    		The assumption \ref{Assum:SecCurv} implies that 
    		 for any immersion $\phi:\Sigma\rightarrow (M^m,h)$, it holds
    		\begin{align*}
    			\mathrm{Area}(\phi)\leq \frac{1}{\kappa_0}\int_{\Sigma}K_{h}(\phi_{\ast}T\Sigma)\, d\vg\leq \frac{1}{\kappa_0}\int_{\Sigma}\left(|\H|^2+K_h(\phi_{\ast}T\Sigma)\right)d\vg\leq \frac{1}{\kappa_0}W(\phi),
    		\end{align*}
    		which implies in particular that the bound on the area follows
            once a Willmore energy bound is in place.            In particular, the theorem will hold for any small enough (in the $C^2$ topology) perturbation of the round metric on $S^n$ \emph{without the assumption on the uniform boundedness of the area}.                	    \item[(4)] 
    	    Given a closed manifold $M^m$,  for a {generic} Riemannian metric $h$ (see \cite{random}), one can control the area of an immersion $\phi:\Sigma\rightarrow (M^m,h)$  by its $\mathrm{L}^2$-curvature energy  (\cite{bangert_kuwert}; see also \cite{mondiminus}) 
	    \begin{equation*}
	    F_{1,1}(\phi)=  \int_{\Sigma}|\vec{\mathbb{I}}|^{2}\, d\vg.
	    \end{equation*}
	      Therefore,  the area bound in the assumption \eqref{hyp0} can be dropped for a generic metric on a closed ambient manifold, when considering the quantization of energy for a functional $F_{\lambda_1, \lambda_2}$, with $\lambda_1, \lambda_2>0$. 
	        	\end{enumerate} 	  
    \end{rems}

    \subsection*{Some ideas of the proofs}
    \subsubsection*{Related literature on energy quantization and general strategy}

    The main results  Theorem \ref{thm:MainThm} and Theorem \ref{thm:MainThm2} shall be read in the context of other bubble-neck decomposition and energy quantization results, previously obtained in the literature. One can  mention  \cite{sacks, Struwe85, Jost91,  dingtian, parker, linriv, RivICM}
     in the setting of harmonic maps and other conformally invariant variational problems.
    A fundamental difference between the aforementioned energies and the Willmore functional is that, in the former, the corresponding Euler Lagrange equations are of second order, while the latter is a fourth-order problem.

 As already mentioned, the first quantization result for the Willmore energy was obtained by  Bernard-Rivière \cite{quanta} for Willmore surfaces  with bounded conformal structures and immersed in  Euclidean ambient spaces. A first generalisation of  \cite{quanta} was established by Laurain-Rivière \cite{quantamoduli} in the case of Willmore immersions with degenerating conformal classes, still with values into $\R^n$. The crux of the proof of the quantization of energy is to obtain the \emph{no-neck energy} property, once a suitable decomposition of the domain is performed. 	The idea is not restricted to the Willmore energy and applies to any quadratic energy (the Dirichlet energy \cite{linriv}, 	the Ginzburg-Landau energy \cite{linriv_GL1, linriv_GL2}, the Euclidean Willmore energy \cite{quanta, quantamoduli}, horizontal $1/2$-harmonic maps \cite{half_harmonic}, etc). To fix ideas, consider an immersion $\vec{u}:(\Sigma,g)\rightarrow (M^m,h)\subset \R^N$ between Riemannian manifolds (where we assume without loss of generality that $(M^m,h)$ is isometrically embedded into $\R^N$)  and  its Dirichlet energy given by 
	\begin{align*}
		E(\vec{u})=\frac{1}{2}\int_{\Sigma}|d\vec{u}|_g^2\, d\mathrm{vol}_g\, .
	\end{align*}
	 A neck region is conformally equivalent to an annulus $\Omega=B_R\setminus\bar{B}_r(0)$.We say that the \emph{no-neck energy} property holds provided that for any neck-region $\Omega_k(1)=B_{R_k}\setminus\bar{B}_{r_k}(0)\subset \C$, we have 
	\begin{align}\label{quanta0}
		\lim\limits_{\alpha\rightarrow 0}\limsup_{k\rightarrow \infty}\int_{\Omega_k(\alpha)}|\D\vec{u}_k|^2dx=0\, ,
	\end{align}
    where, for all $0<\alpha\leq 1$, $\Omega_k(\alpha)=B_{\alpha R_k}\setminus\bar{B}_{\alpha^{-1}r_k}(0)$. 
    The idea of the proof is to use Lorentz spaces (see the Appendix \ref{appendix} for more details) and the duality between $\mathrm{L}^{2,1}$ and $\mathrm{L}^{2,\infty}$, where $\mathrm{L}^{2,\infty}$ is the weak $\mathrm{L}^2$ space and $\mathrm{L}^{2,1}$ its pre-dual which can be explicitly characterised. The duality implies in particular that for any measured space $(X,\mu)$ and any measurable  maps $\vec{u},\vec{v}:X\rightarrow \R^N$, it holds
    \begin{align}\label{duality}
    	\left|\int_{\Omega}\s{\vec{u}}{\vec{v}}\, d\mu\right|\leq \np{\vec{u}}{2,1}{\Omega}\np{\vec{v}}{2,\infty}{\Omega}.
    \end{align}
    The idea of the quantization is to first show that, for some $\alpha_0>0$ independent of $k\in \N$, a uniform bound
    \begin{align}\label{duality1}
    	\np{d\vec{u}_k}{2,1}{\Omega_k(\alpha_0)}\leq C
    \end{align}
    holds. 
    Then, if one can prove a \emph{weak} quantization of energy, \emph{i.e.} that 
    \begin{align}\label{duality2}
    	\lim\limits_{\alpha\rightarrow 0}\limsup_{k\rightarrow \infty}\np{\D\vec{u}_k}{2,\infty}{\Omega_k(\alpha)}=0,
    \end{align}
    then the duality inequality \eqref{duality}, \eqref{duality1} and \eqref{duality2} show that the energy quantization in \eqref{quanta0} holds. Indeed: 
    \begin{align*}
    	\int_{\Omega_k(\alpha)}|\D\vec{u}_k|^2dx\leq \np{\D\vec{u}_k}{2,1}{\Omega_k(\alpha)}\np{\D\vec{u}_k}{2,\infty}{\Omega_k(\alpha)}\leq C\np{\D\vec{u}_k}{2,\infty}{\Omega_k(\alpha)}. 
    \end{align*}
    This approach was first developed by Lin-Rivière in the context of the Ginzburg-Landau functional 
    \cite{linriv_GL1, linriv_GL2} and for harmonic maps \cite{linriv}. It was used more recently for Willmore immersions with values into $\R^n$ by Bernard-Rivière \cite{quanta} and Laurain-Rivière \cite{quantamoduli}.

 \subsubsection*{Difficulties and novelties of this paper}

 Although we build on Bernard-Rivière's work (\cite{quanta}), new technical difficulties arise in the Riemannian setting. One key point is the need to introduce new Lorentz spaces that were not previously used in this context to our knowledge.

A fundamental ingredient in the proof of the energy quantization for Willmore immersions in Euclidean spaces \cite{quanta} is the introduction of conservation laws by  Rivière \cite{riviere1}: these  permit to rewrite the fourth-order Willmore equation into a system of second-order Jacobian-type equations which can be handled with tools from integrability by compensation.

    The first technical difficulty compared to \cite{quanta} is that we do not get the existence of 
    such an exact system of conservation laws for any 
    critical immersion. 
     Indeed, it is necessary to assume that the area in small enough to get the existence 
    of a perturbed system of conservation laws  (see  \cite[Lemma A.1 and Lemma A.2]{mondinoriviere}), and Lemmas \ref{A1} and \ref{A2} in this paper. 
    The perturbation is caused by the ambient curvature, and makes the system to be complex-valued of non-pure Jacobians (rather than real-valued of pure Jacobians as in the Euclidean setting). This is why,  in order to obtain the existence of the perturbed system of conservation laws in the neck region, we assume that the area is bounded and we prove that the area in neck-regions (and bubble regions too) is quantized, and therefore arbitrarily small. 
        
    Another technical point is to obtain a suitable $\epsilon$-regularity result for Willmore immersions with values into 
    curved ambient spaces. We prove in Theorem \ref{eps_reg} that there exists $\epsilon_0=\epsilon_0(M^m,h)>0$ with the following property: provided that $\phi:B(0,1)\rightarrow M^m$ is a weak Willmore immersion, the estimate
    \begin{align*}
    	\mathrm{Area}(\phi(B(0,1)))+\int_{B(0,1)}|\D\n|^2dx\leq \epsilon_0
    \end{align*}
    implies that $\phi\in C^{\infty}(B(0,1))$ and, for all $k\in \N$, there exists $C_k<\infty$ such that
    \begin{align*}
    	\np{\D^k\n}{\infty}{B(0,\frac{1}{2})}\leq C_k\np{\D\n}{2}{B(0,1)}.
    \end{align*}
    Notice that by the previous Remark \ref{hypothesis} ($3$)., the bound on area is superfluous for a generic metric on $M^m$, 
    or in case of ambient metric with positive sectional curvature.

    As mentioned above, a key technical difficulty in this paper  is that the conservation laws are complex-valued and the systems are non-pure Jacobians. This makes all the estimates more complicated: the additional terms coming from the non-vanishing Christoffel symbols force us to obtain \emph{a priori} estimates on the functions appearing in the system, contrary to the Euclidean case where one can proceed directly from the existence and obtain the $\mathrm{L}^{2,\infty}$ estimate \emph{a posteriori}. 
    
    The main technical difficulty 
    is to obtain a pointwise $\mathrm{L}^{2,\infty}$ bound 
    (see Theorem \ref{thm:L2infLW}).  In order to prove it, we will introduce a generalised Lorentz (or Orlicz-Lorentz) 
    space modelled on $\mathrm{L}^{2,\infty}$ and named $\mathrm{L}^{2,\infty}_{\log^{\beta}}$ (where $0\leq \beta\leq 1$ and $\mathrm{L}^{2,\infty}_{\log^0}=\mathrm{L}^{2,\infty}$)  in the analysis.  
    The reader is referred to Appendix \ref{appendix} for more details on these Banach function spaces.
    
     Another important step  in the proof is to show that for holomorphic maps (the same proof works more generally for harmonic maps), the standard $\epsilon$-regularity and scaling considerations giving that a $\mathrm{L}^{2,\infty}$ bound implies locally a $\mathrm{W}^{1,1}\cap \mathrm{L}^{2,1}$ estimate, hold 
    more generally when one has a $\mathrm{L}^{2,\infty}_{\log^{\beta}}$ bound (see Lemma \ref{lemme_holomorphe1}).    
    Since the more classical improvement 
    from $\mathrm{L}^{2,\infty}$  to $\mathrm{W}^{1,1}\cap \mathrm{L}^{2,1}$ for harmonic maps had several applications, it is natural to expect that the aforementioned sharpened improvement obtained in  Lemma \ref{lemme_holomorphe1} will be useful also in other settings.

  \textbf{Acknowledgments}.   The first   author is supported by the Early Postdoc Mobility \emph{Variational Methods in Geometric Analysis} P$2$EZP$2$\_$191893$.
   {The second  author is  supported by the European Research Council (ERC), under the European Union Horizon 2020 research and innovation programme, via the ERC Starting Grant  “CURVATURE”, grant agreement No. 802689}.

\section{Notation and preliminaries}
Throughout the paper, $(M^m, h)$ is a compact Riemannian manifold without boundary and $\Sigma$ is a closed Riemann surface.
Given a smooth immersion $\phi:\Sigma\to (M^m,h)$, we endow $\Sigma$ with the pull-back metric $g=\phi^*h$. An important role will be played by the conformal structure associated to the metric $g$ (for conformal structures on compact Riemann surfaces see for instance \cite{JostRS}). In particular we will assume that, given a sequence of immersions $\phi_{k}:\Sigma\to (M^m,h)$, the conformal structures associated to the pull-back metrics $g_k=\phi_k^*h$ are contained in a compact region of the moduli space. This assumption prevents the degeneration of the Riemann surface in the domain  (see \cite{quanta, quantamoduli}).

Without loss of generality, we can assume that the smooth immersion $\phi:\Sigma\to (M^m,h)$ is a conformal parametrisation, \emph{i.e.} we can choose local coordinates $(x_{1}, x_{2})$ on $\Sigma$ such that $g_{i,j}=e^{2\lambda} \delta_{i,j}$. The real valued function $\lambda$ will be called \emph{conformal factor}. Sometimes, taking advantage of the complex structure of a Riemann surface, 
it will be useful to switch the complex notation $z=x_{1}+i\, x_{2}$.

    Thanks to the Nash isometric embedding theorem, we can assume without loss of generality that $M^m\subset \R^n$, and that $h=\iota^{\ast}g_{\R^n}$, where $\iota:M^m\hookrightarrow \R^n$ is the 
    Nash embedding. Since $M^m$ is a compact manifold, we have in particular
    \begin{align*}
    	\np{\vec{\I}_{M^m}}{\infty}{M^m}\leq C_0<\infty,
    \end{align*}
    where $\vec{\I}_{M^m}$ is the second fundamental form of $\iota$. 
    Given a smooth immersion $\phi:\Sigma\rightarrow (M^m,h)$   of the $2$-dimensional surface $\Sigma$, we define the generalised Gauss map (see Hoffman-Osserman \cite{hoffman_gauss}) $\n_{\phi}:\Sigma \rightarrow \Lambda^{m-2}TM^m$ by
    \begin{align*}
    	\n_{\phi}=\star_h\frac{\p{x_1}\phi\wedge \p{x_2}\phi}{|\p{x_1}\phi\wedge \p{x_2}\phi|}
    \end{align*}
    where  $z=x_1+i\,x_2$ are arbitrary local coordinates on $\Sigma$, and $\star_h:\Lambda^2TM^m\rightarrow \Lambda^{m-2}TN^m$ is the linear Hodge operator associated to the metric $h$. We claim that the following formula holds:
    \begin{align}\label{second_form1}
    	\n_{\iota\circ\phi}=\iota_{\ast}(\n_{\phi})\wedge (\n_{\iota})\circ\phi.
    \end{align}
    Indeed, we have locally
    $
    	\n_{\iota}=\vec{v}_1\wedge\cdots\wedge \vec{v}_{n-m},
    $
    and
    $ 
        \n_{\phi}=\n_1\wedge\cdots\wedge\n_{m-2}
    $. If $\e_i=e^{-\lambda}\p{x_i}\phi$ in a conformal chart, we deduce by definition that $(\e_1,\e_2,\n_1,\cdots\n_{m-2})$ is an orthonormal basis of $TM^m$, which implies that $(\iota_{\ast}(\e_1),\iota_{\ast}(\e_2),\iota_{\ast}(\n_1),\cdots,\iota_{\ast}(\n_{m-2}),\vec{v}_1,\cdots,\vec{v}_{n-m})$ is an orthonormal basis of $\R^n$ and we deduce the claim \eqref{second_form1}.

 Even if the main Theorem \ref{thm:MainThm} concerns smooth immersions, some of the intermediate results that we will  establish will hold more generally for weak conformal immersions. A weak conformal immersion of the unit ball $B(0,1)\subset \R^{2}$ is map $\phi\in \mathrm{W}^{1,\infty}\cap \mathrm{W}^{2,2}(B(0,1), M^{m})$ such that the a.e. well defined pullback metric  $g=\phi^*h$ is conformal to the Euclidean metric on $B(0,1)$, \emph{i.e.} $g_{i,j}=e^{2\lambda} \delta_{i,j}$ for some a.e. well defined function $\lambda$. 
 Observe that the space of weak (conformal) immersions corresponds to the energy space for the Willmore functional $W$, thus it provides a natural functional analytic framework for the analysis \& calculus of variations of such an energy functional. Indeed the space of weak immersions with bounded area and Willmore energy satisfy useful pre-compactness properties. Let us recall the following pre-compactness result from \cite{mondinoriviere}, after \cite{rivierecrelle, KuwertLi}.

 \begin{theorem}\label{thm:compS2}
 Let $\phi_k:S^2 \hookrightarrow (M^m,h)$ be a sequence of weak immersions of $S^2$ into the closed $m$-dimensional Riemannian manifold $(M^m,h)$ and assume that the uniform area and Willmore bounds \eqref{hyp0} hold.
 Then, up to pre-composing with suitable bi-Lipschitz diffeomorphisms of $S^{2}$, one can assume that $\phi_{k}$ are conformally parametrised. Moreover,
 \begin{itemize}
 \item[\rm{(1)}] \textbf{Either}, $\diam (\phi_k(S^2))\to 0$ and thus there exists a point $\bar{x}\in M$ such that, up to a subsequence,  $\phi_k(S^2)\to \bar{x}$ in Hausdorff distance sense;
 \item[\rm{(2)}] \textbf{Or}, for every $k\in \N$, there exists a positive Möbius transformation $f_{k}$ of $S^2$ such that, called
  \begin{equation*}
\vec{\xi}_{k}= \phi_{k} \circ f_{k}
  \end{equation*}
 the reparametrised immersion and 
  \begin{equation*}
 \tilde{\lambda}_{k}= \log| \partial_{x_{1}} \vec{\xi}_{k} |= \log| \partial_{x_{2}} \vec{\xi}_{k} |
  \end{equation*}
 the new conformal factor, the following holds {\rm (}up to a subsequence{\rm)}: 
 \begin{itemize}
 \item[\rm{(i)}] There exists a finite set of points $\{a_{1}, \ldots, a_{N}\}$ such that for any compact subset $K\subset S^{2}\setminus \{a_{1}, \ldots, a_{N}\}$ 
   \begin{equation*}
 \sup_{k\in \N}\np{\tilde{\lambda}_{k}}{\infty}{K}<\infty\,.  
  \end{equation*}
 \item[\rm{(ii)}] There exists a conformal weak immersion $\vec{\xi}_{\infty}:S^2 \hookrightarrow (M^m,h)$, possibly branched at  $\{a_{1}, \ldots, a_{N}\}$, such that 
 \begin{equation}\label{eq:weakConvCompactThm}
  \vec{\xi}_{k}\rightharpoonup  \vec{\xi}_{\infty}\,  \text{ weakly in } \mathrm{W}^{2,2}_{\mathrm{loc}} (S^{2}\setminus \{a_{1}, \ldots, a_{N}\})\,.
 \end{equation}
Moreover, 
 \begin{equation*}
    W(\vec{\xi}_{\infty})\leq \liminf_{k\to \infty} W(\vec{\xi}_{k})\, .
 \end{equation*}
 \item[\rm{(iii)}] Furthermore, $W(\vec{\xi}_{\infty})= \lim_{k\to \infty} W(\vec{\xi}_{k})$ if and only if one can choose $ \{a_{1}, \ldots, a_{N}\}=\emptyset$ in the above claims.
  \end{itemize}

 \end{itemize}
 \end{theorem}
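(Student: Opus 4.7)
The plan is to reduce the statement to the already-established Euclidean-ambient compactness of Rivière and Kuwert--Li via the Nash isometric embedding $\iota:(M^m,h)\hookrightarrow \R^n$. The key preliminary is a pointwise comparison of Willmore integrands: decomposing the second fundamental form of $\iota\circ\phi_k:S^2\to \R^n$ into the $\iota$-image of the intrinsic second fundamental form of $\phi_k$ and the component inherited from $\vec{\I}_{M^m}$, a direct Gauss-type computation yields
\begin{align*}
    \left|\H_{\iota\circ\phi_k}\right|^2 \;=\; |\H_{\phi_k}|^2 + \left|\sum_{i,j=1}^{2} g_k^{i,j}\,\vec{\I}_{M^m}(\phi_{k\ast}\e_i,\phi_{k\ast}\e_j)\right|^2 \;\leq\; |\H_{\phi_k}|^2 + 4\,C_0^2,
\end{align*}
with $C_0=\npn{\vec{\I}_{M^m}}{\infty}{M^m}<\infty$. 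Since $K_h(\phi_{k\ast}TS^2)$ is pointwise bounded, the hypothesis \eqref{hyp0} transfers into uniform Willmore and area bounds for the Euclidean immersions $\iota\circ\phi_k:S^2\to \R^n$.

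Next, I would invoke the Euclidean compactness dichotomy for weak Willmore immersions of $S^2$ into $\R^n$ (see \cite{rivierecrelle, KuwertLi, mondinoriviere}). After reparametrising $\phi_k$ by bi-Lipschitz diffeomorphisms to reach conformal charts and, possibly, composing with positive Möbius transformations $f_k$, one obtains either $\diam_{\R^n}(\iota\circ\phi_k(S^2))\to 0$ (in which case the compactness of $\iota(M^m)\subset \R^n$ extracts a limit point $\bar x\in M^m$ in Hausdorff distance, establishing alternative (1)), or the existence of finitely many concentration points $\{a_1,\dots,a_N\}\subset S^2$ outside of which the new conformal factors $\tilde\lambda_k$ remain uniformly bounded; the corresponding \emph{lower} bound on $\tilde\lambda_k$ away from branch points is furnished by Hélein-type moving-frame / Müller--Šverák arguments applied to the Coulomb gauge of the generalised Gauss map $\n_{\vec\xi_k}$. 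In this second case, combining the $\tilde\lambda_k$ control with the uniform $\np{\vec{\I}_{\vec\xi_k}}{2}{K}$ bound (itself a consequence of the Willmore and area bounds via Gauss--Codazzi) yields uniform $\mathrm{W}^{2,2}(K)$ control of $\vec{\xi}_k$ on every compact $K\Subset S^2\setminus\{a_1,\dots,a_N\}$. A subsequence therefore converges weakly in $\mathrm{W}^{2,2}_{\loc}$ and strongly in $C^{0}_{\loc}$, and closedness of $M^m$ forces the limit $\vec{\xi}_\infty$ to be $M^m$-valued, hence a weak conformal immersion (possibly branched at the $a_j$).

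The lower semicontinuity in (ii) then follows from weak $\mathrm{L}^2$-lower semicontinuity of $\int_K|\H|^2\,d\vg$ on every compact $K\Subset S^2\setminus\{a_1,\dots,a_N\}$, coupled with the strong $\mathrm{L}^1_{\loc}$-convergence of the sectional curvature contribution (a bounded continuous function of $(\vec\xi_k,d\vec\xi_k)$ integrated against a strongly-converging area form), followed by the monotone exhaustion $K\nearrow S^2\setminus\{a_1,\dots,a_N\}$. For (iii), equality in this lower semicontinuity forces strong $\mathrm{W}^{2,2}_{\loc}$ convergence and rules out Willmore-energy concentration at the $a_j$, from which a removable-singularity argument shows that one can choose $\{a_j\}=\emptyset$; the converse is immediate from strong global convergence. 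The main technical obstacle I anticipate lies exactly in the lower bound on $\tilde\lambda_k$ in punctured neighbourhoods of the concentration points: without it, the limit could be a singular object rather than a branched weak immersion, and producing it in the curved ambient setting requires the conservation-law machinery for the Willmore equation \eqref{el1} that is developed later in the paper in Lemmas \ref{A1}--\ref{A2}.
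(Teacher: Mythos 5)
Your overall route coincides with the paper's: Theorem \ref{thm:compS2} is not reproved here but quoted from \cite{mondinoriviere} (after \cite{rivierecrelle, KuwertLi}), and that proof goes exactly as you propose — embed $(M^m,h)$ isometrically in $\R^n$ by Nash, transfer the bounds \eqref{hyp0} into uniform area and $\mathrm{L}^2$ Gauss-map bounds for $\iota\circ\phi_k$ (this is the computation \eqref{embedding} in the paper, up to your harmless constant in the comparison of mean curvatures), and then run the Euclidean weak-immersion compactness and good-gauge theory, with the dichotomy ``diameter tending to zero'' versus ``Möbius renormalisation plus finitely many concentration points''. Your treatment of the lower semicontinuity and of the equality case (iii) is the standard one and is fine as a sketch.

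The one point that needs correcting is your closing paragraph. The theorem concerns arbitrary weak immersions satisfying \eqref{hyp0}; no Euler--Lagrange equation is assumed, so its proof cannot, and need not, rely on the Willmore conservation-law machinery, and in particular not on Lemmas \ref{A1}--\ref{A2}: those lemmas presuppose the Willmore equation and an $\mathrm{L}^{2,1}$-smallness of the Christoffel terms (i.e.\ small area in the region considered), are developed later for the neck analysis in the quantization argument, and in any case produce solutions of perturbed $\bar{\partial}$-systems rather than conformal-factor bounds. Invoking them here would both restrict the statement to Willmore immersions (weaker than what is stated and used) and invert the logical order of the paper. The two-sided $\mathrm{L}^\infty_{\mathrm{loc}}$ control of $\tilde{\lambda}_k$ away from $\{a_1,\dots,a_N\}$ is exactly what the Coulomb-frame/Wente theory of H\'elein and M\"uller--\v{S}ver\'ak (see \cite{helein, muller, toro}) gives once the local Gauss-map energy is below a universal threshold, combined with the good-gauge M\"obius extraction of \cite{rivierecrelle, KuwertLi}; and since $\iota$ is isometric, the conformal factor of $\vec{\xi}_k$ equals that of $\iota\circ\vec{\xi}_k$, so the curved ambient space adds no difficulty at this step. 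Likewise, the extension of the limit across the points $a_j$ as a possibly branched weak immersion is the point-removability result for conformal $\mathrm{W}^{2,2}\cap\mathrm{W}^{1,\infty}$ immersions with finite total curvature, again independent of the Willmore equation. With that substitution your argument matches the cited proof.
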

   
\begin{rem}
Thanks to Simon's monotonicity formula \cite{simon}, the first case in Theorem \ref{thm:compS2} is equivalent to $\mathrm{Area} (\phi_k(S^2))\to 0$. 
\end{rem}

As a consequence of the $\varepsilon$-regularity Theorem \ref{eps_reg} that we will prove later in the paper, if $\phi_{k}$ are Willmore spheres, then \eqref{eq:weakConvCompactThm} can be improved to a $C^{l}_{\mathrm{loc}} (S^{2}\setminus \{a_{1}, \ldots, a_{N}\})$ convergence for every $l\in \N$. 
The goal of the paper is to perform a fine analysis of $\vec{\xi}_{k}$, including the conformal factor, area, and Willmore energy, around the points $a_{1}, \ldots, a_{N}$, under the assumption that $\vec{\xi}_{k}$ are Willmore immersions. In order to simplify the notation, throughout the paper we will assume that the $\phi_{k}$'s are already conformally parametrised and in the good gauge satisfying  $\displaystyle\sup_{k\in\N} \np{\lambda_{k}}{\infty}{K}<\infty$, for every compact subset  $K\subset S^{2}\setminus \ens{a_{1}, \cdots, a_{N}}$.

    \section{$\mathrm{L}^{2,1}$ estimates on the mean curvature in the neck region}

    The first part of the proof of the main theorem is to establish $\mathrm{L}^{2,1}$ estimates on the mean curvature in the neck region (see Theorem \ref{L21_necks}).  To this aim, in Subsection 
     \ref{SS:AreaQuantHarn} we prove:  a no-neck area property   (in \eqref{eq:NoNeckArea}; see \cite{mondinoriviereACV}), a Harnack-type inequality for the conformal factors (in \eqref{new_harnarck}; see \cite{quanta}),   an $\mathrm{L}^{p}$ quantization result for the conformal parameters for some $p>2$ (see \eqref{neck_lambdap}) and, finally,  uniform $\mathrm{L}^{p}$ estimates for the conformal parameters   for some $p>2$ (see \eqref{lambdak_p}). 
    
       These results are of independent interest as they hold in general for weak immersions, regardless whether they satisfy the Willmore equation or not.

    The heart of the proof of Theorem \ref{L21_necks} will be to establish refined estimates on the approximate conservation laws \cite{mondinoriviere} satisfied by Willmore immersions. This will be achieved in Subsection  \ref{SS:estConsLaw}.

    The crux will be to remove a $\log |z|$ term in one of the key estimates (more precisely, in \eqref{vk}).  This will take almost all of Subsection \ref{SSS:logz} and will be the most innovative and technical part of the paper, requiring the introduction of apparently new Lorentz-type function spaces.   Let us stress that such a $\log |z|$ term is due to the curved ambient space and therefore was not present in the proof of the energy quantization for Willmore surfaces in Euclidean spaces \cite{quanta}.
    \\We next pass to some preliminary considerations.

    Thanks to the hypothesis \eqref{hyp0} of the theorem, we have
    \begin{align}\label{assumption}
    	\Lambda=\sup_{k\in \N}\left(\mathrm{Area}(\phi_k(\Sigma))+W(\phi_k)\right)<\infty\,.
    \end{align}

    Combining  \eqref{second_form1} with the triangle inequality, 
    the Gauss equations and the conformal invariance of the Dirichlet energy, 
    we get:
    \begin{align}\label{embedding}
    	&\int_{\Sigma}|d\n_{\iota\circ\phi}|^2_gd\vg\leq 2\int_{\Sigma}|d\n_{\phi}|^2_gd\vg+2\int_{\Sigma}|d(\n_{\iota}\circ \phi)|_g^2d\vg
    	= 2\int_{\Sigma}|d\n_{\phi}|^2_gd\vg+2\int_{\phi(\Sigma)}|\vec{\I}_{M^m}|^2_hd\mathrm{vol}_h\nonumber\\
    	&\leq 2\int_{\Sigma}|d\n_{\phi}|^2_gd\vg+2C_0^2\,\mathrm{Area}(\phi)
    	=2\int_{\Sigma}\left(4|\H|^2-2K_g+2\,K_h(\phi_{\ast}T\Sigma)\right)d\vg+2C_0^2\,\mathrm{Area}(\phi)\nonumber\\
    	&\leq 8\,W(\phi)+2C_0^2\mathrm{Area}(\phi)+4\np{K_h}{\infty}{M^m}-8\pi\,\chi(\Sigma)\, . 
    \end{align}
    Since $M^m$ is a closed manifold, we deduce that the sectional curvature $K_h$ of the smooth metric $h$ on $M^m$ is bounded. 
    Therefore, 
    the combination of  \eqref{assumption} and \eqref{embedding} yields: 
    \begin{align}\label{hyp}
    	\sup_{k\in\N}\int_{\Sigma}|d\n_{\iota\circ\phi_k}|^2_gd\vg\leq (8+2C_0^2)\Lambda+4\np{K_h}{\infty}{M^m}-8\pi\,\chi(\Sigma)=\Lambda(h)<\infty\, .
    \end{align}
    This allows us to apply the bubble-neck decomposition of Bernard-Rivière \cite[Proposition III.$1$]{quanta},  and the other theorems of \cite[Sections III and IV]{quanta}, since they do not use the Euler-Lagrange equation of Willmore surfaces in $\R^n$ and work for any sequence of smooth immersions of bounded Willmore energy. 
    
    \begin{lemme}[Bernard-Rivière, Lemma V.$1$ of \cite{quanta}]\label{gauss_quant}
    	There exist constants $\epsilon_0(n),C_0(n)>0$ with the following property. If $0<\epsilon<\epsilon_0(n)$, $0<4r<R<\infty$, $\Omega=B_R\setminus\bar{B}_r(0)$, and $\phi:\Omega\rightarrow \R^n$ is a conformal weak immersion satisfying the conditions
    	\begin{align}\label{hyp_gauss}
    		\left\{\begin{alignedat}{1}
    			\np{\D\n}{2,\infty}{\Omega}&\leq \epsilon\\
    			\int_{\partial B_r(0)}|\D\n|d\mathscr{H}^1&\leq \epsilon\\
    			\int_{B_R\setminus\bar{B}_{\frac{R}{2}}(0)}|\D\n|^2dx+\int_{B_{2r}\setminus\bar{B}_r(0)}|\D\n|^2dx&\leq \epsilon,
    		\end{alignedat}\right.
    	\end{align}
        then 
        \begin{align*}
        	\left|\int_{\Omega}K_gd\vg\right|\leq C_0(n)\,\epsilon,
        \end{align*}
        where $g=\phi^{\ast}g_{\R^n}$. 
    \end{lemme}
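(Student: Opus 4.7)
The key structural observation is that the Gauss curvature $2$-form $K_{g}\,d\vg$ is essentially a Jacobian in $\n$: in codimension one it is literally the pullback $\n^{\ast}\omega_{S^{n-1}}$ of the sphere volume form, and more generally it has the same div-curl/Jacobian structure through the generalised Gauss map into the Grassmannian. This suggests converting the bulk integral into boundary contributions on $\partial B_{r}$ and $\partial B_{R}$ and exploiting cancellation through the three smallness hypotheses.

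Working in a conformal chart $g=e^{2\lambda}|dz|^{2}$, Liouville's equation $-\Delta\lambda=K_{g}\,e^{2\lambda}$ and integration by parts give
\begin{align*}
\int_{\Omega}K_{g}\,d\vg \;=\; 2\pi\bigl(r\,\bar\lambda'(r)-R\,\bar\lambda'(R)\bigr),
\end{align*}
where $\bar\lambda(\rho)$ denotes the angular average of $\lambda$ on $\partial B_{\rho}$. Equivalently, for the orthonormal tangent frame $\e_{i}=e^{-\lambda}\p{x_{i}}\phi$ a direct computation using conformality gives the connection $1$-form $\omega_{12}=\s{\e_{1}}{d\e_{2}}=\p{x_{2}}\lambda\,dx_{1}-\p{x_{1}}\lambda\,dx_{2}$, and hence $2\pi\rho\,\bar\lambda'(\rho)=\pm\int_{\partial B_{\rho}}\omega_{12}$. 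The task thus reduces to showing $|\int_{\partial B_{\rho}}\omega_{12}|\leq C\epsilon$ for both $\rho=r$ and $\rho=R$.

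For the inner boundary $\partial B_{r}$, on the collar $B_{2r}\setminus\bar{B}_{r}$ hypothesis (c) supplies the small-energy condition needed to apply a H\'elein-type moving-frame lemma, yielding a Coulomb gauge in which $\omega_{12}$ is pointwise controlled by $|\D\n|$. Combined with the line-integral hypothesis (b), this gives $|r\,\bar\lambda'(r)|\leq C\int_{\partial B_{r}}|\D\n|\,d\mathscr{H}^{1}\leq C\epsilon$. For the outer boundary $\partial B_{R}$, where no analogue of (b) is assumed, I would run a Fubini argument on the collar $B_{R}\setminus B_{R/2}$ (again invoking (c) for the Coulomb gauge) to select a ``good'' radius $\rho_{\ast}\in(R/2,R)$ on which $\int_{\partial B_{\rho_{\ast}}}|\D\n|\,d\mathscr{H}^{1}$ is small. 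Applying the same reduction to the sub-annulus $B_{R}\setminus B_{\rho_{\ast}}$ transfers the bound from $\rho_{\ast}$ to $R$ at the cost of an extra error $\bigl|\int_{B_{R}\setminus B_{\rho_{\ast}}}K_{g}\,d\vg\bigr|$, which is controlled by combining the collar $\mathrm{L}^{2}$ bound from (c) with the global $\mathrm{L}^{2,\infty}$ bound from (a).

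The main obstacle I anticipate is this last estimate: a naive Cauchy--Schwarz from the $\mathrm{L}^{2}$ collar bound alone would yield only an $O(\epsilon^{1/2})$ estimate, whereas the sharp $O(\epsilon)$ claim of the lemma requires fully exploiting the Jacobian / div-curl structure of $K_{g}\,d\vg$ together with the $\mathrm{L}^{2,1}$--$\mathrm{L}^{2,\infty}$ duality --- the compensated-compactness theme central to the Bernard-Rivi\`ere approach of \cite{quanta}, which the present paper extends to the Riemannian setting.
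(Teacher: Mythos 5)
Your reduction in the second paragraph contains a genuine error: it is \emph{not} true, under the hypotheses \eqref{hyp_gauss}, that each boundary term $\bigl|\int_{\partial B_{\rho}}\omega_{12}\bigr|=2\pi\rho\,|\bar\lambda'(\rho)|$ is individually of size $O(\epsilon)$, and no argument can make it so. Take $\phi(z)=(\Re (z^{2}),\Im (z^{2}),0,\dots,0)$ on $\Omega$: this is a conformal immersion whose image lies in a plane, so $\n$ is constant and all three hypotheses hold with $\epsilon$ arbitrarily small; yet $\lambda=\log(2|z|)$ gives $\int_{\partial B_{\rho}}\partial_{\nu}\lambda\,d\mathscr{H}^{1}=2\pi$ on every circle. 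What is small is only the \emph{difference} of the two boundary terms: in a neck the conformal factor behaves like $d\log|z|+A$ with $d$ in general non-zero, each term is close to $2\pi d$, and the content of the lemma is precisely this cancellation. This is also where your gauge argument breaks down: the coordinate-frame form $\omega_{12}=\star\,d\lambda$ and the Coulomb-frame form differ by $d\theta$ for the rotation angle between the frames, so their circle integrals differ by $2\pi\mathbb{Z}$ (a winding number, equal to $1$ in the example above); moreover the Coulomb connection is controlled by $\D\n$ only in integral (Wente/Lorentz) norms, not pointwise, so hypothesis (b) cannot yield $|r\,\bar\lambda'(r)|\leq C\epsilon$. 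The correct mechanism — which is how Bernard--Rivi\`ere prove it in \cite{quanta}, and which is reflected in this paper by estimate \eqref{I2} of Theorem \ref{neckfine} — is to show that $\frac{1}{2\pi}\int_{\partial B_{\rho}}\partial_{\nu}\lambda$ at both ends deviates from one and the same constant $d$ (coming from the $\log$-part/harmonic part of $\lambda$, whose flux is $\rho$-independent, plus the common winding) by errors controlled through the Coulomb-gauge Wente structure by the quantities in (a), (b), (c); the constant then cancels in the difference and only the $O(\epsilon)$ errors survive.

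Two further points. First, the paper does not reprove this statement: it is imported verbatim from \cite[Lemma V.$1$]{quanta}, so there is no internal proof to match, but any proof must contain the cancellation of the common constant $d$ described above, which is the missing idea in your sketch. Second, the defect you yourself flag at the end (a good-slice/Fubini selection only gives $O(\sqrt{\epsilon})$) is real but secondary: the linear dependence on $\epsilon$ in \cite{quanta} comes from the Jacobian structure of $K_{g}e^{2\lambda}$ together with the $\mathrm{L}^{2,1}$--$\mathrm{L}^{2,\infty}$ duality in the Wente estimates, exactly as you guess, but this refinement cannot repair the reduction to individual smallness of the boundary integrals, which is false.
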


    By the Theorema Egregium of Gauss, since $\iota: (M^m,h)\rightarrow (\iota(M^m),g_{\R^n})$ is an isometry, we deduce that we can apply this result to $\phi_k$. Hence, if $\phi_k$ is parametrising a neck-region  $\Omega=B_R\setminus\bar{B}_r(0)$ and satisfies the hypothesis of Lemma \ref{gauss_quant}, we deduce that
    \begin{align*}
    	\left|\int_{\Omega}K_{g_k}d\mathrm{vol}_{g_k}\right|\leq C_0(n)\epsilon.
    \end{align*}
    Then, the other results of \cite[Section V]{quanta} where no Euler-Lagrange equation is used can be applied identically to $\{\iota \circ\phi_k\}_{k\in\N}$. The Liouville equation
    \begin{align*}
    	-\Delta \lambda_k=e^{2\lambda_k}K_{g_k}
    \end{align*}
    implies by the Adams-Morrey embedding (see \cite{rivnotes}) that 
    \begin{equation}\label{eq:estL2inftlak}
    	\sup_{k\in \N}\np{\D\lambda_k}{2,\infty}{B(0,1)}\leq C. 
    \end{equation}

    Now, we will describe the differences from  \cite[Section VI]{quanta} onwards.

   In \cite[Section VI]{quanta}, the conservative form of the Willmore equation for immersions in $\R^{n}$ discovered in \cite{riviere1} plays a fundamental role. In order to extend such analysis to the curved ambient setting, we will use the Euler-Lagrange equation in conservative form obtained in \cite{mondinoriviere} for immersions into Riemannian manifolds. However, this requires to prove that the area in neck regions is small enough (in fact, we need a slightly stronger statement which will follow from this bound thanks to a Harnack inequality; see  Lemma \ref{A1}).

   The next theorem is the main result of this section.

     \begin{theorem}\label{L21_necks}
    	Let $\ens{r_k}_{k\in\N},\ens{R_k}_{k\in\N}\subset (0,\infty)$ be such that $r_k\conv{k\rightarrow\infty}0$ and $R_k\conv{k\rightarrow \infty}R\in (0,\infty)$.  Let  $(M^m,h)\subset \R^n$ be a closed Riemannian manifold that we assume isometrically embedded in $\R^n$.  For any $0<\alpha\leq 1$, define the subset $\Omega_k(\alpha)=B_{\alpha R_k}\setminus\bar{B}_{\alpha^{-1}r_k}(0)\subset B(0,R_k)$. There exists constants $\epsilon_0=\epsilon_0(n,h),\alpha_0=\alpha_0(n,h)>0$ with the following property. Let $\{\phi_k\}_{k\in\N}\subset C^{\infty}(B(0,R_k),M^m)$ be a sequence of Willmore disks satisfying:
    	\begin{align*}
    		&\Lambda =\sup_{k\in\N}\left(\np{\D\lambda_k}{2,\infty}{B(0,R_k)}+\mathrm{Area}(\phi_k(B(0,R_k)))+\int_{B(0,R_k)}|\D\n_k|^2dx\right)<\infty \\
        	&\sup_{s\in \left[r_k,\frac{R_k}{2}\right]}\int_{B_{2s}\setminus\bar{B}_s(0)}|\D\n_k|^2dx\leq \epsilon_0 \, .
        \end{align*}
        Then we have
        \begin{align*}
        	\np{e^{\lambda_k}\H_k}{2,1}{\Omega_k(\alpha_0)}\leq C_0(n,h,\Lambda). 
        \end{align*}
    \end{theorem}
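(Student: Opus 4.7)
My plan is to adapt Bernard-Rivière's Euclidean strategy, the main new difficulty being that in the curved ambient space one only has perturbed (rather than exact) conservation laws, which are complex-valued and of non-pure Jacobian type, and that these generate a spurious logarithmic factor that must be eliminated. First, I would invoke the preparatory results of Subsection \ref{SS:AreaQuantHarn} (all valid for general weak immersions): the no-neck area property, the Harnack-type inequality for the conformal factors $\lambda_k$, the $\mathrm{L}^p$ quantization of $\lambda_k$ for some $p>2$, and a uniform $\mathrm{L}^p$ bound. Combined with the dyadic smallness hypothesis $\int_{B_{2s}\setminus\bar{B}_s}|\D\n_k|^2 \leq \varepsilon_0$ and the $\mathrm{L}^{2,\infty}$ bound on $\D\lambda_k$ from \eqref{eq:estL2inftlak}, these force all the relevant geometric quantities on $\Omega_k(\alpha_0)$ to be arbitrarily small once $\alpha_0=\alpha_0(n,h)$ is chosen small enough.

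With the area on $\Omega_k(\alpha_0)$ sufficiently small, I would then apply the perturbed conservation laws of Mondino-Rivière (Lemmas \ref{A1}, \ref{A2}) to rewrite the Willmore equation \eqref{el1} in divergence form, producing complex-valued potentials $\vec{S}_k, \vec{R}_k$ satisfying second-order Jacobian-type systems with non-pure Jacobian right-hand sides due to the non-vanishing Christoffel symbols of $(M^m,h)$. I would then extract a priori $\mathrm{L}^{2,\infty}$ bounds on $\vec{S}_k$, $\vec{R}_k$ and $e^{\lambda_k}\H_k$ via standard Calder\'on-Zygmund/Adams-Morrey estimates applied to this system. Crucially, unlike the Euclidean case where these estimates can be derived a posteriori, the curvature terms mix the components and force a priori control throughout; moreover the natural bound one produces at this stage carries a spurious weight of $\log|z|$ coming precisely from the non-pure Jacobian terms (see \eqref{vk}).

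The main obstacle, and the most innovative step, is to remove this logarithmic factor. To this end, I would introduce the generalised Lorentz spaces $\mathrm{L}^{2,\infty}_{\log^\beta}$ for $0\leq \beta\leq 1$ (with $\beta=0$ recovering $\mathrm{L}^{2,\infty}$) and prove a refined $\varepsilon$-regularity for holomorphic (and more generally harmonic) maps: a bound in $\mathrm{L}^{2,\infty}_{\log^\beta}$ locally upgrades to a $\mathrm{W}^{1,1}\cap \mathrm{L}^{2,1}$ estimate (Lemma \ref{lemme_holomorphe1}). Plugging this sharpened bootstrap back into the conservation-law system, the logarithmic weight is absorbed and one reaches the pointwise $\mathrm{L}^{2,\infty}$ bound on $e^{\lambda_k}\H_k$ over $\Omega_k(\alpha_0)$ asserted in Theorem \ref{thm:L2infLW}.

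Finally, the desired $\mathrm{L}^{2,1}$ conclusion follows by combining this sharp $\mathrm{L}^{2,\infty}$ bound on $e^{\lambda_k}\H_k$ with the matching $\mathrm{W}^{1,1}\cap \mathrm{L}^{2,1}$ control on one of the companion potentials produced by the same conservation-law system, and then applying the Lorentz duality \eqref{duality} on $\Omega_k(\alpha_0)$. At the compensated-compactness level, the underlying mechanism producing $\mathrm{L}^{2,1}$ rather than mere $\mathrm{L}^2$ integrability is the Coifman-Lions-Meyer-Semmes Hardy-space improvement for Jacobians, applied (after the log-weight has been removed) to the right-hand sides of the perturbed conservation laws.
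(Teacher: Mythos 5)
Your proposal matches the paper's strategy in its main outlines: the preparatory area/Harnack/$\mathrm{L}^p$ estimates for the conformal factor, the use of the Mondino--Rivi\`ere perturbed conservation laws (Lemmas \ref{A1}, \ref{A2}) once the area on $\Omega_k(\alpha_0)$ has been made small, the appearance of a spurious $\log|z|$ weight in the a priori bound on $\vec{V}_k$, and its removal through the generalised Lorentz spaces $\mathrm{L}^{2,\infty}_{\log^{\beta}}$ via the sharpened holomorphic $\varepsilon$-regularity of Lemma \ref{lemme_holomorphe1}, reaching the pointwise $\mathrm{L}^{2,\infty}$ bound of Theorem \ref{thm:L2infLW}.

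Your final step, however, contains a real gap. You claim the $\mathrm{L}^{2,1}$ bound on $e^{\lambda_k}\H_k$ follows by combining its $\mathrm{L}^{2,\infty}$ bound with a $\mathrm{W}^{1,1}\cap\mathrm{L}^{2,1}$ bound on companion potentials and then ``applying the Lorentz duality \eqref{duality}''. That duality only yields an $\mathrm{L}^1$ bound on the product of an $\mathrm{L}^{2,1}$ function with an $\mathrm{L}^{2,\infty}$ function; it cannot upgrade a function from $\mathrm{L}^{2,\infty}$ to $\mathrm{L}^{2,1}$. In the paper, \eqref{duality} is only invoked later, in the proof of Theorem \ref{thm:MainThm}, to convert the combined $\mathrm{L}^{2,1}\times\mathrm{L}^{2,\infty}$ neck bounds into the no-neck energy \eqref{eq:no-neck-energy}; it does not appear in the proof of Theorem \ref{L21_necks} at all. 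The actual conclusion of the proof goes in three steps: (i) the pointwise bound $e^{\lambda_k(z)}|\vec{V}_k(z)|\leq C/|z|$ of Theorem \ref{thm:L2infLW} is fed into an averaging argument (Lemma \ref{averaging_lemma}, Lemma \ref{newl2estimate}, and the Sobolev inequality Lemma \ref{sobolev11}) to establish uniform $\mathrm{L}^\infty$ bounds on $\Re(S_k)$ and $\Re(\vec{R}_k)$; (ii) these $\mathrm{L}^\infty$ bounds allow one to apply the Wente/Hardy-type estimate of Lemma \ref{hardy} to the Jacobian system satisfied by $\Re(S_k)$ and $\Re(\vec{R}_k)$, producing $\np{\D S_k}{2,1}{\Omega_k(\alpha_0/2)}+\np{\D\vec{R}_k}{2,1}{\Omega_k(\alpha_0/2)}\leq C$; (iii) the $\mathrm{L}^{2,1}$ bound on $e^{\lambda_k}\H_k$ then follows from the pointwise algebraic identity expressing $e^{\lambda_k}\H_k$ as a bounded combination of $\D\vec{R}_k$, $\D S_k$, $e^{\lambda_k}\Im(\vec{L}_k)$ and $e^{-\lambda_k}\p{\z}\phi_k$. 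You correctly single out the Coifman--Lions--Meyer--Semmes improvement as the underlying mechanism, but it acts on the potentials $\D S_k$, $\D\vec{R}_k$ after the $\mathrm{L}^\infty$ step, not on $e^{\lambda_k}\H_k$ directly via duality, and the $\mathrm{L}^\infty$ bootstrap is a genuine additional hurdle that your proposal skips over.
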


    The rest of the section will be devoted to the proof of  Theorem \ref{L21_necks}. This will require to estabish several  results of independent interest.  
           \subsection{ $\mathrm{L}^{p}$ bounds and quantization for the conformal parameters}\label{SS:AreaQuantHarn}
           
           The goal  of this section is four-fold: we prove a  no-neck area property  (see \eqref{eq:NoNeckArea}),  establish a  Harnack-type inequality for the conformal factors (see \eqref{new_harnarck}), we prove an $\mathrm{L}^{p}$ quantization result for the conformal parameters for some $p>2$ (see \eqref{neck_lambdap}), and  establish   uniform $\mathrm{L}^{p}$ estimates for the conformal parameters   for some $p>2$ (see \eqref{lambdak_p}).

    Let us first recall the following lemma from \cite{pointwise},  slightly generalising  \cite[Lemma IV.$1$]{quanta}.
     \begin{theorem}\label{neckfine}
    	There exists a positive real number $\epsilon_1=\epsilon_1(n)$ with the following property. Let $0<2^6r<R<\infty$ be fixed radii and $\phi:\Omega=B_R\setminus\bar{B}_r(0)\rightarrow \R^n$ be a weak immersion of finite total curvature such that 
    	\begin{align}\label{I0}
    		\np{\D\n}{2,\infty}{\Omega}\leq \epsilon_1(n).
    	\end{align}
    	For all $\left(\dfrac{r}{R}\right)^{\frac{1}{2}}<\alpha<1$, define $\Omega({\alpha})=B_{\alpha R} \setminus  \bar{B}_{\alpha^{-1}r}(0)$. Then there exists a universal constant $C_1=C_1(n)$ and $d\in \R$ {\rm (}depending on $r,R,\phi$ but not on $\alpha${\rm )} such that for all $\left(\dfrac{r}{R}\right)^{\frac{1}{3}}<\alpha<\dfrac{1}{4}$, we have
    	\begin{align}\label{I1}
    		\np{\D(\lambda-d\log|z|)}{2,1}{\Omega(\alpha)}\leq C_1\left(\sqrt{\alpha}\np{\D\lambda}{2,\infty}{\Omega}+\int_{\Omega}|\D\n|^2dx\right)
    	\end{align} 
    	and for all $r\leq \rho<R$, we have
    	\begin{align}\label{I2}
    		\left|d-\frac{1}{2\pi}\int_{\partial B_{\rho}}\partial_{\nu}\lambda \,d\mathscr{H}^1\right|\leq C_1\left(\int_{B_{\max\ens{\rho,2r}}\setminus \bar{B}_r(0)}|\D\n|^2dx+\frac{1}{\log\left(\frac{R}{\rho}\right)}\int_{\Omega}|\D\n|^2dx\right) \, .
    	\end{align}
    	In particular, there exists a universal constant $C_1'=C'_1(n)$ with the following property: for all $\left(\dfrac{r}{R}\right)^{\frac{1}{3}}<\alpha<\dfrac{1}{4}$, there exists $A_{\alpha}\in \R$ such that
    	\begin{align}\label{I3}
    		\np{\lambda-d\log|z|-A_{\alpha}}{\infty}{\Omega({\alpha})}\leq C_1'\left(\sqrt{\alpha}\np{\D\lambda}{2,\infty}{\Omega}+\int_{\Omega}|\D\n|^2dx\right).
    	\end{align}    	
    \end{theorem}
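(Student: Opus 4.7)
The plan is to decompose the conformal factor on the annulus in the form $\lambda = d\log|z| + \nu$, where the coefficient $d$ is determined by the mean flux of $\lambda$ across circles, and then to control $\nabla \nu$ in a very sharp way by using the Liouville equation together with the smallness of the Gauss curvature on a dyadic decomposition of the annulus. The starting point is the Liouville equation $-\Delta \lambda = K_g\, e^{2\lambda}$, whose right-hand side (a Jacobian-type quantity controlled by $|\D \n|^2$ through the Gauss equations) is well adapted to integration by compensation. First I would prove \eqref{I2} by integrating $-\Delta\lambda = K_g\, d\vg$ over $B_\rho \setminus B_r(0)$ and $B_\rho\setminus B_{2r}(0)$: Stokes' theorem yields
\begin{align*}
\int_{\partial B_\rho} \partial_\nu \lambda\, d\hh^1 - \int_{\partial B_r} \partial_\nu \lambda\, d\hh^1 = -\int_{B_\rho\setminus B_r(0)} K_g\, d\vg,
\end{align*}
while Lemma \ref{gauss_quant} (applied to the relevant sub-annulus, whose hypotheses are guaranteed by \eqref{I0} together with suitable mean-value choices) quantizes the right-hand side, up to an unavoidable error $\log(R/\rho)^{-1}\int_\Omega|\D\n|^2dx$ coming from the need to average over a band of radii in order to use Fubini to find a good inner circle where $\int_{\partial B_s}|\D \n|\, d\hh^1$ is small. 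This delivers the Cauchy-type estimate \eqref{I2} and fixes $d$ as the common limit of the fluxes.

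Next I would prove \eqref{I1}. Setting $\nu = \lambda - d\log|z|$, one has $-\Delta \nu = K_g\, e^{2\lambda}$ on $\Omega$, and by construction $\nu$ has zero mean flux on the circles $\partial B_\rho$, so it admits a global harmonic conjugate $\tilde \nu$ on $\Omega$ in the sense that $\nabla \nu = \nabla^\perp \tilde\nu + \nabla \psi$ where $\psi$ solves a Dirichlet-type problem with data $-K_g\,d\vg$. The holomorphic component $\partial_z(\nu + i\tilde \nu)$ is controlled by Wente/Lorentz-duality estimates: one splits $\Omega(\alpha)$ into dyadic annuli $A_j = B_{2^{j+1}s_j}\setminus B_{2^j s_j}$ and on each $A_j$ uses the duality
\begin{align*}
\left|\int_{A_j} K_g\, d\vg\right|\leq C\, \np{\D \n}{2,\infty}{A_j}\np{\D \n}{2,1}{A_j}
\end{align*}
combined with the Gauss-curvature quantization of Lemma \ref{gauss_quant} applied to the tails of $\Omega$. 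Summing over dyadic annuli inside $\Omega(\alpha)$ produces a geometric series whose endpoint picks up the factor $\sqrt{\alpha}$ in front of $\np{\D\lambda}{2,\infty}{\Omega}$, while the remainder $\int_\Omega |\D\n|^2 dx$ absorbs the Jacobian integral.

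Finally, \eqref{I3} follows from \eqref{I1} by the Sobolev--Lorentz embedding of $\mathrm{W}^{1,(2,1)}$ into $C^0$ on the planar annulus $\Omega(\alpha)$: once $\D(\lambda - d\log|z|) \in \mathrm{L}^{2,1}$ with the stated norm bound, the function $\lambda - d\log|z|$ is continuous and, modulo a constant $A_\alpha$ (its average on some circle in $\Omega(\alpha)$), is pointwise controlled by its $\mathrm{L}^{2,1}$-gradient norm, which is exactly \eqref{I3}. The principal obstacle in this argument is the dyadic summation producing the $\sqrt{\alpha}$ rate: one must carefully balance the $\mathrm{L}^{2,\infty}$ smallness of $\D\lambda$ (which only gives a scale-invariant bound on each dyadic annulus) against the absolute smallness of $\int_\Omega|\D \n|^2dx$ so that no logarithmic factor in $R/r$ accumulates when passing from the scale-invariant dyadic estimates to the global bound on $\Omega(\alpha)$. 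Choosing the cutoff $\alpha > (r/R)^{1/3}$ is precisely what allows one to absorb the inner annulus contributions without incurring such a logarithm.
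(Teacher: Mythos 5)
Your overall architecture (Liouville equation, Jacobian structure of $e^{2\lambda}K_g$, Stokes plus Lemma \ref{gauss_quant} for \eqref{I2}, and the embedding of $\mathrm{L}^{2,1}$ gradients into oscillation bounds for \eqref{I3}) matches the proof this theorem is quoted from (the paper does not reprove it; it recalls \cite{pointwise}, which follows Bernard--Rivi\`ere's Lemma IV.1): there one writes $\lambda=\mu+v$, where $\Delta\mu=-e^{2\lambda}K_g$ is solved on the disk using the Jacobian/Wente structure so that $\np{\D\mu}{2,1}{B_R}\leq C\int_{\Omega}|\D\n|^2dx$, and $v=\lambda-\mu$ is harmonic on $\Omega$. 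However, your prescription of $d$ is a genuine gap: ``the common limit of the fluxes'' is not well defined, since the fluxes $\frac{1}{2\pi}\int_{\partial B_\rho}\partial_\nu\lambda\,d\mathscr{H}^1$ at different radii differ by curvature integrals of size up to $\epsilon_1$ and converge to nothing. The constant $d$ must be the coefficient of $\log|z|$ in the Laurent expansion of the harmonic part $v$ (equivalently its $\rho$-independent flux). This is not cosmetic: an error $\delta$ in $d$ costs $\delta\,\np{\D\log|z|}{2,1}{\Omega(\alpha)}\sim \delta\log\left(\frac{\alpha^2R}{r}\right)$ in \eqref{I1}, which is not controlled by the right-hand side uniformly in $r/R$, so taking $d$ to be any flux of $\lambda$ itself (all of which deviate from the harmonic flux by $O(\epsilon_1)$) would break \eqref{I1}. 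Relatedly, $\nu=\lambda-d\log|z|$ is not harmonic and has no harmonic conjugate until you have performed exactly this $\mu+v$ splitting.

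The mechanism you propose for \eqref{I1} also fails as stated. Bounding $\left|\int_{A_j}K_g\,d\vg\right|$ on dyadic annuli gives scalar (flux) information, not an $\mathrm{L}^{2,1}$ bound on $\D(\lambda-d\log|z|)$; moreover the duality $\np{\D\n}{2,\infty}{A_j}\np{\D\n}{2,1}{A_j}$ is unusable here because no a priori $\mathrm{L}^{2,1}$ control of $\D\n$ is available -- obtaining such control in necks is precisely the content of Theorem \ref{L21_necks} and uses the Willmore equation, whereas Theorem \ref{neckfine} must hold for arbitrary weak immersions. Most importantly, the $\sqrt{\alpha}$ does not come from summing curvature contributions: those enter, with no $\alpha$-gain, the term $\int_{\Omega}|\D\n|^2dx$ through the improved Wente estimate for $\mu$. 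The factor $\sqrt{\alpha}\np{\D\lambda}{2,\infty}{\Omega}$ comes from the harmonic component: after removing the $d\log|z|$ mode, the remaining Laurent modes of $v$ decay geometrically away from $\partial\Omega$, and summing their $\mathrm{L}^{2,1}(\Omega(\alpha))$ norms against the $\mathrm{L}^{2,\infty}(\Omega)$ bound yields $\sqrt{\alpha}$ (this is also where the restriction $(r/R)^{1/3}<\alpha<1/4$ is used). This harmonic-decay step is the heart of \eqref{I1} and is absent from your sketch; by contrast, your treatment of \eqref{I2} (integrating the Liouville equation and invoking Lemma \ref{gauss_quant} with a mean-value choice of good circles, which is where the $1/\log(R/\rho)$ error originates) and of \eqref{I3} is essentially the correct argument.
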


    Applying Theorem \ref{neckfine} to $\{\phi_k\}_{k\in\N}$ (that we see from now as a map $\phi_k:\Sigma\rightarrow \R^n$ such that $\phi_k(\Sigma)\subset M^m$ for all $k\in\N$), we deduce that in a neck region $\Omega_k(\alpha)=B(0,\alpha R_k)\setminus\bar{B}(0,\alpha^{-1}r_k)$ (where $\displaystyle\limsup_{k\rightarrow \infty}R_k<\infty$), there exists large enough $d_k\in \R$ and $A_k\in \R$ such that 
    \begin{align*}
    	\np{\lambda_k-d_k\,\log|z|-A_k}{\infty}{\Omega_k(\alpha)}\leq C_1'\left(\sqrt{\alpha}\np{\D\lambda_k}{2,\infty}{\Omega_k(\alpha)}+\int_{\Omega_k(1)}|\D\n_k|^2dx\right)\leq C_1''<\infty
	    \end{align*}
    thanks to \eqref{hyp} and \eqref{eq:estL2inftlak}. We deduce that:
    \begin{align}\label{harnack1}
    	e^{-2C_1''}e^{2A_k}|z|^{2d_k}\leq e^{2\lambda_k}\leq e^{2C_1''}e^{2A_k}|z|^{2d_k}, \quad \text{for all $z\in \Omega_k(\alpha)$}.
    \end{align}
    Thanks to \eqref{hyp}, we deduce that 
    \begin{align}\label{hyp4}
    	\sup_{k\in\N}\int_{\Omega_k(\alpha)}e^{2A_k}|z|^{2d_k}|dz|^2\leq \sup_{k\in\N}e^{2C_1''}\mathrm{Area}(\phi_k)\leq e^{2C_1''}\Lambda<\infty. 
    \end{align}
    Now, by the $\epsilon$-regularity Theorem \ref{eps_reg} to be proven below, 
     we deduce that $\phi_k\conv{k\rightarrow \infty}\phi_{\infty}$ in $C^{l}_{\mathrm{loc}}(B(0,1)\setminus\ens{0})$ for all $l\in\N$, where $\phi_{\infty}:B(0,1)\rightarrow \R^n$ is a branched immersion having at most a branch point at $0$. Therefore, by \cite{riviere1} (see also \cite{quanta}), there exist an integer $\theta_0\geq 1$ and $\vec{A}_0\in\C^n\setminus\ens{0}$ such that 
    \begin{align*}
    	\p{z}\phi_{\infty}=\vec{A}_0z^{\theta_0-1}+o(|z|^{\theta_0-1})\, .
    \end{align*}
    We also let $\beta_0>0$ such that 
    \begin{align*}
    	e^{2\lambda_{\infty}}=2|\p{z}\phi_{\infty}|^2=2|\vec{A}_0|^2|z|^{2\theta_0-2}(1+o(1))=\beta_0^2|z|^{2\theta_0-2}(1+o(1)). 
    \end{align*}
    Now, applying \eqref{I2} to $d_k$ and $\rho=\alpha\frac{R}{2}$, we deduce that 
    \begin{align*}
    	\limsup_{k\rightarrow \infty}\left|d_k-\frac{1}{2\pi}\int_{\partial B_{\alpha \frac{R}{2}}}\partial_{\nu}\lambda_k\,d\mathscr{H}^1\right|\leq \Gamma_2<\infty.
    \end{align*}
    By the strong convergence, it follows that 
    \begin{align*}
    	\frac{1}{2\pi}\int_{\partial B_{\alpha \frac{R}{2}}}\partial_{\nu}\lambda_k\,d\mathscr{H}^1\conv{k\rightarrow \infty}\frac{1}{2\pi}\int_{\partial B_{\alpha \frac{R}{2}}}\partial_{\nu}\lambda_{\infty}d\mathscr{H}^1=\theta_0-1+O(\alpha R)
    \end{align*}
    and we deduce that $\ens{d_k}_{k\in\N}\subset \R$ is a bounded sequence. Therefore, we can assume up to a subsequence that $d_k\conv{k\rightarrow \infty}d\in\R$.

    \begin{lemme}\label{lem:d>-1}
     $d>-1$.
    \end{lemme}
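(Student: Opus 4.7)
The strategy is a contradiction argument based on the area bound. If $d\leq -1$, the pointwise equivalence $e^{2\lambda_{k}(z)}\asymp e^{2A_{k}}|z|^{2d_{k}}$ provided by the Harnack-type inequality \eqref{harnack1} would force the pulled-back area of the neck $\Omega_{k}(\alpha)$ to diverge, contradicting the uniform bound $\Lambda$ in \eqref{assumption}.

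The first step is to control $e^{2A_{k}}$ from below by a positive constant, uniformly in $k$. Fix $z_{0}\in \C$ with $0<|z_{0}|<\alpha R$; because $r_{k}\to 0$ and $R_{k}\to R$, we have $z_{0}\in \Omega_{k}(\alpha)$ for every $k$ sufficiently large. Since the only possible branch point of $\phi_{\infty}$ on $B(0,1)$ sits at the origin, one has $e^{2\lambda_{\infty}(z_{0})}>0$. The $\epsilon$-regularity Theorem \ref{eps_reg} together with Theorem \ref{thm:compS2} yield $\phi_{k}\to \phi_{\infty}$ in $C^{l}_{\mathrm{loc}}(B(0,1)\setminus\{0\})$, hence $e^{2\lambda_{k}(z_{0})}\to e^{2\lambda_{\infty}(z_{0})}>0$. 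Combined with \eqref{harnack1} and the already established boundedness of $\{d_{k}\}$, this gives $\inf_{k}e^{2A_{k}}>0$. Plugging this lower bound into \eqref{hyp4}, I obtain $\sup_{k}\int_{\Omega_{k}(\alpha)}|z|^{2d_{k}}\,|dz|^{2}<\infty$.

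It then remains to show that the integral
\begin{equation*}
I_{k}\, :=\, \int_{\Omega_{k}(\alpha)}|z|^{2d_{k}}\,|dz|^{2}\, =\, 2\pi\int_{\alpha^{-1}r_{k}}^{\alpha R_{k}}\rho^{2d_{k}+1}\,d\rho
\end{equation*}
diverges as $k\to\infty$ whenever $d\leq -1$, which will contradict the previous bound. If $d<-1$, then for $k$ large $2d_{k}+2<0$ is bounded away from zero, and the explicit primitive gives $I_{k}=\tfrac{2\pi}{2d_{k}+2}\bigl[(\alpha R_{k})^{2d_{k}+2}-(\alpha^{-1}r_{k})^{2d_{k}+2}\bigr]\to +\infty$, because $(\alpha^{-1}r_{k})^{2d_{k}+2}\to +\infty$ as $r_{k}\to 0$. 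The borderline case $d=-1$ is more subtle: $d_{k}\to -1$ may approach from either side, and both numerator and denominator of the explicit expression can simultaneously vanish. I would handle it by Taylor-expanding $x\mapsto x^{2(d_{k}+1)}$ around $d_{k}=-1$ in each of the three sub-cases ($d_{k}=-1$, $d_{k}\to -1^{-}$, $d_{k}\to -1^{+}$) to show that in every scenario $I_{k}\geq \log(\alpha^{2}R_{k}/r_{k})+o(1)\to +\infty$. Recovering this logarithmic divergence in the $0/0$ indeterminate sub-case is the main technical point of the argument.
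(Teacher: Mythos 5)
Your overall strategy coincides with the paper's: use the Harnack estimate \eqref{harnack1} to reduce the area bound to the finiteness of $I_k=\int_{\Omega_k(\alpha)}|z|^{2d_k}|dz|^2$, and derive a contradiction by showing $I_k\to\infty$ when $d\leq -1$. The preparatory step, i.e.\ the lower bound on $e^{2A_k}$, is also the same idea (strong convergence to $\phi_\infty$ at a fixed radius pins down $A_k$ up to bounded error; the paper does this via the explicit expansion $e^{2\lambda_\infty}=\beta_0^2|z|^{2\theta_0-2}(1+o(1))$, but the content is identical).

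The gap is in your treatment of the borderline case $d=-1$. The inequality you announce, namely $I_k\geq \log(\alpha^2 R_k/r_k)+o(1)$, is \emph{false} in the sub-case $d_k\to -1^+$. Writing $\delta_k=d_k+1>0$, $a_k=\alpha^{-1}r_k$, $b_k=\alpha R_k$, convexity of the exponential gives $e^{2\delta_k\log a_k}\geq 1+2\delta_k\log a_k$, and since one divides by $2\delta_k>0$ this yields
\begin{equation*}
\frac{I_k}{2\pi}=\frac{b_k^{2\delta_k}-a_k^{2\delta_k}}{2\delta_k}\leq \log\left(\frac{b_k}{a_k}\right)+o(1),
\end{equation*}
the opposite of what you claim. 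The discrepancy can be large: for instance $r_k=e^{-k}$, $d_k=-1+k^{-1/2}$ gives $I_k\sim \pi\sqrt{k}$ whereas $\log(\alpha^2 R_k/r_k)\sim k$. So a Taylor expansion around $d_k=-1$ alone will not deliver the asserted lower bound, and the divergence of $I_k$ in the sub-case $d_k>-1$ has to be obtained differently. The paper's route is cleaner and avoids this pitfall: from the boundedness of $\frac{1}{d_k+1}\bigl(1-r_k^{2(d_k+1)}\bigr)$ it deduces (writing $d_k=-1+\epsilon_k$, $\epsilon_k>0$) that $r_k^{2\epsilon_k}\geq 1-C\epsilon_k$, hence $\log r_k\geq \tfrac{1}{2\epsilon_k}\log(1-C\epsilon_k)\geq -C$, which contradicts $r_k\to 0$ directly. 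An alternative repair of your approach: show that $I_k/2\pi\geq \frac{1-e^{-2\delta_k\log(1/a_k)}}{2\delta_k}-O(1)$ and observe that this quantity tends to infinity whenever $\delta_k\to 0$ and $\log(1/a_k)\to\infty$, by distinguishing whether $\delta_k\log(1/a_k)$ stays bounded (then it behaves like $\log(1/a_k)$) or blows up (then it is $\gtrsim 1/\delta_k$). Either way, the key point you missed is that in the sub-case $d_k>-1$ the divergence rate can be strictly slower than $\log(1/r_k)$, and your announced bound overstates it.
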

    \begin{proof}
    Define
    \begin{align*}
    	\underline{A}&=\liminf_{k\rightarrow \infty}A_k\in\R\cup\ens{-\infty,\infty}\\
    	\bar{A}&=\limsup_{k\rightarrow \infty}A_k\in\R\cup \ens{-\infty,\infty}.
    \end{align*}
    By the strong convergence, assuming without loss of generality that $R_k\conv{k\rightarrow \infty}R$ for some $R>0$, we deduce that for all $|z|=\alpha R$, we have
    \begin{align*}
    	e^{-2C_1''}e^{2\underline{A}}|z|^{2d}\leq \beta_0^2|z|^{2\theta_0-2}(1+o(1))\leq e^{2C_1''}e^{{A}}|z|^{2d}.
    \end{align*}
    It follows that 
    \begin{align*}
     \left\{\begin{alignedat}{1}
    	-\infty&<\underline{A}\leq \bar{A}<\infty\\
    	 e^{2\bar{A}}&\geq \beta_0^2e^{-2C_1''}(\alpha R)^{2\theta_0-2(d+1)}(1+o(1))\geq \frac{2}{3}\beta_0^2e^{-2C_1''}(\alpha R)^{2\theta_0-2(d+1)}\\
    	 e^{2\bar{A}}&\leq \frac{3}{2}\beta_0^2e^{2C_1''}(\alpha R)^{2\theta_0-2(d+1)}.
         \end{alignedat}\right.
    \end{align*}
    Now, if $d_k\neq -1$, we have
    \begin{align}\label{control_conf}
    	&\int_{\Omega_k(\alpha)}e^{2A_k}|z|^{2d_k}|dz|^2=2\pi e^{2A_k}\int_{\alpha^{-1}r_k}^{\alpha R_k}r^{2d_k+1}dz= \frac{\pi}{d_k+1} e^{2A_k}\left((\alpha R_k)^{2d_k+2}-(\alpha^{-1}r_k)^{2d_k+2}\right)\\
    	&\geq \frac{\pi}{2(d_k+1)}\beta_0^2e^{-2C_1''}(\alpha R)^{2\theta_0
    	}\left(\left(\frac{R}{R_k}\right)^{-2(d+1)}(\alpha R_k)^{2(d_k-d)} -(\alpha R)^{-2(d_k+1)}(\alpha^{-1}r_k)^{2d_k+2}\right).\nonumber
    \end{align}
    If $d<-1$,  we deduce that 
    \begin{align*}
    	\frac{\pi}{2(d_k+1)}\beta_0^2e^{-2C_1''}(\alpha R)^{2\theta_0
    	}\left(\left(\frac{R}{R_k}\right)^{-2(d+1)}(\alpha R_k)^{2(d_k-d)} -(\alpha R)^{-2(d_k+1)}(\alpha^{-1}r_k)^{2d_k+2}\right)\conv{k\rightarrow \infty}\infty,
    \end{align*}
    which contradicts \eqref{hyp4}. Therefore, we have $d\geq -1$. Now, notice that provided that $d_k=-1$, we have
    \begin{align*}
    	\int_{\Omega_k(\alpha)}e^{2A_k}|z|^{2d_k}|dz|^2=2\pi e^{2A_k}\log\left(\frac{\alpha^2R_k}{r_k}\right)\geq \pi\beta_0^2e^{-2C_1''}(\alpha R)^{2\theta_0-2(d+1)}\log\left(\frac{\alpha^2R_k}{r_k}\right)\conv{k\rightarrow \infty}\infty. 
    \end{align*}
    Therefore, in the limiting case $d=-1$, we can assume that $d_k\neq -1$ for $k$ large enough. Now, notice that since $R_k\conv{k\rightarrow \infty}R$, we have
    \begin{align*}
    	&\left(\frac{R}{R_k}\right)^{-2(d+1)}(\alpha R_k)^{2(d_k-d)}=1+o(1)\\
    	&(\alpha R)^{-2(d_k+1)}(\alpha^{-1}r_k)^{2d_k+2}=(1+o(1))r_k^{2d_k+2},
    \end{align*}
    which implies that 
    \begin{align}\label{bounded}
    	\limsup_{k\rightarrow \infty}\left(\frac{1}{d_k+1}\left(1-r_k^{2d_k+2}\right)\right)<\infty.
    \end{align}
    First assume that $d_k<-1$ for $k\in\N$ large enough. Writing $d_k=-1-\epsilon_k$, we get than there exists $C>0$ such that for all $k\in\N$ large enough
    \begin{align*}
    	\frac{1}{d_k+1}\left(1-r_k^{2d_k+2}\right)=\frac{1}{-\epsilon_k}\left(1-\frac{1}{r_k^{2\epsilon_k}}\right)=\frac{1}{\epsilon_k}\left(\frac{1}{r_k^{2k}}-1\right)\leq C.
    \end{align*}
    Therefore, this implies that 
    \begin{align*}
    	\frac{1}{r_k^{2\epsilon_k}}\leq 1+C\epsilon_k\, .
    \end{align*}
    Taking the logarithm, 
    \begin{align*}
    	2\epsilon_k\log\left(\frac{1}{r_k}\right)\leq \log\left(1+C\epsilon_k\right)\leq C\epsilon_k
    \end{align*}
    and finally
    \begin{align*}
    	\log\left(\frac{1}{r_k}\right)\leq \frac{C}{2}
    \end{align*}
    which is absurd since $r_k\conv{k\rightarrow\infty}0$. Coming back to the 
     identity \eqref{control_conf}, 
     we deduce that for $k$ large enough if $\delta>0$ is such that $d>-1+\delta$
    \begin{align*}
    	\int_{\Omega_k(\alpha)}e^{2\lambda_k}|dz|^2\leq \frac{4\pi}{\delta}\beta_0^2e^{2C_1''}(\alpha R)^{2\theta_0}\conv{\alpha\rightarrow 0}0. 
    \end{align*}
    Then, provided that $d_k=-1+\epsilon_k$, \eqref{bounded} implies that there  exists some $C<\infty$ such that
    \begin{align*}
    	\frac{1}{\epsilon_k}\left(1-r_k^{2\epsilon_k}\right)=\frac{1}{d_k+1}\left(1-r_k^{2d_k+2}\right)\leq C,
    \end{align*}
    which yields
    \begin{align*}
    	\log(r_k)\geq \frac{\log(1-C\epsilon_k)}{2\epsilon_k}\geq -C,
    \end{align*}
    for $k$ large enough, using the inequality $\log(1-x)\geq -2x$ for $0\leq x\leq \dfrac{1}{2}$. Since $r_k\conv{k\rightarrow\infty}0$, we have $\log(r_k)\conv{k\rightarrow \infty}-\infty$ and we obtain a contradiction. We conclude that $d>-1$.
    \end{proof}

    \begin{rem}
    	Once the quantization of energy is established,  it will imply \emph{a posteriori} that  (see \cite{pointwise})
    	\begin{equation}\label{eq:dkInt}
    		d_k\conv{k\rightarrow \infty}\theta_0-1\geq 0\, .
    	\end{equation}
        In fact, 
        \cite[Theorem A]{pointwise} 
        holds for an arbitrary immersion with values into $\R^n$ and yields that $d_k=\theta_0-1\geq 0$. However, 
        in the present setting of {curved} ambient space, we could not  get \eqref{eq:dkInt}  \emph{a priori} (as it happens in the case of a {flat} ambient space). The origin of such a difficulty lies in the non-vanishing 	curvature that perturbs the system of conservation laws associated to the Willmore equation \cite{mondinoriviere}.
  
  Let us also observe that the fact that a no-neck energy property implies the  asymptotic integrality of $d_{k}$ suggests  that an hypothesis ensuring that $d_k>-1+\epsilon$ (for $k$ large enough) is necessary. 
    \end{rem}

    Now, since by \cite[Proposition III.$1$]{quanta} neck-regions are disjoint and finite unions of such annuli, if $\Omega_k(\alpha)$ is the whole neck-region associated to a concentration point $a_i$ (where $1\leq i\leq N$, and $N$ is the number of concentration points), we have 
    (see also \cite{mondinoriviereACV}):
    \begin{align}\label{no_area1}
    	\lim_{\alpha\rightarrow 0}\limsup_{k\rightarrow \infty}\int_{\Omega_k(\alpha)}e^{2\lambda_k}|dz|^2=0\, .
    \end{align}
   
  Consider a typical  bubble  region
   \begin{equation}\label{eq:defBijak}
    	B(i,j,\alpha,k)=B_{\alpha^{-1}r_k^{i,j}}(x_k^{i,j})\setminus\bigcup_{j'\in I^{i,j}}B_{\alpha r_k^{i,j}}(x_k^{i,j'})
    \end{equation}
    from the bubble-neck decomposition  \cite[Proposition III.$1$]{quanta}. We refer to \cite{quanta} for the precise statement and relevant definitions. For our purpose here it is sufficient to recall that
    \begin{itemize}
    \item[{(1)}] $B_{r_k^{i,j}}(x_k^{i,j})$  corresponds to a bubble for $\phi_k$;
    \item[{(2)}] the set of indices $I^{i,j}$ corresponds to the bubbles contained in $B_{r_k^{i,j}}(x_k^{i,j})$; in this sense, the role of \eqref{eq:defBijak} is to isolate a single bubble.
    \item[{(3)}] the total number of bubbles is bounded:  $\displaystyle\sup_{k\in \N} \mathrm{card} \bigcup_{i,j} I^{i,j}<\infty$; 
    \vspace{-0.7em}
    \item[{(4)}] $\displaystyle\lim_{k\to \infty} x^{i,j}_{k}\to a_{i}$, for every $i, j$.
    \item[{(5)}]  $\displaystyle \lim_{k\to \infty} r^{i,j}_{k}\to 0$, for every $i, j$.
    \end{itemize}
    From \cite[(VIII.$10$)]{quanta}, a uniform Harnack inequality holds
    : for all $0<\alpha<1$, there exists $C_{\alpha}>1$ such that 
    \begin{equation}\label{eq:HarnackBubble}
    	\sup_{B(i,j,\alpha,k)}e^{2\lambda_k}\leq C_{\alpha}\,\inf_{B(i,j,\alpha,k)}e^{2\lambda_k}, \quad \text{ for all $k\in\N$ large enough}.
    \end{equation}
      Therefore, the estimate \eqref{harnack1} implies that there exists $C_{\alpha}'$ such that
    \begin{align}\label{harnack0}
    	e^{2\lambda_k(z)}\leq C_{\alpha}'(\alpha^{-1}r_{k}^{i,j})^{2d_k}\leq C_{\alpha}''\left(r_k^{i,j}\right)^{2d_k}\, , \quad\text{ for all $z\in B(i,j,\alpha,k)$}\, .
    \end{align}
      Since $d_k\conv{k\rightarrow }d>-1$, we deduce that 
    \begin{align}\label{no_area2}
    	\int_{B(i,j,\alpha,k)}e^{2\lambda_k}|dz|^2\leq \pi C_{\alpha}''\left(r_k^{i,j}\right)^{2d_k+2}\conv{k\rightarrow \infty}0\,, \quad \text{ for every fixed $\alpha\in (0,1)$}.
    \end{align}
    \begin{equation}\label{eq:NoNeckArea}
    	\lim_{\alpha\rightarrow 0}\limsup_{k\rightarrow \infty}\int_{B(0,\alpha R_k)}e^{2\lambda_k}|dz|^2=0\,.
    \end{equation}
    We also deduce that there exists $A\in \R$ such that, for all $k\in \N$ large enough, it holds:
    \begin{align}\label{new_harnarck}
    	e^{-A}|z|^{d_k}\leq e^{\lambda_k(z)}\leq e^{A}|z|^{d_k}, \quad \text{ for all $z\in \Omega_k(1/2)$}.
    \end{align}

For the next developments, we need to sharpen the above estimates to  an $\mathrm{L}^{2,1}$ bound for the conformal parameter and an $\mathrm{L}^{p}$ quantization result for it.
    
    Let $d\in \R$ and $f:B(0,R)\rightarrow \R\cup\ens{\infty}$ be such that, for all $z\in B(0,R)$, it holds $f(z)=|z|^{d}$. If $d\leq -1$, since $\np{\,\cdot\,}{2,1}{X}\geq 2\sqrt{2}\np{\,\cdot\,
    }{2}{X}$, we have:   
    \begin{align*}
    	\np{|z|^d}{2,1}{B(0,R)}=\infty.
    \end{align*}
    Recalling that for all measured space $(X,\mu)$, for all $1<p<\infty$, it holds (see for example \cite[Appedix 3.7.1]{pointwise})
    \begin{align*}
    	\np{f}{p,1}{X}=\frac{p^2}{p-1}\int_{0}^{\infty}\mu\left(X\cap\ens{x: |f(x)|>t}\right)^{\frac{1}{p}}dt,
    \end{align*}
    we have:
        \begin{align*}
    	\np{|z|^d}{2,1}{B(0,R)}&=4\int_{0}^{1}\mu(B(0,R))^{\frac{1}{2}}dt=4\sqrt{\pi}R\, , \quad \text{for $d=0$},\\
    	\np{|z|^d}{2,1}{B(0,R)}&=4\int_{0}^{{R^d}}\mu(B(0,R))^{\frac{1}{2}}dt+4\int_{R^d}^{\infty}\mu(B(0,t^{\frac{1}{d}}))^{\frac{1}{2}}dt\\
    	&=4\sqrt{\pi}R^{1+d}+4\sqrt{\pi}\frac{-d}{1+d}R^{1+d}
    	=\frac{4\sqrt{\pi}}{1+d}R^{1+d}\, , \quad \text{for $-1<d<0$} , \\
    	\np{|z|^d}{2,1}{B(0,R)}&=4\int_{0}^{R^d}\mu(B(0,R)\setminus\bar{B}(0,t^{\frac{1}{d}}))^{\frac{1}{2}}dt=4\sqrt{\pi}R^{1+d}\int_{0}^1\sqrt{1-s^{\frac{2}{d}}}ds\\
	&\leq 4\sqrt{\pi}R^{1+d}\, , \quad \text{for $d>0$}.
    \end{align*}
    Combining the last estimates with \eqref{new_harnarck}, we deduce that, for all $k\in \N$ large enough, it holds
    \begin{align}\label{neck_lambda21_1}
    	\np{e^{\lambda_k}}{2,1}{\Omega_k(\alpha)}\leq  4\sqrt{\pi} e^{A} \max\ens{1,\frac{1}{1+d_k}}(\alpha R_k)^{1+d_k}\conv{k\rightarrow \infty}4\sqrt{\pi} e^{A}\max\ens{1,\frac{1}{1+d}}(\alpha R)^{1+d}.
    \end{align}
    By \eqref{harnack0}, we have
    \begin{align}\label{neck_lambda21_2}
    	\np{e^{\lambda_k}}{2,1}{B(i,j ,\alpha,k)}\leq 
    	4\sqrt{\pi}\sqrt{C_{\alpha}''}\left(r_k^{i,j}\right)^{1+d_k}\conv{k\rightarrow \infty}0.
    \end{align}
    The combination of  \eqref{neck_lambda21_1} and \eqref{neck_lambda21_2} yields
    \begin{align}\label{neck_lambda21}
    	\lim\limits_{\alpha\rightarrow 0}\limsup_{k\rightarrow \infty}\np{e^{\lambda_k}}{2,1}{B(0,\alpha R_k)}=0.
    \end{align}

   Later on, we will need the following  improvement of the quantization \eqref{neck_lambda21}. Since $d_k\conv{k\rightarrow\infty}d>-1$, we deduce that there exists $0<\epsilon<1$ and $N\in \N$ such that for all $k\geq N$, we have $d_k\geq -1+\epsilon$. In particular, this implies that 
    \begin{align*}
    	\int_{\Omega_k(\alpha)}e^{p\lambda_k(z)}|dz|^2\leq e^{pA}\int_{B(0,\alpha R_k)}|z|^{pd_k}|dz|^2=2\pi e^{pA}\frac{(\alpha R_k)^{2+pd_k}}{2+pd_k}\, ,\quad\text{ for all $p<\dfrac{2}{1-\epsilon}$}.
    \end{align*} 
    Using the Harnack inequality in bubble domains \eqref{eq:HarnackBubble}, we deduce that 
    \begin{align}\label{neck_lambdap}
    	\lim\limits_{\alpha\rightarrow 0}\limsup_{k\rightarrow \infty}\np{e^{\lambda_k}}{p}{B(0,\alpha R_k)}=0\,, \quad \text{for all $p<\frac{2}{1-\epsilon}$} .
    \end{align}
    Indeed, we have $2+pd_k>2+p(-1+\epsilon)>0$ if and only if $p<\dfrac{2}{1-\epsilon}$. In particular, we deduce that 
    \begin{align}\label{lambdak_p}
    	e^{\lambda_k}\quad\text{is bounded in }\;\, \mathrm{L}^p\left(B\left(0,R_k/2\right)\right)\;\,\text{for all}\;\, p<\dfrac{2}{1-\epsilon}. 
    \end{align}

    \subsection{Refined estimates on the approximate conservation laws}\label{SS:estConsLaw}
    
    From now on, $\{\phi_k\}_{k \in \N}$ will be a sequence of smooth Willmore immersions satisfying the assumptions of  the main Theorem \ref{thm:MainThm}.
In the next lemma, we generalise \cite[Lemma A.1]{mondinoriviere}, by relaxing the $\mathrm{L}^{\infty}$ control to an $\mathrm{L}^{2,1}$ control. 
    \begin{lemme}\label{A1}
    	There exists constants $\epsilon_2(m),C_2(m)>0$ with the following property. 
       	For all $j,k\in \ens{1,\cdots, m}$, let $\gamma_{j}^k\in \mathrm{W}^{1,2}\cap C^0(\C)$ be such that $\supp(\gamma_{j}^k)\subset B(0,2)$ and $\np{\gamma_{j}^k}{2,1}{\C}\leq \epsilon_0$. For all $\vec{U}\in \mathrm{L}^{1}_{\mathrm{loc}}(\C)$, define
       	\begin{align*}
       		\left(\D_z \vec{U}\right)_j&=\p{z}\vec{U}_j+\sum_{k=1}^{m}\gamma_{j}^k\vec{U}_k\qquad\text{in}\;\,\mathscr{D}'(\C),\;\, \text{where}\;\, 1\leq j\leq m. 
       	\end{align*}
        Then for all $\vec{Y}\in \left(\dot{\mathrm{H}}^{-1}+\mathrm{L}^1\right)(\C)$
        , there exists a unique $\vec{U}\in \mathrm{L}^{2,\infty}(B(0,1))$ satisfying 
        \begin{align*}
        	\left\{\begin{alignedat}{2}
        		\D_z\vec{U}&=\vec{Y}\qquad&&\text{in}\;\,\mathscr{D}'(B(0,1))\\
        		\Im(\vec{U})&=0\qquad&&\text{on}\;\,\partial B(0,1). 
        	\end{alignedat}
        	 \right.
        \end{align*}
        Furthermore, we have the estimate
        \begin{align*}
        	\np{\vec{U}}{2,\infty}{B(0,1)}
        	\leq C_2
        	\mixdotHL{\vec{Y}}{-1}{1}{B(0,1)}.
        \end{align*}
    \end{lemme}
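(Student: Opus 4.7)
The strategy I would follow is to treat the zeroth-order term $\sum_k \gamma_{j}^{k}\vec{U}_{k}$ as a small perturbation of the bare Cauchy--Riemann operator $\partial_{z}$, and set up a Banach fixed-point argument on $\mathrm{L}^{2,\infty}(B(0,1))$. The whole argument hinges on the duality between $\mathrm{L}^{2,1}$ and $\mathrm{L}^{2,\infty}$, which makes the assumption $\np{\gamma_{j}^{k}}{2,1}{\C}\leq \epsilon_{0}$ (rather than the stronger $\mathrm{L}^{\infty}$ smallness used in \cite{mondinoriviere}) exactly the right replacement for the argument of \cite[Lemma A.1]{mondinoriviere}.

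\emph{Step 1: solving the unperturbed problem.} I would first construct a bounded solution operator $T_{0}:(\dot{\mathrm{H}}^{-1}+\mathrm{L}^{1})(B(0,1))\rightarrow \mathrm{L}^{2,\infty}(B(0,1))$ for the linear problem $\partial_{z}\vec{V}=\vec{F}$ in $\mathscr{D}'(B(0,1))$ with $\mathrm{Im}(\vec{V})=0$ on $\partial B(0,1)$. For an $\mathrm{L}^{1}$ datum, extending by zero and convolving with the Cauchy kernel $1/(\pi\bar{z})\in \mathrm{L}^{2,\infty}(\C)$ produces an $\mathrm{L}^{2,\infty}$ function by Young's inequality in Lorentz spaces. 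For a datum in $\dot{\mathrm{H}}^{-1}$, Fourier-inverting $\partial_{z}$ yields a preimage in $\mathrm{L}^{2}\hookrightarrow \mathrm{L}^{2,\infty}$. In both cases one then adds an (anti)holomorphic correction coming from a Schwarz-type kernel on the disk to adjust the imaginary part on $\partial B(0,1)$, and checks that this correction is controlled by the same norm. This gives $\np{T_{0}\vec{F}}{2,\infty}{B(0,1)}\leq C\,\mixdotHL{\vec{F}}{-1}{1}{B(0,1)}$.

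\emph{Step 2: fixed-point reformulation.} With $T_{0}$ in hand, the boundary value problem is equivalent to the fixed-point equation $\vec{U}=T_{0}\bigl(\vec{Y}-\gamma\,\vec{U}\bigr)$ on $\mathrm{L}^{2,\infty}(B(0,1))^{m}$, where $(\gamma\,\vec{U})_{j}=\sum_{k}\gamma_{j}^{k}\vec{U}_{k}$. The Hölder inequality for Lorentz spaces yields, for each $j,k$,
\begin{align*}
\np{\gamma_{j}^{k}\vec{U}_{k}}{1}{B(0,1)}\leq C\,\np{\gamma_{j}^{k}}{2,1}{\C}\np{\vec{U}_{k}}{2,\infty}{B(0,1)}\leq C\epsilon_{2}\np{\vec{U}_{k}}{2,\infty}{B(0,1)},
\end{align*}
so the map $\vec{U}\mapsto T_{0}(\vec{Y}-\gamma\,\vec{U})$ is Lipschitz on $\mathrm{L}^{2,\infty}(B(0,1))^{m}$ with constant at most $C(m)\,\epsilon_{2}$. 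Taking $\epsilon_{2}=\epsilon_{2}(m)$ small enough makes this a strict contraction, and Banach's fixed-point theorem produces the unique solution $\vec{U}\in\mathrm{L}^{2,\infty}(B(0,1))^{m}$.

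\emph{Step 3: estimate.} From the fixed-point identity and $\mathrm{L}^{2,\infty}$-boundedness of $T_{0}$,
\begin{align*}
\np{\vec{U}}{2,\infty}{B(0,1)}\leq C\,\mixdotHL{\vec{Y}}{-1}{1}{B(0,1)}+C(m)\,\epsilon_{2}\,\np{\vec{U}}{2,\infty}{B(0,1)},
\end{align*}
and absorbing the small term on the right yields the claimed bound with $C_{2}=C_{2}(m)$. I expect the main technical obstacle to be a clean construction of $T_{0}$: neither the whole-plane Cauchy transform nor the $\dot{\mathrm{H}}^{-1}$ inversion respect the boundary condition $\mathrm{Im}(\vec{V})=0$ on $\partial B(0,1)$, so one must carefully add a Schwarz-type (real-harmonic/holomorphic) correction on the disk and show it is controlled in $\mathrm{L}^{2,\infty}$ by the same decomposition norm $\mixdotHL{\cdot}{-1}{1}{B(0,1)}$. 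Once this is done the perturbative Lorentz-Hölder estimate makes the rest of the argument automatic.
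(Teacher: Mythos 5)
Your proposal is correct and follows essentially the same route as the paper: a Banach fixed-point argument on $\mathrm{L}^{2,\infty}$ in which the zeroth-order term is made small through the $\mathrm{L}^{2,1}/\mathrm{L}^{2,\infty}$ duality, and the Cauchy kernel $1/(\pi\bar{z})\in \mathrm{L}^{2,\infty}(\C)$ is handled by the weak-type Young inequality, with the $\dot{\mathrm{H}}^{-1}+\mathrm{L}^{1}$ norm entering exactly as in your Step~1. The only cosmetic difference is that you build the boundary adjustment (the Schwarz-type correction for $\Im$) into the solution operator $T_{0}$ at every iteration, whereas the paper iterates the plain Cauchy transform on all of $\C$ and imposes the condition $\Im(\vec{U})=0$ on $\partial B(0,1)$ by the harmonic-extension/harmonic-conjugate construction carried out separately (as in its Lemma \ref{A2}, following \cite{mondinoriviere}).
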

    \begin{proof}
    	As in \cite{mondinoriviere}, we use a fix point argument.  For all $\vec{U}\in \mathrm{L}^{2,\infty}(\C)$, define
    	\begin{align*}
    		T(\vec{U})=\left(-\frac{1}{\pi\z}\ast \left(\vec{Y}_j-\sum_{k=1}^{m}\gamma_j^k\vec{U}_k\right)\right)_{1\leq j\leq m}.
    	\end{align*}
        By the Young inequality         for weak $\mathrm{L}^p$ spaces (that follows from the classical Young inequality by interpolation) and the $\mathrm{L}^{2,1}/\mathrm{L}^{2,\infty}$ duality, we have for some $\Gamma_0<\infty$
        \begin{align}\label{cont1}
        	\np{T(\vec{U})+\frac{1}{\pi\z}\ast \vec{Y}}{2,\infty}{\C}&\leq \sum_{j,k=1}^{m}\Gamma_0\np{\frac{1}{\pi\z}}{2,\infty}{\C}\np{\gamma_j^k\vec{U}_k}{1}{\C}
        	\leq \frac{\Gamma_0}{\sqrt{\pi}}\sum_{j,k=1}^{m}\np{\gamma_j^k}{2,1}{\C}\np{\vec{U}_k}{2,\infty}{\C}\nonumber\\
        	&\leq \frac{\Gamma_0 m}{\sqrt{\pi}}\epsilon_0\sum_{k=1}^{m}\np{\vec{U}_k}{2,\infty}{\C}\leq \frac{\Gamma_0 m^2}{\sqrt{\pi}}\epsilon_0\np{\vec{U}}{2,\infty}{\C}.
        \end{align}
        Choose 
        \begin{align*}
        	\epsilon_0=\frac{\sqrt{\pi}}{2\Gamma_0 m^2}.
        \end{align*}
        Now, exactly as in \cite{mondinoriviere}, we get the estimate
        \begin{align}\label{cont2}
        	\np{\frac{1}{\pi\z}\ast\vec{Y}}{2,\infty}{\C}\leq \Gamma_1(m)\mixdotHL{\vec{Y}}{-1}{1}{\C},
        \end{align}
        where for all $u:\R^m\rightarrow \R^m$, we have
        \begin{align*}
        	\mixdotHL{u}{-1}{1}{\R^m}=\inf\left\{\hsdot{u_1}{-1}{\R^m}+\np{u_2}{1}{\R^m}: u=u_1+u_2\right\}.
        \end{align*}
        Therefore, \eqref{cont1} and \eqref{cont2} implies that for all $\vec{U}\in \mathrm{L}^{2,\infty}(\C)$, we have
        \begin{align}\label{cont3}
        	\np{T(\vec{U})}{2,\infty}{\C}\leq \frac{1}{2}\np{\vec{U}}{2,\infty}{\C}+\Gamma_1(n)\mixdotHL{\vec{Y}}{-1}{1}{\C}\, .
        \end{align}
        By \eqref{cont1}, for all $\vec{U}_1,\vec{U}_2\in \mathrm{L}^{2,\infty}(\C)$, we have
        \begin{align}\label{cont4}
        	\np{T(\vec{U}_1)-T(\vec{U}_2)}{2,\infty}{\C}\leq \frac{1}{2}\np{\vec{U}_1-\vec{U}_2}{2,\infty}{\C}.
	        \end{align}
        Therefore, \eqref{cont3} and \eqref{cont4}
        prove that $T:\mathrm{L}^{2,\infty}(\C)\rightarrow \mathrm{L}^{2,\infty}(\C)$ is a contraction, and therefore admits a fixed point by Banach contraction mapping Theorem. The estimate follows from \eqref{cont2}. 
        
    \end{proof}
    
  \noindent
    Now, let $f_k:\C^m\rightarrow \C^m$ be the linear map such that for all $X\in C^{\infty}(B(0,R_k),\C^m)$, we have 
        \begin{align*}
    	\D_z\vec{X}=\p{z}\vec{X}+f_k(\vec{X})=\p{z}\vec{X}+\left(\sum_{l=1}^m 
    	\gamma_{j,k}^l\vec{X}_l\right)_{1\leq j\leq m},
    \end{align*}
    where, 
    denoting with $(\phi_k^1, \ldots, \phi_k^m)$ the components of $\phi_k$ in the local coordinates of $M^m$, we set:
    \begin{align*}
  	\gamma_{j,k}^l=\sum_{q=1}^{m}\Gamma_{j,q}^l\p{z}\phi_k^q,
    \end{align*}
   where $\Gamma_{j,q}^l$ are the Christoffel symbols of the ambient Riemannian manifold $(M^m,h)$. We now fix some $\epsilon_3(m)
    \leq \epsilon_2(m)$ to be determined later. By the estimate \eqref{neck_lambda21}, we deduce that there exists $\alpha_0>0$ and $N\in \N$ such that
    \begin{align}\label{smallness_end}
    	\sup_{1\leq j,l\leq m}\np{\gamma_{j,k}^l}{2,1}{B(0,\alpha_0 R_k)}\leq \epsilon_3(m)\leq \epsilon_2(m)\, , \quad \text{ for all $k\geq N$}.   
    \end{align}
   Recalling the ${\mathrm L}^{2,1}/{\mathrm L}^{2,\infty}$ duality, we have that for all $X\in \mathrm{L}^{2,\infty}(B(0,R_k),\C^m)$ and for all $r\leq \alpha_0R_k$ the following estimate holds:
    \begin{align}\label{fk}
    	\np{f_k(\vec{X})}{1}{B(0,r)}\leq \sum_{j,l=1}^{m}\np{\gamma_j^l}{2,1}{B(0,r)}\np{\vec{X}_l}{2,\infty}{B(0,r)}\leq \epsilon_3(m) m^2\np{\vec{X}}{2,\infty}{B(0,r)}. 
    \end{align}
    We also have the pointwise estimate 
    \begin{align}\label{new_pointwise}
    	|f_k(\vec{X})(z)|\leq C_1(h)\,e^{\lambda_k(z)}|\vec{X}(z)|\, ,
    \end{align}
    where $C_1(h)>0$ depends only on $h$. 
     Let ${\vec{Y}_k}:\Omega_k(\alpha_0)\rightarrow \C^m$ be defined by 
    \begin{equation}\label{eq:defYk}
    	\vec{Y}_k=i\left(\D_z\H_k-3\,\D^{\perp}_z\H_k-i\,\star_h\left(\D_z\n_k\wedge \H_k\right)\right)\, .
    \end{equation}
 Notice that ${\vec{Y}_k}$ is smooth, so in particular it is an element of $\mathrm{H}^{-1}+\mathrm{L}^1$. Thus, the extension  ${\vec{Y}_k}$ to the whole $\C$ by setting   ${\vec{Y}_k} \equiv 0$ on $\C\setminus \Omega_k(\alpha_0)$ is an element of $\dot{\mathrm{H}}^{-1}+\mathrm{L}^1 (\C)$. Making use of the $\mathrm{L}^{2,1}$ estimate \eqref{smallness_end}, we are in position to apply  Lemma \ref{A1} and deduce that there exists $\vec{L}_k\in {\mathrm L}^{2,\infty}(B(0,\alpha_0R_k),\C)$ such that
    \begin{align}\label{systemL}
    	\left\{\begin{alignedat}{2}
    		\D_z\vec{L}_k&=\vec{Y}_k\qquad&&\text{in}\;\,B(0,\alpha_0 R_k)\\
    		\Im(\vec{L}_k)&=0\qquad&&\text{on}\;\,\partial B(0,\alpha_0 R_k).
    	\end{alignedat} \right.
    \end{align}
    
\begin{rem}
    The boundary condition for $\Im(\vec{L}_k)$ in \eqref{systemL} is obtained in the exact same way as in Lemma A.$2$ in \cite{mondinoriviere} since $\phi_k$ is smooth (see Lemma \ref{A2} for more details). However, the $\mathrm{L}^{2,\infty}$ estimate obtained here by simply applying Lemma \ref{A1}  will depend on $k$ and for technical reasons we need to obtain a function $\vec{L}_k$ controlled in $\mathrm{L}^{2,\infty}$ independently of $k$. Indeed, without \emph{a priori} estimates, since the boundary condition of $\Re(\vec{L}_k)$ cannot be prescribed, we would not be able to get a $\mathrm{L}^{2,\infty}$ control on $\Re(\vec{L}_k)$ (in \cite{quanta}, at a crucial step, the authors use the fact that the equation in $\D\vec{L}$ holds up to a constant, which allows to assume that some mean of $\vec{L}_k$ vanishes; however, if the Christoffel symbols do not vanish, the equation is not invariant by translation). This is due to the fact that in general, one cannot prescribe the full boundary condition in a $\bar{\partial}$ equation. 
   \end{rem} 
    From the $\epsilon$-regularity Theorem \ref{eps_reg} 
    we know that there exists a constant $C>0$ independent of $k$ and $\alpha_0$ such that
    \begin{align}\label{control_Y}
    	e^{\lambda_k(z)}|\vec{Y}_k(z)|\leq Ce^{-\lambda_k(|z|)}\frac{\delta(|z|)}{|z|}\leq \frac{C_0}{|z|^{2+d_k}}. 
    \end{align}
    \noindent
    For technical reasons, we will have to perform a disjunction of cases depending on the value of $d_k$.  
    The analysis in the first case, $d_k\leq 0$, will take several pages; the other cases will be  discussed after \eqref{final_V_2a}.
     
      \subsection*{Analysis of case 1: $d_k\leq 0$} 
    If  $d_k\leq 0$, then we have \emph{a fortiori}  
    \begin{align}\label{control_Y0}
    	|\vec{Y}_k(z)|\leq \frac{C_0}{|z|^2}. 
    \end{align}
    
    First define $\vec{U}_k:\C\to \C^{m}$ by 
    \begin{align}\label{u1}
    	\vec{U}_k(z)=\frac{1}{\z^2}\left(-\frac{1}{\pi \bar{\zeta}}\ast \left(\bar{\zeta}^2\vec{Y}_k(\zeta)\right)\right)(z)=-\frac{1}{\pi\z^2}\int_{\C}\frac{\bar{\zeta}^2\vec{Y}_k(\zeta)}{\bar{z-\zeta}}|d\zeta|^2=\frac{1}{2\pi i\z^2}\int_{\C}\frac{\bar{\zeta}^2\vec{Y}_k(\zeta)}{\bar{\zeta-z}}d\bar{\zeta}\wedge d\zeta. 
    \end{align}
   
   \begin{lemme}\label{Lem:UKlog}
There exists constants $C_{0},C_{1}>0$ and $a_{k,0}\in \C^{m}$, for $k\in \N$,  with $ \displaystyle\sup_{k\in \N} |a_{k,0}|<\infty$ such that
  \begin{align}\label{u14Lemma}
    	\left|\vec{U}_k(z)-\frac{a_{k,0}}{\z^2}\right|\leq C_1\frac{\log\left(\frac{R}{|z|}\right)}{|z|}+\frac{8C_0}{|z|}, \quad \text{for all } z\in \C\, . 
    \end{align}
   \end{lemme}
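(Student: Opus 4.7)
The plan is to identify the leading-order behaviour of $\vec{U}_k$ as $z\to 0$ and bound the remainder by a clean convolution estimate. I would define
$$a_{k,0} := \frac{1}{\pi}\int_{\Omega_k(\alpha_0)}\bar{\zeta}\,\vec{Y}_k(\zeta)\,|d\zeta|^2,$$
which is heuristically the value of $\bar{z}^2\,\vec{U}_k(z)$ at $z=0$. The uniform bound $\sup_k|a_{k,0}|<\infty$ follows directly from the pointwise estimate \eqref{control_Y0} together with $R_k\to R\in(0,\infty)$, since
$$|a_{k,0}| \leq \frac{C_0}{\pi}\int_{\Omega_k(\alpha_0)}\frac{|d\zeta|^2}{|\zeta|} \leq 2C_0\,\alpha_0 R_k.$$

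The next step is an algebraic simplification. Using the partial-fraction identity
$$\frac{1}{\overline{z-\zeta}} + \frac{1}{\bar{\zeta}} = \frac{\bar{z}}{\bar{\zeta}\,\overline{z-\zeta}},$$
inserted into \eqref{u1}, I would obtain the clean representation
$$\vec{U}_k(z) - \frac{a_{k,0}}{\bar{z}^2} = -\frac{1}{\pi\bar{z}}\int_{\Omega_k(\alpha_0)}\frac{\bar{\zeta}\,\vec{Y}_k(\zeta)}{\overline{z-\zeta}}\,|d\zeta|^2,$$
so that the lemma reduces to a bound on $J(z):=\int_{\Omega_k(\alpha_0)}\frac{|d\zeta|^2}{|\zeta|\,|z-\zeta|}$, with the desired prefactor $1/|z|$ coming from $1/\bar{z}$.

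I would then estimate $J(z)$ by splitting the domain of integration into three annular regions relative to $z$: an inner zone $\{|\zeta|\leq|z|/2\}$, where $|z-\zeta|\geq|z|/2$ gives the factor $2/|z|$ and the remaining integral of $|\zeta|^{-1}$ over a disk of radius $|z|/2$ produces an $O(1)$ contribution; an intermediate annulus $\{|z|/2 < |\zeta| < 2|z|\}$, where $|\zeta|^{-1}\leq 2/|z|$ and polar integration of $|z-\zeta|^{-1}$ over the disk of radius $3|z|$ around $z$ again yields $O(1)$; and an outer zone $\{|\zeta|\geq 2|z|\}$, where $|z-\zeta|\geq|\zeta|/2$ makes the integrand $O(|\zeta|^{-2})$ and the radial integral $\int_{2|z|}^{\alpha_0 R_k}r^{-1}\,dr$ produces exactly the logarithmic contribution $\log(R/|z|)+O(1)$. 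Summing the three contributions and dividing by $|z|$ gives the estimate \eqref{u14Lemma}, up to relabeling constants.

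The main subtlety, and the reason the estimate is not routine, is the choice of $a_{k,0}$. A naive two-term expansion $\bar{\zeta}^2/\overline{z-\zeta}=-\bar{\zeta}-\bar{z}+\bar{z}^2/\overline{z-\zeta}$ would separately produce a $1/\bar{z}$ correction $b_{k,0}/\bar{z}$ with $b_{k,0}=\pi^{-1}\int\vec{Y}_k\,|d\zeta|^2$; however, the pointwise estimate $|\vec{Y}_k|\leq C_0/|\zeta|^2$ only gives $|b_{k,0}|=O(\log(R_k/r_k))$, so this coefficient would \emph{not} be uniformly bounded in $k$. The partial-fraction identity above is the precise reorganisation that absorbs this potentially divergent contribution into a remainder with the integrable kernel $1/(|\zeta||z-\zeta|)$, reducing the blow-up to the harmless logarithmic far-field tail already present on the right-hand side of \eqref{u14Lemma}.
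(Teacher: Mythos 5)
Your argument is correct and reaches \eqref{u14Lemma} by a genuinely cleaner route than the paper. The paper expands the Cauchy kernel $1/\overline{\zeta-z}$ as a geometric series on the far region $\{|\zeta|\geq 2|z|\}$, identifies $a_{k,0}$ with the $l=0$ coefficient, extracts the logarithm from the $l=1$ coefficient (which is only bounded by $\log(R/(2|z|))$, not uniformly), sums the tail $l\geq 2$ geometrically, and must split into the two cases $2|z|\leq 2r_k$ and $2|z|>2r_k$, estimating the near-field integral directly in the second case. You instead subtract the value of $\bar{z}^2\vec{U}_k$ at $z=0$ exactly, via the identity $\frac{1}{\overline{z-\zeta}}+\frac{1}{\bar{\zeta}}=\frac{\bar{z}}{\bar{\zeta}\,\overline{z-\zeta}}$, which reduces the whole lemma to the single potential estimate $\int_{\Omega_k(\alpha_0)}|\zeta|^{-1}|z-\zeta|^{-1}|d\zeta|^2\lesssim 1+\log_+\!\left(\frac{R}{|z|}\right)$; your three-zone estimate of that kernel is standard and correct, your uniform bound on $a_{k,0}$ is exactly the paper's \eqref{u5}, and your definition of $a_{k,0}$ agrees with the paper's leading coefficient up to an $O(|z|)$ ambiguity that is absorbed by the error term. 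What this buys is the absence of any series summation and of the case distinction in $|z|$, with the logarithm arising transparently from the outer zone rather than from a $z$-dependent truncation of the $l=1$ coefficient; your closing remark is also the right diagnosis of why only the constant term can be removed, since $\int|\vec{Y}_k|\,|d\zeta|^2$ is only $O(\log(R_k/r_k))$ and hence not uniformly bounded in $k$. The only cosmetic discrepancies are that your constants come out slightly larger than the paper's $8C_0$ (immaterial, since the lemma only asserts existence of uniform constants) and that for $|z|\gtrsim R$ the logarithm should be read as $\log_+$, a restriction the paper's own proof (which fixes $z\in B(0,R)$) shares despite the statement's ``for all $z\in\C$''.
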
 
    
    \begin{proof}
  Since $\z^{-2}$ is anti-holomorphic, we have: 
    \begin{align}\label{u2}
    	\p{z}\vec{U}_k(z)=\frac{1}{\z^2}\left(\delta_z\ast \left(\bar{\zeta}^2\vec{Y}_k(\zeta)\right)\right)=\frac{1}{\z^2}\cdot \z^2\vec{Y}_k(z)=\vec{Y}_k(z),
    \end{align}
    where $\delta_z$ is the Dirac mass in $z\in\C$. 
    Write for simplicity $r=2r_k$ and $R=\frac{R_k}{2}$.

    Fix some $z\in B(0,R)$. First, if $0<2|z|\leq r$, we have 
    \begin{align}\label{u3}
    	\vec{U}_k(z)=\frac{1}{2\pi i\z^2}\int_{B(0,R)\setminus \bar{B}(0,2|z|)}\frac{\bar{\zeta}^2\vec{Y}_k(\zeta)}{\bar{\zeta-z}}d\bar{\zeta}\wedge d\zeta
    \end{align}
    and 
    \begin{align}\label{u4}
    	&\frac{1}{2\pi i}\int_{B(0,R)\setminus \bar{B}(0,2|z|)}\frac{\bar{\zeta}^2\vec{Y}_k(\zeta)}{\bar{\zeta-z}}d\bar{\zeta}\wedge d\zeta=\sum_{l=0}^{\infty}\left(\frac{1}{\pi}\int_{B(0,R)\setminus \bar{B}(0,2|z|)}\frac{\bar{\zeta}^2\vec{Y}_k(\zeta)}{\zeta^{l+1}}|d\zeta|^2\right)z^l=\sum_{l=0}^{\infty}a_{k,l}z^l\, .
       \end{align}
    For $l=0$, we have 
    \begin{align}\label{u5}
    	\left|\int_{B(0,R)\setminus\bar{B(0,2|z|)}}\frac{\bar{\zeta}^2\vec{Y}_k(\zeta)}{\zeta^{l+1}}\frac{|d\zeta|^2}{|\zeta|}\right|\leq C_0\int_{B(0,R)}\frac{|d\zeta|^2}{|\zeta|}=2\pi C_0R,
    \end{align}
    and for $l=1$, we have 
    \begin{align}\label{u6}
    	\left|\int_{B(0,R)\setminus\bar{B}(0,2|z|)}\frac{\bar{\zeta^2\vec{Y}_k(\zeta)}}{\bar{\zeta-z}}|d\zeta|^2\right|\leq 2\pi \int_{2|z|}^{R}\frac{dt}{t}=2\pi\log\left(\frac{R}{2|z|}\right)\, .
    \end{align}
    Therefore, by \eqref{u3}, \eqref{u4}, \eqref{u5} and \eqref{u6}, we deduce that there exists a universal constant $C_1$ such that 
    \begin{align}\label{u7}
    	\left|\z^2\vec{U}_k(z)-a_{k,0}-\sum_{l=2}^{\infty}a_{k,l}z^l\right|\leq C_1|z|\log\left(\frac{R}{|z|}\right),
    \end{align}
    and $a_{k,0}$ is uniformly bounded for $k\geq N$. 

    Furthermore, we have for all $l\geq 2$
    \begin{align}\label{u8}
    	\left|\int_{B(0,R)\setminus \bar{B}(0,2|z|)}\frac{\bar{\zeta}^2\vec{Y}_k(\zeta)}{\zeta^{l+1}}|d\zeta|^2\right|\leq C_0\int_{B(0,R)\setminus \bar{B}(0,2|z|)}\frac{|d\zeta|^2}{|\zeta|^{l-1}}\leq 2\pi C_0\int_{2|z|}^{R}\frac{dt}{t^{l}}=\frac{2\pi C_0}{l-1}\left(\frac{1}{(2|z|)^{l-1}}-\frac{1}{R^{l-1}}\right),
    \end{align}
    which  implies that 
    \begin{align}\label{u9}
    	\left|\sum_{l=2}^{\infty}a_{k,l}z^l\right|\leq 2C_0\sum_{l=2}^{\infty}\frac{1}{(2|z|)^{l-1}}\times |z|^l=2C_0|z|. 
    \end{align}   
    Therefore, by \eqref{u8} and \eqref{u9} we get
    \begin{align}\label{u10}
    	\left|\vec{U}_k(z)-\frac{a_{k,0}}{\z^2}\right|\leq C_1\frac{\log\left(\frac{R}{|z|}\right)}{|z|}+\frac{2C_0}{|z|}. 
    \end{align}

    Now, assume that $2|z|>r$. Then we have 
    \begin{align}\label{u11}
    	\z^2\vec{U}_k(z)=\frac{1}{2\pi i}\int_{B(0,2|z|)\setminus\bar{B}(0,r)}\frac{\bar{\zeta}^2\vec{Y}_k(\zeta)}{\bar{\zeta-z}}d\bar{\zeta}\wedge d\zeta+\frac{1}{2\pi i}\int_{B(0,R)\setminus\bar{B}(0,2|z|)}\frac{\bar{\zeta}^2\vec{Y}_k(\zeta)}{\bar{\zeta-z}}d\bar{\zeta}\wedge d\zeta=\vec{u}_1(z)+\vec{u}_2(z). 
    \end{align}
    We first easily estimate 
    \begin{align}\label{u12}
    	\left|\frac{1}{2\pi i}\int_{B(0,2|z|)\setminus\bar{B(0,r)}}\frac{\bar{\zeta^2}\vec{Y}_k(\zeta)}{\bar{\zeta-z}}d\bar{\zeta}\wedge d\zeta\right|\leq \frac{C_0}{\pi}\int_{B(0,2|z|)}\frac{|d\zeta|^2}{|\zeta-z|}\leq \frac{C_0}{\pi}\int_{B(z,3|z|)}\frac{|d\zeta|^2}{|\zeta-z|}=6 C_0R|z|
    \end{align}
    and the previous argument shows that (notice that this is the same constant $a_{k,0}$)
    \begin{align}\label{u13}
    	\left|\vec{u}_2(z)-a_{k,0}\right|\leq C_1|z|{\log\left(\frac{R}{|z|}\right)}+2C_0|z|.
    \end{align}
    Finally, by \eqref{u11}, \eqref{u12} and \eqref{u13}, we deduce that 
    \begin{align}\label{u14}
    	\left|\vec{U}_k(z)-\frac{a_{k,0}}{\z^2}\right|\leq C_1\frac{\log\left(\frac{R}{|z|}\right)}{|z|}+\frac{8C_0}{|z|},   
    \end{align}
    which concludes the proof of the lemma.
    \end{proof}
  
Next, define $\vec{V}_{k}:\C\to \C^{m}$ by:
    \begin{align}\label{def_Vk}
    	\vec{V}_k(z)=\vec{U}_k(z)-\frac{a_{k,0}}{\z^2}\, , \quad \text{for all } z\in \C.
    \end{align}
    
    \begin{lemme}\label{lem:EstVk}
    Let $\vec{V}_{k}:\C\to \C^{m}$ be defined in  \eqref{def_Vk}.  Then there exists $C_{2}>0$ such that 
     \begin{align}\label{vk}
    	\left|\vec{V}_k(z)\right|\leq \frac{C_2}{|z|}\left({\log\left(\frac{R}{|z|}\right)}+1\right).
    \end{align}
    Moreover,  $\D\Im(\vec{V}_k)\in \mathrm{L}^{2,\infty}(\C)$ which shows in particular that $\Im(\vec{V}_k)\in \mathrm{W}^{1,(2,\infty)}(B(0,R_k))$. Furthermore, by the Sobolev embedding, it holds: 
    \begin{align*}
    	\Im(\vec{V}_k)\in \bigcap_{p<\infty}\mathrm{L}^p(B(0,R_k)). 
    \end{align*}
    \end{lemme}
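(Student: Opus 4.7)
The pointwise bound \eqref{vk} is a mere rewriting of Lemma \ref{Lem:UKlog}: since $\vec{V}_k(z)=\vec{U}_k(z)-a_{k,0}/\bar{z}^{2}$ by the definition \eqref{def_Vk}, the estimate \eqref{u14Lemma} immediately yields \eqref{vk} with $C_{2}=\max\{C_{1},\,8\,C_{0}\}$.

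For the gradient estimate $\D\Im(\vec{V}_k)\in\mathrm{L}^{2,\infty}(\C)$, the strategy is to proceed through the auxiliary function $\vec{L}_k$. Both $\vec{V}_k$ and $\vec{L}_k$ are built so as to solve a $\bar\partial$-type equation with source $\vec{Y}_k$, but only $\vec{L}_k$ carries the boundary datum $\Im\vec{L}_k=0$ on $\partial B(0,\alpha_0 R_k)$, cf. \eqref{systemL}. Subtracting, the difference $\vec{V}_k-\vec{L}_k$ satisfies $\p{z}(\vec{V}_k-\vec{L}_k)=f_k(\vec{L}_k)$, and by \eqref{fk} together with the smallness \eqref{smallness_end} the right-hand side can be kept arbitrarily small in $\mathrm{L}^{1}$ as soon as $\vec{L}_k$ is controlled in $\mathrm{L}^{2,\infty}$. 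Such a control on $\vec{L}_k$ is provided by Lemma \ref{A1}, once a suitable $\dot{\mathrm{H}}^{-1}+\mathrm{L}^{1}$-decomposition of the source $\vec{Y}_k$ has been arranged; in Case~1, $d_k\leq 0$, this is supplied by the pointwise bound \eqref{control_Y0} and the compact support of $\vec{Y}_k$ in the neck annulus $\Omega_k(\alpha_0)$.

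The main obstacle is then to pass from this difference equation, together with the representation \eqref{u1}, to an $\mathrm{L}^{2,\infty}$ bound on the \emph{full} gradient of $\Im(\vec{V}_k)$, rather than only on $\p{z}\vec{V}_k$. Writing $\vec{V}_k=\bar{z}^{-2}(\vec{W}_k-a_{k,0})$ with $\vec{W}_k=-\tfrac{1}{\pi\bar{z}}\ast(\bar{\zeta}^{2}\vec{Y}_k)$, the product rule produces a singular factor $\D(\bar{z}^{-2})\sim|z|^{-3}$ multiplied by $\vec{W}_k-a_{k,0}$, which vanishes at $z=0$ by the choice of $a_{k,0}$ with size $|z|\,(\log(R/|z|)+1)$; the resulting pointwise contribution scales like $(\log(R/|z|)+1)/|z|^{2}$, a priori outside $\mathrm{L}^{2,\infty}$. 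The decisive point is that the angular symmetries of $\Im(\bar{z}^{-2})$ against the radial leading behaviour of $\vec{W}_k-a_{k,0}$ turn this pointwise bound into a genuine weak-$\mathrm{L}^{2}$ bound after integration against test functions, and this is exactly the phenomenon that motivates the Lorentz--log spaces $\mathrm{L}^{2,\infty}_{\log^{\beta}}$ developed later in the paper. The fact that this cancellation fails for $\D\Re(\vec{V}_k)$ is why the statement is restricted to the imaginary part.

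Finally, \eqref{vk} yields $\Im\vec{V}_k\in\mathrm{L}^{2,\infty}(B(0,R_k))$, so once the gradient estimate is established we obtain $\Im\vec{V}_k\in\mathrm{W}^{1,(2,\infty)}(B(0,R_k))$. The two-dimensional Sobolev embedding of $\mathrm{W}^{1,(2,\infty)}(B(0,R_k))$ into $\mathrm{L}^{p}(B(0,R_k))$ for every finite $p$ then closes the proof.
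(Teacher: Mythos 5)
The first claim is fine: \eqref{vk} is exactly Lemma \ref{Lem:UKlog} combined with the definition \eqref{def_Vk}, and this is also how the paper argues. The genuine content of the lemma, however, is the bound $\D\Im(\vec{V}_k)\in\mathrm{L}^{2,\infty}(\C)$, and here your proposal has a real gap: you never use the fact that $\phi_k$ is Willmore, and none of the mechanisms you invoke can produce the estimate. The detour through $\vec{L}_k$ and Lemma \ref{A1} only controls $\vec{L}_k$ itself in $\mathrm{L}^{2,\infty}$, not any gradient; and the relation $\p{z}(\vec{V}_k-\vec{L}_k)=f_k(\vec{L}_k)$ leaves the anti-holomorphic part of the difference (hence $\D\Im$ of it) completely undetermined, so nothing about $\D\Im(\vec{V}_k)$ follows. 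Likewise, differentiating the representation \eqref{u1} only gives $|\D\vec{V}_k|\lesssim |z|^{-2}(\log(R/|z|)+1)$, which is not in $\mathrm{L}^{2,\infty}$, and the claimed rescue by ``angular symmetries of $\Im(\z^{-2})$ \ldots after integration against test functions'' is not an argument: the convolution factor $-\frac{1}{\pi\bar{\zeta}}\ast(\bar{\zeta}^2\vec{Y}_k)$ is not radial, so no such cancellation is available, and indeed this is not the mechanism at work. Your explanation of why only the imaginary part is controlled (failure of the cancellation for $\Re$) is therefore also off the mark, as is the suggestion that this step is what motivates the $\mathrm{L}^{2,\infty}_{\log^{\beta}}$ spaces (those are introduced later, to remove the $\log$ from the pointwise bound on $\vec{V}_k$, not to prove this lemma).

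The missing idea is a PDE argument through the Laplacian of the imaginary part. Since $\p{z}\vec{V}_k=\vec{Y}_k$, one has $\Delta\Im(\vec{V}_k)=4\,\Im(\p{\z}\vec{Y}_k)=4\,\Im(\D_{\z}\vec{Y}_k)-4\,\Im(\bar{f_k}(\vec{Y}_k))$. The Willmore equation \eqref{el1}, via the conservation-law identities of \cite{mondinoriviere}, converts the first term into curvature expressions (see \eqref{identity_y0}), giving $|\Im(\D_{\z}\vec{Y}_k)|\leq Ce^{2\lambda_k}(1+|\H_k|)\in\mathrm{L}^p$ for $p<\frac{1}{1-\epsilon}$. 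The naive bound $|\bar{f_k}(\vec{Y}_k)|\lesssim |z|^{-2}$ is only $\mathrm{L}^{1,\infty}$ and useless, so one must exploit the structure of $\vec{Y}_k$: writing $\vec{Y}_k=-2i\,\p{z}\H_k+(\text{terms with }\bar{f_k}\text{-image in }\mathrm{L}^1)$ and expressing $\Im(\bar{f_k}(-2i\,\p{z}\H_k))$ in divergence form (a divergence of an $\mathrm{L}^2$ field plus $\mathrm{L}^1$ terms), one obtains $\Delta\Im(\vec{V}_k)\in\mathrm{H}^{-1}+\mathrm{L}^1(\C)$; then splitting $\Im(\vec{V}_k)$ into the two corresponding Newtonian potentials, a Young-inequality estimate (for the $\mathrm{L}^1$ part) and a Parseval/Calder\'on--Zygmund estimate (for the divergence part) give $\D\Im(\vec{V}_k)\in\mathrm{L}^{2,\infty}(\C)$, after which your final step ($\mathrm{W}^{1,(2,\infty)}$ and Sobolev embedding) is correct. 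It is precisely the restriction of the Willmore equation to $\Im(\D_{\z}\vec{Y}_k)$ that explains why only the imaginary part is controlled — consistent with the fact that only one real boundary datum can be prescribed in a $\bar{\partial}$-problem.
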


    \begin{proof}
From Lemma \ref{Lem:UKlog},   we get that $\p{z}\vec{V}_k=\vec{Y}_k$ on $\C$ and that there exists $C_2>0$ such that:
    \begin{align*}
    	\left|\vec{V}_k(z)\right|\leq \frac{C_2}{|z|}\left({\log\left(\frac{R}{|z|}\right)}+1\right), \quad \text{for all $z\in \C$}.
    \end{align*}
    Furthermore, we have 
    \begin{align*}
    	\Delta \Im\left(\vec{V}_k\right)=4\,\Im\left(\p{\z}\vec{Y}_k\right)=4\,\Im\left(\D_{\z}\vec{Y}_k(z)\right)-4\,\Im\left(\bar{f_k}(\vec{Y}_k)\right)\quad \text{ on $\Omega_k(1/2)$ },
    \end{align*}
    while $\Im(\vec{V}_k)$ is harmonic on $B(0,\alpha_0^{-1}r_k)$. 
    Recall that by \cite[Lemma 3.2 and Theorem 3.1]{mondinoriviere}, the following identities hold:
    \begin{align*}
    	&\vec{Y}_k=i\left(\D_z\H_k-3\,\D_z^{\perp}\H_k-i\star_h(\D_z\n_k\wedge \H_k)\right)=-2i\left(\D_z^{\perp}\H_k+\s{\H_k}{\H_{k,0}}\p{\z}\phi_{k}\right)\\
    	&4e^{-2\lambda}\Re\left(\D_{\z}\left(\D_z^{\perp}\H_k+\s{\H_k}{\H_{0,k}}\p{\z}\phi_k\right)\right)=\Delta_g^{\perp}\H_k-2|\H_k|^2\H+\mathscr{A}(\H_k)+8\,\Re\left(\s{R(\e_{\z},\e_z)\e_z}{\H_k}\e_{\z}\right). 
    \end{align*}
    Therefore:
    \begin{align*}
    	\Im(\D_{\z}\vec{Y}_k)&=\Im\left(-2i\D_{\z}\left(\D_z^{\perp}\H_k+\s{\H_k}{\H_{k,0}}\p{\z}\phi_k\right)\right)=-2\,\Re\left(\D_{\z}\left(\D_z^{\perp}\H_k+\s{\H_k}{\H_{k,0}}\p{\z}\phi_k\right)\right)\\
    	&=-\frac{1}{2}e^{2\lambda_k}\left(\Delta_{g_k}^{\perp}\H_k-2|\H_k|^2\H+\mathscr{A}(\H_k)\right)-4e^{2\lambda_k}\,\Re\left(\s{R(\e_{\z},\e_z)\e_z}{\H_k}\e_{\z}\right).
    \end{align*}
    Using that $\phi_{k}$ is Willmore 
    and using \eqref{el1}, we deduce that
    \begin{align}\label{identity_y0}
    	\Im(\D_{\z}\vec{Y}_k)=\frac{1}{2}e^{2\lambda_k}\left(\mathscr{R}_1^{\perp}(\H_k)-2\,\tilde{K}_h\,\H_k+2\,\mathscr{R}_2(d\phi_k)+(DR)(d\phi_k)-8\,\Re\left(\s{R(\e_{\z},\e_z)\e_z}{\H_k}\e_{\z}\right)\right).
    \end{align}
    Therefore, by \eqref{lambdak_p} and Hölder's inequality, we get:
    \begin{align}\label{elliptic_V1}
    	|\Im(\D_{\z}\vec{Y}_k)|\leq Ce^{2\lambda_k(z)}\left(1+|\H_k|\right)\in \bigcap_{p<\frac{1}{1-\epsilon}}\mathrm{L}^{p}(B(0,R_k)).
	    \end{align} 
 However, from \eqref{control_Y0} we have 
    \begin{align*}
    	\left|\bar{f_k}(\vec{Y}_k)\right|\leq Ce^{\lambda_k}|\vec{Y}_k|\leq C'\frac{\delta(|z|)}{|z|}\leq \frac{C''}{|z|^2}, 
    \end{align*}
    which does not suffice as one cannot obtain elliptic estimates from a $\mathrm{L}^{1,\infty
    }$ bound on the Laplacian.
   In order to circumvent this problem, we will argue differently. Recall that for any vector-field $\vec{X}$, we defined
    \begin{align*}
    \D_z\vec{X}=\p{z}\vec{X}+f_k(\vec{X})=\p{z}\vec{X}+\left(\sum_{l=1}^m\gamma_{j,k}^l\vec{X}_l\right)_{1\leq j\leq m},
    \end{align*}
     where 
    \begin{align*}
        \gamma_{j,k}^l=\sum_{q=1}^{m}\Gamma_{j,q}^l\p{z}\phi_{k}^{q}.
    \end{align*}
    Recalling that $\vec{Y}_k=i\left(\D_z\H_k-3\,\D^{\perp}_z\H_k-i\,\star_h\left(\D_z\n_k\wedge \H_k\right)\right)$, we get
    \begin{align*}
    	\vec{Y}_k&=i\left(\p{z}\H_k-3\pi_{\n_k}(\p{z}\H_k)-2f_k(\H_k)-i\star_h (\p{z}\n_k\wedge \H_k)-i\star_h\left(G_k(\n_k)\wedge\H_k\right)\right)\\
    	&=i\left(-2\,\p{z}\H_k+3\left(\p{z}\pi_{\n_k}\right)\H_k-2f_k(\H_k)-i\star_h (\p{z}\n_k\wedge \H_k)-i\star_h\left(G_k(\n_k)\wedge\H_k\right)\right).
    \end{align*}
   Since $e^{\lambda_k}$, and $e^{\lambda_k}\H_k$ and $|\D\n_k|$ are bounded in $\mathrm{L}^2(B(0,R_k))$ by hypothesis, we deduce that there exists a constant $C>0$ such that:
    \begin{align}\label{elliptic_V2}
    	\left|\bar{f_k}(\vec{Y}_k+2i\,\p{z}\H_k)\right|\leq C\left(|\D\n_k|e^{\lambda_k}|\H_k|+e^{\lambda_k}|\H_k|+|\D\n_k|e^{\lambda_k}|\H_k|+e^{2\lambda_k}|\H_k|\right)\in \mathrm{L}^1(B(0,R_k)).
    \end{align}
   It follows that
    \begin{align*}
    	&\Im\left(\bar{f_k}(-2i\,\p{z}\H_k)\right)_{j}=\Im\left(-2i\sum_{q=1}^{m}\Gamma_{j,q}^l\p{\z}\phi_{k}^{q}\p{z}\H_{k,l}\right)=-2\,\sum_{q=1}^m\Gamma_{j,q}^l\Re\left(\p{\z}\phi_{k}^{q}\p{z}\H_{k,l}\right)\\
    	&=-\frac{1}{2}\sum_{q=1}^{m}\Gamma_{j,q}^l\s{\D\phi_{k}^{q}}{\D\H_{k,l}}\\
    	&=-\frac{1}{2}\dive\left(\sum_{q=1}^{m}\Gamma_{j,q}^l\D\phi_{k}^{q}\,\H_{k,l}\right)+\frac{1}{2}\sum_{q=1}^{m} \s{\D\Gamma_{j,q}^l}{\D\phi_{k}^{q}}\H_{k,l}+\frac{1}{2}\sum_{q=1}^m\Gamma_{j,q}^l\Delta\phi_{k}^{q}\,\H_{k,l}\\
    	&=-\frac{1}{2}\dive\left(\sum_{q=1}^{m}\Gamma_{j,q}^l\D\phi_{k}^{q}\,\H_{k,l}\right)+\frac{1}{2}\sum_{q=1}^{m} \s{\D\Gamma_{j,q}^l}{\D\phi_{k}^{q}}\H_{k,l}+\sum_{q=1}^{m}\Gamma_{j,q}^le^{2\lambda_k}\H_{k,q}\H_{k,l}. 
    \end{align*}
    Since $e^{\lambda_k}\H_k\in \mathrm{L}^2(B(0,R_k))$, we deduce that 
    \begin{align}\label{elliptic_V3}
    	\Im\left(\bar{f_k}(-2i\,\p{z}\H_k)\right)\in \mathrm{H}^{-1}+\mathrm{L}^1(B(0,R_k)).
    \end{align}
    Therefore, by \eqref{elliptic_V1}, \eqref{elliptic_V2} and \eqref{elliptic_V3}, we finally deduce that  
    \begin{align*}
    	\Delta \Im(\vec{V_k})\in \mathrm{H}^{-1}+\mathrm{L}^1(\C).
    \end{align*}
    Standard elliptic estimates imply that $\Im(\vec{V}_k)\in \mathrm{W}^{1,(2,\infty)}(\C)$. Indeed,  by the preceding estimates, we have a decomposition 
    \begin{align*}
    	\Delta \Im(\vec{V}_k)=\dive(\vec{X}_{k})+\vec{Z}_{k},
    \end{align*}
    where $\vec{X}_k\in \mathrm{L}^2(\C)$ and $\vec{Z}_k\in \mathrm{L}^1(\C)$. Therefore, making the decomposition $\Im(\vec{V}_k)=\vec{u}_k+\vec{v}_k$, where 
    \begin{align*}
    	\Delta\vec{u}_k=\dive(\vec{X}_k) \quad \text{and}\quad   	\Delta \vec{v}_k=\vec{Z}_k, 
    \end{align*}
    we obtain: 
    \begin{align*}
    	\vec{v}_k(z)=\frac{1}{2\pi}\int_{\C}\Delta \vec{v}_k(\zeta)\log|z-\zeta|\,|d\zeta|^2=\frac{1}{2\pi}\int_{\C}\vec{Z}_k(\zeta)\log|z-\zeta|\,|d\zeta|^2.
    \end{align*}
    Clearly, it holds
    \begin{align*}
    	\p{z}\vec{v}_k(z)=\frac{1}{4\pi}\int_{\C}\frac{\vec{Z}_k(\zeta)}{z-\zeta}|d\zeta|^2,
    \end{align*}
    which, by  Young inequality, yields:
    \begin{align*}
    	\np{\D\vec{v}_k}{2,\infty}{\C}\leq C_Y\np{\frac{1}{z}}{2,\infty}{\C}\np{\vec{Z}_k}{1}{\C}\leq C.
    \end{align*}
    Likewise, we have in the distributional sense 
    \begin{align*}
    	\p{z}\vec{u}_k(z)=\frac{1}{4\pi}\int_{\C}\frac{\dive(\vec{X}_k)(\zeta)}{z-\zeta}|\zeta|^2.
    \end{align*}
    Now, since the Fourier transform of $\dfrac{1}{z}$ is $\dfrac{c_0}{\bar{\xi}}$ for some constant $c_0\in \C$, we have 
    \begin{align*}
    	\mathscr{F}(\p{x_j}\vec{X})(\xi)=-i\xi_j\mathscr{F}(\vec{X})(\xi)\, , \quad \text{ for all $j\in\ens{1,2}$}.
    \end{align*}
    Therefore, by the Parseval identity, we have 
    \begin{align*}
    	\int_{\C}|\D\vec{u}_k|^2dx&=4\int_{\C}\left|\p{z}\vec{u}_k\right|^2|dz|^2=\frac{1}{4\pi^2}\int_{\C}\left|\frac{1}{z}\ast \dive(\vec{X}_k)\right|^2|dz|^2=\frac{c_0^2}{16\pi^4}\int_{\C}\frac{1}{|\xi|^2}\left|-i\xi_1\vec{X}_{k,1}-i\xi_2\vec{X}_{k,2}\right|^2|d\xi|^2\\
    	&\leq \frac{c_0^2}{8\pi^4}\int_{\C}|\vec{X}_k|^2dx.
    \end{align*}
    Finally, we deduce that $\D\Im(\vec{V}_k)\in \mathrm{L}^{2,\infty}(\C)$ which shows in particular that $\Im(\vec{V}_k)\in \mathrm{W}^{1,(2,\infty)}(B(0,R_k))$. Furthermore, by the Sobolev embedding, we have 
    \begin{align*}
    	\Im(\vec{V}_k)\in \bigcap_{p<\infty}\mathrm{L}^p(B(0,R_k)),
    \end{align*}
    which concludes the proof of the lemma. 
    \end{proof}
    
    The next step will be to remove the $\log|z|$ term in the estimate \eqref{vk}.

   \subsection{Removal of the logarithmic singularity  in the estimate of  $\vec{V}_k$}\label{SSS:logz}
    Removing the  $\log|z|$ term in the inequality \eqref{vk} is the most technical part of the proof of Theorem \ref{L21_necks}. To this aim, we will introduce and use some (apparently new) Lorentz-type functional space. Let us stress out that this difficulty is due to the curved ambient space and therefore was not present in the proof of the energy quantization for Willmore surfaces in Euclidean spaces \cite{quanta}.

   Let us  first make the decomposition $\Im(\vec{V}_k)=\vec{u}_k+\vec{v}_k+\vec{w}_k$, where 
    \begin{align*}
    	\left\{\begin{alignedat}{2}
        	\Delta \vec{u}_k&=4\,\Im\left(\D_{\z}\vec{Y}_k\right)\qquad&&\text{in}\;\, B(0,{R_k}/{2})\\
        	\vec{u}_k&=0\qquad&&\text{on}\;\,\partial B(0,{R_k}/{2})\, ,
    	\end{alignedat}\right.
    \end{align*}
    \begin{align*}
    	\left\{\begin{alignedat}{2}
    		\Delta \vec{v}_k&=-4\,\Im\left(\bar{f_k}(\vec{Y}_k)\right)\qquad&&\text{in}\;\, B(0,{R_k}/{2})\\
    		\vec{v}_k&=0\qquad&&\text{on}\;\,\partial B(0,{R_k}/{2})\, ,
    	\end{alignedat}\right.
    \end{align*}
    and 
    \begin{align*}
    	\left\{\begin{alignedat}{2}
    		\Delta \vec{w}_k&=0\qquad&&\text{in}\;\, B(0,{R_k}/{2})\\
    		\vec{w}_k&=\Im(\vec{V}_k)\qquad&&\text{on}\;\,\partial B(0,{R_k}/{2}).
    	\end{alignedat}\right.
    \end{align*}
       By the bound \eqref{elliptic_V1}, we have $\Delta \vec{u}_k\in \mathrm{L}^p(\C)$ for all $p<\dfrac{1}{1-\epsilon}$ which implies by Calder\'{o}n-Zygmund estimates that $\vec{u}_k\in \mathrm{W}^{2,p}(B(0,R_k/2))$ for all $p<\dfrac{1}{1-\epsilon}$. By standard elliptic regularity (\cite{helein}, Chapter $3$, $3.3$), we also get $\vec{w}_k\in \mathrm{W}^{1,(2,\infty)}(B(0,R_k/2))$.  
   Regarding $\vec{v}_{k}$, it holds
    \begin{align*}
    	\p{\z}\left(z^2\p{z}\vec{v}_k-\frac{1}{\pi}\int_{\C}\frac{\zeta^2\Im(\bar{f_k}(\vec{Y}_k)(\zeta))}{\zeta-z}|d\zeta|^2\right)=0,
    \end{align*}
    and $\vec{v}_k\in \mathrm{L}^{2,\infty}(\C)$, which implies that there exists a holomorphic function $h_{k}$ such that 
    \begin{align*}
    	\p{z}\vec{v}_k=\frac{h_{k}(z)}{z^2}+\frac{1}{\pi z^2}\int_{\C}\frac{\zeta^2\Im(\bar{f_k}(\vec{Y}_k)(\zeta)}{\zeta-z}|d\zeta|^2.
    \end{align*}
    By the estimates in the proof of Lemmas \ref{Lem:UKlog} and \ref{lem:EstVk} 
    , we deduce that there exists a constant $C>0$ such that 
    \begin{align}\label{est1a}
    	\left|\frac{1}{\pi z^2}\int_{\C}\frac{\zeta^2\Im(\bar{f_k}(\vec{Y}_k)(\zeta)}{\zeta-z}|d\zeta|^2-\frac{a_{0,k}}{z^2}\right|\leq C\frac{\log\left(\frac{R_k}{|z|}\right)}{|z|}.
    \end{align}
    Taking the expansion $h_k(z)=b_{0,k}+O(|z|)$, from $\D\vec{v}_k\in \mathrm{L}^{2,\infty}(\C)$ we deduce that 
    \begin{align}
    	b_{0,k}=-a_{0,k},
    \end{align}
    and that 
    \begin{equation}\label{eq:deffk}
   \psi_{k}(z)=\dfrac{h_{k}(z)-b_{0,k}}{z^2}\in \mathrm{L}^{p}_{\mathrm{loc}}(\C)\,, \quad \text{for all $p<2$}.
    \end{equation}    
       More precisely, the estimate \eqref{est1a} shows that there exists a function $\psi_{1,k}$ and a constant $C_{k}>0$ such that 
    \begin{align}\label{est1}
    	|\psi_{k}(z)-\psi_{1,k}(z)| \in \mathrm{L}^{2,\infty}\left(B\left(0,\frac{R_k}{2}\right)\right)
    \end{align}
    and 
    \begin{align}\label{est2}
    	|\psi_{1,k}(z)|\leq \frac{C_{k}}{|z|}\log\left(\frac{R_k}{|z|}\right).
    \end{align}
    Therefore, we deduce that $\psi_{k}$ admits the Laurent expansion
    \begin{align}\label{est3}
    \psi_{k}(z)=\sum_{n\geq -1}^{\infty}a_{k,n}z^n.
    \end{align}
    Next, we will show that 
    \begin{equation}\label{eq:psikL21}
    \psi_{k} \in \mathrm{L}^{2,1}(B(0,\alpha R_k))\, .
    \end{equation}
   The proof is quite involved and will make use  of some (apparently new) Lorentz-type function spaces.
    For a more systematic discussion of generalised Lorentz spaces, the reader is refereed to the Appendix \ref{appendix}.
    \\ First, write for simplicity $R=\dfrac{R_k}{2}$, and for all $\alpha>0$, let $\varphi_{\alpha}:(0,R)\rightarrow \R$ be defined by
    \begin{align*}
    	\varphi_{\alpha}(t)=\frac{t}{\log^{\alpha}\left(\frac{R}{t}\right)}.
    \end{align*}
    Let $W:\R_+\rightarrow\R_+$ be the Lambert function,  which satisfies for all $x\geq 0$
    \begin{align}\label{lambert}
    	W(x)e^{W(x)}=x 
    \end{align}
    One easily checks that
      \begin{align*}
    	\psi_\alpha(t)=Re^{-\alpha W\left(\frac{1}{\alpha}\left(\frac{R}{t}\right)^{\frac{1}{\alpha}}\right)} 
    \end{align*}
    is the inverse of $\varphi_{\alpha}$.
        Notice that for $\alpha=1$, the identiy \eqref{lambert} gives
    \begin{align*}
    	\psi_1(t)=tW\left(\frac{R}{t}\right). 
    \end{align*}
    Explicit computations give
    \begin{align*}
    	\varphi_\alpha'(t)=\frac{1}{\log^{\alpha}\left(\frac{R}{t}\right)}+\frac{\alpha}{\log^{\alpha+1}\left(\frac{R}{t}\right)}>0\,, \quad 
    	\varphi_\alpha''(t)=\frac{\alpha}{t\log^{\alpha+1}\left(\frac{R}{t}\right)}+\frac{\alpha(\alpha+1)}{t\log^{\alpha+2}(t)}>0\, ,
    \end{align*}  
    which show that $\varphi_\alpha$ is convex and strictly increasing. We deduce that  for all $t>0$, it holds 
    \begin{align*}
    	&\leb^2\left(B(0,R)\cap\ens{x:\frac{\log^{\alpha}\left(\frac{R}{|x|}\right)}{|x|}>t}\right)=\leb^2\left(B(0,R)\cap\ens{x:\varphi_{\alpha}(|x|)<\dfrac{1}{t}}\right)\\
    	&=\leb^2\left(B(0,R)\cap\ens{x:|x|<\psi_{\alpha}\left(\frac{1}{t}\right)}\right)=\pi \min\ens{R^2,\psi_{\alpha}^2\left(\frac{1}{t}\right)}=\pi \min\ens{R^2,R^2e^{-2\alpha W\left(\alpha^{-1}(Rt)^{\frac{1}{\alpha}}\right)}}\\
    	&=\pi R^2 e^{-2\alpha W\left(\alpha^{-1}(Rt)^{\frac{1}{\alpha}}\right)}\, . 
    \end{align*}
    The following asymptotic expansions hold: 
    \begin{align*}
    	\left\{\begin{alignedat}{2}
    		W(t)&=t+O(t^2)\qquad&&\text{when}\;\, t\rightarrow 0\\
    		W(t)&=\log(t)-\log\log(t)+o(1)\qquad&&\text{when}\;\,t\rightarrow \infty\, . 
    	\end{alignedat}\right.
    \end{align*}
    Therefore, when $t\rightarrow 0$, we deduce that
    \begin{align*}
    	e^{-2\alpha W\left(\alpha^{-1}(Rt)^{\frac{1}{\alpha}}\right)}=e^{-2(Rt)^{\frac{1}{\alpha}}+O(t^{\frac{2}{\alpha}})}\conv{t\rightarrow 0}1\, ,
    \end{align*}
    so that
    \begin{align*}
    	\lim\limits_{t\rightarrow 0+}t^2\leb^2\left(B(0,R)\cap\ens{x:\frac{\log^{\alpha}|x|}{|x|}>t}\right)=0\, . 
    \end{align*}
    However, when $t\rightarrow \infty$, we have
    \begin{align*}
    	&e^{-2\alpha\,W\left(\alpha^{-1}(Rt)^{\frac{1}{\alpha}}\right)}=e^{-2\alpha\left(\log\left(\alpha^{-1}(Rt)^{\frac{1}{\alpha}}\right)-\log\log\left(\alpha^{-1}(Rt)^{\frac{1}{\alpha}}\right)+o(1)\right)}\\
    	&=\frac{e^{2(\alpha\log(\alpha)-\log(R))+o(1)}}{t^2}\log^{2\alpha}\left(\alpha^{-1}\left(Rt\right)^{\frac{1}{\alpha}}\right)=\frac{e^{A+o(1)}}{t^2}\left(\log^{2\alpha}(t)+O(1)\right).
    \end{align*}
    for some $A\in \R$. This implies that
    \begin{align*}
    	t^2\leb^2\left(B(0,R)\cap\ens{x:\frac{\log^{\alpha}|x|}{|x|}>t}\right)=\frac{e^{A+o(1)}}{t^2}\left(\log^{2\alpha}(t)+O(1)\right)\conv{t\rightarrow \infty}\infty. 
    \end{align*}
    Note that
    \begin{align*}
    	\lim\limits_{t\rightarrow \infty}\left(\frac{t}{\log^{\alpha}(t)}\right)^{2}\leb^2\left(\R^2\cap\ens{x:\frac{\log^{\alpha}|x|}{|x|}>t}\right)\leq C<\infty.
    \end{align*}
    This suggests that $u=u_{\alpha}=1/\varphi_{\alpha}(|\,\cdot\,|)$ belongs to a Lorentz space. To determine its weight, we first compute the function $u_{\ast}:\R_+\rightarrow \R_+$ defined by 
    \begin{align*}
    	u_{\ast}(t)=\inf\ens{s>0:\leb^2\left(B(0,R)\cap\ens{x:|u(x)|>s}\right)\leq t}.
    \end{align*}
    For $t\leq \pi R^2$, it holds
    \begin{align*}
    	\leb^2\left(B(0,R)\cap\ens{x:|u(x)|>s}\right)=\pi \psi^2\left(\frac{1}{s}\right)\leq t \Longleftrightarrow \frac{1}{s}\leq \varphi\left(\sqrt{\frac{t}{\pi}}\right)\Longleftrightarrow s\geq u\left(\sqrt{\frac{t}{\pi}}\right)
	    \end{align*}
    while  $u_{\ast}(t)= 0$,  for $t\geq \pi R^2$.
    Therefore, 
    \begin{align*}
    	u_{\ast}(t)=u\left(\sqrt{\frac{t}{\pi}}\right)=\sqrt{\frac{\pi}{t}}{\log^{\alpha}\left(R\sqrt{\frac{\pi}{t}}\right)},\, \quad\text{for $t\leq \pi R^2$}
    \end{align*}
    and
    \begin{align*}
    	u_{\ast}(t)=\left\{\begin{alignedat}{2}
    		&\sqrt{\frac{\pi}{t}}\log^{\alpha}\left(R\sqrt{\frac{\pi}{t}}\right)\qquad&&\text{for}\;\, t<\pi R^2\\
    		&0\qquad&&\text{for}\;\, t\geq \pi R^2
    	\end{alignedat}\right.
    \end{align*}
    so that 
    \begin{align*}
    	\int_{0}^{t}u_{\ast}(s)ds&=\sqrt{\pi}\left[2\sqrt{s}\log^{\alpha}\left(R\sqrt{\frac{\pi}{s}}\right)\right]_0^t+{2\pi \alpha R}\int_{0}^t\frac{\log^{\alpha-1}\left(R\sqrt{\frac{\pi}{s}}\right)}{\sqrt{s}}ds\\
    	&=2\sqrt{\pi t}\log^{\alpha}\left(R\sqrt{\frac{\pi}{t}}\right)+2\pi\alpha R\int_{0}^{t}\log^{\alpha-1}\left(R\sqrt{\frac{\pi}{s}}\right)\frac{ds}{\sqrt{s}},
    \end{align*}
    which, for $\alpha=1$, shows that 
    \begin{align*}
    	\int_{0}^tu_{\ast}(s)ds=2\sqrt{\pi t}\log\left(R\sqrt{\frac{\pi}{t}}\right)+4\pi R\sqrt{t}.
    \end{align*}
    First assume that $\alpha\in\N$ and notice that,  for $s=\pi R^2v$,  it holds:
    \begin{align*}
    	\int_{0}^tu_{\ast}(s)ds=\pi R^2\int_{0}^{\frac{t}{\pi R^2}}\frac{1}{R\sqrt{v}}\log^{\alpha}\left(\frac{1}{v}\right)du=\pi R\int_{0}^{\frac{t}{\pi R^2}}\frac{1}{\sqrt{v}}\log^{\alpha}\left(\frac{1}{v}\right)dv.
    \end{align*}
   We introduce the integrals $I(\alpha,r)$ for $r\leq 1$ defined by 
    \begin{align*}
    	I(\alpha,r)=\int_{0}^r\log^{\alpha}\left(\frac{1}{t}\right)\frac{dt}{\sqrt{t}}.
    \end{align*}
   It holds
    \begin{align*}
    	I(\alpha,r)&=2\sqrt{r}\log^{\alpha}\left(\frac{1}{r}\right)+2\alpha\int_{0}^r\log^{\alpha}\left(\frac{1}{t}\right)\frac{dt}{\sqrt{t}}=2\sqrt{r}\log^{\alpha}\left(\frac{1}{r}\right)+2\alpha\, I(\alpha-1,r)\\
    	&=2\sqrt{r}\log^{\alpha} \left(\frac{1}{r}\right)+4\alpha\sqrt{r}\log^{\alpha-1}\left(\frac{1}{r}\right)+4\alpha(\alpha-1)I(\alpha-2,r).
    \end{align*}
    Therefore, we deduce that provided that $\alpha=n+\beta$ with $n\in\N$ and $0\leq \beta<1$, we have
    \begin{align*}
    	I(\alpha,r)=\sum_{k=0}^{n-1}2^{k+1}\prod_{l=0}^{k-1}(n-l+\beta)\sqrt{r}\log^{n-l+\beta}\left(\frac{1}{r}\right)+2^{n}\prod_{k=0}^{n}(n-k+\beta)\int_{0}^r\log^{\beta}\left(\frac{1}{t}\right)\frac{dt}{\sqrt{t}}.
    \end{align*}
    Now, we will estimate the last integral by the method of stationary phase. A change of variable $x=\log^{\beta}\left(\frac{1}{t}\right)$ shows that, for  $p=\frac{1}{\beta}>1$, it holds
    \begin{align*}
    	\int_{0}^r\log^{\beta}\left(\frac{1}{t}\right)\frac{dt}{\sqrt{t}}=\int_{\log^{\beta}\left(\frac{1}{r}\right)}^{\infty}px^{p}e^{-\frac{x^p}{2}}dx&=\left[-2x\,e^{-\frac{x^p}{2}}\right]_{\log^{\beta}\left(\frac{1}{r}\right)}^{\infty}+2\int_{\log^{\beta}\left(\frac{1}{r}\right)}^{\infty}e^{-\frac{x^p}{2}}dx\\
    	&=2\sqrt{r}\log^{\beta}\left(\frac{1}{r}\right)+2\log^{\beta}\left(\frac{1}{r}\right)\int_{1}^{\infty}e^{-\log\left(\frac{1}{r}\right)\frac{x^p}{2}}dx,
    \end{align*}
    where we performed a linear change of variable in the last integral. Now, if $F:(0,\infty)\rightarrow \R$ is defined by 
    \begin{align}
    	F(t)=\int_{1}^{\infty}e^{-t\frac{x^p}{2}}dx,
    \end{align}
    then we can directly apply the method of stationary phase since $p>1$ (it will be clear that it fails for $p=1$ since in this case, we have $F(t)=2/t$). Indeed, if $\varphi:[1,\infty)\rightarrow \R$ is defined by $\varphi(x)=-\frac{x^p}{2}$, then $\varphi$ is strictly decreasing and $\varphi''(1)=-\frac{p(p-1)}{2}<0$, so the method of stationary phase (or rather Laplace's method) implies that 
    \begin{align*}
    	F(t)\underset{t\rightarrow\infty }{\sim}\sqrt{\frac{2\pi}{-\varphi''(1)}}\frac{e^{-\frac{t}{2}}}{\sqrt{t}}=\frac{4\pi}{p(p-1)}\frac{e^{-\frac{t}{2}}}{\sqrt{t}}.
    \end{align*}
    Applying it to $t=\log\left(\frac{1}{r}\right)$, we deduce that 
    \begin{align*}
    	2\log^{\beta}\left(\frac{1}{r}\right)\int_{1}^{\infty}e^{-\log\left(\frac{1}{r}\right)\frac{x^p}{2}}dx\underset{r\rightarrow 0}{\sim}2\beta\log^{\beta}\left(\frac{1}{r}\right)\sqrt{\frac{4\pi}{1-\beta}}\sqrt{\frac{r}{\log\left(\frac{1}{r}\right)}}=2\beta\sqrt{\frac{4\pi}{1-\beta}}\sqrt{r}\log^{\beta-\frac{1}{2}}\left(\frac{1}{r}\right). 
    \end{align*}
    Therefore, we deduce that for $\alpha>0$, we have 
    \begin{align*}
    	I(\alpha,r)=2\sqrt{r}\log^{\alpha}\left(\frac{1}{r}\right)\left(1+\frac{1}{\sqrt{\log\left(\frac{1}{r}\right)}}\right).
    \end{align*}
    In particular, choosing the function 
    \begin{align}\label{lambda}
    	\Lambda_{\alpha}(t)=\sqrt{t}\left(1+\log_+^{\alpha}\left(R\sqrt{\frac{\pi}{t}}\right)\right),
    \end{align}
    we deduce that $u_{\alpha}\in M(\Lambda_{\alpha})=\mathrm{L}^{2,\infty}_{\log^{\alpha}}(B(0,R))$, where 
    \begin{align*}
    	M(\Lambda_{\alpha})=\mathrm{L}^{1}_{\mathrm{loc}}(B(0,R))\cap\ens{f:\Vert f\Vert_{M(\Lambda_{\alpha})}=\sup_{t>0}\left(\frac{1}{\Lambda_{\alpha}(t)}\int_{0}^tf_{\ast}(s)ds\right)<\infty}. 
    \end{align*}
    Likewise, we define the space $N(\Lambda_{\alpha})=\mathrm{L}^{2,1}_{\log^{\alpha}}(B(0,R))$ by 
    \begin{align*}
    	N(\Lambda_{\alpha})=\mathrm{L}^{1}_{\mathrm{loc}}(B(0,R))\cap\ens{f:\Vert f\Vert_{N(\Lambda_{\alpha})}=\int_{0}^{\infty}\Lambda_{\alpha}\left(\leb^2\left(B(0,R)\cap\ens{x:|f(x)|>t}
    		\right)\right)dt<\infty
    	}. 
    \end{align*}
    Furthermore, notice that $\Lambda_{\beta}:(0,\infty)\rightarrow (0,\infty)$ is non-zero, concave (for $\beta\leq 1$), and that $\Lambda_{\beta}$ extends continuously to $0$ and that $\Lambda_{\beta}(0)=0$. Therefore, we can apply the classical results of Steigerwalt and White to deduce 
    (\cite[Theorem $4.1$ and Theorem $4.4$]{lorentz_general}, see also  Theorem \ref{thm:dualityNM} in the appendix) that $N(\Lambda_{\beta})^{\ast}=M(\Lambda_{\beta})$, and that for all $(f,g)\in N(\Lambda_{\beta})\times M(\Lambda_{\beta})$, the  product $fg\in \mathrm{L}^1(B(0,R))$ with
    \begin{align*}
    	\left|\int_{B(0,R)}fg \, d\leb^2\right|\leq \Vert f\Vert_{N(\Lambda_{\beta})}\Vert g\Vert_{M(\Lambda_{\beta})}.
    \end{align*}
    To see that $\Lambda_{\beta}$ is concave, we can assume that $t\leq \pi R^2$ without loss of generality, so that up to a scaling $\Lambda_{\beta}$ is concave if and only if 
    \begin{align*}
    	\psi(t)=\sqrt{t}\log^{\beta}\left(\frac{1}{t}\right)
    \end{align*}
    is concave on $(0,1)$. We compute
    \begin{align*}
    	\psi'(t)&=\frac{1}{2\sqrt{t}}\log^{\beta}\left(\frac{1}{t}\right)-\frac{\beta}{\sqrt{t}}\log^{\beta-1}\left(\frac{1}{t}\right)\\
    	\psi''(t)&=-\frac{1}{4t^{\frac{3}{2}}}\log^{\beta}\left(\frac{1}{t}\right)-\frac{\beta}{2t^{\frac{3}{2}}}\log^{\beta-1}\left(\frac{1}{t}\right)+\frac{\beta}{2t^{\frac{3}{2}}}\log^{\beta-1}\left(\frac{1}{t}\right)+\frac{\beta(\beta-1)}{t^{\frac{3}{2}}}\log^{\beta-2}\left(\frac{1}{t}\right)\\
    	&=-\frac{1}{4t^{\frac{3}{2}}}\log^{\beta}\left(\frac{1}{t}\right)-\frac{\beta(1-\beta)}{t^{\frac{3}{2}}}\log^{\beta-2}\left(\frac{1}{t}\right)<0
    \end{align*}
    for all $0<t<1$ and $0\leq \beta\leq 1$. Now, fix $0<\beta\leq 1$, let $x\in B(0,\frac{R}{2})$ and $0<r<\frac{R}{4}$. By the co-area formula, we have
    \begin{align*}
    	\int_{B_{2r}\setminus\bar{B}_r(x)}|u(x)|dx=\int_{r}^{2r}\left(\rho\int_{\partial B_{\rho}(x)}|u|d\mathscr{H}^1\right)\frac{d\rho}{\rho}\leq \log(2)\inf_{r<\rho<2r}\left(\rho\int_{\partial B_{\rho}(x)}|u|d\mathscr{H}^1\right).
    \end{align*}
    Therefore, there exists $\rho\in (r,2r)$ such that 
    \begin{align*}
    	\rho\int_{\partial B_{\rho}(x)}|u|d\mathscr{H}^1&\leq \frac{1}{\log(2)}\int_{B_{2r}\setminus\bar{B}_r(x)}|u(x)|dx.
    \end{align*}  
    Now, we will show that if $u$ is a holomorphic function on $B(0,R)$, then a $\mathrm{L}^{2,\infty}_{\log^{\beta}}$ control implies a $\mathrm{W}^{1,1}$ control on $B(0,\alpha R)$ for all $\alpha<1$. Since it seems not standard to us, we give a full proof of this claim (in fact, we are not aware of a previous study of such spaces in the past literature).
    Using the $\mathrm{L}^{2,1}_{\log^{\beta}}/\mathrm{L}^{2,\infty}_{\log^{\beta}}$ duality  
    (see Theorem \ref{thm:dualityNM} in the appendix, or \cite[Theorem $4.4$]{lorentz_general}), we get
    \begin{align*}
    	\int_{B_{2r}\setminus \bar{B}_{r}(x)}|u(x)|dx\leq \Vert 1\Vert_{\mathrm{L}^{2,1}_{\mathrm{log}^{\beta}}(B_{2r}\setminus \bar{B}_r(0))}\Vert u\Vert_{\mathrm{L}^{2,\infty}_{\mathrm{log}^{\beta}}(B_{2r}\setminus\bar{B}_r(0))}.
    \end{align*}
    Notice that 
    \begin{align*}
    	\lambda_{1}(t)=\leb^2\left(B_{2r}\setminus\bar{B}_r(x)\cap\ens{x:1>t}\right)=\left\{\begin{alignedat}{2}
    		&3\pi r^2\qquad&&\text{if}\;\,t<1\\
    		&0\qquad&&\text{if}\;\,t\geq 1.
    	\end{alignedat}\right.
    \end{align*}
    We have by definition
    \begin{align*}
         \Vert 1\Vert_{\mathrm{L}^{2,1}_{\mathrm{log}}(B_{2r}\setminus \bar{B}_r(x))}=\int_{0}^{\infty
         }\left(\lambda_1(t)\right)^{\frac{1}{2}}\left(1+\log_+^{\beta}\left(R\sqrt{\frac{\pi}{\lambda_1(t)}}\right)\right)dt=\sqrt{3\pi }r\left(1+\log_+^{\beta}\left(\frac{1}{\sqrt{3}}\frac{R}{r}\right)\right).
    \end{align*}
    Finally, we deduce that 
    \begin{align*}
    	\rho\int_{\partial B_{\rho}(x)}|u|d\mathscr{H}^1\leq \frac{\sqrt{3\pi}}{\log(2)}r\left(1+\log_+^{\beta}\left(\frac{1}{\sqrt{3}}\frac{R}{r}\right)\right)\znp{u}{2,\infty}{\log^{\beta}}{B_{2r}\setminus\bar{B}_r(x)},
    \end{align*}
    and that,  \emph{a fortiori},  for all $x\in B(0,\frac{R}{2})$ and $0<r\leq \frac{R}{2}$ such that $B(x,2r)\subset B(0,R)$, there exists $\rho\in[r,2r]$ such that 
    \begin{align}\label{estimate_lebesgue}
    	\int_{\partial B_{\rho}(x)}|u|d\mathscr{H}^1\leq \frac{2\sqrt{3\pi}}{\log(2)}\left(1+\log_+^{\beta}\left(\frac{1}{\sqrt{3}}\frac{R}{r}\right)\right)\znp{u}{2,\infty}{\log^{\beta}}{B_{2r}\setminus\bar{B}_r(x)}.
    \end{align}
    Thanks to this result, we will now be able to show a variant of a lemma appearing in \cite{quantamoduli}. 
    \begin{lemme}\label{lemme_holomorphe1}
    	Let $u:B(0,R)\rightarrow \C$ be a holomorphic function and fix some $0\leq \alpha<1$, $0\leq \beta\leq 1$. Assume that $u\in \mathrm{L}^{2,\infty}_{\log^{\beta}}(B(0,R))$. Then $u\in \mathrm{L}^2(B(0,\alpha R))$ and there exists a universal constant $\Gamma_0$ {\rm (}independent of $\alpha$ and $\beta${\rm )} such that 
    	\begin{align*}
    		\np{u}{2}{B(0,\alpha R)}\leq \Gamma_0\frac{\alpha}{(1-\alpha)}\left(1+\log^{\beta}\left(\frac{1}{1-\alpha}\right)\right)\znp{u}{2,\infty}{\log^{\beta}}{B(0,R)}. 
    	\end{align*}
    \end{lemme}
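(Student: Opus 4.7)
The key observation is that, for a holomorphic function $u$, the mean value property provides a pointwise bound in terms of boundary circle integrals, and those boundary integrals have already been estimated in \eqref{estimate_lebesgue} in terms of the $\mathrm{L}^{2,\infty}_{\log^\beta}$ norm on an adjacent annulus. So the argument reduces to pointwise estimation, followed by integration over $B(0,\alpha R)$.

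\textbf{Step 1 (choice of scale).} Fix $x\in B(0,\alpha R)$ and set $r=(1-\alpha)R/2$, so that $B(x,2r)\subset B(0,R)$. Apply \eqref{estimate_lebesgue}: there exists $\rho\in [r,2r]$ such that
\[
\int_{\partial B_\rho(x)}|u|\,d\mathscr{H}^{1}\leq \frac{2\sqrt{3\pi}}{\log(2)}\Bigl(1+\log_+^{\beta}\bigl(\tfrac{R}{\sqrt{3}\,r}\bigr)\Bigr)\znp{u}{2,\infty}{\log^{\beta}}{B(0,R)}.
\]

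\textbf{Step 2 (mean value property).} Since $u$ is holomorphic on $B(x,\rho)\subset B(0,R)$,
\[
u(x)=\frac{1}{2\pi\rho}\int_{\partial B_\rho(x)}u\,d\mathscr{H}^{1},\qquad\text{so}\qquad |u(x)|\leq \frac{1}{2\pi r}\int_{\partial B_\rho(x)}|u|\,d\mathscr{H}^{1}.
\]
Plugging Step 1 in and using $r=(1-\alpha)R/2$ together with the elementary bound $\log_+^{\beta}(2/(\sqrt 3(1-\alpha)))\leq C(1+\log^{\beta}(1/(1-\alpha)))$, valid uniformly in $\beta\in[0,1]$, we obtain
\[
|u(x)|\leq \frac{C_1}{(1-\alpha)R}\,\Bigl(1+\log^{\beta}\bigl(\tfrac{1}{1-\alpha}\bigr)\Bigr)\znp{u}{2,\infty}{\log^{\beta}}{B(0,R)}
\]
for a universal constant $C_1$, independently of $x\in B(0,\alpha R)$.

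\textbf{Step 3 (integration).} Since the bound above is uniform in $x\in B(0,\alpha R)$, squaring and integrating over a set of area $\pi(\alpha R)^2$ yields
\[
\np{u}{2}{B(0,\alpha R)}\leq \sqrt{\pi}\,\alpha R\cdot \frac{C_1}{(1-\alpha)R}\Bigl(1+\log^{\beta}\bigl(\tfrac{1}{1-\alpha}\bigr)\Bigr)\znp{u}{2,\infty}{\log^{\beta}}{B(0,R)},
\]
which is exactly the claimed estimate with $\Gamma_0=\sqrt{\pi}\,C_1$, independent of $\alpha$ and $\beta$.

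\textbf{Main obstacle.} The nontrivial ingredient is already encapsulated in \eqref{estimate_lebesgue}, whose derivation relied on the co-area formula, the concavity of $\Lambda_\beta$, and the $\mathrm{L}^{2,1}_{\log^{\beta}}/\mathrm{L}^{2,\infty}_{\log^{\beta}}$ duality (Theorem \ref{thm:dualityNM}). Once that estimate is in hand, the proof is completely standard: holomorphy converts an integral bound on one well-chosen boundary circle into a pointwise bound, and the universal character of the constant follows from the uniform bound $0\leq \beta\leq 1$. The only point requiring minimal care is absorbing the $\log$ factor coming from $R/r$ into a $\log(1/(1-\alpha))$ factor, which is immediate since $r/R = (1-\alpha)/2$.
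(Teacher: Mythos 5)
Your proof is correct, and it takes a genuinely simpler route than the paper's. You apply \eqref{estimate_lebesgue} at every $x\in B(0,\alpha R)$ (with the uniform choice $r=(1-\alpha)R/2$, which guarantees $B(x,2r)\subset B(0,R)$) and combine it with the mean value property for $u$ itself, obtaining an $\mathrm{L}^{\infty}$ bound on all of $B(0,\alpha R)$; squaring and integrating then yields the claim immediately. The paper instead introduces the antiderivative $v$ of $u$, applies the mean value estimate only at points of the \emph{boundary circle} $\partial B(0,\alpha R)$ to control $\np{\D v}{\infty}{\partial B_{\alpha R}}$, rewrites $\int_{B(0,\alpha R)}|\D v|^{2}$ by integration by parts as the boundary pairing $\int_{\partial B_{\alpha R}}\bar{(v-v_{\alpha R})}\,\partial_{\nu}v$, and closes the estimate via the $\mathrm{H}^{1/2}$ trace/Sobolev embedding. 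Both arguments hinge on the same two facts — the mean value property for holomorphic functions and the duality estimate \eqref{estimate_lebesgue} — but yours short-circuits the trace/integration-by-parts machinery, which is not needed for the stated $\mathrm{L}^{2}$ conclusion (it would become relevant if one wanted to go directly after the finer $\mathrm{L}^{2,1}$ or $\mathrm{W}^{1,1}$ bounds that the paper derives separately in Lemma \ref{lemma_holomorphe2}, once $u\in\mathrm{L}^{2}$ is known). The only small point you correctly flag and handle is the absorption of $\log_{+}^{\beta}(2/(\sqrt{3}(1-\alpha)))$ into $1+\log^{\beta}(1/(1-\alpha))$ with a constant uniform in $\beta\in[0,1]$, using subadditivity of $t\mapsto t^{\beta}$.
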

    \begin{proof}
    Write 
    \begin{align*}
    	u(z)=\sum_{n=0}^{\infty}a_nz^n.
    \end{align*}
      Notice that $u=\p{z}v$, where 
    \begin{align}
    	v(z)=\sum_{n=1}^{\infty}b_nz^n=\sum_{n=1}^{\infty}\frac{a_{n-1}}{n}z^n.
    \end{align}
    First, using the estimate \eqref{estimate_lebesgue} applied to $\D v$ at a point $z\in \partial B(0,\alpha R)$ with $r=\frac{1}{2}(1-\alpha)R$, we deduce by the mean-value formula that for some $\rho\in[\frac{1}{2}(1-\alpha)R,(1-\alpha)R]$, we have 
    \begin{align}\label{neue_id0}
    	&|\D v(z)|=|2\,\p{z}v(z)|=\left|\frac{1}{\pi \rho}\int_{\partial B(z,\rho)}\p{\zeta}v(\zeta)d\zeta\right|\nonumber\\
    	&\leq \frac{4}{\log(2)}\sqrt{\frac{3}{\pi}}\frac{1}{(1-\alpha)R}\left(1+\log^{\beta}\left(\frac{2}{\sqrt{3}}\frac{1}{1-\alpha}\right)\right)\znp{\D v}{2,\infty}{\log^{\beta}}{B(0,R)}\nonumber\\
    	&= \frac{8}{\log(2)}\sqrt{\frac{3}{\pi}}\frac{1}{(1-\alpha)R}\left(1+\log^{\beta}\left(\frac{2}{\sqrt{3}}\frac{1}{1-\alpha}\right)\right)\znp{u}{2,\infty}{\log^{\beta}}{B(0,R)},
    \end{align}
    where we used $|\D v|^2=4|\p{z}v|^2$ by the holomorphy of $v$. Therefore, via integration by parts and using that both $u$ and $\bar{u}$ are harmonic, we deduce that 
    \begin{align}\label{neue_id1}
    	\int_{B(0,\alpha R)}|u(z)|^2|dz|^2&\leq \frac{1}{2}\int_{B(0, \alpha R)}|\D v|^2dx=\frac{1}{2}\int_{B(0, \alpha R)}\dive\left(\bar{v}\D v\right)=\frac{1}{2}\int_{\partial B(0, \alpha R)}\bar{v}\,{\partial_{\nu}{v}}\,d\mathscr{H}^1\nonumber\\
    	&=\frac{1}{2}\int_{\partial B(0,\alpha R)}\bar{(v-v_{\alpha R})}\partial_{\nu} v\,d\mathscr{H}^1,
    \end{align}
    where for all $0<\rho<R$, we set
    \begin{align*}
    	v_{\rho}=\dashint{\partial B_{\rho}(0)}v\,d\mathscr{H}^1.
    \end{align*}
    Thanks to the $\mathrm{L}^{\infty}$ bound \eqref{neue_id0} and the Sobolev embedding $\mathrm{H}^{\frac{1}{2}}(S^1)\hookrightarrow \mathrm{L}^1(B(0,1))$, there exists a uniform constant $C_0>0$ such that 
    \begin{align*}
    	&\np{\D v}{2}{B(0,\alpha R)}^2=\left|\int_{\alpha R}\bar{(v-{v}_{\partial B_{\alpha R}})}\partial_{\nu}v\,d\mathscr{H}^1\right|\leq \np{v-v_{\alpha R}}{1}{\partial B_{\alpha R}(0)}\np{\D v}{\infty}{\partial B_{\alpha R(0)}}\\
    	&\leq C_0\alpha R\hs{v}{\frac{1}{2}}{\partial B_{\alpha R}(0)}\times \frac{8}{\log(2)}\sqrt{\frac{3}{\pi}}\frac{1}{(1-\alpha)R}\left(1+\log^{\beta}\left(\frac{2}{\sqrt{3}}\frac{1}{1-\alpha}\right)\right)\znp{u}{2,\infty}{\log^{\beta}}{B(0,R)}\\
    	&\leq \frac{8 C_0}{\log(2)}\sqrt{\frac{3}{\pi}}\frac{\alpha}{(1-\alpha)}\left(1+\log^{\beta}\left(\frac{2}{\sqrt{3}}\frac{1}{1-\alpha}\right)\right)\np{\D v}{2}{B(0,\alpha R)}\znp{u}{2,\infty}{\log^{\beta}}{B(0,R)}.
    \end{align*}
    Therefore, we get
    \begin{align*}
    	\np{u}{2}{B(0,\alpha R)}\leq \frac{4 C_0}{\log(2)}\sqrt{\frac{3}{\pi}}\frac{\alpha}{(1-\alpha)}\left(1+\log^{\beta}\left(\frac{2}{\sqrt{3}}\frac{1}{1-\alpha}\right)\right)\znp{u}{2,\infty}{\log^{\beta}}{B(0,R)}
    \end{align*}
    which concludes the proof of the lemma.    
    \end{proof}
    We will also need the following lemma. 
    \begin{lemme}\label{lemma_holomorphe2}
    	Let $R>0$ and $u:B(0,R)\rightarrow \C$ by a holomorphic function such that $u\in \mathrm{L}^2(B(0,R))$. Then,  for all $0<\alpha<1$,  the following estimates hold:
    	\begin{align*}
    		\np{u}{2,1}{B(0,\alpha R)}&\leq \frac{4\alpha}{1-\alpha^2}\np{u}{2}{B(0,R)}\\
    		\np{\D u}{1}{B(0,\alpha R)}&\leq 4\sqrt{\pi}\frac{\alpha^2}{(1-\alpha^2)^{\frac{3}{2}}}\np{u}{2}{B(0,R)}. 
    	\end{align*}
    \end{lemme}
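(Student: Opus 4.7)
The plan is to expand $u$ in its Taylor series $u(z)=\sum_{n\geq 0}a_{n}z^{n}$, estimate the $L^{2,1}$ and $L^{1}$ norms of each monomial $z^{n}$ on $B(0,\alpha R)$ explicitly, and then recombine via Cauchy--Schwarz against the Parseval-type orthogonality identity
\begin{align*}
	\np{u}{2}{B(0,R)}^{2}=\pi\sum_{n=0}^{\infty}\frac{|a_{n}|^{2}}{n+1}R^{2n+2},
\end{align*}
which follows from the pairwise orthogonality of $(z^{n})$ in $L^{2}$ of centered disks. This reduces both inequalities to summation of elementary geometric-type series in $\alpha$.

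For the first inequality, I would start by computing $\np{z^{n}}{2,1}{B(0,\alpha R)}$ from the distribution-function formula $\np{f}{2,1}{X}=4\int_{0}^{\infty}\mu_{f}(t)^{1/2}\,dt$; the distribution function is $\mu_{|z|^{n}}(t)=\pi((\alpha R)^{2}-t^{2/n})_{+}$, and a change of variable $t=(\alpha R)^{n}s$ yields the sharp bound $\np{z^{n}}{2,1}{B(0,\alpha R)}\leq 4\sqrt{\pi}\,(\alpha R)^{n+1}$. The triangle inequality for $\np{\cdot}{2,1}{\cdot}$ followed by Cauchy--Schwarz with weight $(n+1)^{-1/2}$ then gives
\begin{align*}
	\np{u}{2,1}{B(0,\alpha R)}\leq 4\sqrt{\pi}\sum_{n\geq 0}|a_{n}|(\alpha R)^{n+1}\leq 4\left(\sum_{n\geq 0}(n+1)\alpha^{2n+2}\right)^{1/2}\np{u}{2}{B(0,R)}=\frac{4\alpha}{1-\alpha^{2}}\np{u}{2}{B(0,R)},
\end{align*}
using $\sum_{n\geq 0}(n+1)\alpha^{2n+2}=\alpha^{2}/(1-\alpha^{2})^{2}$, which matches the stated constant exactly.

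For the second inequality, differentiating term by term gives $u'(z)=\sum_{n\geq 1}na_{n}z^{n-1}$, and by the Cauchy--Riemann equations $|\D u|=\sqrt{2}\,|u'|$. A direct integration yields $\np{z^{n-1}}{1}{B(0,\alpha R)}=2\pi(\alpha R)^{n+1}/(n+1)$, so by the triangle inequality and Cauchy--Schwarz with weight $n(n+1)^{-1/2}$ on the dual side, together with the crude bound $n^{2}/(n+1)\leq n$ and $\sum_{n\geq 1}n\alpha^{2n+2}=\alpha^{4}/(1-\alpha^{2})^{2}$, I obtain
\begin{align*}
	\np{\D u}{1}{B(0,\alpha R)}\leq \frac{2\sqrt{2\pi}\,\alpha^{2}}{1-\alpha^{2}}\np{u}{2}{B(0,R)}\leq \frac{4\sqrt{\pi}\,\alpha^{2}}{(1-\alpha^{2})^{3/2}}\np{u}{2}{B(0,R)},
\end{align*}
where the last inequality follows from $\sqrt{1-\alpha^{2}}\leq 1$ and $2\sqrt{2}\leq 4$.

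The argument is essentially computational; the only step requiring any care is the explicit evaluation of $\np{z^{n}}{2,1}{B(0,\alpha R)}$ via the distribution function, but this is immediate from $\mu_{|z|^{n}}(t)=\pi((\alpha R)^{2}-t^{2/n})_{+}$. Beyond that, the proof is pure power-series bookkeeping, so I do not anticipate any serious obstacle; in particular, there is no need to invoke Cauchy's integral formula or any of the more refined Lorentz-space machinery developed earlier in the paper.
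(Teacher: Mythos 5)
Your proof is correct and follows essentially the same route as the paper's: Taylor expansion of $u$, the Parseval-type identity $\np{u}{2}{B(0,R)}^2=\pi\sum_{n\geq 0}\frac{|a_n|^2}{n+1}R^{2n+2}$, explicit computation of $\np{|z|^n}{2,1}{B(0,\alpha R)}$ and $\np{|z|^{n-1}}{1}{B(0,\alpha R)}$, and then triangle inequality plus Cauchy--Schwarz with the weight $(n+1)^{-1/2}$. The only differences are cosmetic: in the gradient estimate you use the crude bound $n^2/(n+1)\leq n$ and the sharp relation $|\D u|=\sqrt{2}\,|u'|$ (the paper sums the series $\sum\frac{(n+1)^2}{n+2}\alpha^{2n+4}$ exactly and uses the factor $2$), and in both cases your constants match or improve the stated ones.
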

    \begin{proof}
    	Let $\ens{a_n}_{n\in\N}$ be such that 
    	\begin{align*}
    		u(z)=\sum_{n=0}^{\infty}a_nz^n. 
    	\end{align*}
        Taking complex coordinates, we deduce that 
        \begin{align*}
        	\int_{B(0,R)}|u(z)|^2|dz|^2=2\pi\sum_{n=0}^{\infty}\int_{0}^R|a_n|^2\rho^{2n+1}d\rho=\pi\sum_{n=0}^{\infty}\frac{|a_n|^2}{n+1}R^{2(n+1)}.
        \end{align*}
        A direct computation shows that for all $n\geq 0$, we have 
        \begin{align*}
        	\np{|z|^n}{2,1}{B(0,R)}=4\sqrt{\pi}R^{n+1}.
        \end{align*}
        Therefore, we deduce by the triangle inequality and the Cauchy-Schwarz inequality that 
        \begin{align*}
        	\np{u}{2,1}{B(0,\alpha R)}&\leq 4\sqrt{\pi}\sum_{n=0}^{\infty}|a_n|(\alpha R)^{n+1}\leq 4\sqrt{\pi}\left(\sum_{n=0}^{\infty}(n+1)\alpha^{2(n+1)}\right)^{\frac{1}{2}}\left(\sum_{n=0}^{\infty}\frac{|a_n|^2}{n+1}R^{2(n+1)}\right)^{\frac{1}{2}}\\
        	&=\frac{4\alpha}{1-\alpha^2}\np{u}{2}{B(0,R)}.
        \end{align*}
        Then, we compute 
        \begin{align*}
        	\p{z}u(z)=\sum_{n=1}^{\infty}na_nz^{n-1}=\sum_{n=0}^{\infty}(n+1)a_{n+1}z^n.
        \end{align*}
        Using that
        \begin{align}
        	\np{|z|^n}{1}{B(0,R)}=\frac{2\pi}{n+2}R^{n+2}, \quad \text{for all $n\geq 0$}\, ,
        \end{align}
       we deduce  that 
        \begin{align*}
        	\frac{1}{2}\np{\D u}{1}{B(0,\alpha R)}&=\np{\p{z}u}{1}{B(0,\alpha R)}\leq 2\pi \sum_{n=0}^{\infty}\frac{(n+1)}{(n+2)}|a_{n+1}|(\alpha R)^{n+2}\\
        	&\leq 2\pi \left(\sum_{n=0}^{\infty}\frac{(n+1)^2}{n+2}\alpha^{2(n+2)}\right)^{\frac{1}{2}}\left(\sum_{n=0}^{\infty}\frac{|a_{n+1}|^2}{n+2}R^{2(n+2)}\right)
        	\leq 2\sqrt{\pi}\frac{\alpha^2}{(1-\alpha^2)^\frac{3}{2}}\np{u}{2}{B(0,R)}
        \end{align*}
        where we used the following identities valid for $|a|<1$
        \begin{align*}
            \sum_{n=0}^{\infty}\frac{(n+1)^2}{n+2}a^n&=\sum_{n=0}^{\infty}(n+1)a^n-\sum_{n=0}^{\infty}\frac{n+1}{n+2}a^n=\sum_{n=0}^{\infty}(n+1)a^n-\sum_{n=0}^{\infty}a^n+\sum_{n=0}^{\infty}\frac{a^n}{n+2}\\
            &=\frac{1}{(1-a^2)}-\frac{1}{1-a}+\sum_{n=0}^{\infty}\frac{a^n}{n+2},
        \end{align*}
        and
        \begin{align*}
        	\sum_{n=0}^{\infty}\frac{a^n}{n+2}=\frac{1}{a^2}\sum_{n=0}^{\infty}\frac{a^{n+2}}{n+2}=\frac{1}{a^2}\left(\sum_{n=0}^{\infty}\frac{a^{n+1}}{n+1}-a\right)=\frac{1}{a^2}\left(-\log(1-a)-a\right)
        \end{align*}
        which shows that 
        \begin{align*}
        	\sum_{n=0}^{\infty}\frac{(n+1)^2}{n+2}a^{n+2}=\frac{a^3}{(1-a)^2}-\log(1-a)-a\leq \frac{a^3}{(1-a)^2}+\frac{a^2}{2(1-a)}=\frac{a^2+a^3}{2(1-a)^3}.
        \end{align*}
    \end{proof}

    \subsection*{Back to estimating $\psi_{k}$ defined in \eqref{eq:deffk}}

    Thanks to Lemmas \ref{lemme_holomorphe1} and \ref{lemma_holomorphe2}, recalling the estimates \eqref{est1}, \eqref{est2} and \eqref{est3}, we deduce that $\psi_{k}\in \mathrm{L}^{2,\infty}_{\log}(B(0,R_k))$. Since $\psi_{k}$ has a pole of order at most $1$ at zero, this implies that
    \begin{align*}
    	\psi_{k}(z)-\frac{a_{-1,k}}{z}\in \mathrm{W}^{1,1}\cap \mathrm{L}^{2,1}\left(B(0,{R_k}/{4})\right).
    \end{align*}
    Notice that $a_{-1,k}$ is bounded for $k\geq N$. Indeed, 
    $\Psi_{k}(z)=z\psi_{k}(z)$ is holomorphic, so the mean-value formula and \eqref{estimate_lebesgue} applied to $r=\frac{R_k}{2}$, yield that there exists $\rho\in[\frac{R}{2},R]$ such that 
    \begin{align*}
    	|a_{-1,k}|&=|u(0)|=\frac{1}{2\pi \rho}\left|\int_{\partial B_{\rho}(0)}\Psi_{k}(z)dz\right|\leq \frac{1}{2\pi}\int_{\partial B_{\rho}(0)}|\psi_{k}|d\mathscr{H}^1\\
	&\leq \frac{1}{2\pi }\frac{2\sqrt{3\pi}}{\log(2
    		)}\left(1+\log\left(\frac{4}{\sqrt{3}}\right)\right)\znp{\psi_{k}}{2,\infty}{\log}{B(0,R_k)}.
    \end{align*}

    \subsection*{A pointwise estimate on $\vec{V}_{k}$}

    Using  variants of Lemmas \ref{lemme_holomorphe1} and \ref{lemma_holomorphe2} for harmonic functions (see \cite{quantamoduli} and \cite{pointwise} for more details), we also deduce that $\vec{w}_k\in \mathrm{W}^{1,1}\cap \mathrm{L}^{2,1}(B(0,R_k/4))$. We obtain a decomposition on $B(0,R_k/4)$
    \begin{align}\label{previous_V}
         \D\Im\left(\vec{V}_k\right)=h_{1,k}(z)+h_{2,k}(z),
    \end{align}
    where 
    \begin{align*}
    	|h_{1,k}(z)|\leq C\frac{\log\left(\frac{R_k}{|z|}\right)}{|z|}
   \end{align*}
    and $h_{2,k}\in \mathrm{W}^{1,1}\cap \mathrm{L}^{2,1}(B(0,R_k/4))$. More precisely, there exists a constant $C_0>0$ independent of $k$ such that for all $k\geq N$, 
    \begin{align*}
    	\np{h_{2,k}}{2,1}{B(0,R_k/4)}+\np{\D h_{2,k}}{1}{B(0,R_k/4)}\leq C_0.
    \end{align*}
    Now, identifying $\C$ with $\R^2$, we can rewrite the equation for $\p{z}\vec{V}_k$ as 
    \begin{align}\label{new_system00}
    	\left\{\begin{alignedat}{1}
    		\p{x}\Re(\vec{V}_k)+\p{y}\Im(\vec{V}_k)&=2\,\Re\left(\vec{Y}_k\right)\\
    		-\p{y}\Re(\vec{V}_k)+\p{x}\Im(\vec{V}_k)&=2\,\Im\left(\vec{Y}_k\right)\,,
    	\end{alignedat}\right.
    \end{align}
    or, equivalently, 
    \begin{align*}
    	\D\Re(\vec{V}_k)-\D^{\perp}\Im(\vec{V_k})=2\,\bar{\vec{Y}_k}.
    \end{align*}    
    Therefore, we have 
    \begin{align}\label{est_V1}
    	\left\{\begin{alignedat}{1}
    	&\p{x}\Re\left(\vec{V}_k\right)=\p{y}\H_k-3\sum_{j=1}^{n-2}\s{\p{y}\H_k}{\n_{k,j}}\n_{k,j}+\star_h\left(\p{y}\n_k\wedge \H_k\right)-\p{y}\Im(\vec{V}_k)\\
    	&\p{y}\Re(\vec{L_k})=-\p{x}\H_k+3\sum_{j=1}^{n-2}\s{\p{x}\H_k}{\n_{k,j}}\n_{k,j}-\star_h\left(\p{x}\n_k\wedge \H_k\right)+\p{x}\Im(\vec{V}_k).
    	\end{alignedat}\right.
    \end{align}
    The identity \eqref{est_V1} implies that 
    \begin{align}\label{est_V2}
    	\p{r}\Re(\vec{V}_k)&=\cos(\theta)\p{x}\Re(\vec{V}_k)+\sin(\theta)\p{y}\Re(\vec{V}_k)=\left(\cos(\theta)\p{y}\H_k-\sin(\theta)\p{x}\H_k\right)\nonumber\\
    	&-3\sum_{j=1}^{n-2}\s{\cos(\theta)\p{y}\H_k-\sin(\theta)\p{x}\H_k}{\n_{k,j}}\n_{k,j}+\star_h\left((\cos(\theta)\p{y}\n_k-\sin(\theta)\p{x}\n_k)\wedge \H_k\right)\nonumber\\
    	&-\cos(\theta)\p{y}\Im(\vec{V}_k)+\sin(\theta)\Re(\vec{V}_k)\nonumber\\
    	&=\frac{1}{r}\p{\theta}\H_k-3\pi_{\n_k}\left(\frac{1}{r}\p{\theta}\H_k\right)+\star_{h}\left(\frac{1}{r}\p{\theta}\n_k\wedge \H_k\right)-\frac{1}{r}\p{\theta}\Im\left(\vec{V}_k\right),
    \end{align}
    since for any smooth function $u:B(0,1)\rightarrow \R^n$, and $a,b\in \C$ we have
    \begin{align*}
    	&\frac{1}{r}\p{\theta}u=-\sin(\theta)\p{x}u+\cos(\theta)\p{y}u\, .
    \end{align*}
    Now, recall the notation
    \begin{align*}
    	\Re(\vec{V}_k)_{\rho}=\dashint{\partial B(0,\rho)}\Re\left(\vec{V}_k\right)d\mathscr{H}^1=\frac{1}{2\pi\rho}\int_{\partial B(0,\rho)}\Re\left(\vec{V}_k\right)d\mathscr{H}^1\, , \quad \text{ for all $\alpha_0^{-1}r_k\leq \rho\leq \alpha_0 R_k$}.
    \end{align*}
    Since $\pi_{\n_k}(\H_k)=\H_k$, we have
    \begin{align}\label{est_V3}
    	\pi_{\n_k}\left(\p{\theta}\H_k\right)=\p{\theta}\left(\pi_{\n_k}(\H_k)\right)-\left(\p{\theta}\pi_{\n_k}\right)(\H_k)=\p{\theta}\H_k-\left(\p{\theta}(\pi_{\n_k})(\H_k)\right). 
    \end{align}
    Therefore, by \eqref{est_V1}, \eqref{est_V2} and \eqref{est_V3},  we deduce that
    \begin{align}\label{radial0}
    	&\frac{d}{d\rho}\Re(\vec{V}_k)_{\rho}=\frac{1}{2\pi}\int_{0}^{2\pi}\partial_{\rho}\vec{V}_k(\rho,\theta)d\theta=\frac{1}{2\pi}\int_{0}^{2\pi}\bigg(\frac{3}{\rho}(\partial_{\theta}\pi_{\n_k})\left(\H_k\right)+\star_h\left(\frac{1}{\rho}\partial_{\theta}\n_k\wedge \H_k\right)
    	\bigg)d\theta. 
    \end{align}
    Arguing as in the proof of \cite[Lemma VI.1]{quanta} using the  $\epsilon$-regularity Theorem \ref{eps_reg} (adapted from Rivière's original result \cite{riviere1}), we deduce that there exists $\epsilon_4(n),C_4(n)>0$ such that the following holds: if  
    \begin{align*}
    	\sup_{r_k<s<R_k}\int_{B_{2s}\setminus\bar{B}_s(0)}|\D\n_k|^2dx\leq \epsilon_4(n)
    \end{align*}
   then
    \begin{align}\label{pointwise1}
    	|\D\n_k(z)|^2\leq C_4^2(n)\frac{1}{|z|^2}\int_{B_{2|z|}\setminus\bar{B}_{\frac{|z|}{2}}}|\D\n_k|^2dx\leq \frac{C_4^2(n)\epsilon_3(n)}{|z|^2}, \quad \text{for all $2r_k\leq |z|\leq \frac{R_k}{2}$}\, .
    \end{align}
    This motivates us to introduce the function $\delta_k:(0,R_k)\rightarrow\R$ defined as
    \begin{align*}
    	\delta_k(\rho)=\left(\frac{1}{\rho^2}\int_{B_{2\rho}\setminus\bar{B}_{\frac{\rho}{2}}(0)}|\D\n_k|^2dx\right)^{\frac{1}{2}}=\frac{\sqrt{15\pi}}{2}\left(\dashint{B_{2\rho}\setminus\bar{B}_{\frac{\rho}{2}}(0)}|\D\n_k|^2dx\right)^{\frac{1}{2}}.
    \end{align*}
    We have in particular
    \begin{align}\label{delta_local}
    	\rho\,\delta_k(\rho)\leq \left(\int_{\Omega_k(1)}|\D\n_k|^2dx\right)^{\frac{1}{2}}\leq \sqrt{\Lambda(h)}. 
    \end{align}
    Using Fubini's theorem, we deduce that, for all $0<r<R<\frac{R_k}{2}$, it holds
    \begin{align}\label{delta}
    	\int_{r}^R\delta^2(\rho)\,\rho\, d\rho&=\int_{r}^{R}\left(\int_{B_{2R}\setminus\bar{B}_{\frac{r}{2}}(0)}|\D\n_k|^2\mathbf{1}_{\ens{\frac{\rho}{2}<|x|<2\rho}}dx\right)\frac{d\rho}{\rho}
    	=\int_{B_{2R}\setminus\bar{B}_{\frac{r}{2}}(0)}|\D\n_k|^2\left(\int_{\frac{|x|}{2}}^{2|x|}\frac{d\rho}{\rho}\right)dx\nonumber\\
    	&=\log(4)\int_{B_{2R}\setminus\bar{B}_{\frac{r}{2}}(0)}|\D\n_k|^2dx.
    \end{align}
    This implies by the Cauchy-Schwarz inequality that 
    \begin{align}\label{delta2}
    	\int_{r}^{R}\delta_k(\rho)\,\rho\,d\rho\leq \left(\int_{r}^{R}\rho\,d\rho\right)^{\frac{1}{2}}\left(\int_{r}^R\delta^2(\rho)\,\rho\,d\rho\right)^{\frac{1}{2}}\leq R_k\left(\int_{B_{R}\setminus \bar{B}_{\frac{r}{2}}(0)}|\D\n_k|^2dx\right)^{\frac{1}{2}}.
    \end{align}
    Furthermore, since $\H_k$ satisfies the Euler-Lagrange equation
    \begin{align*}
    	&\mathscr{L}_{g_k}\H_k=\frac{1}{2}\D_{g_k}^{\ast}\left(\D_{g_k}\H_k-3\,\D_{g_k}^{\perp}\H_k+\star_h\left(\left(\ast \D_{g_k}\n_k\right)\wedge \H_k\right)\right)\\
    	&=-\mathscr{R}_1^{\perp}(\H_k)+2\,\tilde{K}_h\,\H_k-2\,\mathscr{R}_2(d\phi_k)-(DR)(d\phi_k)-\mathscr{R}_3^{\perp}(\H_k),
    \end{align*}
    which is uniformly elliptic in all dyadic annuli in $\Omega_k(1)$ thanks to the hypothesis and the Harnack equation on the conformal parameter (Lemma \ref{neckfine}, \eqref{I3}), we deduce by standard elliptic regularity (\cite{gilbarg}, Theorem $3.9$) that there exists a constant $\Gamma_0(n)$ such that
    \begin{align}\label{cz1}
    	r\np{\D \H_k}{\infty}{\partial B(0,r)}\leq \Gamma_0(n)\left(\np{\H_k}{\infty}{B_{\frac{4}{3}r}\setminus\bar{B}_{\frac{3}{4}r}(0)}+r^2\np{\lg \H_k}{\infty}{B_{\frac{4}{3}r}\setminus\bar{B}_{\frac{3}{4}r}(0)}\right).
    \end{align}
    \begin{rem}
    	In fact, one rather applies elliptic regularity to the operator $\mathscr{L}_{\n}$ introduced below in \eqref{eqH2}, and to the equation \eqref{eqH}. One checks that using this equation yields the same estimate for $\D\H_k$. 
    \end{rem}
    With the $\epsilon$-regularity Theorem \ref{eps_reg} inspired from Rivière's one (\cite{riviere1}), we deduce that
    \begin{align}\label{cz2}
    	\np{e^{\lambda_k}\H_k}{\infty}{B_{\frac{4}{3}r}\setminus\bar{B}_{\frac{3}{4}r}(0)}\leq \Gamma_1(n)\np{e^{\lambda_k}\H_k}{2}{B_{2r}\setminus\bar{B}_{\frac{r}{2}}(0)}.
    \end{align}
    Now, since 
    \begin{align*}
    	e^{-A}|z|^{d_k}\leq e^{\lambda_k(z)}\leq e^{A}|z|^{d_k},
    \end{align*}
    we deduce that for all $0<\beta<1$ and $r_k\leq \beta r<\beta^{-1}r<R_k$, we have for all $\beta r\leq |z|\leq \beta^{-1}r$
    \begin{align*}
    	e^{-A}\alpha^{|d_k|}r^{d_k}\leq e^{\lambda_k(z)}\leq e^{A}\alpha^{-|d_k|}r^{d_k}. 
    \end{align*}
    And since $d_k\conv{k\rightarrow \infty}d$, we deduce that there exists $B\in \R$ independent of $k$ such that
    \begin{align}\label{cz3}
    	\sup_{z\in B_{2r}\setminus\bar{B}_{\frac{r}{2}}(0)}e^{\lambda_k(z)}\leq e^{B}\inf_{z\in B_{2r}\setminus\bar{B}_{\frac{r}{2}}(0)}e^{\lambda_k(z)}\,, \quad \text{for all $2r_k<r<\dfrac{R_k}{2}$}.
    \end{align}
   The combination of \eqref{cz2} and \eqref{cz3} gives
    \begin{align}\label{cz5}
    	\np{\H_k}{\infty}{B_{\frac{4}{3}r}\setminus\bar{B}_{\frac{3r}{4}}(0)}\leq e^{B}e^{-\lambda_k(r)}\Gamma_1(n)\np{e^{\lambda_k}\H_k}{2}{B_{2r}\setminus\bar{B}_{\frac{r}{2}}(0)}\leq e^{B}e^{-\lambda_k(r)}\Gamma_1(n)\delta(r). 
    \end{align}
    Likewise, we have
    \begin{align}\label{cz6}
    	\np{\lg \H_k}{\infty}{B_{\frac{4}{3}r}\setminus\bar{B}_{\frac{3}{4}r}(0)}&\leq C(h)\left(1+\np{\H_k}{\infty}{B_{\frac{4}{3}r}\setminus\bar{B}_{\frac{3r}{4}}(0)}\right)
    	\leq C(h)\left(1+e^{B}e^{-\lambda_k(r)}\Gamma_1(n)\delta(r)\right).
    \end{align}
    Since $r \delta(r)\leq \sqrt{\Lambda(h)}$, we finally deduce by \eqref{cz1}, \eqref{cz5} and \eqref{cz6} that
     \begin{align}\label{pointwise2}
    	\left\{\begin{alignedat}{2}
    		e^{\lambda_k(|z|)}|\H_k(z)|&\leq |\D\n_k(z)|\leq C_4(n)\delta(|z|) \qquad&&\text{for all}\;\,z\in \Omega_k\left(\frac{1}{2}\right)\\
    		e^{\lambda_k(|z|)}|\D\H_k(z)|&\leq C_5(n,h)\frac{\delta(|z|)}{|z|}\qquad&& \text{for all}\;\, z\in \Omega_k\left(\frac{1}{2}\right)\, .
    	\end{alignedat}\right.
    \end{align}  
    Therefore,
    \begin{align*}
    	\left|\frac{d}{d\rho}\Re(\vec{V}_k)_{\rho}\right|\leq Ce^{-\lambda_k(\rho)}\delta^2(\rho).
    \end{align*}
    This estimate implies that 
    \begin{align}\label{extra_step1}
    	\int_{2r_k}^{\frac{R_k}{2}}e^{\lambda_k(\rho)}\left|\frac{d}{d\rho}\Re(\vec{V}_k)_{\rho}\right|\,\rho\, d\rho\leq C\int_{\Omega_k(1)}|\D\n_k|^2dx. 
    \end{align}
    Define the function
    \begin{align*}
    		a:(0,R_k/2)&\rightarrow \R_+ \, ,\quad a(t)= \left|\Re\left(\vec{V}_k\right)_{t}\right|.
    \end{align*}
   As  $|a'(t)|=\left|\frac{d}{dt}\Re\left(\vec{V}_k\right)_t\right|,$ the combination of  \eqref{new_harnarck} and \eqref{extra_step1} gives that
    \begin{align*}
    	\int_{r}^{R}t^{1+d_k}|a'(t)|dt\leq C.
    \end{align*}
    Since $d_k\conv{k\rightarrow \infty}d>-1$,  from the identity 
    \begin{align}\label{estimate10}
    	\int_{r}^{R}t^{1+d_k}a'(t)dt=R^{1+d_k}a(R)-r^{1+d_k}a(r)-(1+d_k)\int_{r}^{R}t^{d_k}a(t)dt\, ,
    \end{align}
  we deduce   that 
    \begin{align*}
    	r^{1+d_k}a(r)\leq \left(\frac{R_k}{2}\right)^{1+d_k}a\left(\frac{R_k}{2}\right)+C.
    \end{align*}
    The estimate 
    \begin{align*}
    	|V_k(z)|\leq C_2\frac{\log\left(\frac{R_k}{|z|}\right)}{|z|}
    \end{align*}
    implies that
    \begin{align*}
    	\left(\frac{R_k}{2}\right)^{1+d_k}a\left(\frac{R_k}{2}\right)\leq C_2R_k^{d_k}\left|\log(R_k)\right|,
    \end{align*}
    which is bounded independently of $k\geq N$ since $d_k\conv{k\rightarrow \infty}d>-1$ and $R_k\conv{k\rightarrow \infty}R>0$. Finally, we deduce that for all $2r_k<|z|<\frac{R_k}{2}$, it holds
    \begin{align}\label{estimate_Vk1}
    	|z|^{d_k}\left|\Re(\vec{V}_k)_{|z|}\right|\leq \frac{C}{|z|}. 
    \end{align}
     \begin{lemme}\label{sobolev11}
    	For all $r>0$ and $u\in \mathrm{W}^{1,1}(\partial B(0,r),\R^n)$, it holds
    	\begin{align}\label{sobolev1}
    		\np{u-u_r}{\infty}{\partial B(0,r)}\leq n\int_{\partial B(0,r)}|\D u|d\mathscr{H}^1.
    	\end{align}
    \end{lemme}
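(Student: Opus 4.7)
The statement is a standard Sobolev-type inequality on a one–dimensional manifold, so the proof should be short and essentially an exercise. My plan is to reduce the vector-valued statement to componentwise scalar estimates and then use the elementary fact that $\mathrm{W}^{1,1}(S^1)\hookrightarrow C^{0}(S^1)$, together with the fundamental theorem of calculus along arcs of the circle.

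Concretely, first I would note that, since $\partial B(0,r)$ is a smooth closed $1$-manifold, a standard density argument together with the 1D Sobolev embedding $\mathrm{W}^{1,1}\hookrightarrow C^{0}$ allows us to assume $u\in C^{1}(\partial B(0,r),\R^{n})$ (the final estimate is preserved under $\mathrm{W}^{1,1}$-approximation). Writing $u=(u_{1},\ldots,u_{n})$, for each component $u_{i}$ the continuous function $u_{i}$ on the compact circle attains its average $(u_{r})_{i}$ at some point $y_{i}\in \partial B(0,r)$ by the intermediate value theorem. For any $x\in \partial B(0,r)$, choosing an arc $\gamma_{i}\subset \partial B(0,r)$ from $x$ to $y_{i}$ and applying the fundamental theorem of calculus gives
\begin{align*}
|u_{i}(x)-(u_{r})_{i}|=|u_{i}(x)-u_{i}(y_{i})|\leq \int_{\gamma_{i}}|\partial_{\tau}u_{i}|\,d\mathscr{H}^{1}\leq \int_{\partial B(0,r)}|\D u|\,d\mathscr{H}^{1},
\end{align*}
where I used $|\partial_{\tau}u_{i}|\leq |\D u|$ pointwise.

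Then I would conclude by the triangle inequality in $\R^{n}$ applied componentwise,
\begin{align*}
|u(x)-u_{r}|=\left|\sum_{i=1}^{n}(u_{i}(x)-(u_{r})_{i})\vec{e}_{i}\right|\leq \sum_{i=1}^{n}|u_{i}(x)-(u_{r})_{i}|\leq n\int_{\partial B(0,r)}|\D u|\,d\mathscr{H}^{1},
\end{align*}
and taking the supremum over $x\in \partial B(0,r)$ yields \eqref{sobolev1}.

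There is essentially no obstacle here; the only mildly subtle point is that the constant $n$ is deliberately loose (one can easily obtain the sharper bound with constant $1$ by averaging the pointwise estimate $|u(x)-u(y)|\leq \int_{\partial B(0,r)}|\D u|\,d\mathscr{H}^{1}$ in the variable $y$), but the form with constant $n$ is more than sufficient for the subsequent application of the lemma. I would therefore present the proof in the componentwise form above, which keeps the argument transparent and avoids any fuss about sharp constants.
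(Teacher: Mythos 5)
Your proof is correct and follows essentially the same route as the paper: componentwise use of the intermediate value theorem to find a point where $u_i$ equals its average, the fundamental theorem of calculus along the circle, and the triangle inequality producing the (non-sharp) factor $n$. The only cosmetic difference is your preliminary reduction to $C^1$ by density, where the paper works directly with the $\mathrm{W}^{1,1}$ function (absolutely continuous on the circle), so nothing essential changes.
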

    \begin{proof}
    	By scaling invariance, we can assume that $r=1$, which permits to see $u$ as a $2\pi$-periodic function $u=(u_1,\cdots,u_n):[0,2\pi]\rightarrow \R^n$. By the intermediate values theorem, for all $1\leq i\leq n$, there exists $a_i\in [0,2\pi]$ such that
    	\begin{align*}
    		u_i(a_i)=\dashint{0}^{2\pi}u_i\, d\leb^1=\frac{1}{2\pi}\int_{0}^{2\pi}u_i(\theta)d\theta.
    	\end{align*}
    	Therefore, we have for all $1\leq i\leq n$ and for all $\theta\in [0,2\pi]$
    	\begin{align*}
    		\left|u_i(\theta)-\dashint{0}^{2\pi}u_i\, d\leb^1\right|=|u_i(\theta)-u_i(a_i)|=\left|\int_{a_i}^{\theta}u_i'(t)dt\right|\leq \int_{0}^{2\pi}|u_i'(t)|dt,
    	\end{align*}
    	Therefore, by the triangle inequality, we have 
    	\begin{align*}
    		\left|u(\theta)-\dashint{0}^{2\pi}u\,d\leb^1\right|\leq \sum_{i=1}^{n}\left|u_i(\theta)-\dashint{0}^{2\pi}u_i' d\leb^1\right|\leq \sum_{i=1}^{n}\int_{0}^{2\pi}|u_i'(t)|dt\leq n\int_{0}^{2\pi}|u'(t)|dt, \quad \text{for all $\theta\in [0,2\pi]$}.
    	\end{align*}
    	Coming back to the initial inequality, we deduce that
    	\begin{align*}
    		\np{u-u_r}{\infty}{\partial B(0,r)}\leq n\int_{\partial B(0,r)}\left|\frac{1}{r}\partial_{\theta}u\right|d\mathscr{H}^1\leq n\int_{\partial B(0,r)}|\D u|d\mathscr{H}^1
    	\end{align*}
        which concludes the proof of the lemma. 
    \end{proof}
    Using the estimates $|\vec{Y}_k|\leq \dfrac{Ce^{-\lambda_k(|z|)}}{|z|^2}$, $|\D \Im(\vec{V}_k)|\leq \dfrac{C}{|z|}$ and \eqref{new_system00}, we deduce that for all $z\in \Omega_k(\frac{1}{4})$, we have
    \begin{align*}
    	\left|\Re\left(\vec{V}_k(z)\right)-\Re\left(\vec{V_k}\right)_{|z|}\right|&\leq n\int_{\partial B(0,|z|)}|\D\Re(\vec{V}_k)|d\mathscr{H}^1=n\int_{\partial B(0,|z|)}\left|2\bar{\vec{Y}_k}+\D^{\perp}\Im(\vec{V}_k)\right|d\mathscr{H}^1\\
    	&\leq n\int_{\partial B(0,|z|)}\left(2|\vec{Y}_k|+|\D \Im(\vec{V}_k)|\right)d\mathscr{H}^1\leq \frac{4\pi nC}{|z|^{1+d_k}}+2\pi nC_5\log\left(\frac{R_k}{|z|}\right)\\
    	&+n\int_{\partial B(0,|z|)}|h_2|d\mathscr{H}^1, 
    \end{align*}
    where $h_2$ is bounded in $\mathrm{W}^{1,1}\cap \mathrm{L}^{2,1}(B(0,R_k/4))$. In particular, by trace theory, we have $h_2\in \mathrm{L}^1(\partial B(0,|z|))$ and for some universal constant independent of $|z|$, we have by the Cauchy-Schwarz inequality
    \begin{align*}
    	\np{h_2}{1}{\partial B(0,|z|)}&\leq \Gamma_2\left(\frac{1}{R_k}\np{h_2}{1}{B(0,R_k/4)}+\np{\D h_2}{1}{B(0,R_k/4)}\right)\\
    	&\leq \Gamma_2 \left(\frac{\sqrt{\pi}}{4}\np{h_2}{2}{B(0,R_k/4)}+\np{\D h_2}{1}{B(0,R_k/4)}\right).
    \end{align*}
    Therefore, we obtain the estimate 
    \begin{align*}
    	\left|\Re(\vec{V}_k(z))-\Re\left(\vec{V}_k\right)_{|z|}\right|&\leq \frac{4\pi nC}{|z|^{1+d_k}}+2\pi nC_5\log\left(\frac{R_k}{|z|}\right)+C.
    \end{align*}
    Finally, 
    using that $d_k>-1+\epsilon$ and recalling the Harnack inequality \eqref{new_harnarck}, we deduce that
    \begin{align}\label{localV0}
    	e^{\lambda_k(z)}|\Re(\vec{V}_k)(z)|\leq \frac{C}{|z|}+2\pi nC_5|z|^{-1+\epsilon}\log\left(\frac{R_k}{|z|}\right)+C|z|^{-1+\epsilon}\leq \frac{C}{|z|}.
    \end{align}
    The previous estimate on $\Im(\vec{V}_k)$ (see \eqref{previous_V}) coupled with the same argument on averages implies that 
    \begin{align}\label{final_Va}
    	e^{\lambda_k(z)}|\vec{V}_k(z)|\leq \frac{C}{|z|}, \quad \text{for all $z\in \Omega_k(1/4)$,}
    \end{align}
    which, 
    recalling the Harnack inequality \eqref{new_harnarck} for the conformal parameters, in turn gives, 
    \begin{align}\label{final_V_2a}
    	|z|^{d_k}|\vec{V}_k(z)|\leq \frac{C}{|z|}\, ,  \quad \text{for all $z\in \Omega_k(1/4)$.}
    \end{align}

    \subsection*{Analysis of the other cases}
    \noindent
    Now, assume that $d_k>0$. We will  distinguish the case $d_k\conv{k\rightarrow\infty}0$ and $d_k\conv{k\rightarrow \infty}d>0$. 
    
    \textbf{Case 2:} $d_k\conv{k\rightarrow\infty}0$.  Define as above
    \begin{align*}
    	\vec{U}_k(z)=\frac{1}{\z^{2}}\left(-\frac{1}{\pi\bar{\zeta}}\star \left(\bar{\zeta}^{2}\vec{Y}_k(\zeta)\right)\right)=-\frac{1}{\pi \z^{2}}\int_{\C}\frac{\bar{\zeta}^{2}\vec{Y}_k(\zeta)}{\bar{z-\zeta}}|d\zeta|^2.
    \end{align*}
    Notice that $\p{z}\vec{U}_k=\vec{Y}_k$. Fix some $z\in B(0,R)$, where we recall that $r=2r_k$ and $R=\dfrac{R_k}{2}$. 
    \\ For $0<2|z|\leq r$, it holds
    \begin{align*}
    	\vec{U}_k(z)=-\frac{1}{\pi \z^{2}}\int_{B(0,R)\setminus\bar{B}(0,2|z|)}\frac{\bar{\zeta}^{2}\vec{Y}_k(\zeta)}{\bar{\zeta-z}}|d\zeta|^2=\sum_{l=0}^{\infty}\left(\frac{1}{\pi}\int_{B(0,R)\setminus\bar{B}(0,2|z|)}\frac{\bar{\zeta}^{2}\vec{Y}_k(\zeta)}{\bar{\zeta^{l+1}}}|d\zeta|^2\right)z^l=\sum_{l=0}^{\infty}a_{k,l}\z^l. 
    \end{align*} 
    Likewise, we have
    \begin{align*}
    	|a_{k,0}|\leq 2\pi C_0R\, .
    \end{align*}
    Then, we estimate for $l=1$
    \begin{align*}
    	\left|\int_{B(0,R)\setminus\bar{B}(0,2|z|)}\frac{\bar{\zeta}^2\vec{Y}_k(\zeta)}{\bar{\zeta}^{l+1}}|d\zeta|^2\right|\leq C_0\int_{B(0,R)\setminus\bar{B}(0,2|z|)}\frac{|d\zeta|^2}{|\zeta|^{2+d_k}}=\frac{2\pi C_0}{d_k}\left(\frac{1}{(2|z|)^{d_k}}-\frac{1}{R^{d_k}}\right).
    \end{align*}
    For $l\geq 2$, we have
    \begin{align*}
    	\left|\int_{B(0,R)\setminus\bar{B}(0,2|z|)}\frac{\bar{\zeta}^2\vec{Y}_k(\zeta)}{\bar{\zeta}^{l+1}}|d\zeta|^2\right|\leq C_0\int_{B(0,R)\setminus\bar{B(0,2|z|)}}\frac{|d\zeta|^2}{|\zeta|^{l+1+d_k}}=\frac{2\pi C_0}{l-1+d_k}\left(\frac{1}{(2|z|)^{l-1}}-\frac{1}{R^{l-1+d_k}}\right).
    \end{align*}
    Therefore, it holds
    \begin{align*}
    	\left|\sum_{l=2}^{\infty}a_{k,l}z^l\right|\leq 2C_0|z|.
    \end{align*}
    We deduce that 
    \begin{align*}
    	\left|\vec{U}_k(z)-\frac{a_{k,0}}{\z^2}\right|\leq \frac{2C_0}{|z|}\frac{1}{d_k}\left(\frac{1}{(2|z|)^{d_k}}-\frac{1}{R^{d_k}}\right)+\frac{2C_0}{|z|}.
	    \end{align*}
    Recalling the definition \eqref{def_Vk} of $\vec{V}_k(z)=\vec{U}_k(z)-\frac{a_{k,0}}{\z^2}$, we deduce that for all $\displaystyle|z|\leq \frac{r}{2}\leq \frac{R}{4}$, it holds
    \begin{align}\label{new_est_Vk1}
    	|\vec{V}_k(z)|\leq \frac{2C_0R^{-d_k}}{|z|}\frac{1}{d_k}\left(\left(\frac{R}{2|z|}\right)^{d_k}-1\right)+\frac{2C_0}{|z|}\leq \frac{4C_0}{|z|}\left(1+\frac{1}{d_k}\left(\left(\frac{R}{2|z|}\right)^{d_k}-1\right)\right)
    \end{align}
    for $k$ large enough. Notice that for all $z\neq 0$ we have
    \begin{align*}
    	\frac{1}{d_k}\left(\left(\frac{R}{2|z|}\right)^{d_k}-1\right)=\frac{1}{d_k}\left(e^{d_k\log\left(\frac{R}{2|z|}\right)}-1\right)=\log\left(\frac{R}{2|z|}\right)+O(d_k)\conv{k\rightarrow \infty}\log\left(\frac{R}{|z|}\right)\, ,
    \end{align*}
    which suggest at the light of the previous discussion that the function above in the right-hand side of \eqref{new_est_Vk1} belongs to a Lorentz space. Although one can effectively prove such an estimate by introducing a sequence of Lorentz spaces \enquote{converging} towards $\mathrm{L}^{2,\infty}_{\log}(B(0,R))$ and generalise Lemma \ref{lemme_holomorphe1} (see the Appendix \ref{app2}), we will not need of this fact. 
    
   Assume that $2|z|>r$. It holds
    \begin{align*}
    	\z^2\vec{U}_k(z)=\frac{1}{2\pi i}\int_{B(0,2|z|)\setminus \bar{B}(0,r)}\frac{\bar{\zeta}^2\vec{Y}_k(\zeta)}{\bar{\zeta-z}}d\bar{\zeta}\wedge d\zeta+\frac{1}{2\pi i}\int_{B(0,R)\setminus\bar{B}(0,2|z|)}\frac{\bar{\zeta}^2\vec{Y}_k(\zeta)}{\bar{\zeta-z}}d\bar{\zeta}\wedge d\zeta.
    \end{align*}
    We first estimate directly 
    \begin{align}\label{n_step1}
    	\left|\frac{1}{2\pi i}\int_{B(0,2|z|)\setminus\bar{B}(0,r)}\frac{\bar{\zeta}^2\vec{Y}_k(\zeta)}{\bar{\zeta-z}}d\bar{\zeta}\wedge d\zeta\right|&\leq \frac{C_0}{\pi}\int_{B(0,2|z|)}\frac{d\zeta|^2}{|\zeta|^{d_k}|z-\zeta|}\nonumber\\
    	&=\frac{C_0}{\pi}\int_{B(0,\frac{|z|}{2})}\frac{|d\zeta|^2}{|\zeta|^{d_k}|z-\zeta|}+\frac{C_0}{\pi}\int_{B(0,2|z|)\setminus\bar{B}(0,\frac{|z|}{2})}\frac{|d\zeta|^2}{|\zeta|^{d_k}|z-\zeta|}.
    \end{align}  
    For all $\zeta\in B(0,\frac{|z|}{2})$, we have by the triangle inequality
    \begin{align*}
    	|z-\zeta|\leq |z|-|\zeta|\geq \frac{|z|}{2},
    \end{align*}
    which implies that
    \begin{align}\label{n_step2}
    	\int_{B(0,\frac{2}{|z|})}\frac{|d\zeta|^2}{|\zeta|^{d_k}|z-\zeta|}\leq \frac{2}{|z|}\int_{B(0,\frac{|z|}{2})}\frac{|d\zeta|^2}{|z|^{d_k}}=\frac{4\pi}{|z|}\int_{0}^{\frac{|z|}{2}}r^{1-d_k}dr=\frac{4\pi}{2-d_k}|z|^{1-d_k},
    \end{align}
    while
    \begin{align}\label{n_step3}
    	\int_{B(0,2|z|)\setminus\bar{B}(0,\frac{|z|}{2})}\frac{|d\zeta|^2}{|\zeta|^{d_k}|z-\zeta|}&\leq \frac{2^{d_k}}{|z|^{d_k}}\int_{B(0,2|z|)\setminus\bar{B}(0,\frac{|z|}{2})}\frac{|d\zeta}{|z-\zeta|}\nonumber\\
    		&\leq \frac{2^{d_k}}{|z|^{d_k}}\int_{B(z,3|z|)}\frac{|d\zeta|^2}{|z-\zeta|}=2^{d_k+1}\cdot 3\pi |z|^{1-d_k}.
    \end{align}
    Since $d_k<1$, the combination of  \eqref{n_step1}, \eqref{n_step2}, \eqref{n_step3} and \eqref{n_step4} gives that 
    \begin{align}\label{n_step4}
    	\left|\frac{1}{2\pi i}\int_{B(0,2|z|)\setminus\bar{B}(0,r)}\frac{\bar{\zeta}^2\vec{Y}_k(\zeta)}{\bar{\zeta-z}}d\bar{\zeta}\wedge d\zeta\right|\leq {16 C_0} |z|^{1-d_k}.
    \end{align}
    The previous argument shows that (for the same constant $a_{k,0}$)
    \begin{align*}
    	\left|-\frac{1}{\pi \z^{2}}\int_{B(0,R)\setminus\bar{B}(0,2|z|)}\frac{\bar{\zeta}^{2}\vec{Y}_k(\zeta)}{\bar{\zeta-z}}|d\zeta|^2-\frac{a_{k,0}}{\z^2}\right|\leq \frac{4C_0}{|z|}\left(1+\frac{1}{d_k}\left(\left(\frac{R}{2|z|}\right)^{d_k}-1\right)\right),
    \end{align*}
    and
    \begin{align*}
    	\left|\vec{U}_k(z)-\frac{a_{k,0}}{\z^2}\right|\leq \frac{4C_0}{|z|}\left(1+\frac{1}{d_k}\left(\left(\frac{R}{2|z|}\right)^{d_k}-1\right)\right)+\frac{16C_0}{|z|^{1+d_k}}.
    \end{align*}
    Finally, we get that for all $z\in B(0,\frac{R}{2})$ the following estimate holds:
    \begin{align}\label{new_pointwise_Vk}
    	|\vec{V}_k(z)|\leq \frac{4C_0}{|z|}\left(1+\frac{1}{d_k}\left(\left(\frac{R}{2|z|}\right)^{d_k}-1\right)\right)+\frac{16C_0}{|z|^{1+d_k}}.
    \end{align}
    
    \textbf{Case 3:} $0<d_k\leq 1$ and $d_k\conv{k\rightarrow \infty}d\in (0,1)$. The estimate \eqref{new_pointwise_Vk} gives that
    \begin{align*}
    	|z|^{d_k}|\vec{V}_k(z)|\leq \frac{4C_0}{|z|}\left(|z|^{d_k}+\frac{1}{d_k}\left(\left(R^{d_k}-1\right)\right)\right)+\frac{16C_0}{|z|}\leq \frac{C}{|z|}
    \end{align*}
    which shows that our needed estimate holds uniformly in $k\in \N$. 
    
    \textbf{Case 4:} $d_k\conv{k\rightarrow \infty}d\geq 1$. By the previous cases, we can also assume that $d_k\geq 1$ for all $k\in \N$ too. Let $a\in \N$ such that $a\leq d_k<a+1$, and define 
    \begin{align*}
    	\vec{U}_k(z)=\frac{1}{\z^{a+2}}\left(-\frac{1}{\pi\bar{\zeta}}\ast \left(\bar{\zeta}^{a+2}\vec{Y}_k(\zeta)\right)\right)=-\frac{1}{\pi \z^{a+2}}\int_{\C}\frac{\bar{\zeta}^{a+2}\vec{Y}_k(\zeta)}{\bar{z-\zeta}}|d\zeta|^2.
    \end{align*}
    Observe that $\p{z}\vec{U}_k=\vec{Y}_k$. Fix some $z\in B(0,R)$, where we recall that $r=2r_k$ and $R=\dfrac{R_k}{2}$. 
    \\ For $0<2|z|\leq r$, it holds 
    \begin{align*}
    	\vec{U}_k(z)&=-\frac{1}{\pi \z^{a+2}}\int_{B(0,R)\setminus\bar{B}(0,2|z|)}\frac{\bar{\zeta}^{a+2}\vec{Y}_k(\zeta)}{\bar{\zeta-z}}|d\zeta|^2=\frac{1}{\z^{a+2}}\sum_{l=0}^{\infty}\left(\frac{1}{\pi}\int_{B(0,R)\setminus\bar{B}(0,2|z|)}\bar{\zeta}^{a-l+1}\vec{Y}_k(\zeta)|d\zeta|^2\right)\z^l\\
	&=\sum_{l=0}^{\infty}a_{k,l}\z^{l-a-2}. 
    \end{align*} 
    First, we have
    \begin{align*}
    	|a_{k,0}|\leq \frac{C_0}{\pi}\int_{B(0,R)\setminus\bar{B}(0,2|z|)}|\zeta|^{a-d_k-1}|d\zeta|^2=\frac{C_0}{a+1-d_k}\left(R^{a+1-d_k}-(2|z|)^{a+1-d_k}\right). 
    \end{align*}
    Then, for $l=1$:
    \begin{align*}
    	\left|\frac{1}{\pi}\int_{B(0,R)\setminus\bar{B}(0,2|z|)}\bar{\zeta}^{a-l+1}\vec{Y}_k(\zeta)|d\zeta|^2\right|&\leq \frac{C_0}{\pi}\int_{B(0,R)\setminus\bar{B}(0,2|z|)}\frac{|d\zeta|^2}{|\zeta|^{2+d_k-a}}\\
    	&=\left\{\begin{alignedat}{2}
    		&2C_0\log\left(\frac{R}{2|z|}\right)\qquad&& \text{if}\;\, d_k=a\\
    		&\frac{2C_0}{d_k-a}\left(\frac{1}{(2|z|)^{d_k-a}}-\frac{1}{R^{d_k-a}}\right)\qquad&&\text{if}\;\,d_k>a.
    	\end{alignedat}\right.
    \end{align*}
    For $l\geq 2$, we have:
    \begin{align*}
    	\left|\frac{1}{\pi}\int_{B(0,R)\setminus\bar{B}(0,2|z|)}\zeta^{a-l+1}\vec{Y}_k(\zeta)|d\zeta|^2\right|&\leq \frac{C_0}{\pi}\int_{B(0,R)\setminus\bar{B}(0,2|z|)}\frac{|d\zeta|^2}{|\zeta|^{d_k+l+1-a}}\\
    	& \hspace{-5em} =\frac{2C_0}{d_k+l-a-1}\left(\frac{1}{(2|z|)^{d_k+l-a-1}}-\frac{1}{R^{d_k+l-a-1}}\right)\leq \frac{2C_0}{l-1}\frac{1}{(2|z|)^{d_k-a-1}}\frac{1}{(2|z|)^{l}}\, .
    \end{align*}
    Therefore
    \begin{align*}
    	\left|\sum_{l=2}^{\infty}a_{k,l}z^l\right|\leq 2C_0(2|z|)^{a+1-d_k}\sum_{l=2}^{\infty}\frac{1}{l-1}\frac{1}{2^l}=C_0(2|z|)^{a+1-d_k}\sum_{l=0}^{\infty}\frac{1}{l+1}\frac{1}{2^{l+1}}=\log(2)C_0(2|z|)^{a+1-d_k}. 
    \end{align*}
    We deduce that for $d_k=a$, it holds
    \begin{align}\label{case1_Uk}
    	\left|\vec{U}_k(z)-\frac{a_{k,0}}{\z^{a+2}}\right|\leq \frac{4C_0}{|z|^{d_k+1}}+2C_0\frac{1}{|z|^{d_k+1}}\log\left(\frac{R}{2|z|}\right),
    \end{align}
    while for $d_k>a$, 
    \begin{align}\label{case2_Uk}
    	\left|\vec{U}_k(z)-\frac{a_{k,0}}{\z^{a+2}} \right|\leq \frac{4C_0}{|z|^{1+d_k}}+\frac{2C_0}{d_k-a}\frac{1}{|z|^{a+1}}\left(\frac{1}{|z|^{d_k-a}}-\left(\frac{2}{R}\right)^{d_k-a}\right).
    \end{align}
    Now, if $2|z|>r$, we can make the same decomposition 
    \begin{align*}
    	\z^{a+2}\vec{U}_k(z)=\frac{1}{2\pi i}\int_{B(0,2|z|)\setminus \bar{B}(0,r)}\frac{\bar{\zeta}^{a+2}\vec{Y}_k(\zeta)}{\bar{\zeta-z}}d\bar{\zeta}\wedge d\zeta+\frac{1}{2\pi i}\int_{B(0,R)\setminus\bar{B}(0,2|z|)}\frac{\bar{\zeta}^{a+2}\vec{Y}_k(\zeta)}{\bar{\zeta-z}}d\bar{\zeta}\wedge d\zeta.
    \end{align*}
    The second integral is estimated as above and we get 
    \begin{align}\label{gen_step1}
    	\left|\frac{1}{2\pi i}\int_{B(0,R)\setminus\bar{B}(0,2|z|)}\frac{\bar{\zeta}^2\vec{Y}_k(\zeta)}{\bar{\zeta-z}}d\bar{\zeta}\wedge d\zeta-\frac{a_{k,0}}{\z^{a+2}}\right|\leq \frac{4C_0}{|z|^{1+d_k}}+\frac{2C_0}{d_k-a}\frac{1}{|z|^{a+1}}\left(\frac{1}{|z|^{d_k-a}}-\left(\frac{2}{R}\right)^{d_k-a}\right).
    \end{align}
    Then, we have
    \begin{align*}
    	\left|\frac{1}{2\pi i}\int_{B(0,2|z|)\setminus\bar{B}(0,r)}\frac{\bar{\zeta}^{a+2}\vec{Y}_k(\zeta)}{\bar{\zeta-z}}|d\zeta|^2\right|\leq \frac{C_0}{\pi}\int_{B(0,2|z|)\setminus\bar{B(0,r)}}\frac{|d\zeta|^2}{|\zeta|^{d_k-a}|\zeta-z|}\leq 16C_0|z|^{a+1-d_k}
    \end{align*}
    using the same proof given the estimate \eqref{n_step4}. 
    Finally, we deduce that
    \begin{align}\label{final_estimate}
    	|\vec{V}_k(z)|\leq \frac{4C_0}{|z|}\left(1+\frac{1}{d_k}\left(\left(\frac{R}{2|z|}\right)^{d_k}-1\right)\right)+\frac{16C_0}{|z|^{1+d_k}}.
    \end{align}
    
    \textbf{Sub-case 1:} $d_k\conv{k\rightarrow \infty}d$ with $a<d\leq a+1$. Then the estimate \eqref{final_estimate} immediately implies the bound 
    \begin{align*}
    	|z|^{d_k}|\vec{V}_k(z)|\leq \frac{C}{|z|}, \quad \text{  for all $z\in \Omega_k\left(\frac{1}{2}\right)$.}
    \end{align*}

    \textbf{Sub-case 2:} $d_k=a$. Then we can apply the same argument on averages as in the case $a=-1$ and the rest of the proof is unchanged (notice that the proof of the estimate on $\Im(\vec{V}_k)$ is independent of the multiplicity $d_k$).
    
    \textbf{Sub-case 3:} $d_k\conv{k\rightarrow \infty}a$ and $d_k>a$ for all $k\in \N$ (notice that it covers all cases by the previous case). Then we apply the same averaging argument as before the statement of Lemma \ref{sobolev11} and find
    \begin{align*}
    	r^{1+d_k}a(r)\leq \left(\frac{R_k}{2}\right)^{1+d_k}a\left(\frac{R_k}{2}\right)+C,
    \end{align*}
    which shows by \eqref{case2_Uk} that 
    \begin{align*}
   	    r^{1+d_k}a(r)\leq 4C_0+C.
    \end{align*}
    Notice that if the estimate had been replaced by an estimate on $\Omega_k(\alpha_0)$ for all $0<\alpha_0<\dfrac{1}{2}$, we would have had instead
    \begin{align*}
    	r^{1+d_k}a(r)\leq 4C_0+\frac{2C_0}{d_k-a}\frac{1}{(\alpha_0R_k)^{a+1}}\left(\frac{1}{(\alpha_0 R_k)^{d_k-a}}-\left(\frac{2}{R_k}\right)^{d_k-a}\right).
    \end{align*}
    Since $d_k\conv{k\rightarrow \infty}a$ and $R_k\conv{k\rightarrow \infty}R\in (0,\infty)$, we deduce that 
    \begin{align*}
    	\frac{1}{(\alpha_0R_k)^{d_k-a}}=e^{-(d_k-a)\log(\alpha _0R_k)}=1-(d_k-a)\log (\alpha_0R_k)+O\left((d_k-a)^2\right)
    \end{align*}
    which gives that 
    \begin{align*}
    	\frac{1}{d_k-a}\left(\frac{1}{(\alpha_0 R_k)^{d_k-a}}-\left(\frac{2}{R_k}\right)^{d_k-a}\right)\conv{k\rightarrow \infty} \log\left(\frac{1}{2\alpha_0}\right),
    \end{align*}
    showing that 
    \begin{align*}
    	\limsup_{k\in \rightarrow\infty}\left(4C_0+\frac{2C_0}{d_k-a}\frac{1}{(\alpha_0R_k)^{a+1}}\left(\frac{1}{(\alpha_0 R_k)^{d_k-a}}-\left(\frac{2}{R_k}\right)^{d_k-a}\right)\right)=4C_0+\frac{2C_0}{(\alpha_0R)^{a+1}}\log\left(\frac{1}{2\alpha_0}\right)<\infty.
    \end{align*}
    
    In the next proposition, we summarise the point-wise estimates on $\vec{V}_k$ proved in this section.
    
    \begin{prop}\label{prop:summarySec}
    Under the hypothesis of Theorem \ref{L21_necks}, there exists $C=C(m,h,\Lambda)>0$ and $\alpha_0>0$ such that 
     $\vec{V}_k$ defined in \eqref{def_Vk}  satisfies the pointwise estimates
    \begin{align}\label{final_V}
        \left\{\begin{alignedat}{1}	e^{\lambda_k(z)}|\vec{V}_k(z)| &\leq \frac{C}{|z|}, \quad \text{for all $z\in \Omega_k(\alpha_0)$}\, , \\
        |z|^{d_k} \,  | \vec{V}_k(z)|& \leq \frac{C}{|z|}, \quad \text{for all $z\in \Omega_k(\alpha_0)$}\, .   
        \end{alignedat}\right.
    \end{align}   
   \end{prop}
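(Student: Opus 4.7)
My plan is to assemble the proposition directly from the case analysis carried out in the preceding pages, which already establishes all of the required pointwise bounds, once they are uniformised over $k$ by choosing $\alpha_0$ sufficiently small. I would organise the argument by the asymptotic value $d \in (-1, +\infty)$ of the sequence $d_k$, splitting into the four cases already identified.

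In Case 1 ($d_k \leq 0$ with $d_k \to d \in (-1, 0]$), the full chain of estimates has been worked out: from the Cauchy-type representation \eqref{u1}, through the logarithmic bound of Lemma \ref{lem:EstVk}, through the removal of the $\log |z|$ factor performed in Subsection \ref{SSS:logz} via the generalised Lorentz spaces $\mathrm{L}^{2,\infty}_{\log^\beta}$ and the holomorphic improvement of Lemma \ref{lemme_holomorphe1}, down to the averaging argument culminating in \eqref{estimate_Vk1} and \eqref{localV0}. Combining the bound on the real part with the decomposition \eqref{previous_V} for $\Im(\vec V_k)$ yields $e^{\lambda_k(z)} |\vec V_k(z)| \leq C/|z|$ on $\Omega_k(1/4)$, i.e.\ \eqref{final_Va}; the Harnack inequality \eqref{new_harnarck} then turns this into the weighted bound \eqref{final_V_2a}.

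In Cases 2, 3 and 4, the crucial simplification compared to Case 1 is that one already has improved pointwise estimates on $\vec V_k$ of the form \eqref{new_pointwise_Vk} (for $d_k \to 0$) or \eqref{final_estimate} (for $d_k \to d \geq 1$), in which no new logarithmic singularity appears, except in the borderline sub-cases $d_k = a$ or $d_k \to a^+$ with $a \in \N$. In Case 3 ($d \in (0,1)$), multiplying \eqref{new_pointwise_Vk} by $|z|^{d_k}$ and using $d_k < 1$ together with $R_k \to R$ gives at once $|z|^{d_k} |\vec V_k(z)| \leq C/|z|$, and then \eqref{new_harnarck} converts this into the $e^{\lambda_k}$-weighted bound. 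In Case 4, the same mechanism applies except that in the most delicate sub-case ($d_k \to a$ with $d_k > a$) the coefficient $(d_k - a)^{-1}$ would diverge on $\Omega_k(1/2)$; this is precisely why the proposition asserts the estimates only on $\Omega_k(\alpha_0)$ for some $\alpha_0 > 0$. Choosing such an $\alpha_0$ and using $(d_k - a)^{-1}\bigl((\alpha_0 R_k)^{-(d_k-a)} - (2/R_k)^{d_k-a}\bigr) \to \log(1/(2\alpha_0))$ keeps the right-hand side uniformly bounded.

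The main obstacle, already overcome in Subsection \ref{SSS:logz}, is the removal of the spurious logarithmic factor from the Cauchy representation of $\vec V_k$ in Case 1, which forced the introduction of the generalised Lorentz spaces $\mathrm{L}^{2,\infty}_{\log^\beta}$; the remaining task for the present proposition is essentially bookkeeping, namely selecting a single $\alpha_0 > 0$ that works simultaneously in all four cases and invoking the Harnack inequality \eqref{new_harnarck} to pass between the weights $e^{\lambda_k}$ and $|z|^{d_k}$.
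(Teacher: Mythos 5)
Your overall architecture is the paper's: the proposition is indeed just the collection of the pointwise bounds obtained in the preceding case analysis, and your Case~1 (the chain through Lemma \ref{lem:EstVk}, the $\mathrm{L}^{2,\infty}_{\log^{\beta}}$ machinery, \eqref{estimate_Vk1}, \eqref{localV0}, \eqref{previous_V}, \eqref{final_Va}, \eqref{final_V_2a}) and Case~3 are assembled exactly as in the text, with \eqref{new_harnarck} passing between the weights $e^{\lambda_k}$ and $|z|^{d_k}$.

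There is, however, a genuine gap in how you dispose of the borderline situations, namely Case~2 ($d_k\to 0^{+}$) and the sub-cases $d_k=a$ and $d_k\to a^{+}$ of Case~4. After multiplying \eqref{case2_Uk} (or \eqref{final_estimate}) by $|z|^{d_k}$, the problematic contribution is $\frac{2C_0}{|z|}\,\frac{1}{d_k-a}\bigl(1-(2|z|/R)^{\,d_k-a}\bigr)$, which for $d_k$ close to $a$ behaves like $\frac{2C_0}{|z|}\log\bigl(\frac{R}{2|z|}\bigr)$ at \emph{every} point of the neck; since $|z|$ ranges down to $\alpha_0^{-1}r_k\to 0$, no choice of $\alpha_0$ makes this $\leq C/|z|$ on $\Omega_k(\alpha_0)$. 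The convergence $(d_k-a)^{-1}\bigl((\alpha_0R_k)^{-(d_k-a)}-(2/R_k)^{d_k-a}\bigr)\to\log(1/(2\alpha_0))$ that you invoke only tames the estimate at the single outer radius $|z|=\alpha_0R_k$; in the paper it appears precisely as the boundary term $(\alpha_0 R_k)^{1+d_k}a(\alpha_0R_k)$ of the averaging argument, not as a bound valid throughout the neck. What actually removes the logarithm in these sub-cases is a rerun of the Case~1 machinery: the radial derivative of the circular means of $\Re(\vec{V}_k)$ is bounded by $Ce^{-\lambda_k}\delta^2$ via \eqref{pointwise2} and \eqref{extra_step1}, one integrates inward from the outer radius (where the raw estimate is harmless), and the oscillation Lemma \ref{sobolev11} together with the $\mathrm{W}^{1,(2,\infty)}$ control of $\Im(\vec{V}_k)$ upgrades the bound on means to the pointwise bound. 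The same applies to the sub-case $d_k=a$, where \eqref{case1_Uk} carries a genuine $\log(R/(2|z|))$ factor: you flag it as an exception but never resolve it. So for these cases your $\alpha_0$-restriction is not a substitute for the averaging argument, and the proposal as written does not close.
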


 \subsection*{Estimates on $\vec{W}_{k}$ and  $\vec{L}_{k}$ }
 
 \noindent
    First,  will construct a function $\vec{U}_k$ on $B(0,\alpha_0R_k)$, for $\alpha_0>0$,  such that 
    \begin{equation}\label{eq:DzUk}
    	\D_z\vec{U}_k=\p{z}\vec{U}_k+\vec{f}_k(\vec{U}_k)=\p{z}\vec{V}_k\, , \quad \text{on $\Omega_k(\alpha_0)$}.
    \end{equation}
    Note this is equivalent to 
    \begin{align*}
    	\vec{U}_k=\vec{V}_k-\frac{1}{\pi\z}\ast f_k(\vec{U}_k)\, , \quad \text{on $\Omega_k(\alpha_0)$}.
    \end{align*}  
    This will be achieved by a fixed point argument similar to the one of  \cite[Lemma A.$1$]{mondinoriviere}. 
    
    First, extend by $0$ the restriction of $\vec{V}_k$ to $B(0,\alpha_0 R_k)$, and using a smooth non-negative cut-off $\eta$ such that $\eta=1$ on $B(0,\alpha_0R_k)$ and $\supp(\eta)\subset B(0,2\alpha_0R_k)$, we can assume that $\supp(\gamma_{j}^l)\subset B(0,2\alpha_0R_k)$ for all $1\leq j,l\leq m$. Then, define $\tilde{\lambda}_k:\C\rightarrow \R$ such that 
    \begin{align*}
    	\tilde{\lambda}_k(z)=\lambda_k(z)\eta(z)-(1-\eta(z))\log(1+|z|^2).
    \end{align*}
    Consider the normed space
    \begin{align*}
    	\mathrm{L}^{2,\infty}_{\tilde{\lambda}_k}(\C)=\mathrm{L}^{1}_{\mathrm{loc}}(\C)\cap\ens{u:e^{\tilde{\lambda}_k}u\in \mathrm{L}^{2,\infty}(\C)}\, , \quad \text{for all $k\in\N$},
    \end{align*}
    equipped with the norm 
    \begin{align*}
    	\znp{u}{2,\infty}{\tilde{\lambda}_k}{\C}=\np{e^{\tilde{\lambda}_k}u}{2,\infty}{\C}. 
    \end{align*}
    One checks immediately that $\mathrm{L}^{2,\infty}_{\tilde{\lambda}_k}(\C)$ is a Banach space (for example, by the series criterion using that $\mathrm{L}^{2,\infty}(\C)$ is a Banach space). Now, by the previous estimate \eqref{final_V}, we deduce that $\vec{V}_k\in \mathrm{L}^{(2,\infty)}_{\tilde{\lambda}_k}(\C)$. 
    \\ We introduce the operator $T:\mathrm{L}^{2,\infty}_{\tilde{\lambda}_k}(\C) \rightarrow  \mathrm{L}^{(2,\infty)}_{\tilde{\lambda}_k}(\C)$, defined by 
    \begin{equation*}
    	T(\vec{U})=\vec{V}_k-\frac{1}{\pi\z}\ast f_k(\vec{U}). 
    \end{equation*}
    Using the Young inequality for convolution, for all $2<p<\infty$, if $q<2$ is such that 
    \begin{align*}
    	\frac{1}{p}=\frac{1}{2}+\frac{1}{q}-1,
	    \end{align*}
    using the support assumption on $\gamma_j^l$ and that $|f_k(\vec{U}_k)|\leq C_0e^{\lambda_k}|\vec{U}_k|$, 
    we deduce that 
    \begin{align}\label{young}
    	&\np{\frac{1}{\pi\z}\ast f_k(\vec{U})}{p,\infty}{\C}\leq C(p)\np{\frac{1}{\pi\z}}{2,\infty}{\C}\np{f_k(\vec{U}_k)}{\frac{2p}{p+2},\infty}{\C}=2\sqrt{\pi}C(p)\np{f_k(\vec{U}_k)}{\frac{2p}{p+2}}{B(0,2\alpha_0R_k)}\nonumber\\
    	&\leq 2\sqrt{\pi}C(p)\np{1}{\frac{p+2}{p}}{B(0,2\alpha_0R_k)}\np{f_k(\vec{U}_k)}{2,\infty}{B(0,2\alpha_0R_k)}\nonumber\\
    	&\leq 2\sqrt{\pi}C_0C(p)\pi^{\frac{p}{p+2}}(2\alpha_0R_k)^{\frac{2p}{p+2}}\znp{\vec{U}}{2,\infty}{\tilde{\lambda}_k}{\C}. 
    \end{align}
    Since $e^{\lambda_k}\in \mathrm{L}^p(B(0,R_k))$ for all $p<\dfrac{2}{1-\epsilon}$, using Hölder's inequality we deduce that  for all $2<p<\dfrac{2}{1-\epsilon}$ it holds
    \begin{align*}
    	&\znp{T(\vec{U})-\vec{V}_k}{2,\infty}{\tilde{\lambda}_k}{\C}=\np{e^{\tilde{\lambda}_k}\left(T(\vec{U})-\vec{V}_k\right)}{2,\infty}{\C}\leq C(p)\np{e^{\tilde{\lambda}_k}}{p,\infty}{\C}\np{T(\vec{U})-\vec{V}_k}{\frac{p}{p-2},\infty}{\C}\\
    	&\leq C(p)\left(C_{\alpha_0}\np{r_k^{d_k}}{p}{B(0,\alpha_0^{-1}r_k)}+e^{A}\np{|z|^{d_k}}{p}{B(0,2\alpha_0R_k)}+\np{\frac{1}{1+|z|^2}}{2}{\C}\right)\np{\frac{1}{\pi\z}\ast f_k(\vec{U})}{\frac{p}{p-2},\infty}{\C}\\
    	&\leq C(p)\left(C_{\alpha_0}(2\pi)^{\frac{1}{p}}r_k^{\frac{2}{p}+d_k}+(2\pi)^{\frac{1}{p}}\frac{(2\alpha_0R_k)^{\frac{2}{p}+d_k}}{(2+pd_k)^{\frac{1}{p}}}+\sqrt{\pi}\right)\\
    	&\times 2\sqrt{\pi}C_0C\left(\frac{p}{p-2}\right)\pi^{\frac{p-2}{3p-4}}(2\alpha_0R_k)^{\frac{2p-4}{3p-4}}\znp{\vec{U}}{2,\infty}{\tilde{\lambda}_k}{\C}.
    \end{align*}
    In particular, choosing $p=2+\epsilon<\frac{2}{1-\epsilon}$, we obtain that there exists a constant $C(\epsilon)>0$ such that 
    \begin{align*}
    	\znp{T(\vec{U})-\vec{V}_k}{2,\infty}{\tilde{\lambda}_k}{\C}\leq C(\epsilon)\alpha_0^{\frac{\epsilon}{4}}\znp{\vec{U}}{2,\infty}{\tilde{\lambda}_k}{\C}.
    \end{align*}
    Therefore, taking $\alpha_0=\left(\frac{1}{2C(\epsilon)}\right)^{\frac{4}{\epsilon}}$, we get that 
    \begin{align*}
    	\znp{T(\vec{U})-\vec{V}_k}{2,\infty}{\tilde{\lambda}_k}{\C}\leq \frac{1}{2}\znp{\vec{U}}{2,\infty}{\tilde{\lambda}_k}{\C}.
    \end{align*}
    Since $\vec{V}_k\in \mathrm{L}^{2,\infty}_{\tilde{\lambda}_k}(\C)$, we conclude that $T:\mathrm{L}^{2,\infty}_{\tilde{\lambda}_k}(\C)\rightarrow \mathrm{L}^{2,\infty}_{\tilde{\lambda}_k}(\C)$ is a contraction which implies in particular that $T$ admits a unique fixed point $\vec{U}_k$.
    
    The function $\vec{U}_k\in \mathrm{L}^{2,\infty}_{\tilde{\lambda_k}}(\C)$ satisfies 
    \begin{align}\label{new_eq}
    	\vec{U}_k=\vec{V}_k-\frac{1}{\pi\z}\ast f_k(\vec{U}_k).
    \end{align}
    Since $f_k(\vec{U}_k)\in \mathrm{L}^{2,\infty}(B(0,\alpha_0R_k))$, the previous estimate \eqref{young} shows that 
    \begin{align*}
    	\frac{1}{\pi\z}\ast f_k(\vec{U}_k)\in \bigcap_{p<\infty}\mathrm{L}^p(\C).
    \end{align*}
    Since $e^{\lambda_k}\in \mathrm{L}^p(B(0,\alpha_0R_k))$ for all $p<\dfrac{2}{1-\epsilon}$, we deduce by Hölder's inequality that 
    \begin{align*}
    	e^{\lambda_k}\,\left(\frac{1}{\pi\z}\ast f_k(\vec{U}_k)\right)\in \bigcap_{p<\frac{2}{1-\epsilon}}\mathrm{L}^p(B(0,\alpha_0R_k)). 
    \end{align*}
    Now, recalling the pointwise estimate \eqref{final_V}, we deduce that there exists $f\in \mathrm{L}^p(B(0,\alpha_0R_k
    ))$ for all $p<\frac{2}{1-\epsilon}$ such that
    \begin{align*}
    	e^{\lambda_k}|\vec{U}_k|\leq \frac{C}{|z|}+f(z)\qquad\text{for all}\;\, z\in \Omega_k(\alpha_0). 
    \end{align*}
    Then, using an estimate presented in the proof of \cite[Lemma A.$2$]{mondinoriviere}, we deduce that 
    \begin{align*}
    	\np{\D\left(\frac{1}{\pi\z}\ast f_k(\vec{U}_k)\right)}{2,\infty}{\C}\leq C\np{f_k(\vec{U}_k)}{2,\infty}{\C}\leq C. 
    \end{align*}
    Since $\D\Im(\vec{V}_k)\in  \mathrm{L}^{2,\infty}(\C)$, by  \eqref{new_eq} we conclude that 
    \begin{align*}
    	\D\Im(\vec{U}_k)=\D\Im(\vec{V}_k)-\Im\left(\D\left(\frac{1}{\pi\z}\ast f_k(\vec{U}_k)\right)\right)\in \mathrm{L}^{2,\infty}(\C). 
    \end{align*}

    Now, we can apply the exact same proof as Lemma A.$1$ (see also Lemma \ref{A2} for more details) to obtain by a similar contraction argument as above the existence of $\vec{W}_k\in \mathrm{W}^{1,(2,\infty)}(\C)$ such that
    \begin{align*}
    	\left\{\begin{alignedat}{1}
    		\D_z\vec{W}_k&=0\qquad\text{in}\;\, B(0,\alpha_0R_k)\\
    		\Im(\vec{W}_k)&=\Im(\vec{U}_k)\qquad\text{on}\;\,\partial B(0,\alpha_0R_k). 
    	\end{alignedat}\right.
    \end{align*}
    Finally, defining $\vec{L}_k=\vec{U}_k-\vec{W}_k:B(0,\alpha_0R_k)\rightarrow \C^n$, we deduce that
    \begin{align*}
    	\left\{\begin{alignedat}{1}
    		&e^{\lambda_k}\vec{L}_k\in  \mathrm{L}^{2,\infty}(B(0,\alpha_0R_k))\\
    		&\Im(\vec{L}_k)\in \mathrm{W}^{1,(2,\infty)}(B(0,\alpha_0R_k))
    	\end{alignedat} \right.
    \end{align*}
    and
    \begin{align*}
    	\left\{\begin{alignedat}{2}
    		&\D_z\vec{L}_k=\vec{Y}_k\qquad&&\text{in}\;\, B(0,\alpha_0R_k)\\
    		&\Delta\Im\left(\vec{L}_k\right)=4\,\Im\left(\D_{\z}\vec{Y}_k\right)-4\,\Im\left(f_k(\vec{Y}_k)\right)-4\,\Im\left(\p{\z}\left(f_k(\vec{U}_k)\right)\right)\qquad&&\text{in}\;\,B(0,\alpha_0R_k)\\
    		&\Im(\vec{L}_k)=0\qquad&&\text{on}\;\,\partial B(0,\alpha_0R_k).
    	\end{alignedat}\right.
    \end{align*}
    
\noindent
    Furthermore, there exists 
    $l_k$     bounded in $\mathrm{L}^p(B(0,\alpha_0R_k))$ for all $p<\dfrac{2}{1-\epsilon}$ such that 
    \begin{align}\label{pointwise_Lk}
    	e^{\lambda_k(z)}|\vec{L}_k(z)|\leq \frac{C}{|z|} l_k(z), \quad \text{on }\Omega_k(\alpha_0)\, .
    \end{align}

    Now, we introduce a refinement of \cite[Lemma A.$2$]{mondinoriviere}.
    \begin{lemme}\label{A2}
    	Let $2<p<\infty$ and $1<q<\infty$ be fixed real numbers. There exists constants $\epsilon_3(n,p,q),C_3(n,p,q)>0$ with the following property. 
    	For all $j,k\in \ens{1,\cdots, n}$, let $\gamma_{j}^k\in \mathrm{L}^p(\C)\cap \mathrm{W}^{1,(2,\infty)}(\C)$ be such that $\supp(\gamma_{j}^k)\subset B(0,2)$ and $\np{\gamma_{j}^k}{2,1}{\C}\leq \epsilon_0$. For all $\vec{U}\in \mathrm{L}^{1}_{\mathrm{loc}}(\C)$, define
    	\begin{align*}
    		\left(\D_z \vec{U}\right)_j&=\p{z}\vec{U}_j+\sum_{k=1}^{n}\gamma_{j}^k\vec{U}_k\qquad\text{in}\;\,\mathscr{D}'(\C),\;\, \text{where}\;\, 1\leq j\leq n. 
    	\end{align*}
    	Then for all $\vec{Y}\in \left(\mathrm{L}^1\cap \mathrm{L}^{2,\infty}\right)(\C)$,
    	if $\np{\gamma_j^k}{p}{B(0,2)}\leq \epsilon_3$ there exists a unique $\vec{U}\in \mathrm{W}^{1,(2,\infty)}(B(0,1))$ such that $\Im(\vec{U})\in \mathrm{W}^{2,q}(B(0,1))$ satisfying
    	\begin{align*}
    		\left\{\begin{alignedat}{2}
    			\D_z\vec{U}&=\vec{Y}\qquad&&\text{in}\;\,\mathscr{D}'(B(0,1))\\
    			\Im(\vec{U})&=0\qquad&&\text{on}\;\,\partial B(0,1). 
    		\end{alignedat}
    		\right.
    	\end{align*}
    	Furthermore, we have the estimate
    	\begin{align*}
    		\wp{\vec{U}}{1,(2,\infty)}{B(0,1)}
    		\leq C_3
    		\left(
    		\np{\vec{Y}}{1}{\C}+\np{\vec{Y}}{2,\infty}{\C}\right)\,.
    	\end{align*}
    \end{lemme}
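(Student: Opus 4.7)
The argument refines the fixed-point strategy of Lemma \ref{A1} (and of \cite[Lemma A.2]{mondinoriviere}), replacing the pointwise control of the coefficients $\gamma_{j}^{k}$ with the combined $\mathrm{L}^{2,1}$ and $\mathrm{L}^{p}$ smallness. The key observation is that, in two dimensions, the embedding $\mathrm{W}^{1,(2,\infty)}(B(0,1))\hookrightarrow \mathrm{BMO}\hookrightarrow \bigcap_{r<\infty}\mathrm{L}^{r}(B(0,1))$ holds, so that every candidate $\vec{U}\in \mathrm{W}^{1,(2,\infty)}$ automatically belongs to $\mathrm{L}^{r}(B(0,1))$ for each finite $r$. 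This will allow me to estimate $f_{k}(\vec{U})=\left(\sum_{l}\gamma_{j}^{l}\vec{U}_{l}\right)_{j}$ in both $\mathrm{L}^{1}$ and $\mathrm{L}^{2,\infty}$ with constants proportional to the smallness of $\gamma$.

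First I would establish the corresponding linear statement: for every $\vec{Z}\in \mathrm{L}^{1}\cap \mathrm{L}^{2,\infty}(\C)$, there exists a unique $\vec{V}\in \mathrm{W}^{1,(2,\infty)}(B(0,1))$ solving $\p{z}\vec{V}=\vec{Z}$ on $B(0,1)$ and $\Im(\vec{V})=0$ on $\partial B(0,1)$, with
\begin{equation*}
\wp{\vec{V}}{1,(2,\infty)}{B(0,1)}\leq C\left(\np{\vec{Z}}{1}{\C}+\np{\vec{Z}}{2,\infty}{\C}\right).
\end{equation*}
For this, I would use the Cauchy particular solution $\vec{V}_{0}=-\frac{1}{\pi \z}\ast \vec{Z}$: the Young inequality for weak $\mathrm{L}^{p}$ spaces, exactly as in the proof of Lemma \ref{A1}, yields $\np{\vec{V}_{0}}{2,\infty}{\C}\leq C\np{\vec{Z}}{1}{\C}$; in addition $\p{z}\vec{V}_{0}=\vec{Z}\in \mathrm{L}^{2,\infty}$, while $\p{\z}\vec{V}_{0}$ is, up to a constant, the Beurling transform of $\vec{Z}$, which is bounded on $\mathrm{L}^{2,\infty}$ by Marcinkiewicz interpolation of the classical $\mathrm{L}^{p}$ bounds for Calderón--Zygmund operators. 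To enforce the boundary datum $\Im(\vec{V})|_{\partial B(0,1)}=0$, I would add to $\vec{V}_{0}$ a holomorphic correction $\vec{V}_{1}$ on $B(0,1)$ solving the Schwarz problem $\Im(\vec{V}_{1})|_{\partial B(0,1)}=-\Im(\vec{V}_{0})|_{\partial B(0,1)}$; the harmonic conjugate is produced via the Poisson integral, and standard boundary estimates for harmonic functions with Lorentz-type data control $\wp{\vec{V}_{1}}{1,(2,\infty)}{B(0,1)}$ by $\wp{\vec{V}_{0}}{1,(2,\infty)}{B(0,1)}$.

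Given the linear theory, I define the operator $T$ on $\mathrm{W}^{1,(2,\infty)}(B(0,1),\C^{n})$ by letting $T(\vec{U})$ be the unique solution of $\p{z}\vec{V}=\vec{Y}-f_{k}(\vec{U})$ in $B(0,1)$ with $\Im(\vec{V})|_{\partial B(0,1)}=0$. The contraction property reduces, via the linear estimate applied to $T(\vec{U}_{1})-T(\vec{U}_{2})$, to controlling $\np{f_{k}(\vec{U}_{1}-\vec{U}_{2})}{1}{\C}+\np{f_{k}(\vec{U}_{1}-\vec{U}_{2})}{2,\infty}{\C}$ by the $\mathrm{W}^{1,(2,\infty)}$ norm of $\vec{U}_{1}-\vec{U}_{2}$ with a small constant. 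The $\mathrm{L}^{1}$ bound follows from the $\mathrm{L}^{2,1}/\mathrm{L}^{2,\infty}$ duality together with $\np{\gamma_{j}^{l}}{2,1}{\C}\leq \epsilon_{0}$, exactly as in Lemma \ref{A1}. For the $\mathrm{L}^{2,\infty}$ bound, choosing $r=\frac{2p}{p-2}<\infty$ and using Hölder's inequality together with the Sobolev embedding $\mathrm{W}^{1,(2,\infty)}(B(0,1))\hookrightarrow \mathrm{L}^{r}(B(0,1))$, I get
\begin{equation*}
\np{\gamma_{j}^{l}(\vec{U}_{1}-\vec{U}_{2})_{l}}{2,\infty}{B(0,2)}\leq \np{\gamma_{j}^{l}(\vec{U}_{1}-\vec{U}_{2})_{l}}{2}{B(0,2)}\leq \np{\gamma_{j}^{l}}{p}{B(0,2)}\,\np{\vec{U}_{1}-\vec{U}_{2}}{r}{B(0,1)}\leq \epsilon_{3}\,C\,\wp{\vec{U}_{1}-\vec{U}_{2}}{1,(2,\infty)}{B(0,1)}.
\end{equation*}
Choosing $\epsilon_{0}$ and $\epsilon_{3}$ sufficiently small, $T$ becomes a $\frac{1}{2}$-contraction and the Banach fixed-point theorem provides the unique $\vec{U}$ with the stated estimate.

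To obtain the additional regularity $\Im(\vec{U})\in \mathrm{W}^{2,q}(B(0,1))$, I would use the Laplace equation $\Delta \Im(\vec{U})=4\,\Im(\p{\z}\vec{Y})-4\,\Im(\p{\z}(f_{k}(\vec{U})))$ with vanishing Dirichlet datum. Expanding $\p{\z}(\gamma_{j}^{l}\vec{U}_{l})=(\p{\z}\gamma_{j}^{l})\vec{U}_{l}+\gamma_{j}^{l}\p{\z}\vec{U}_{l}$ and combining the hypothesis $\gamma_{j}^{l}\in \mathrm{L}^{p}\cap \mathrm{W}^{1,(2,\infty)}$ with the regularity just obtained ($\vec{U}\in \bigcap_{r<\infty}\mathrm{L}^{r}$ and $\p{\z}\vec{U}\in \mathrm{L}^{2,\infty}$), Hölder in Lorentz spaces places the right-hand side in $\mathrm{L}^{q}(B(0,1))$, and classical Calderón--Zygmund theory then upgrades $\Im(\vec{U})$ to $\mathrm{W}^{2,q}$. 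The main technical obstacle is the boundary enforcement $\Im(\vec{V})=0$ in the linear step: since the Cauchy--Riemann structure of the equation $\p{z}\vec{V}=\vec{Z}$ only allows prescription of half of the trace, the explicit Schwarz correction cannot be avoided, and verifying the $\mathrm{W}^{1,(2,\infty)}$ bound of this correction with constants controlled solely by $\np{\vec{Z}}{1}{\C}+\np{\vec{Z}}{2,\infty}{\C}$ (independently of $k$) is the most delicate point of the argument.
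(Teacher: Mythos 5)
Your overall strategy matches the paper's: a Banach fixed-point argument for the operator $T$, the Cauchy transform for the particular solution, the Calder\'on--Zygmund/Beurling $\mathrm{L}^{2,\infty}$ bound on $\nabla T(\vec{U})$, H\"older with $\np{\gamma_j^l}{p}{}$ and a Sobolev embedding for the $\mathrm{L}^{2,\infty}$ part of the contraction, and an explicit holomorphic/harmonic correction to enforce $\Im(\vec{U})=0$ on $\partial B(0,1)$. Your organization is slightly more streamlined than the paper's, which first runs a contraction on all of $\C$ to produce $\vec{U}_0$ and then runs a \emph{second} contraction $S$ on $B(0,1)$ to fix the boundary datum, whereas you bundle the boundary correction into a single fixed-point map on $B(0,1)$. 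Your shortcut $\mathrm{W}^{1,(2,\infty)}(B(0,1))\hookrightarrow \mathrm{L}^{r}(B(0,1))$, $r=\frac{2p}{p-2}$, does the same work as the paper's longer chain through $\mathrm{W}^{1,q}(B(0,2))\hookrightarrow \mathrm{L}^{q^{\ast}}$ and the $\mathrm{L}^{\frac{2(p-1)}{p-2},1}/\mathrm{L}^{\frac{2(p-1)}{p},\infty}$ duality. Both approaches are sound, and yours is a reasonable simplification.

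Two points deserve attention. First, a terminological slip: the correction you call ``holomorphic'' must actually be anti-holomorphic ($\p_z \vec{V}_1 = 0$, not $\p_{\bar z}\vec{V}_1 = 0$), otherwise $\p_z(\vec{V}_0 + \vec{V}_1)\neq \vec{Z}$. The paper constructs $\vec{V}_4 = i\overline{\vec{V}_3}$ with $\vec{V}_3$ holomorphic, precisely to get the anti-holomorphic correction. This is easily fixed but as written it is incorrect.

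Second, and more substantively, your argument for $\Im(\vec{U})\in \mathrm{W}^{2,q}(B(0,1))$ has a genuine gap. You write $\Delta\Im(\vec{U}) = 4\Im(\p_{\z}\vec{Y}) - 4\Im(\p_{\z}(f_k(\vec{U})))$ and claim that the right-hand side lies in $\mathrm{L}^q$. But the hypothesis $\vec{Y}\in(\mathrm{L}^1\cap\mathrm{L}^{2,\infty})(\C)$ gives no distributional control on $\p_{\z}\vec{Y}$ whatsoever; the term $\Im(\p_{\z}\vec{Y})$ cannot be placed in any Lebesgue space from the stated hypotheses, so the Calder\'on--Zygmund step does not apply. (Also, even for the $f_k$-term, the product $\gamma_j^l\,\p_{\z}\vec{U}_l$ with $\gamma_j^l\in\mathrm{L}^p\cap\mathrm{W}^{1,(2,\infty)}$ and $\p_{\z}\vec{U}_l\in\mathrm{L}^{2,\infty}$ lands only in $\mathrm{L}^{2-\delta}$, not in $\mathrm{L}^q$ for arbitrary $1<q<\infty$.) To be fair, this difficulty is not created by you: the paper's own proof of Lemma \ref{A2} does not establish the claimed $\mathrm{W}^{2,q}$ regularity either; it is verified only in the applications (for $S_k$, $\vec{R}_k$) where $\vec{Y}$ has additional structure and regularity coming from the Willmore equation. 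If you want a self-contained argument, you should either make explicit the additional hypotheses on $\vec{Y}$ (or on $\Delta\Im(\vec{U})$) that are implicitly assumed, or restrict the regularity claim to what actually follows, namely a bound on $\Delta\Im(\vec{U})$ in terms of $\vec{Y}$ together with the products built from $\gamma$ and $\vec{U}$.
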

    \begin{proof}
    	As in the proof of Lemma \ref{A1}, the same argument shows that if $T$ is defined analogously, we have for all $\vec{U}\in \mathrm{L}^{2,\infty}(\C)$ the estimate 
    	\begin{align*}
    		\np{T(\vec{U})}{2,\infty}{\C}\leq C\np{\vec{Y}}{1}{\C}
    	\end{align*}
    	and that for all $\vec{U}_1,\vec{U}_2\in \mathrm{L}^{2,\infty}(\C)$ if $\epsilon_3\leq \delta_0$ is small enough, we have
    	\begin{align*}
    		\np{T(\vec{U}_1)-T(\vec{U}_2)}{2,\infty}{\C}\leq \frac{1}{4}\np{\vec{U}_1-\vec{U}_2}{2,\infty}{\C}.
    	\end{align*}
    	By the same argument as \cite{mondinoriviere}, we deduce that 
    	\begin{align*}
    		\np{\D T(\vec{U})_j}{2,\infty}{\C}&\leq \Gamma_2\np{\vec{Y}_j-\sum_{k=1}^{n}\gamma_k^k\vec{U}_k}{2,\infty}{\C}
    		\leq \Gamma_2\np{\vec{Y}_j}{2,\infty}{\C}+\sum_{j=1}^{n}\Gamma_2\np{\gamma_j^k\vec{U}_k}{2}{B(0,2)}\\
    		&\leq \Gamma_2\np{\vec{Y}}{2,\infty}{\C}+\sum_{j=1}^{n}\Gamma_2\np{\gamma_j^k}{p}{B(0,2)}\np{\vec{U}_k}{\frac{2p}{p-2}}{B(0,2)}.
    	\end{align*}
    	By the Sobolev embedding if $q<2$, we have $\mathrm{W}^{1,q}(B(0,2))\hooklongrightarrow \mathrm{L}^{q^{\ast}}(B(0,2))$, where
    	\begin{align*}
    		\frac{1}{q^{\ast}}=\frac{1}{q}-\frac{1}{2}=\frac{2-q}{2q}.
    	\end{align*}
    	Since we want to impose $q^{\ast}=\dfrac{2p}{p-2}$ where $p>2$, this implies that $q=p'=\frac{p}{p-1}$
    	Therefore, we deduce that there exits a universal constant $\Gamma_4$ such that
    	\begin{align*}
    		\np{\vec{U}_k}{\frac{2p}{p-2}}{B(0,2)}\leq \Gamma_4\left(\np{\vec{U}_k}{\frac{p}{p-1}}{B(0,2)}+\np{\D\vec{U}_k}{\frac{p}{p-1}}{B(0,2)}\right). 
    	\end{align*}
    	Now, let $r>1$ such that $rp'=2$, \emph{i.e.} $r=\dfrac{2}{p'}=\dfrac{2(p-1)}{p}$. By the $\mathrm{L}^{\frac{2(p-1)}{p-2},1}/\mathrm{L}^{\frac{2(p-1)}{p},\infty}$ duality, we deduce that 
    	\begin{align*}
    		\np{\vec{U}_k}{\frac{p}{p-1}}{B(0,2)}\leq \np{1}{\frac{2(p-1)}{p},1}{B(0,2)}\np{\vec{U}_k}{2,\infty}{B(0,2)}=2^{\frac{3p-4}{p-1}}\frac{(p-1)^2}{p(p-2)}\pi^{\frac{p-2}{2(p-1)}}\np{\vec{U}_k}{2,\infty}{B(0,2)},
    	\end{align*}
    	and likewise
    	\begin{align*}
    		\np{\D\vec{U}_k}{\frac{p}{p-1}}{B(0,2)}\leq 2^{\frac{3p-4}{p-1}}\frac{(p-1)^2}{p(p-2)}\pi^{\frac{p-2}{2(p-1)}}\pi^{\frac{p}{2(p-2)}}\np{\D\vec{U}_k}{2,\infty}{B(0,2)}.
    	\end{align*}
    	Therefore, we get
    	\begin{align*}
    		\np{\D T(\vec{U})}{2,\infty}{\C}&\leq \Gamma_2n\np{\vec{Y}}{2,\infty}{\C}\\
    		&+\Gamma_2n^2 2^{\frac{3p-4}{p-1}}\frac{(p-1)^2}{p(p-2)}\pi^{\frac{p-2}{2(p-1)}}\epsilon_4\left(\np{\vec{U}_k}{2,\infty}{B(0,2)}+\np{\D\vec{U}_k}{2,\infty}{B(0,2)}\right),
    	\end{align*}
    	and for all $\vec{U}_1,\vec{U}_2\in \mathrm{W}^{1,(2,\infty)}(\C)$, the estimate 
    	\begin{align*}
    		\np{\D T(\vec{U}_1)-\D T(\vec{U}_2)}{2,\infty}{\C}\leq \Gamma_2n^2 2^{\frac{3p-4}{p-1}}\frac{(p-1)^2}{p(p-2)}\pi^{\frac{p-2}{2(p-1)}}\pi^{\frac{p}{2(p-2)}}\epsilon_4\wp{\vec{U}_1-\vec{U}_2}{1,(2,\infty)}{\C}.
    	\end{align*}
    	Therefore, taking 
    	\begin{align*}
    		\epsilon_4=\min\ens{\delta_0, \frac{1}{4}\left(\Gamma_2n^22^{\frac{3p-4}{p-1}}\frac{(p-1)^2}{p(p-2)}\pi^{\frac{p-2}{2(p-1)}}\right)^{-1}},
    	\end{align*}
    	we deduce that 
    	\begin{align*}
    		\wp{T(\vec{U}_1)-T(\vec{U}_2)}{1,(2,\infty)}{\C}\leq \frac{1}{2}\wp{\vec{U}_1-\vec{U}_2}{1,(2,\infty)}{\C}.
    	\end{align*}
    	Therefore, $T$ has a unique fixed point that we denote $\vec{U}_0$. Define by $f:\C^n\rightarrow \C^n$ the linear map such that for all $\vec{X}\in C^{\infty}(\C,\C^n)$, we have
    	\begin{align*}
    		\D_z\vec{X}=\p{z}\vec{X}+f(\vec{X}). 
    	\end{align*}
    	Now, to get the boundary condition, define the operator $S:\mathrm{W}^{1,(2,\infty)}(B(0,1),\C^n)\rightarrow \mathrm{W}^{1,(2,\infty)}(B(0,1),\C^n)$ such that for all $\vec{U}\in \mathrm{W}^{1,(2,\infty)}(B(0,1),\C^n)$, $\vec{V}=S(\vec{U})$ is the unique solution of the equation 
    	\begin{align}\label{handnotes}
    		\left\{\begin{alignedat}{2}
    			\p{z}\vec{V}&=-f(\vec{U})\qquad&&\text{in}\;\, B(0,1)\\
    			\Im(\vec{V})&=\Im(\vec{U}_0)\qquad&&\text{on}\;\, \partial B(0,1). 
    		\end{alignedat}\right.
    	\end{align}
    	Since we only prescribe the imaginary part of $\vec{V}$, this system admits a unique solution. First, by the Cauchy formula, if 
    	\begin{align*}
    		\vec{V}_0(z)=\frac{1}{2\pi i}\int_{B(0,1)}\frac{f(\vec{U})}{\bar{\zeta-z}}d{\zeta}\wedge d\bar{\zeta},
    	\end{align*}
    	then we have $\p{z}\vec{V}_0=-f(\vec{U})$ and $\vec{V}_0=0$ on $\partial B(0,1)$. Furthermore, by the previous convolution estimates, we have 
    	\begin{align}\label{intermediate}
    		\wp{\vec{V}_0}{1,(2,\infty)}{B(0,1)}\leq \frac{1}{2}\wp{\vec{U}}{1,(2,\infty)}{B(0,1)}. 
    	\end{align}
    	Now, first solve in $\mathrm{W}^{1,(2,\infty)}(B(0,1),\R^n)$ the equation
    	\begin{align*}
    		\left\{\begin{alignedat}{2}
    			\Delta \vec{V}_1&=0\qquad&&\text{in}\;\, B(0,1)\\
    			\vec{V}_1&=\Im(\vec{U}_0)\qquad&&\text{on}\;\,\partial B(0,1).
    		\end{alignedat} \right.
    	\end{align*}
    	Then we have by Calder\'{o}n-Zygmund estimate $\vec{V}_1\in \mathrm{W}^{1,(2,\infty)}(B(0,1))$. If $\vec{V}_2$ is the harmonic conjugate of $\vec{V}_1$, then by interpolation theory, we also deduce that $\vec{V}_2\in \mathrm{W}^{1,(2,\infty)}(B(0,1))$ and that for some universal constant $C$, we have
    	\begin{align*}
    		\wp{\vec{V}_1}{1,(2,\infty)}{B(0,1)}+\wp{\vec{V}_2}{1,(2,\infty)}{B(0,1)}\leq C\wp{\Im(\vec{U}_0)}{1,(2,\infty)}{B(0,1)}. 
    	\end{align*}
    	By construction, the function $\vec{V}_3=\vec{V}_1+i\vec{V}_2=i(\vec{V}_2-i\vec{V}_1)$ is holomorphic, which implies that $\vec{V}_4=i\bar{\vec{V}_3}=\vec{V}_2+i\vec{V}_1$ is anti-holomorphic, \emph{i.e.} $\p{z}\vec{V}_4=0$. Furthermore, we have by construction $\Im(\vec{V}_4)=\vec{V}_1=\Im(\vec{U}_0)$ on $\partial B(0,1)$. Therefore, the function $\vec{V}_0+\vec{V}_4$ is the unique solution to the system \eqref{handnotes}. Furthermore, by the estimate \eqref{intermediate}, $S$ is a contraction, so we get a unique fixed point $\vec{U}_1$ of $S$, which satisfies $\D_z\vec{U}_1=0$ and $\Im(\vec{U}_1)=\Im\left(\vec{U}_0\right)$ on $\partial B(0,1)$. Therefore, the function $\vec{U}=\vec{U}_0-\vec{U}_1$ is the unique solution to the system of the theorem.
    \end{proof}

   Recall the expansion 
    \begin{align*}
    	\vec{L}_k=\vec{V}_k-\vec{W}_k-\frac{1}{\pi\z}\ast f_k(\vec{U}_k),
    \end{align*}
    where $f_k(\vec{U}_k)\in \mathrm{L}^{2,\infty}_{\tilde{\lambda}_k}(\C)$, $\vec{W}_k\in \mathrm{W}^{1,(2,\infty)}(B(0,\alpha_0R_k))$. Moreover,  
    \begin{align*}
    	e^{\lambda_k(z)}|\vec{V}_k(z)|\leq \frac{C}{|z|}\, , \quad \text{for all $z\in \Omega_k(\alpha_0)$}\, .
    \end{align*}
    From \cite[Lemma A.$1$]{mondinoriviere}, we deduce that
    \begin{align*}
    	\frac{1}{\pi\z}\ast f_k(\vec{U}_k)\in \mathrm{W}^{1,(2,\infty)}(\C),
    \end{align*}
    which finally implies that 
    \begin{align*}
    	\vec{L}_k=\vec{V}_k+\tilde{\vec{W}}_k,\quad \text{with $\tilde{\vec{W}}_k\in \mathrm{W}^{1,(2,\infty)}(B(0,\alpha_0R_k))$}\,.
    \end{align*}
    For simplicity, we shall rename $\tilde{\vec{W}}_k$ as $\vec{W}_k$. Summarising, we proved the following:
    
    \begin{theorem}\label{thm:L2infLW}
    	Under the hypothesis of Theorem \ref{L21_necks}, there exists $C_1(m,h,\Lambda)$, $\alpha_0>0$  and a measurable function $\vec{L}_k:B(0,\alpha_0R_k)\rightarrow \C^m$  satisfying the equation
    	\begin{align*}
    		\D_z\vec{L}_k=\vec{Y}_k\qquad\text{on}\;\,\Omega_k(\alpha_0).
    	\end{align*}
        Moreover, the following decomposition holds: $\vec{L}_k=\vec{V}_k+\vec{W}_k$, where 
        \begin{align*}
        	e^{\lambda_k(z)}|\vec{V}_k(z)|\leq \frac{C_1}{|z|}, \quad \text{for all $z\in \Omega_k(\alpha_0)$}, 
        \end{align*}
        and 
        \begin{align*}
        	\Im(\vec{L}_k)\;\,\text{and}\;\,\vec{W}_k\;\,\text{are bounded in $\mathrm{W}^{1,(2,\infty)}(B(0,\alpha_0R_k))$ uniformly in $k\in \N$.} 
        \end{align*}
    \end{theorem}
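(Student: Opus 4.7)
The plan is to package together the constructions carried out throughout Section 3: the candidate $\vec{L}_k$ must solve the perturbed $\bar{\partial}$-type equation $\D_z \vec{L}_k = \vec{Y}_k$ where $\vec{Y}_k$ is the conservative form \eqref{eq:defYk} of the Willmore equation in the Riemannian setting from \cite{mondinoriviere}. Since $\vec{Y}_k$ satisfies the pointwise bound \eqref{control_Y} coming from the $\varepsilon$-regularity Theorem \ref{eps_reg}, it is natural to first produce a singular ``explicit'' piece $\vec{V}_k$ encoding the $1/|z|$ behaviour coming from the multiplicity $d_k$, and then solve for the correction $\vec{W}_k = \vec{L}_k - \vec{V}_k$ as a uniformly $\mathrm{W}^{1,(2,\infty)}$ contribution. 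The decomposition must be robust enough that the pointwise control $e^{\lambda_k(z)}|\vec{V}_k(z)| \le C_1/|z|$ survives, while $\Im(\vec{L}_k)$ gains one derivative in $\mathrm{L}^{2,\infty}$ so as to enable the $\mathrm{L}^{2,1}$ duality argument in the subsequent proof of Theorem \ref{L21_necks}.

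The construction of $\vec{V}_k$ will be a Cauchy-transform inversion of $\partial_z$, with the weight $\z^{-(a+2)}$ chosen so that $a \le d_k < a+1$ (case analysis over $d_k \le 0$, $d_k\to 0^+$, $d_k\to d\in(0,1)$, and $d_k\to d\ge 1$). Splitting the domain into $\{|\zeta|<2|z|\}$ and $\{|\zeta|>2|z|\}$ and Taylor-expanding the Cauchy kernel produces an explicit leading term $a_{k,0}/\z^{a+2}$ that is subtracted off to define $\vec{V}_k$, leaving a remainder controlled by $\log(R/|z|)/|z|$ as in Lemma \ref{Lem:UKlog} and \ref{lem:EstVk}. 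The crucial task is then to upgrade this logarithmic bound to the clean $1/|z|$ bound; this is the main obstacle and occupies Subsection \ref{SSS:logz}. The plan there is to split $\Im(\vec{V}_k)$ elliptically: the Willmore equation together with the identity for $\Im(\D_{\z}\vec{Y}_k)$ reduces the non-cancelling nonlinearity to an $\mathrm{L}^1 + \mathrm{H}^{-1}$ source, while the Christoffel contribution $\bar{f}_k(-2i\p{z}\H_k)$ is rewritten in divergence form using $\Delta\phi_k^q = e^{2\lambda_k}\H_k^q$, yielding $\Im(\vec{V}_k)\in \mathrm{W}^{1,(2,\infty)}$.

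For $\Re(\vec{V}_k)$ the strategy is more delicate: the holomorphic remainder $\psi_k(z)$ of \eqref{eq:deffk} lives a priori only in the new Lorentz-type space $\mathrm{L}^{2,\infty}_{\log}$, and the key technical input is Lemma \ref{lemme_holomorphe1}, which asserts that for holomorphic functions a $\mathrm{L}^{2,\infty}_{\log^\beta}$ bound implies an interior $\mathrm{L}^2$ bound (and hence, via Lemma \ref{lemma_holomorphe2}, an interior $\mathrm{W}^{1,1}\cap \mathrm{L}^{2,1}$ bound). Combined with the elliptic splitting of $\Im(\vec{V}_k)$, this controls the angular part of $\Re(\vec{V}_k)$, and the radial average $\Re(\vec{V}_k)_\rho$ is then handled by differentiating along $\rho$ and plugging in the Euler--Lagrange equation: the Willmore structure causes all leading fourth-order terms to cancel via \eqref{radial0}, giving $|\tfrac{d}{d\rho}\Re(\vec{V}_k)_\rho|\lesssim e^{-\lambda_k(\rho)}\delta_k^2(\rho)$, which integrates via \eqref{estimate10} and the Harnack bound \eqref{new_harnarck} to the required $|z|^{1+d_k}|\Re(\vec{V}_k)_{|z|}|\le C$. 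Lemma \ref{sobolev11} converts this mean bound to the pointwise estimate \eqref{final_Va}.

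Finally, to obtain $\vec{L}_k$ itself, the plan is to solve $\D_z\vec{U}_k = \p{z}\vec{V}_k$ by a Banach fixed-point argument for the map $T(\vec{U}) = \vec{V}_k - (\pi\z)^{-1}\ast f_k(\vec{U})$ in the weighted Banach space $\mathrm{L}^{2,\infty}_{\tilde\lambda_k}(\C)$. The contraction constant is gained by choosing $\alpha_0$ small and invoking the $\mathrm{L}^p$ quantization \eqref{lambdak_p} of $e^{\lambda_k}$ for some $p>2$, together with Young's inequality in Lorentz spaces applied to the Cauchy kernel. Then the refined variant Lemma \ref{A2} adjusts the boundary trace so that $\Im(\vec{L}_k)=0$ on $\partial B(0,\alpha_0 R_k)$, absorbing the correction into $\vec{W}_k\in \mathrm{W}^{1,(2,\infty)}$ via a further contraction. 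The main obstacle compared with the Euclidean proof of \cite{quanta} is precisely the curvature-induced term $f_k(\vec{X})$ appearing in $\D_z$: it spoils the pure-Jacobian structure and translation invariance, which is why a priori pointwise estimates on $\vec{V}_k$ must be established before the fixed-point argument, rather than after.
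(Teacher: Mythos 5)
Your proposal is correct and follows essentially the same route as the paper: the Cauchy-transform construction of $\vec{V}_k$ with the weight $\z^{-(a+2)}$ and case analysis in $d_k$, the removal of the logarithm via the elliptic splitting of $\Im(\vec{V}_k)$, the $\mathrm{L}^{2,\infty}_{\log^{\beta}}$ estimates for the holomorphic remainder (Lemmas \ref{lemme_holomorphe1} and \ref{lemma_holomorphe2}), the radial-average argument through \eqref{radial0}, \eqref{estimate10} and Lemma \ref{sobolev11}, and finally the fixed point for $T(\vec{U})=\vec{V}_k-\frac{1}{\pi\z}\ast f_k(\vec{U})$ in $\mathrm{L}^{2,\infty}_{\tilde{\lambda}_k}(\C)$ followed by the boundary correction of Lemma \ref{A2}. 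No gaps beyond the level of detail expected in a proof sketch.
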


 \subsection{Conclusion of the proof of Theorem \ref{L21_necks} }\label{last_est}
    
    By Lemma \ref{A2}, we deduce that there exists $S_k\in \mathrm{W}^{1,(2,\infty)}(B(0,\alpha_0 R_k),\C)$ such that 
    \begin{align}\label{systemS}
    	\left\{\begin{alignedat}{2}
    		\p{z}S_k&=\bs{\p{z}\phi_k}{\bar{\vec{L}_k}}\qquad&&\text{in}\;\, B(0,\alpha_0 R_k)\\
    		\Im(S_k)&=0\qquad&&\text{on}\;\,\partial B(0,\alpha_0 R_k)
    	\end{alignedat}\right.
    \end{align}
    Furthermore,  by \cite[($6.7$)]{mondinoriviere} we have
    \begin{align*}
    	\Im\left(\p{\z}\left(\s{\p{z}\phi_k}{\bar{\vec{L}_k}}\right)\right)=-\frac{1}{2}e^{2\lambda_k}\bs{\H_k}{\Im\left(\vec{L}_k\right)}.
    \end{align*}
    Since $\Im(\vec{L}_k)\in \mathrm{L}^q(B(0,\alpha_0R_k))$ for all $q<\infty$, $e^{\lambda_k}\H_k\in \mathrm{L}^2(B(0,\alpha_0R_k))$ and $e^{\lambda_k}\in \mathrm{L}^p(\Omega_k(\alpha_0))$ for all $p<\frac{2}{1-\epsilon}$, we deduce that 
    \begin{align*}
    	e^{2\lambda_k}\bs{\H_k}{\Im\left(\vec{L}_k\right)}\qquad\text{is bounded in}\;\, \mathrm{L}^p(B(0,\alpha_0R_k))\;\, \text{for all}\;\, q<\frac{2}{2-\epsilon}. 
    \end{align*}
    Therefore, since 
    \begin{align*}
    	\left\{\begin{alignedat}{2}
    		\Delta \Im(S_k)&=4\,\Im\left(\p{\z}\left(\s{\p{z}\phi_k}{\bar{\vec{L}_k}}\right)\right) \qquad&&\text{on}\;\,B(0,\alpha_0R_k)\\
    		\Im(S_k)&=0\qquad&&\text{on}\;\,\partial B(0,\alpha_0R_k),
    	\end{alignedat} \right.
    \end{align*}
    the classical Calder\'{o}n-Zygmund estimates give that 
    \begin{align*}
    	\Im(S_k)\;\,\text{is bounded in}\;\, \mathrm{W}^{2,q}(B(0,\alpha_0R_k
    	))\;\,\text{for all}\;\,q<\frac{2}{2-\epsilon}.
    \end{align*}
    By Sobolev embedding, we deduce that 
    \begin{align*}
    	\D\Im(S_k)\;\,\text{is bounded in}\;\ \mathrm{L}^p(B(0,\alpha_0R_k))\qquad\text{for all}\;\, p<\frac{2}{1-\epsilon}. 
    \end{align*}
    Therefore,  for all $z\in \Omega_k(\alpha_0)$ it holds
    \begin{align}\label{local_ReS}
    	|\D\Re(S_k)|&=2|\p{z}\Re(S_k)|=2\left|-\p{z}\Im(S_k)+\s{\p{z}\phi_k}{\bar{\vec{L}_k}}\right|\nonumber\\
    	&\leq |\D\Im(S_k)|+2
    	e^{\lambda_k}|\vec{L}_k|\leq \frac{C}{|z|}+e^{\lambda_k}|g_k(z)|, 
    \end{align}
    where $g_k\in \mathrm{W}^{1,(2,\infty)}(B(0,\alpha_0R_k))$. 
    
    Now, using Lemma \ref{A2}, we deduce that there exists $\vec{R}_k\in \mathrm{W}^{1,(2,\infty)}(B(0,\alpha_0R_k), \Lambda^2\C^m)$ such that 
    \begin{align}\label{systemR}
    	\left\{
    	\begin{alignedat}{2}
    		\D_z\vec{R}_k&=\p{z}\phi_k\wedge \vec{L}_k-2i\,\p{z}\phi_k\wedge \H_k\qquad&&\text{in}\;\,B(0,\alpha R_k)\\
    		\Im(\vec{R}_k)&=0\qquad&&\text{on}\;\,\partial B(0,\alpha R_k).
    	\end{alignedat} \right.
    \end{align}
    Furthermore,  by \cite[($6.8$)]{mondinoriviere} we have
    \begin{align*}
    	\Im\left(\D_{\z}\left(\p{z}\phi_k\wedge \vec{L}_k-2i\p{z}\phi_k\wedge \H_k\right)\right)=-\frac{1}{2}e^{2\lambda_k}\H_k\wedge \Im(\vec{L}_k)\in \mathrm{L}^q(\Omega_k(\alpha_0))\qquad\text{for all}\;\,q<\frac{2}{2-\epsilon}.
    \end{align*}
    This give in turn
    \begin{align*}
    	\Delta\Im(\vec{R}_k)&=4\,\p{\z}\p{z}\vec{R}_k=4\,\Im\left(\p{\z}\left(\p{z}\phi_k\wedge \vec{L}_k-2i\,\p{z}\phi_k\wedge \H_k\right)\right)-4\,\Im\left(\p{\z}\left(F_k(\vec{R}_k)\right)\right)\\
    	&=4\,\Im\left(\D_{\z}\left(\p{z}\phi_k\wedge \vec{L}_k-2i\,\p{z}\phi_k\wedge \H_k\right)\right)-4\,\Im\left(\bar{F_k}\left(\p{z}\phi_k\wedge \Im(\vec{L}_k)-2i\,\p{z}\phi_k\wedge \H_k\right)\right)\\
    	&-4\,\Im\left(\p{\z}\left(F_k(\vec{R}_k)\right)\right). 
    \end{align*}
    Notice that 
    \begin{align*}
    	\left|\bar{F}_k\left(\p{z}\phi_k\wedge \Im(\vec{L}_k)-2i\,\p{z}\phi_k\wedge \H_k\right)\right|\leq Ce^{2\lambda_k}\left(|\vec{L}_k|+|\H_k|\right)\in \mathrm{L}^q(B(0,\alpha_0R_k))\quad \text{for all}\;\,q<\dfrac{2}{2-\epsilon}. 
    \end{align*}
    Recalling the Sobolev embedding
    \begin{align*}
    	\mathrm{W}^{1,(2,\infty)}(B(0,1))\hooklongrightarrow \bigcap_{q<\infty}\mathrm{L}^q(B(0,1)),
    \end{align*}
    we have $\vec{R}_k\in \mathrm{L}^{q}$ for all $q<\infty$. Since $e^{\lambda_k}\in \mathrm{L}^p(\Omega_k(\alpha_0))$ for all $p<\frac{2}{1-\epsilon}$, we deduce that \emph{a fortiori}  it holds
    \begin{align*}
    	|F_k(\vec{R}_k)|\leq Ce^{\lambda_k}|\vec{R}_k|\in \bigcap_{p<\frac{2}{1-\epsilon}}\mathrm{L}^{p}(\Omega_k(\alpha_0))\,.
    \end{align*}
    Thus $\Delta \Im(\vec{R}_k)\in \mathrm{W}^{-1,p}(B(0,\alpha_0R_k))$ for all $p<\dfrac{2}{1-\epsilon}$ and, by Calder\'{o}n-Zgymund estimates, we obtain that $\Im(\vec{R}_k)\in \mathrm{W}^{1,p}(B(0,\alpha_0R_k))$ for all $p<\dfrac{2}{1-\epsilon}$.

    Next, we sharpen the last estimate. To this aim, we first prove a pointwise bound for $\Re(\vec{R}_k)$.
    Writing
    \begin{align*}
    	\vec{R}_k=\vec{R}_k^1\wedge \vec{R}_k^2\, ,
    \end{align*}
    we deduce that
    \begin{align*}
    	\D_z\vec{R}_k&=\p{z}\vec{R}_k+f_k(\vec{R}_{k}^1)\wedge \vec{R}_{k}^2+\vec{R}_{k}^1\wedge f_k(\vec{R}_{k}^2)\\
    	&=\p{z}\vec{R}_k+\sum_{i,j,l=1}^{n}c_{i,j,l}\p{z}\phi_{k,l} \vec{R}_{k,i,j}\,e_i\wedge e_j\, ,
    \end{align*}
    where $(\e_1,\cdots,\e_m)$ is a basis of $\R^m$, and $c_{i,j,l}\in\R $ are finite linear combinations  of Christoffel symbols with integer weights. We compute:
    \begin{align*}
    	&\p{\z}F_k(\vec{R}_k)=\sum_{i,j,l=1}^{m}\left(\p{\z}c_{i,j,l}\p{z}\phi_{k,l}\,\vec{R}_{k,i,j}+\frac{1}{2}c_{i,j,l}e^{\lambda_k}\H_k\cdot e^{\lambda_k}\vec{R}_{k,i,j}+c_{i,j,l}\p{z}\phi_{k,l}\p{\z}\vec{R}_{k,i,j}\right)\,e_i\wedge e_j\\
    	&=\sum_{i,j,l=1}^n\bigg(\frac{1}{4}\left(\p{x}c_{i,j,l}+i\,\p{y}c_{i,j,l}\right)\left(\p{x}\phi_{k,l}-i\,\p{y}\phi_{k,l}\right)\left(\Re\left(\vec{R}_{k,i,j}\right)+i\,\Im\left(\vec{R}_{k,i,j}\right)\right)\\
    	&\quad +\frac{1}{2}c_{i,j,l}e^{\lambda_k}\H_k\cdot\left(\Re\left(\vec{R}_{k,i,j}\right)+i\,\Im\left(\vec{R}_{k,i,j}\right)\right)\\
    	&\quad +\frac{1}{4}c_{i,j,l}\left(\p{x}\phi_{k,l}-i\,\p{y}\phi_{k,l}\right)\left(\p{x}+i\,\p{y}\right)\left(\Re\left(\vec{R}_{k,i,j}\right)+i\,\Im\left(\vec{R}_{k,i,j}\right)\right)\bigg)\,e_i\wedge e_j\\
    	&=\sum_{i,j,l=1}^m\bigg(\frac{1}{4}\left(\s{\D c_{i,j,l}}{\D\phi_{k,l}}+i\,\s{\D c_{i,j,l}}{\D^{\perp}\phi_{k,l}}\right)\left(\Re\left(\vec{R}_{k,i,j}\right)+i\,\Im\left(\vec{R}_{k,i,j}\right)\right)\\
    	&\quad +\frac{1}{2}c_{i,j,l}e^{\lambda_k}\H_k\cdot\left(\Re\left(\vec{R}_{k,i,j}\right)+i\,\Im\left(\vec{R}_{k,i,j}\right)\right)\\
    	&\quad +\frac{1}{4}c_{i,j,l}\left(\p{x}\phi_{k,l}-i\,\p{y}\phi_{k,l}\right)\left(\p{x}\Re\left(\vec{R}_{k,i,j}\right)-\p{y}\Im\left(\vec{R}_{k,i,j}\right)\right. \\
    	&
    	\left.+i\left(\p{y}\Re\left(\vec{R}_{k,i,j}\right)+\p{x}\Im\left(\vec{R}_{k,i,j}\right)\right)\right)\bigg)\,e_i\wedge e_j\\
    	&=\sum_{i,j,l=1}^{m}\bigg(\frac{1}{4}\left(\s{\D c_{i,j,l}}{\D\phi_{k,l}}\Re\left(\vec{R}_{k,i,j}\right)-\s{\D c_{i,j,l}}{\D^{\perp}\phi_{k,l}}\Im\left(\vec{R}_{k,i,j}\right)\right)\\
    	&\quad +\frac{i}{4}\left(\s{\D c_{i,j,l}}{\D^{\perp}\phi_{k,l}}\Re\left(\vec{R}_{k,i,j}\right)+\s{\D c_{i,j,l}}{\D\phi_{k,l}}\Im\left(\vec{R}_{k,i,j}\right)\right)\\
    	&\quad + \frac{1}{2}c_{i,j,l}e^{\lambda_k}\H_k\cdot e^{\lambda_k}\left(\Re\left(\vec{R}_{k,i,j}\right)+i\,\Im\left(\vec{R}_{k,i,j}\right)\right)\\
    	&+\frac{1}{4}c_{i,j,l}\left(\s{\D\phi_k}{\D\Re\left(\vec{R}_{k,i,j}\right)}+\s{\D\phi_k}{\D^{\perp}\Im\left(\vec{R}_{k,i,j}\right)}\right)\\
    	&\quad +\frac{i}{4}c_{i,j,l}\left(-\s{\D\phi_{k,l}}{\D^{\perp}\Re\left(\vec{R}_{k,i,j}\right)}+\s{\D\phi_{k,l}}{\D\Im\left(\vec{R}_{k,i,j}\right)}\right)
    	\bigg)\,e_i\wedge e_j\, .
    \end{align*}
    Therefore, we have
    \begin{align*}
    	&\Im\left(\p{\z}F_k(\vec{R}_k)\right)=\frac{1}{4}\sum_{i,j,l=1}^{n}\bigg(\s{\D c_{i,j,l}}{\D^{\perp}\phi_{k,l}}\Re\left(\vec{R}_{k,i,j}\right)+\s{\D c_{i,j,l}}{\D\phi_{k,l}}\Im\left(\vec{R}_{k,i,j}\right)\\
    	&\qquad +2\,e^{\lambda_k}\H_k\cdot e^{\lambda_k}\Im\left(\vec{R}_{k,i,j}\right)+c_{i,j,l}\left(-\bs{\D\phi_{k,l}}{\D^{\perp}\Re\left(\vec{R}_{k,i,j}\right)}+\bs{\D\phi_{k,l}}{\D\Im\left(\vec{R}_{k,i,j}\right)}\right)
    	\bigg)\,e_i\wedge e_j. 
    \end{align*}
    Since $\vec{R}_{k}\in \mathrm{L}^p(B(0,\alpha_0R_k))$ for all $p<\infty$, we deduce that  for all $i,j,l\in\ens{1,\cdots,m}$
    \begin{align*}
    	\s{\D c_{i,j,l}}{\D^{\perp}\phi_{k,l}}\Re\left(\vec{R}_{k,i,j}\right)+\s{\D c_{i,j,l}}{\D\phi_{k,l}}\Im\left(\vec{R}_{k,i,j}\right)\in \bigcap_{p<\frac{1}{1-\epsilon}}\mathrm{L}^p(B(0,\alpha_0R_k
    	)), 
    \end{align*}
    Indeed, since for some $c_{i,j,l}^{\alpha,\beta,\gamma}\in \N$, we have
    \begin{align*}    
    	c_{i,j,l}(x)=\sum_{\alpha,\beta,\gamma}^{}c_{i,j,l}^{\alpha,\beta,\gamma}\Gamma_{\alpha,\beta}^{\gamma}(\phi_k(x)), \quad\text{}, 
    \end{align*}
    we deduce that
    \begin{align*}
    	|\D c_{i,j,l}|\leq C_0e^{\lambda_k},
    \end{align*}
    so that $\s{\D c_{i,j,l}}{\D^{\perp}\phi_{k,l}}\in \bigcap_{p<\frac{1}{1-\epsilon}}(B(0,\alpha_0R_k))\displaystyle$. 
    From $\Im(\vec{R}_k)\in C^0(B(0,\alpha_0R_k))$, we deduce that 
    \begin{align*}
    	\left|e^{\lambda_k}\H_k\cdot e^{\lambda_k}\Im\left(\vec{R}_{k,i,j}\right)\right|\leq C|z|^{d_k}\delta(|z|)\leq C|z|^{d_k-1}\,.
    \end{align*}
    Since $\D\lambda_k\in \mathrm{L}^{2,\infty}$ and $\D \vec{R}_k\in \mathrm{L}^{2,\infty}$, using the previous $\mathrm{L}^p$ bound on $e^{\lambda_k}$ (see \eqref{lambdak_p}), we get that 
    \begin{align*}
    	\p{\z}F_k(\vec{R}_k)\in \mathrm{L}^q(B(0,\alpha_0R_k
    	))\qquad\text{for all}\;\, q<\dfrac{2}{2-\epsilon}.
    \end{align*}
    Therefore, we have $\Im(\vec{R}_k)\in \mathrm{W}^{2,q}(B(0,\alpha_0R_k
    ))$ for all $q<\dfrac{2}{2-\epsilon}$, and by Sobolev embedding, we get $\D \Im(\vec{R}_k)\in \mathrm{L}^{p}(B(0,\alpha_0R_k))$ for all $p<\frac{2}{1-\epsilon}$, which shows in particular that $\D\Im(\vec{R}_k)\in \mathrm{L}^{2,1}(B(0,\alpha_0R_k))$ and $\Im(\vec{R}_k)\in C^0(B(0,\alpha_0R_k))$, which suffices to our purpose. Indeed, by the pointwise bounds \eqref{pointwise2} and \eqref{pointwise_Lk}, we deduce that
    \begin{align}\label{pointwise_Rk}
    	|\D \Re(\vec{R}_k)|&=2|\p{z}\Re(\vec{R}_k)|\leq e^{\lambda_k}|\vec{L}_k|+2e^{\lambda_k}|\H_k|+2|F_k(\vec{R}_k)|\nonumber\\
    	&\leq \frac{C}{|z|}+f_k(z)+C\delta(|z|)+Ce^{\lambda_k}|\vec{R}_k|
    	\leq \frac{C}{|z|}+h_k(z),
    \end{align}
    where $h_k$ is bounded in $\mathrm{L}^p(B(0,\alpha_0R_k))$ for all $p<\dfrac{2}{1-\epsilon}$.

    Furthermore, $\vec{R}_k$ and $S_k$ solve the system
    \begin{align}\label{systemRS1}
    	\left\{\begin{alignedat}{2}
    		\D_z\vec{R}_k&=i\left((-1)^{n+1}\star_h\left(\n_k\antires \D_z\vec{R}_k\right)+(\p{z}S_k)\star_h\n_k\right)\qquad&&\text{in}\;\,B(0,\alpha R_k)\\
    		\p{z}S_k&=-i\s{\D_z\vec{R}_k}{\star_h\n_k}\qquad&&\text{in}\;\,B(0,\alpha R_k).
    	\end{alignedat}\right.
    \end{align}
    
    Therefore, there exists a linear map $F_k:\Lambda^2\R^n\rightarrow \Lambda^2\R^n$ such that 
    \begin{align*}
    	\D_z\vec{R}_k=\p{z}\vec{R}_k+F_k(\vec{R}_k),
    \end{align*}
    where 
    \begin{align}\label{l2one}
    	|F_k(\vec{R}_k)|\leq Ce^{\lambda_k}|\vec{R}_k|.
    \end{align}
    Thus, we can rewrite the system \eqref{systemRS1} as
    \begin{align}\label{systemRS2}
    	\left\{\begin{alignedat}{1}
    		\p{z}\vec{R}_k&=i\left((-1)^{n+1}\star \left(\n_k\antires \p{z}\vec{R}_k\right)+(\p{z}S_k)\star_h\n_k+i\,F_k(\vec{R}_k)+(-1)^{n+1}\star_h\left(\n_k\antires F_k(\vec{R}_k)\right)\right)\\
		\p{z}S_k&=-i\s{\p{z}\vec{R}_k}{\star_h\n_k}-i\s{F_k(\vec{R}_k)}{\star_h\,\n_k} \, . 
    	\end{alignedat} \right.
    \end{align}
    Since $\vec{R}_k$ and $S_k$ are bounded in $\mathrm{W}^{1,(2,\infty)}(B(0,\alpha R_k))$, we deduce by the Sobolev embedding that for all $1<p<\infty$, there exists $C_p<\infty$ such that 
        \begin{align}\label{eq:LpBoundRkSk}
    	\np{\vec{R}_k}{p}{B(0,\alpha R_k)}+\np{S_k}{p}{B(0,\alpha R_k)}\leq C_p<\infty, \quad \text{for all $0<\alpha<\alpha_0$ and $k\geq N$.}
    \end{align}
      In order to make the notations easier to read, we prove the following lemma (see \cite{quanta}).  Denote:
       \begin{align*}
    	{u}_r=\dashint{\partial B(0,r)}u\,d\mathscr{H}^1=\frac{1}{2\pi r}\int_{\partial B(0,r)}u\,d\mathscr{H}^1. 
    \end{align*}
    \begin{lemme}\label{averaging_lemma}
    	Let $k,m\in \N$, $u\in \mathrm{W}^{1,1}(B(0,1),\C)$, $f\in \mathrm{L}^{2}(B(0,1),\C)$, $\vec{v}\in \mathrm{W}^{1,(2,\infty)}(B(0,1),\Lambda^k\C^m)$, $\vec{w}\in \mathrm{W}^{1,2}\cap \mathrm{L}^{\infty}(B(0,1),\Lambda^k\R^m)$  such that 
    	\begin{align}\label{eq:pzu}
    		\p{z}u=-i\left(\s{\p{z}\vec{v}}{\vec{w}}+f\right).
    	\end{align}
    	Let $0<r<R<\infty$ and set $\Omega=B_R\setminus\bar{B}_r(0)$. Assume that $\Im(\vec{v})\in \mathrm{W}^{1,2}(\Omega)$ and that 
    	\begin{align}\label{l2infty}
    		|\D \Re(\vec{v})(z)|\leq \frac{C_0}{|z|}\, , \quad \text{for all $r\leq |z|\leq R$}.
    	\end{align}
    	Then 
    	\begin{align}\label{averages_ineq}
    		\left(\int_{r}^R\left|\frac{d}{d\rho}\Re(u)_{\rho}\right|^2\rho\,d\rho\right)^{\frac{1}{2}}\leq \sqrt{2\pi}\binom{n}{k}C_0\np{\D\vec{w}}{2}{\Omega}+\frac{1}{\sqrt{2\pi}}\np{\vec{w}}{\infty}{\Omega}\np{\D\Im(\vec{v})}{2}{\Omega}+\frac{1}{\sqrt{2\pi}}\np{f}{2}{\Omega}.
    	\end{align}
    \end{lemme}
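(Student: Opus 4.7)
The plan is to derive an ODE-type identity for $\frac{d}{d\rho}\Re(u)_{\rho}$ directly from the PDE $\p{z}u=-i(\s{\p{z}\vec v}{\vec w}+f)$, and then estimate each contribution in the weighted norm $\int_{r}^{R}|\cdot|^{2}\rho\,d\rho$ separately, combining the three estimates by Minkowski's inequality. Writing $z=\rho e^{i\theta}$ and using $2e^{i\theta}\p{z}=\p{\rho}-\frac{i}{\rho}\p{\theta}$ gives $\p{\rho}\Re(u)=2\,\Re(e^{i\theta}\p{z}u)-\frac{1}{\rho}\p{\theta}\Im(u)$. Averaging over $\theta$ kills the last term by $2\pi$-periodicity, so
\begin{equation*}
\frac{d}{d\rho}\Re(u)_{\rho}=\frac{1}{\pi}\int_{0}^{2\pi}\Re\bigl(e^{i\theta}\p{z}u\bigr)\,d\theta=\frac{1}{\pi}\int_{0}^{2\pi}\Im\bigl(e^{i\theta}(\s{\p{z}\vec v}{\vec w}+f)\bigr)\,d\theta,
\end{equation*}
the second equality using $\Re(-i\,c)=\Im(c)$ together with \eqref{eq:pzu}. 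Decomposing $\vec v=\Re(\vec v)+i\,\Im(\vec v)$, and exploiting that $\vec w$ is real, splits the right-hand side into contributions from $\Re(\vec v)$, $\Im(\vec v)$, and $f$.

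The $f$- and $\Im(\vec v)$-terms are handled by elementary Cauchy--Schwarz. For $f$, one bounds $\bigl|\frac{1}{\pi}\int_{0}^{2\pi}\Im(e^{i\theta}f)\,d\theta\bigr|^{2}\leq \frac{2}{\pi}\int_{0}^{2\pi}|f|^{2}\,d\theta$, and integration against $\rho\,d\rho$ converts the right-hand side into a multiple of $\np{f}{2}{\Omega}^{2}$. For $\Im(\vec v)$, the identity $\Im\bigl(e^{i\theta}\cdot i\s{\p{z}\Im\vec v}{\vec w}\bigr)=\Re\bigl(e^{i\theta}\s{\p{z}\Im\vec v}{\vec w}\bigr)$ combined with the pointwise estimates $|\p{z}\Im\vec v|\leq \frac{1}{2}|\D\Im\vec v|$ and $|\vec w|\leq\np{\vec w}{\infty}{\Omega}$ reduces the contribution to $\frac{\|\vec w\|_{L^{\infty}}^{2}}{2\pi}\int_{\Omega}|\D\Im\vec v|^{2}dx$ after squaring and integrating in $\rho\,d\rho$; extracting the square root yields the desired bound by $\frac{1}{\sqrt{2\pi}}\np{\vec w}{\infty}{\Omega}\np{\D\Im(\vec v)}{2}{\Omega}$.

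The heart of the proof, and its main obstacle, is the $\Re(\vec v)$-term: the only hypothesis on $\Re(\vec v)$ is the pointwise bound $|\D\Re(\vec v)(z)|\leq C_{0}/|z|$, which is not in $L^{2}(\Omega)$ (it would diverge logarithmically on a neck region), so $\s{\p{z}\Re(\vec v)}{\vec w}$ cannot be estimated directly in $L^{2}$. The crucial observation is that, since both $\Re(\vec v)$ and $\vec w$ are real, expanding $2e^{i\theta}\p{z}\Re(\vec v)=\p{\rho}\Re(\vec v)-\frac{i}{\rho}\p{\theta}\Re(\vec v)$ yields
\begin{equation*}
2\,\Im\bigl(e^{i\theta}\s{\p{z}\Re(\vec v)}{\vec w}\bigr)=-\frac{1}{\rho}\s{\p{\theta}\Re(\vec v)}{\vec w},
\end{equation*}
so the troublesome radial derivative is eliminated for free. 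Integrating by parts in $\theta$ (boundary terms vanish by periodicity) and subtracting the mean $(\Re\vec v)_{\rho}$ (permitted since $\p{\theta}$ kills constants) this becomes $\frac{1}{2\pi\rho}\int_{0}^{2\pi}\s{\Re(\vec v)-(\Re\vec v)_{\rho}}{\p{\theta}\vec w}\,d\theta$. The Wirtinger inequality on the circle gives $\int_{0}^{2\pi}|\Re\vec v-(\Re\vec v)_{\rho}|^{2}d\theta\leq \int_{0}^{2\pi}|\p{\theta}\Re\vec v|^{2}d\theta\leq 2\pi C_{0}^{2}$ via $|\p{\theta}\Re\vec v|\leq \rho\,|\D\Re\vec v|\leq C_{0}$, while $|\p{\theta}\vec w|\leq \rho\,|\D\vec w|$ converts the remaining factor into an $L^{2}$-quantity. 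Cauchy--Schwarz on $\partial B_{\rho}$ and integration in $\rho\,d\rho$ then produce the bound in terms of $C_{0}\np{\D\vec w}{2}{\Omega}$, with any combinatorial factor $\binom{n}{k}$ arising from componentwise bookkeeping over $\Lambda^{k}\C^{m}$. This integration-by-parts-in-$\theta$ maneuver, which relies essentially on the reality of $\vec w$ to kill the $\p{\rho}$-contribution and to justify the integration by parts, is the complex-variable analogue of the approach of \cite[Lemma VI.1]{quanta}.
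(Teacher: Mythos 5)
Your proof is correct, and its skeleton coincides with the paper's: pass to polar coordinates, exploit that $\vec{w}$, $\Re(\vec{v})$, $\Im(\vec{v})$ are real so that in $\Im\bigl(e^{i\theta}\s{\p{z}\vec{v}}{\vec{w}}\bigr)$ only the angular derivative of $\Re(\vec{v})$ and the radial derivative of $\Im(\vec{v})$ survive, average over circles so that the $\p{\theta}\Im(u)$ contribution vanishes, and estimate the three resulting pieces in $\mathrm{L}^2(\rho\,d\rho)$ via Cauchy--Schwarz and Minkowski. The one point where you genuinely diverge is the critical pairing $\frac{1}{\rho}\s{\p{\theta}\Re(\vec{v})}{\vec{w}}$: the paper keeps the derivative on $\Re(\vec{v})$, uses $\int_0^{2\pi}\p{\theta}\Re(\vec{v})\,d\theta=0$ to replace $\vec{w}$ by $\vec{w}-\vec{w}_{\rho}$, and controls $\np{\vec{w}-\vec{w}_{\rho}}{\infty}{\partial B_{\rho}}$ by $\binom{n}{k}\int_{\partial B_{\rho}}|\D\vec{w}|\,d\mathscr{H}^1$ (Lemma \ref{sobolev11}), followed by Cauchy--Schwarz and the co-area formula; you instead integrate by parts in $\theta$ (valid for a.e.\ $\rho$, since by Fubini the restrictions of $\vec{w}$ and $\Re(\vec{v})$ to a.e.\ circle are $\mathrm{W}^{1,2}$), subtract the circle-mean of $\Re(\vec{v})$, and invoke Wirtinger's inequality together with $|\p{\theta}\Re(\vec{v})|\leq \rho\,|\D\Re(\vec{v})|\leq C_0$. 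Both treatments are sound; yours yields the sharper bound $\frac{C_0}{\sqrt{2\pi}}\np{\D\vec{w}}{2}{\Omega}$ with no $\binom{n}{k}$ factor, while the paper's avoids the integration by parts and reuses its oscillation Lemma \ref{sobolev11}, which it needs elsewhere anyway. The only caveat is constant bookkeeping: your Cauchy--Schwarz step for the $f$-term gives $\sqrt{2/\pi}\,\np{f}{2}{\Omega}$ rather than the stated $\frac{1}{\sqrt{2\pi}}\np{f}{2}{\Omega}$ (note the factor $2$ in front of $\Im\bigl(\frac{z}{|z|}f\bigr)$ in \eqref{no_mean2}, which suggests the paper's own constant there is off by the same factor); since \eqref{averages_ineq} is only ever applied with unspecified constants, this is immaterial.
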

    \begin{proof}
    	Rewriting the equation \eqref{eq:pzu} as
    	\begin{align*}
    		\p{z}\,\Re(u)=-i\left(\s{\p{z}\vec{v}}{\vec{w}}+\p{z}\Im(u)+f\right),
    	\end{align*}
    	and recalling that $\p{z}=\frac{1}{2}\left(\p{x}-i\,\p{y}\right)$,    	we deduce that 
    	\begin{align*}
    		\left\{
    		\begin{alignedat}{3}
    			\p{x}\Re(u)&=&&2\,\Re\left(-i\left(\s{\p{z}\vec{v}}{\vec{w}}+\p{z}\Im(u)+f\right)\right)&&=2\,\Im\left(\s{\p{z}\vec{v}}{\vec{w}}+\p{z}\Im(u)+f\right)\\
    			\p{y}\Re(u)&=-&&2\,\Im\left(-i\left(\s{\p{z}\vec{v}}{\vec{w}}+\p{z}\Im(u)+f\right)\right)&&=2\,\Re\left(\s{\p{z}\vec{v}}{\vec{w}}+\p{z}\Im(u)+f\right). 
    		\end{alignedat}
    		\right.
    	\end{align*}
    	Recalling that
    	\begin{align*}
    		\begin{alignedat}{
    				4}
    			\p{r}\Re(u)&=&&\cos(\theta)\,\p{x}u\,&&+&&\sin(\theta)\,\p{y}u\\
    			\frac{1}{r}\p{\theta}\Re(u)&=-&&\sin(\theta)\,\p{x}u\,&&+&&\cos(\theta)\,\p{y}u,
    		\end{alignedat}
    	\end{align*}
    	we get
    	\begin{align}\label{no_mean1}
    		\p{r}\Re(u)&=2\,\cos(\theta)\Im\left(\s{\p{z}\vec{v}}{\vec{w}}+\p{z}\Im(u)+f\right)+2\,\sin(\theta)\Re\left(\s{\p{z}\vec{v}}{\vec{w}}+\p{z}\Im(u)+f\right)\nonumber\\
    		&=\frac{2}{|z|}\Im\left(\s{z\p{z}\vec{v}}{\vec{w}}+z\p{z}\Im(u)+zf(z)\right)\, .
    	\end{align}
    	Now, notice that
    	\begin{align*}
    		\frac{2}{|z|}\Im\left(\s{z\,\p{z}\vec{v}}{\vec{w}}\right)&=\Im\left(\s{\left(\cos(\theta)+i\sin(\theta)\right)\left(\p{x}-i\,\p{y}\right)\Re(\vec{v})}{\vec{w}}\right)\\
    		&+\Im\left(i\,\s{\left(\cos(\theta)+i\sin(\theta)\right)\left(\p{x}-i\,\p{y}\right)\Im(\vec{w})}{\w}\right)\\
    		&=\s{\sin(\theta)\p{x}\,\Re(\vec{v})-\cos(\theta)\p{y}\Re(\vec{v})}{\vec{w}}+\s{\cos(\theta)\p{x}\Im(\vec{v})+\sin(\theta)\p{y}\Im(\vec{v})}{\vec{w}}\\
    		&=-\frac{1}{r}\s{\p{\theta}\Re(\vec{v})}{\vec{w}}+\s{\p{r}\Im(\vec{v})}{\vec{w}}.
    	\end{align*}
    	Using \eqref{no_mean1}, the same computation for $z\p{z}\Im(u)$ yields 
    	\begin{align}\label{no_mean2}
    		\p{r}\,\Re(u)=-\frac{1}{r}\s{\p{\theta}\Re(\vec{v})}{\vec{w}}+\frac{1}{r}\p{\theta}\Im(u)+\s{\p{r}\Im(\vec{v})}{\vec{w}}+2\,\Im\left(\frac{z}{|z|}f(z)\right).
    	\end{align}
    	Therefore, we deduce that
    	\begin{align*}
    		\frac{d}{dr}{\Re(u)}_r&=\frac{1}{2\pi}\int_{0}^{2\pi}\partial_r \Re(u)(r,\theta)d\theta=-\frac{1}{2\pi 
    		}\int_{0}^{2\pi}\bs{\frac{1}{r}\partial_{\theta}\Re(\vec{v})}{\vec{w}}d\theta+\frac{1}{2\pi r}\int_{\partial B(0,t)}\partial_{\theta}\Im(u)d\theta\\
    		&+\frac{1}{2\pi}\int_{0}^{2\pi}\s{\p{r}\Im(\vec{v})}{\vec{w}}
    		+\frac{1}{2\pi}\int_{0}^{2\pi
    		}\Im\left(\frac{z}{|z|}f(z)\right)d\theta\\
    		&=-\frac{1}{2\pi }\int_{0}^{2\pi}\bs{\frac{1}{r}\partial_{\theta}\Re(\vec{v})}{\vec{w}-\vec{w}_t}d\theta+\frac{1}{2\pi}\int_{0}^{2\pi}\s{\p{r}\Im(\vec{v})}{\vec{w}}d\theta
    		+\frac{1}{2\pi}\int_{0}^{2\pi
    		}\Im\left(\frac{z}{|z|}f(z)\right)d\theta.
    	\end{align*}
    	Notice that by the Cauchy-Schwarz inequality, we have for all $\varphi\in \mathrm{L}^2(\Omega,\C)$
    	\begin{align*}
    		\int_{r}^R\left|\int_{0}^{2\pi}\varphi \frac{d\theta}{2\pi}\right|^2\rho\,d\rho\leq \frac{1}{2\pi}\int_{r}^R\int_{0}^{2\pi}|\varphi|^2\rho\,d\rho d\theta=\frac{1}{2\pi}\int_{\Omega}|\varphi|^2|dz|^2.
    	\end{align*}
    	Therefore,  by the Minkowski inequality and \eqref{l2infty} we have
    	\begin{align*}
    		&\left(\int_{r}^R\left|\frac{d}{d\rho}\Re(u)_{\rho}\right|^2\rho\,d\rho\right)^{\frac{1}{2}}\leq \frac{C_0}{\sqrt{2\pi}}\left(\int_{\Omega}\left|\vec{w}-\vec{w}_{|z|}\right|^2\frac{|dz|^2}{|z|^2}\right)^{\frac{1}{2}}+\frac{1}{\sqrt{2\pi}}\left(\int_{\Omega}|\s{\p{r}\Im(\vec{v})}{\vec{w}}|^2|dz|^2\right)^{\frac{1}{2}}\\
    		&+\frac{1}{\sqrt{2\pi}}\left(\int_{\Omega}\left|\Im\left(\frac{z}{|z|}f(z)\right)\right|^2|dz|^2\right)^{\frac{1}{2}}\\
    		&\leq \frac{C_0}{\sqrt{2\pi}}\left(\int_{\Omega}|\vec{w}-\vec{w}_{|z|}|^2\frac{|dz|^2}{|z|^2}\right)^{\frac{1}{2}}+\frac{1}{\sqrt{2\pi}}\np{\vec{w}}{\infty}{\Omega}\np{\D\Im(\vec{v})}{2}{\Omega}+\frac{1}{\sqrt{2\pi}}\np{f}{2}{\Omega}.
    	\end{align*}
    	Now, by \eqref{sobolev1}, we infer
    	\begin{align*}
    		\np{\w-\vec{w}_{|z|}}{\infty}{\partial B(0,|z|)}\leq \binom{n}{k}\int_{\partial B(0,|z|)}|\D\vec{w}|d\mathscr{H}^1, \quad \text{ for all $z\in B(0,1)$}.
    	\end{align*}
    	Using the Cauchy-Schwarz inequality and the co-area formula twice, we deduce that
    	\begin{align*}
    		&\int_{B_R\setminus\bar{B}_r(0)}|\vec{w}-\vec{w}_{|z|}|^2\frac{|dz|^2}{|z|^2}\leq \binom{n}{k}^2\int_{B_R\setminus\bar{B}_r(0)}\left(\int_{\partial B(0,|z|)}|\D\vec{w}|d\mathscr{H}^1\right)^2\frac{|dz|^2}{|z|^2}\\
    		&\leq 2\pi\,\binom{n}{k}^2\int_{B_R\setminus\bar{B}_r(0)}\int_{\partial B(0,|z|)}|\D\vec{w}|^2\frac{|dz|^2}{|z|}
    		=(2\pi)^2\binom{n}{k}^2\int_{r}^R\left(\int_{\partial B(0,t)}|\D\vec{w}|^2d\mathscr{H}^1\right)dt\\
    		&
    		=(2\pi)^2\binom{n}{k}^2\int_{B_R\setminus\bar{B}_r(0)}|\D\w|^2|dz|^2.
    	\end{align*}
    	We conclude that
    	\begin{align*}
    		\left(\int_{r}^R\left|\frac{d}{d\rho}\Re(u)_{\rho}\right|^2\rho\,d\rho\right)^{\frac{1}{2}}\leq \sqrt{2\pi}\binom{n}{k}C_0\np{\D\vec{w}}{2}{\Omega}+\frac{1}{\sqrt{2\pi}}\np{\vec{w}}{\infty}{\Omega}\np{\D\Im(\vec{v})}{2}{\Omega}+\frac{1}{\sqrt{2\pi}}\np{f}{2}{\Omega},
    	\end{align*}
    	which concludes the proof of the lemma. 
    \end{proof}
    Applying  Lemma \ref{averaging_lemma} to $S_k$ in the equation \eqref{systemRS2}, we deduce that 
    \begin{align*}
    	\left(\int_{\alpha_0^{-1}r_k}^{\alpha R_k}\left|\frac{d}{d\rho}\Re(S_{k,\rho})\right|^2\rho\,d\rho\right)^{\frac{1}{2}}&\leq n(n-1)\sqrt{\frac{\pi}{2}}C_3(n,\Lambda)\np{\D\n_k}{2}{\Omega_k(\alpha)}+\frac{1}{\sqrt{2}\pi}\np{\D\Im(\vec{R}_k)}{2}{\Omega_k(\alpha)}\\
    	&\qquad +\frac{1}{\sqrt{2\pi}}\np{\vec{R}_k}{2}{\Omega_k(\alpha)}. 
    \end{align*}
    Applying Lemma \ref{averaging_lemma} to each of the $\frac{m(m-1)}{2}$ components of $\vec{R}_k$, we get by the Minkowski inequality that 
    \begin{align*}
    	&\left(\int_{\alpha_0^{-1}r_k}^{\alpha_0R_k}\left|\frac{d}{d\rho}\Re(\vec{R}_{k,\rho})\right|^2\rho\,d\rho\right)^{\frac{1}{2}}\leq {n(n-1)}\Gamma_1(n)C_3(n\Lambda)\np{\D\n_k}{2}{\Omega_k(\alpha)}\\
    	&+\frac{n(n-1)}{2\sqrt{2\pi}}\left(\np{\D\Im(S_k)}{2}{\Omega_k(\alpha)}+\np{\D\Im(\vec{R}_k)}{2}{\Omega_k(\alpha)}\right)
    	+{n(n-1)}\sqrt{\frac{2}{\pi}}\np{\vec{R}_k}{2}{\Omega_k(\alpha)}. 
    \end{align*}
     Recall now the following generalization (see \cite{pointwise}) of \cite[Lemma VI.$2$]{quanta}  proved in \cite{angular} (see also \cite{quantamoduli}). 
    \begin{lemme}\label{newl2estimate}
    	There exists a universal constant $R_0>0$ with the following property.
    	Let $0<4r<R<R_0$, $\Omega=B(0,R)\setminus \bar{B}(0,r)\rightarrow \R$, $a,b:\Omega \rightarrow \R$ such that $\D a\in \mathrm{L}^{2,\infty}(\Omega)$ and $\D b\in \mathrm{L}^2(\Omega)$, and $\varphi:\Omega\rightarrow \R$ be a solution of 
    	\begin{align*}
    		\Delta \varphi=\D a\cdot \D^{\perp}b\qquad \text{in}\;\ \Omega.
    	\end{align*}
    	For $r\leq \rho\leq R$, define
    	\begin{align*}
    		{\varphi}_r=\dashint{\partial B_{\rho}(0)}\varphi\,d\mathscr{H}^1=\frac{1}{2\pi \rho}\int_{\partial B_{\rho}(0)}\varphi\,d\mathscr{H}^1.
    	\end{align*}
    	Then $\D \varphi\in \mathrm{L}^2(\Omega)$, and there exists a positive constant $C_0>0$ independent of $0<4r<R$ such that for all $\left(\dfrac{r}{R}\right)^{\frac{1}{2}}<\alpha<\dfrac{1}{2}$ it holds:
    	\begin{align*}
    		\np{\D \varphi}{2}{B_{\alpha R}\setminus \bar{B}_{\alpha^{-1}r}}\leq C_0\np{\D a}{2,\infty}{\Omega}\np{\D b}{2}{\Omega}+C_0\np{\D {\varphi}_{\rho}}{2}{\Omega}+C_0\np{\D \varphi }{2,\infty}{\Omega}.
    	\end{align*}
    \end{lemme}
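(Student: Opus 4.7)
The plan is to decompose $\varphi=\varphi_{1}+\varphi_{2}$, where $\varphi_{1}$ solves the Jacobian equation globally on $\R^{2}$ and $\varphi_{2}$ is the harmonic correction on $\Omega$, and then treat the radial mean and the non-zero angular modes of $\varphi_{2}$ separately, in the spirit of the angular-energy approach of \cite{angular}.

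First, I would extend $a$ and $b$ to the whole plane by multiplying with smooth cut-offs equal to $1$ on $\Omega$ and supported in a slightly larger annulus; the hypothesis $R<R_{0}$ ensures that this extension preserves $\np{\D a}{2,\infty}{\Omega}$ and $\np{\D b}{2}{\Omega}$ up to universal constants. Defining
\begin{equation*}
\varphi_{1}=\frac{1}{2\pi}\log|\cdot|\ast\bigl(\D\tilde a\cdot\D^{\perp}\tilde b\bigr)
\end{equation*}
one has $\Delta\varphi_{1}=\D\tilde a\cdot\D^{\perp}\tilde b$ on $\R^{2}$. The $L^{2,\infty}$--$L^{2}$ refinement of the Coifman--Lions--Meyer--Semmes Hardy-space estimate for Jacobians, combined with the $L^{2,1}$/$L^{2,\infty}$ duality recalled in Appendix \ref{appendix}, yields
\begin{equation*}
\np{\D\varphi_{1}}{2}{\R^{2}}\leq C\,\np{\D a}{2,\infty}{\Omega}\np{\D b}{2}{\Omega}.
\end{equation*}
Setting $\varphi_{2}=\varphi-\varphi_{1}$, then $\Delta\varphi_{2}=0$ in $\Omega$, and $\D\varphi_{2}\in L^{2,\infty}(\Omega)$ with norm at most $\np{\D\varphi}{2,\infty}{\Omega}+\np{\D\varphi_{1}}{2}{\Omega}$.

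Next, expanding the harmonic function $\varphi_{2}$ in Fourier modes on circles,
\begin{equation*}
\varphi_{2}(\rho,\theta)=\bar{(\varphi_{2})}_{\rho}+\sum_{n\in\Z\setminus\{0\}}\bigl(A_{n}\rho^{|n|}+B_{n}\rho^{-|n|}\bigr)e^{in\theta},
\end{equation*}
the radial mean contributes exactly the term $\np{\D\bar\varphi_{\rho}}{2}{\Omega}$ (the radial mean of $\D\varphi_{1}$ is already absorbed into the Wente term). For each non-zero mode the building blocks $z^{n}$ and $\bar z^{\,-n}$ are (anti-)holomorphic, so applying mode-by-mode a variant of Lemma \ref{lemme_holomorphe1} converts the $L^{2,\infty}(\Omega)$ bound into an $L^{2}$ bound on the strictly smaller annulus $B_{\alpha R}\setminus\bar B_{\alpha^{-1}r}$, with constant depending only on $\alpha$. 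Summing by Parseval in $\theta$ and using the geometric decay of $\rho^{\pm|n|}$ when shrinking from $(r,R)$ to $(\alpha^{-1}r,\alpha R)$ then produces the desired $L^{2}$ bound for $\D\varphi_{2}$ minus its radial mean.

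The main obstacle is precisely this last step: summing the mode-wise $L^{2,\infty}\Rightarrow L^{2}$ improvements uniformly in the mode index $n$ and in the radii $r,R$, so that the total constant is bounded by an absolute $C_{0}$. This is the content of the angular-energy technique of \cite{angular}, which exploits the orthogonality of the Fourier modes together with the explicit form of harmonic functions on an annulus to reduce the estimate to a sequence of one-variable integrals convergent under the admissibility condition $(r/R)^{1/2}<\alpha<1/2$; the smallness constraint $R<R_{0}$ is what keeps the logarithmic contribution of the $\log\rho$ term in the radial mean under control and allows one to absorb boundary terms arising from the cut-off extension in Step 1.
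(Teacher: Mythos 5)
The paper does not prove this lemma: it is recalled from the literature (\cite{angular}, see also \cite{pointwise,quantamoduli}) and used as a black box, so there is no internal proof to compare your attempt against. I will therefore assess the attempt on its own merits.

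Your overall architecture (Wente splitting $\varphi=\varphi_1+\varphi_2$, with $\varphi_1$ handled by the $L^{2,\infty}$--$L^2$ Wente inequality and $\varphi_2$ harmonic) is the right one, and the treatment of the radial mean of $\varphi_2$ via the $\np{\D\varphi_\rho}{2}{\Omega}$ term is fine. However, the step you flag as ``the main obstacle'' is exactly where the argument breaks, and the mechanism you propose for it does not work. You suggest applying Lemma~\ref{lemme_holomorphe1} mode-by-mode to the Fourier components of $\varphi_2-(\varphi_2)_\rho$ and then ``summing by Parseval.'' There are two problems. First, the $\mathrm{L}^{2,\infty}$ quasi-norm of $\D\varphi_2$ on $\Omega$ does not decompose across Fourier/Laurent modes, so there is no mode-wise $\mathrm{L}^{2,\infty}$ quantity to feed into Lemma~\ref{lemme_holomorphe1} in the first place; Parseval is an $\mathrm{L}^2$ identity and gives no information here, since $\D\varphi_2$ is only in $\mathrm{L}^{2,\infty}(\Omega)$ \emph{a priori}. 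Second, applied to a single monomial $z^n$ (or $\bar z^{-n}$), Lemma~\ref{lemme_holomorphe1} is vacuous: such a monomial already lies in $\mathrm{L}^2$ of the relevant disk, and the lemma provides no uniform-in-$n$ gain. The correct argument must treat $\partial_z\varphi_2$ minus its $z^{-1}$ Laurent coefficient (i.e.\ the holomorphic function obtained after removing the $\log$-mode) as a \emph{single} holomorphic function on the annulus, split it into the part with Laurent indices $\geq 0$ (holomorphic on $B_R$) and the part with indices $\leq -2$ (holomorphic and decaying on $\C\setminus\bar B_r$), and apply the $\mathrm{L}^{2,\infty}\Rightarrow\mathrm{L}^2$ improvement to each of these two pieces on a disk, respectively on a punctured exterior after inversion. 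Propagating the $\mathrm{L}^{2,\infty}(\Omega)$ bound to each of these two pieces (rather than to their sum) is itself a non-trivial estimate that your sketch omits. Finally, a minor point: the estimate $\np{\D\varphi_1}{2}{\R^2}\lesssim \np{\D a}{2,\infty}{\Omega}\np{\D b}{2}{\Omega}$ does not come from Coifman--Lions--Meyer--Semmes combined with $\mathrm{L}^{2,1}/\mathrm{L}^{2,\infty}$ duality (CLMS needs both factors in $\mathrm{L}^2$ to land in the Hardy space); the correct reference is the $\mathrm{L}^{2}$--$\mathrm{L}^{2,\infty}$ Wente estimate already cited elsewhere in the paper (\cite[Th\'eor\`eme~3.4.5]{helein}).
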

  From \eqref{eq:LpBoundRkSk} we know that
    \begin{align*}
    	\np{S_k}{2}{\Omega_k(\alpha_0/2)}+\np{\vec{R}_k}{2}{\Omega_k(\alpha_0/2)}\leq C.
    \end{align*}
     Lemma \ref{newl2estimate}, the system \eqref{systemRS2} and the Cauchy-Schwarz inequality imply that
    \begin{align*}
    	&\int_{2\alpha_0^{-1}r_k}^{\frac{\alpha_0R_k}{2}}\left(\left|\frac{d}{d\rho}\Re(\vec{R}_{k,\rho})\right|+\left|\frac{d}{d\rho}\Re(S_{k,\rho})\right|\right)d\rho\leq C\int_{2\alpha_0^{-1}r_k}^{\frac{\alpha_0 R_k}{2}}\delta(\rho)\left(\int_{\partial B(0,\rho)}\left(|\D S_k|+|\D\vec{R}_k|\right)d\mathscr{H}^1\right)d\rho\\
    	&\leq C\left(\int_{2\alpha_0^{-1}r_k}^{\frac{\alpha_0R_k}{2}}\delta^2(\rho)\rho\,d\rho\right)^{\frac{1}{2}}\left(\np{S_k}{2}{\Omega_k(\alpha_0/2)}+\np{\vec{R}_k}{2}{\Omega_k(\alpha_0/2)}\right)\leq C.
    \end{align*}
    Furthermore, by the inequality \eqref{estimate_lebesgue}, there exists $r_0\in (\frac{\alpha_0R_k}{4},\frac{\alpha_0R_k}{2})$ such that if $r=\frac{\alpha_0R_k}{2}$, we have
    \begin{align}\label{estimate_lebesgue2}
    	\int_{\partial B_{r_0}(0)}\left(|S_k|+|\vec{R}_k|\right)d\mathscr{H}^1&\leq \frac{2\sqrt{3\pi}}{\log(2)}\left(\np{S_k}{2,\infty}{B_{2r}\setminus\bar{B}_r(0)}+\np{\vec{R}_k}{2,\infty}{B_{2r}\setminus\bar{B}_r(0)}\right) \nonumber \\
    	&\leq \frac{2\pi^{1-\frac{1}{2p}}(2r)^{1-\frac{1}{p}}}{\log(2)}\left(\np{S_k}{p}{B_{2r}\setminus\bar{B}_r(0)}+\np{\vec{R}_k}{p}{B_{2r}\setminus\bar{B}_r(0)}\right).
    \end{align}
    Therefore, it holds
    \begin{align*}
    	\left|\Re(\vec{R}_{k,r}-\Re(\vec{R}_{k,r_0})\right|+\left|\Re(S_{k,r}-\Re(S_{k,r_0})\right|\leq C_n\int_{2\alpha_0^{-1}r_k}^{r_0}\left(\left|\frac{d}{d\rho}\Re(\vec{R}_{k,\rho})\right|+\left|\frac{d}{d\rho}\Re(S_{k,\rho})\right|\right)d\rho\leq C
    \end{align*}
    and
    \begin{align*}
    	|\Re(\vec{R}_{k,r})|+|\Re(S_{k,r})|\leq C+\frac{C}{(\alpha_0R_k)^{\frac{1}{p}}}\left(\np{\vec{R}_k}{p}{\Omega_k(\alpha_0/2)}+\np{S_k}{p}{\Omega_k(\alpha_0/2)}\right).
    \end{align*}
    In particular, we get that $\Re(\vec{R}_k)_\rho,\Re(S_k)_\rho\in \mathrm{L}^{\infty}([4\alpha_0^{-1}R_k,\frac{\alpha_0R_k}{4}])$.
    We deduce that 
    \begin{align*}
    	&\left|\Re\left(\vec{R}_k\right)(z)-\Re\left(\vec{R}_k\right)_{|z|}\right|\leq n\int_{\partial B(0,|z|)}|\D\Re\left(\vec{R}_k\right)|d\mathscr{H}^1\leq n\int_{\partial B(0,|z|)}|\D\Im(\vec{R}_k)|d\mathscr{H}^1\\
    	&+2n\int_{\partial B(0,|z|)}e^{\lambda_k}\left(|\vec{L}_k|+|\H_k|
    	\right)d\mathscr{H}^1+C|z|^{d_k}\int_{\partial B(0,|z|)}\left|\Re(\vec{R}_k)\right|d\mathscr{H}^1+C|z|^{d_k}\int_{\partial B(0,|z|)}|\Im(\vec{R}_k)|d\mathscr{H}^1\\
    	&\leq C+|z|^{d_k}\int_{\partial B(0,|z|)}|\tilde{W}_k|d\mathscr{H}^1+C|z|^{d_k}\int_{\partial B(0,|z|)}|\Re(\vec{R}_k)|d\mathscr{H}^1\, .
    \end{align*}
    By the $\mathrm{L}^{2,1}/\mathrm{L}^{2,\infty}$ duality in dimension $1$ and  trace theory, since $\tilde{W}_k\in \mathrm{W}^{1,(2,\infty)}$, we deduce that 
    \begin{align*}
    	\int_{\partial B(0,|z|)}|\tilde{W}_k|d\mathscr{H}^1\leq \np{1}{2,1}{\partial B(0,|z|)}\np{\tilde{W}_k}{2,\infty}{\partial B(0,|z|)}\leq C|z|\wp{\tilde{W}_k}{1,(2,\infty)}{\Omega_k(\alpha_0)}\leq C|z|\, . 
    \end{align*}
    Since $d_k>-1+\epsilon$, we deduce by the $\mathrm{L}^{\infty}$ estimate for the means of $\Re(\vec{R}_k)$ that 
    \begin{align*}
    	|\Re(\vec{R}_k)(z)|\leq C+C|z|^{d_k}\int_{\partial B(0,|z|)}|\Re(\vec{R}_k)|d\mathscr{H}^1\, .
    \end{align*}
    Integrating this identity and using $d_k>-1+\epsilon$, we deduce that
    \begin{align*}
    	\int_{\partial B(0,|z|)}|\Re(\vec{R}_k)|d\mathscr{H}^1&\leq 2\pi C|z|+2\pi C|z|^{1+d_k}\int_{\partial B(0,|z|)}|\Re(\vec{R}_k)|d\mathscr{H}^1\\
	&\leq C|z|+C(\alpha_0R_k)^{\epsilon}\int_{\partial B(0,|z|)}|\Re(\vec{R}_k)|d\mathscr{H}^1,
    \end{align*}
    which shows that for $\alpha_0>0$ small enough, we have
    \begin{align*}
    	\int_{\partial B(0,|z|)}|\Re(\vec{R}_k)|d\mathscr{H}^1\leq C|z|,\, .
    \end{align*}
    We conclude that $\Re(\vec{R}_k)\in \mathrm{L}^{\infty}(B(0,\alpha_0R_k
    ))$. A similar argument (easier since we have a $\p{z}$ equation and not a $\D_z$ one) finally yields that
    \begin{align*}
    	\np{S_k}{\infty}{\Omega_k(\alpha_0/4)}+\np{\vec{R}_k}{\infty}{\Omega_k(\alpha_0/4)}\leq C. 
    \end{align*}    
    To complete the proof, recall from \cite{mondinoriviere} that in $B(0,\alpha_0R_k)$ it holds:
    \begin{align*}
    	\left\{
    	\begin{alignedat}{1}
    		\Delta\left(\Re(\vec{R}_k)\right)&=(-1)^n\star_h\left(\D\n_k\antires\D^{\perp}\Re(\vec{R}_k)\right)-\star_h\left(\D\n_k\antires\D^{\perp}(\Re(S_k))\right)+\vec{G}_{1,k}\\
    		\Delta\left(\Re(S_k)\right)&=\s{\D(\star_h\n_k)}{\D^{\perp}\Re(\vec{R}_k)}+G_{2,k}
    	\end{alignedat}\right.
    \end{align*}
    for some  $\vec{G}_{1,k}$ and $G_{2,k}$ which are bounded in $\mathrm{L}^p(B(0,\alpha_0 R_k))$, for all $1\leq p<2$.
    
   Recall also the following slight variant (see \cite{pointwise}) from a Lemma of \cite{angular}.
    \begin{lemme}\label{hardy}
    	Let $R_0>0$ be the constant of Lemma \ref{newl2estimate}.
    	Let $0<16r<R<R_0$, $\Omega=B(0,R)\setminus \bar{B}(0,r)\rightarrow \R$, $a,b:\Omega \rightarrow \R$ such that $\D a\in \mathrm{L}^{2}(\Omega)$ and $\D b\in \mathrm{L}^2(\Omega)$, and $\varphi:\Omega\rightarrow \R$ be a solution of 
    	\begin{align*}
    		\Delta \varphi=\D a\cdot \D^{\perp}b\quad \text{in}\;\ \Omega.
    	\end{align*}
    	Assume that $\np{\varphi}{\infty}{\partial\Omega}<\infty$. Then there exists a constant $C_1>0$ such that for all $\left(\dfrac{r}{R}\right)^{\frac{1}{2}}<\alpha<\dfrac{1}{4}$, 
    	\begin{align*}
    		\np{\varphi}{\infty}{\Omega}+\np{\D\varphi}{2,1}{B_{\alpha R}\setminus \bar{B}_{\alpha^{-1}r}(0)}+\np{\D^2\varphi}{1}{B_{\alpha R}\setminus\bar{B}_{\alpha^{-1}r}(0)}\leq C_1\left(\np{\D a}{2}{\Omega}\np{\D b}{2}{\Omega}+\np{\varphi}{\infty}{\partial\Omega}\right).
    	\end{align*}
    \end{lemme}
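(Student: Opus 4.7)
The approach is to combine a global Wente-type solution via Hardy-space theory with a harmonic-remainder analysis on the annulus. I decompose $\varphi = u + \psi$, where $u$ is a global solution of the same Jacobian equation on all of $\R^{2}$ (after extending $a,b$), and $\psi := \varphi - u$ is harmonic on $\Omega$ with $\mathrm{L}^{\infty}$-controlled boundary data.

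\textbf{Global part $u$.} Extend $a,b$ to $\tilde{a},\tilde{b}\in \dot{\mathrm{H}}^{1}(\R^{2})$ of compact support via a bounded $H^{1}$-extension operator followed by a smooth cut-off, so that $\np{\D\tilde{a}}{2}{\R^{2}}\leq C\,\np{\D a}{2}{\Omega}$ and analogously for $b$. Define $u := -(2\pi)^{-1}\log|\cdot|\ast (\D\tilde{a}\cdot \D^{\perp}\tilde{b})$, which solves $\Delta u = \D\tilde{a}\cdot \D^{\perp}\tilde{b}$ on $\R^{2}$. By the Coifman--Lions--Meyer--Semmes theorem, $\D\tilde{a}\cdot \D^{\perp}\tilde{b}\in \mathscr{H}^{1}(\R^{2})$, with Hardy-space norm $\leq C\,\np{\D a}{2}{\Omega}\np{\D b}{2}{\Omega}$. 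Since Riesz transforms are bounded on $\mathscr{H}^{1}$, this yields $\D^{2} u\in \mathscr{H}^{1}\hookrightarrow \mathrm{L}^{1}(\R^{2})$, and the Sobolev--Hardy embedding in dimension two (Tartar, H\'{e}lein) then produces $\D u\in \mathrm{L}^{2,1}(\R^{2})$ and $u\in \mathrm{L}^{\infty}(\R^{2})$, all controlled by $C\,\np{\D a}{2}{\Omega}\np{\D b}{2}{\Omega}$.

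\textbf{Harmonic remainder $\psi$.} The boundary values satisfy $\|\psi\|_{\mathrm{L}^{\infty}(\partial \Omega)}\leq M := \np{\varphi}{\infty}{\partial \Omega}+\np{u}{\infty}{\R^{2}}$, so by the maximum principle $\np{\psi}{\infty}{\Omega}\leq M$. Decompose $\psi = b_{0}\log|z| + \tilde{\psi}$, where $\tilde{\psi}$ has no logarithmic mode. The coefficient $b_{0}$ equals $\bigl(\overline{\psi|_{\partial B_{R}}}-\overline{\psi|_{\partial B_{r}}}\bigr)/\log(R/r)$, hence $|b_{0}|\leq 2M/\log(R/r)$; a direct computation shows $\np{|z|^{-1}}{2,1}{B_{\alpha R}\setminus \bar{B}_{\alpha^{-1}r}(0)}\leq C\log(\alpha^{2}R/r)\leq C\log(R/r)$, so the log mode contributes at most $CM$ to the $\mathrm{L}^{2,1}$-norm of $\D \psi$. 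For $\tilde{\psi}$, classical interior elliptic estimates on each dyadic sub-annulus $A_{\rho}=B_{3\rho/2}\setminus \bar{B}_{5\rho/4}(0)$ give $\rho\,|\D\tilde{\psi}|+\rho^{2}|\D^{2}\tilde{\psi}|\leq CM$ pointwise, and the absence of the logarithmic mode forces the Laurent coefficients of $\tilde{\psi}$ to decay geometrically away from the boundary components, so that the induced $\mathrm{L}^{2,1}$- and $\mathrm{L}^{1}$-contributions sum across dyadic scales with no logarithmic loss. Combining these estimates with the bounds on $u$ yields the announced inequality.

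\textbf{Main obstacle.} The principal difficulty is to obtain \emph{uniform} constants as $r/R\to 0$ and $\alpha\to 0$. Two points require care: (i) the $H^{1}$-extension of $a,b$ must have operator norm independent of the conformal modulus of $\Omega$, which is handled by adapting cut-offs to dyadic sub-annuli near each boundary component and using the conformal invariance of $\dot{\mathrm{H}}^{1}$ in dimension two; and (ii) the $\mathrm{L}^{2,1}$-estimate for the logarithmic mode of $\psi$ hinges on the precise cancellation $|b_{0}|\cdot \np{|z|^{-1}}{2,1}{}\lesssim M$ between the smallness $|b_{0}|\lesssim M/\log(R/r)$ of the Laurent coefficient and the scale-invariant largeness $\np{|z|^{-1}}{2,1}{}\sim \log(R/r)$, a cancellation specific to the endpoint Lorentz space $\mathrm{L}^{2,1}$ that would fail in any $\mathrm{L}^{p}$-framework with $p\neq 2$.
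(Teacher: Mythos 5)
The paper does not actually prove Lemma \ref{hardy}: it recalls it as a slight variant of a lemma from \cite{angular} (see also \cite{pointwise}), and your argument follows essentially the same route as that reference — extend $a,b$ with Dirichlet energy controlled uniformly in the conformal modulus, solve the Jacobian equation globally via Coifman--Lions--Meyer--Semmes/Wente theory to get the $\mathrm{L}^{\infty}$, $\mathrm{L}^{2,1}$-gradient and $\mathrm{L}^{1}$-Hessian bounds, and treat the harmonic remainder by a Laurent-mode analysis in which the $\log|z|$ mode is compensated by the smallness $|b_{0}|\lesssim \np{\varphi}{\infty}{\partial\Omega}/\log(R/r)$ against the $\mathrm{L}^{2,1}$ mass $\sim\log(\alpha^{2}R/r)$ of $|z|^{-1}$. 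Your proposal is therefore correct in substance; the only step stated too briefly is the non-logarithmic modes, where the pointwise bound $\rho\,|\D\tilde{\psi}|\lesssim M$ alone would reproduce a logarithmic loss, so the geometric decay of the nonzero Fourier coefficients on the strictly smaller annulus $B_{\alpha R}\setminus\bar{B}_{\alpha^{-1}r}(0)$, $\alpha<\frac{1}{4}$ (which you do invoke), is genuinely what yields the uniform constant.
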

    From Lemma \ref{hardy}, we deduce that
    \begin{align*}
    	\np{\D S_k}{2,1}{\Omega_k(\alpha_0/2)}+\np{\D \vec{R}_k}{2,1}{\Omega_k(\alpha_0/2)}+\np{\D^2S_k}{1}{\Omega_k(\alpha_0/2)}+\np{\D^2\vec{R}_k}{1}{\Omega_k(\alpha_0/2)}\leq C. 
    \end{align*}
    Since $\Im(\vec{L}_k)\in \mathrm{W}^{1,(2,\infty)}(B(0,\alpha_0R_k))$ we deduce that $e^{\lambda_k}\Im(\vec{L}_k)\in \mathrm{L}^{2+\epsilon}(B(0,\alpha_0R_k))$.  Using the identity
    \begin{align*}
    	e^{\lambda_k}\H_k&=-\Im\left(\D_z\vec{R}_k\res e^{-\lambda_k}\p{\z}\phi_k\right)-\frac{1}{2}e^{\lambda_k}\Im(\vec{L}_k)\\
	&\quad -\Re\left(ie^{-\lambda_k}\p{\z}\phi_k\,\p{z}S_k\right)+\Re\left(\s{\p{z}\phi_k}{\Im(\vec{L}_k)}e^{-\lambda}\p{\z}\phi_k\right)\, ,
    \end{align*}
    we finally get 
    \begin{align*}
    	\np{e^{\lambda_k}\H_k}{2,1}{\Omega_k(\alpha_0/4)}\leq C
    \end{align*}    
    which concludes the proof of  Theorem \ref{L21_necks}. 
      \hfill$\Box$

     \section{Weak $\epsilon$-regularity for Willmore immersions with values into manifolds}
    By \cite[Lemma $3.2$ and Theorem $3.1$]{mondinoriviere}, the following identities are satisfied for any smooth immersion $\phi:\Sigma\rightarrow (M^m,h)$ 
     \begin{align*}
     	\vec{Y}=i\left(\D_z\H-3\,\D_z^{\perp}\H-i\star_h(\D_z\n\wedge \H)\right)&=-2i\left(\D_z^{\perp}\H+\s{\H}{\H_0}\p{\z}\phi\right)\\
     	4e^{-2\lambda}\Re\left(\D_{\z}\left(\D_z^{\perp}\H+\s{\H}{\H_0}\p{\z}\phi\right)\right)&=\Delta_g^{\perp}\H-2|\H|^2\H+\mathscr{A}(\H)+8\,\Re\left(\s{R(\e_{\z},\e_z)\e_z}{\H}\e_{\z}\right). 
     \end{align*}
     We deduce that
     \begin{align*}
     	\Im(\D_{\z}\vec{Y})&=\Im\left(-2i\D_{\z}\left(\D_z^{\perp}\H+\s{\H}{\H_0}\p{\z}\phi\right)\right)=-2\,\Re\left(\D_{\z}\left(\D_z^{\perp}\H+\s{\H}{\H_0}\p{\z}\phi\right)\right)\\
     	&=-\frac{1}{2}e^{2\lambda}\left(\Delta_g^{\perp}\H-2|\H|^2\H+\mathscr{A}(\H)\right)-4e^{2\lambda}\,\Re\left(\s{R(\e_{\z},\e_z)\e_z}{\H}\e_{\z}\right).
     \end{align*}
     Assuming that $\phi$ is a Willmore immersion, from \eqref{el1} we deduce that
     \begin{align}\label{identity_y2}
     	\Im(\D_{\z}\vec{Y})=\frac{1}{2}e^{2\lambda}\left(\mathscr{R}_1^{\perp}(\H)-2\,\tilde{K}_h\,\H+2\,\mathscr{R}_2(d\phi)+(DR)(d\phi)-8\,\Re\left(\s{R(\e_{\z},\e_z)\e_z}{\H}\e_{\z}\right)\right).
     \end{align}
     As before, let $f:\C^m\rightarrow \C^m$ be the linear map such that for all $X\in C^{\infty}(B(0,1),\C^m)$, it holds
     \begin{align*}
     	\D_z\vec{X}=\p{z}\vec{X}+f(\vec{X})=\p{z}\vec{X}+\left(\sum_{l=1}^m\gamma^j_l\vec{X}^l\right)_{1\leq j\leq m}\, ,
     \end{align*}
     where 
    $ 	\gamma^j_l=\sum_{q=1}^{m}\Gamma_{l,q}^j\p{z}\phi^q $
     and $\Gamma_{l,q}^j$ are the Christoffel symbols of the ambient space $(M^m,h)$.
     \\ Likewise, there exists linear maps $F:\Lambda^2\C^m\rightarrow \Lambda^2\C^m$ and $G:\Lambda^{m-2}\C^m\rightarrow \Lambda^{m-2}\C^m$ (notice that $G=f$ is $m=3$) such that for all $\vec{Y}\in C^{\infty}(B(0,1),\Lambda^2\C^m)$ and $\vec{Z}\in C^{\infty}(B(0,1),\Lambda^{m-2}\C^m)$ it holds:
     \begin{align*}
     	\D_z\vec{Y}=\p{z}\vec{Y}+F(\vec{Y})\quad \text{and} \quad \D_z\vec{Z}=\p{z}\vec{Z}+G(\vec{Z})\,.
     \end{align*}
    Notice that 
     \begin{align*}
     	\Im(\D_{\z}\vec{Y})=\Im(\p{\z}\vec{Y}+\bar{f_k}(\vec{Y})),
     \end{align*}
     that 
     \begin{align*}
     	\vec{Y}=i\left(\p{z}\vec{H}-3\pi_{\n}\left(\p{z}\vec{H}\right)-i\star_h\left(\p{z}\n\wedge\H\right)-2f_k(\H)-i\star_h\left(G(\n)\wedge \H\right)\right). 
     \end{align*}
    and that
     \begin{align*}
     	\Im\left(\p{\z}\left(\vec{Y}\right)\right)=-\Re\left(\p{\z}\left(\p{z}\H-3\,\pi_{\n}(\p{z}\H)-i\,\star_h(\p{z}\n\wedge \H)\right)\right)+\Re\left(\p{\z}\left(f(\H)-i\star_h\left(G(\n)\wedge\H\right)\right)\right). 
     \end{align*}
     We immediately get
     \begin{align*}
     	\Re\left(\p{\z}\left(\p{z}\H-3\,\pi_{\n}\left(\p{z}\H\right)\right)\right)=\frac{1}{4}\dive\left(\D\H-3\,\pi_{\n}(\D\H)\right)
     \end{align*}
     and
     \begin{align*}
     	\Re\left(\p{\z}\left(-i\star_h\left(\p{z}\n\wedge \H\right)\right)\right)&=\Re\left(-\frac{i}{4}\star_h\left(\Delta\n\wedge \H\right)+\Re\left(-i\,\star_h\left(\p{z}\n\wedge \p{\z}\H\right)\right)\right)\\
     	&=\frac{1}{4}\star_h\left(\D^{\perp}\n\wedge \D\H\right)=\frac{1}{4}\dive\left(\star_h\left(\D^{\perp}\n\wedge \H\right)\right). 
     \end{align*}
     Finally, we get
     \begin{align*}
     	\Im\left(\p{\z}\left(\vec{Y}\right)\right)=-\frac{1}{4}\dive\left(\D\H-3\,\pi_{\n}(\D\H)+\star_h\left(\D^{\perp}\n\wedge \H\right)\right)+\Re\left(\p{\z}\left(f(\H)-i\ast_h\left(G(\n)\wedge\H\right)\right)\right).
     \end{align*}
     Now, we have
     \begin{align*}
     	\vec{Y}&=i\left(\p{z}\H-3\pi_{\n}(\p{z}\H)-2f_k(\H)-i\star (\p{z}\n\wedge \H)-i\star_h\left(G(\n)\wedge\H\right)\right)\\
     	&=i\left(-2\,\p{z}\H+3\left(\p{z}\pi_{\n}\right)\H-2f(\H)-i\star (\p{z}\n\wedge \H)-i\star_h\left(G(\n)\wedge\H\right)\right).
     \end{align*}
     We compute 
      \begin{align*}
          	&\Im\left(\bar{f}\left((\p{z}\pi_{\n})\H\right)\right)_j=\frac{1}{4}\sum_{l=1}^{n}\sum_{q=1}^n\Gamma_{j,q}^l\s{\D\phi_q}{\D\pi_{\n}}\H_j=\frac{1}{4}\left(\vec{A}_2(\n)\cdot\H\right)_j\\
          	&\Im\left(-i\,\bar{f}\left(f(\H)\right)\right)_j=\Re\left(\sum_{l=1}^{n}\sum_{q=1}^n\Gamma_{j,q}^l\p{\z}\phi_qf(\H)_l\right)=\Re\left(\sum_{l=1}^{n}\sum_{q=1}^n\sum_{l'=1}^n\sum_{q'=1}^n\Gamma_{j,q}^l\Gamma_{l,q'}^{l'}\p{\z}\phi_q\p{z}\phi_{q'}\H_{l'}\right)\\
          	&=\frac{1}{4}\sum_{l=1}^{n}\sum_{q=1}^n\sum_{l'=1}^n\sum_{q'=1}^n\Gamma_{j,q}^l\Gamma_{l,q'}^{l'}\s{\D\phi_{q}}{\D\phi_{q'}}\H_{l'}=\frac{1}{4}\left(\vec{A}_3\cdot\H\right)_j\\
          	&\Im\left(\bar{f}\left(\star_h\left(\p{z}\n\wedge\H\right)\right)\right)_j=\frac{1}{4}\sum_{l=1}^n\sum_{q=1}^{n}\Gamma_{j,q}^l\bs{\D\phi_q}{\star_h\left(\D^{\perp}\n\wedge\H\right)_l}=\frac{1}{4}\left(\vec{A}_4\cdot \star_h\left(\D^{\perp}\n\wedge\H\right)\right)_j\\
          	&\Im\left(\bar{f}\left(\star_h\left(G(\n)\wedge\H\right)\right)\right)=\frac{1}{4}\vec{A}_4\cdot (\vec{W}\wedge\H).
          \end{align*}
         Likewise, we have for all $1\leq j\leq n$
          \begin{align*}
          	&\Im\left(\bar{f}(-2i\,\p{z}\H)\right)_j=\Im\left(-2i\sum_{q=1}^{n}\Gamma_{j,q}^l\p{\z}\phi_{q}\p{z}\H_{l}\right)=-2\,\sum_{l=1}^{n}\sum_{q=1}^n\Gamma_{j,q}^l\Re\left(\p{\z}\phi_{q}\p{z}\H_{l}\right)\\
          	&=-\sum_{l=1}^{n}\frac{1}{2}\sum_{q=1}^{n}\Gamma_{j,q}^l\s{\D\phi_{q}}{\D\H_{l}}\\
          	&=-\frac{1}{2}\dive\left(\sum_{l=1}^{n}\sum_{q=1}^{n}\Gamma_{j,q}^l\D\phi_{q}\,\H_{l}\right)+\frac{1}{2}\sum_{l=1}^{n}\sum_{q=1}^{n} \s{\D\Gamma_{j,q}^l}{\D\phi_{q}}\H_{l}+\frac{1}{2}\sum_{l=1}^{n}\sum_{q=1}^n\Gamma_{j,q}^l\Delta\phi_{q}\,\H_{l}\\
          	&=-\frac{1}{2}\dive\left(\sum_{l=1}^{n}\sum_{q=1}^{n}\Gamma_{j,q}^l\D\phi_{q}\,\H_{l}\right)+\frac{1}{2}\sum_{l=1}^{n}\sum_{q=1}^{n} \s{\D\Gamma_{j,q}^l}{\D\phi_{q}}\H_{l}+\sum_{l=1}^{n}\sum_{q=1}^{n}\Gamma_{j,q}^le^{2\lambda}\H_{q}\H_{l}\\
          	&=-\frac{1}{2}\dive\left(\vec{A}_0\cdot \H\right)+\frac{1}{4}\vec{A}_1\cdot \H+\frac{1}{4}B(\H,\H),
          \end{align*}
          where $\vec{A}_0,\vec{A}_1\in M_n(\R)\otimes\R^2$ and $B$ is a bilinear map, such that for some universal constant $C_2=C_2(n,h)$,  have
          \begin{align*}
          	|\vec{A}_0|+|\vec{A}_1|+\sqrt{\Vert B\Vert}\leq C_2e^{\lambda}. 
          \end{align*} 
          Then, we have 
          \begin{align*}
          	\Re\left(\p{\z}f(\H)\right)=\frac{1}{4}\dive\left(\sum_{l=1}^{n}\sum_{q=1}^{n}\Gamma_{j,q}^l\D\phi_q\H_l\right)_{1\leq j\leq n}=\frac{1}{4}\dive\left(\vec{A}_0\cdot\H\right),
          \end{align*}
          and
          \begin{align*}
          	&\Re\left(\p{\z}\left(-i\star_h\left(G(\n)\wedge \H\right)\right)\right)\\
          	&=\frac{1}{4}\dive\left(\star_h\left(\sum_{k=1}^{n-2}\sum_{l=1}^{n}\sum_{q=1}^{n}\Gamma_{j,q}\left(\n_1\wedge\cdots\wedge \n_{k-1}\wedge \D^{\perp}\phi_q\n_{k,l}\right)\wedge \n_{k+1}\wedge\cdots\wedge \n_{n-2}\right)\wedge \H\right)_{1\leq j\leq n}\\
          	&=-\frac{1}{4}\dive\left(\star_h\left(\vec{V}\wedge \H\right)\right)
          \end{align*}
          where $\vec{V}\in \Lambda^{n-2}\R^n$. 
  Finally, we deduce that 
     \begin{align}\label{operator_bounds}
     	\left\{\begin{alignedat}{1}
	&|\vec{A}_0|+|\vec{A}_1|+|\vec{A}_4|+\sqrt{\Vert B\Vert}+|\vec{V}|+|\vec{W}|\leq C e^{\lambda}\\
     	&|\vec{A}_2(\n)|\leq C e^{\lambda}|\D\n|\\
     	&|\vec{A}_3|\leq C e^{2\lambda}\, ,
     	     \end{alignedat}\right.
     \end{align}
     for some $C=C(m,h)>0$.
 We deduce that 
     \begin{align}\label{eqH}
     	&\dive\left(\D\H-3\,\pi_{\n}(\D\H)+\star_h\left(\D^{\perp}\n\wedge\H\right)+\vec{A}_0\cdot\H+\star_h(\vec{V}\wedge\H)\right)\nonumber\\
	&= -4 \Im(\D_{\z}\vec{Y})\nonumber\\
     	&=-2e^{2\lambda}\left(\mathscr{R}_1^{\perp}(\H)-2\,\tilde{K}_h\,\H+2\,\mathscr{R}_2(d\phi)+(DR)(d\phi)-8\,\Re\left(\s{R(\e_{\z},\e_z)\e_z}{\H}\e_{\z}\right)\right)\nonumber\\
     	&\quad -\vec{A}_1\cdot \H-{3}\,\vec{A}_2(\n)\cdot\H-2\,\vec{A}_3\cdot\H-\vec{A}_4\cdot\star_h\left(\D^{\perp}\n\wedge\H\right)-\vec{A}_5\cdot \star_h\left(\vec{W}\wedge\H\right)-B(\H,\H).
     \end{align}
     The bounds \eqref{operator_bounds} give
     \begin{align*}
     	&\bigg|-2e^{2\lambda}\left(\mathscr{R}_1^{\perp}(\H)-2\,\tilde{K}_h\,\H+2\,\mathscr{R}_2(d\phi)+(DR)(d\phi)-8\,\Re\left(\s{R(\e_{\z},\e_z)\e_z}{\H}\e_{\z}\right)\right)\\
     	&-\vec{A}_1\cdot \H-{3}\,\vec{A}_2(\n)\cdot\H-2\,\vec{A}_3\cdot\H-\vec{A}_4\cdot\star_h\left(\D^{\perp}\n\wedge\H\right)-\vec{A}_5\cdot \star_h\left(\vec{W}\wedge\H\right)-B(\H,\H)\bigg|\\
     	&\leq C_3\left(e^{\lambda}+e^{2\lambda}+e^{\lambda}|\H|+e^{2\lambda}|\H|+|\D\n|e^{\lambda}|\H|+e^{2\lambda}|\H|^2\right)\in \mathrm{L}^1(B(0,1)).
     \end{align*}
    Following the strategy of \cite{riviere1}, we next study the linear operator $\mathscr{L}_{\n}$ defined for all vector-fields $\vec{u}:B(0,1)\rightarrow TM^m$ by 
     \begin{align}\label{eqH2}
     	\leb_{\n}\vec{u}=\Delta \vec{u}+\dive\left(-3\,\pi_{\n}(\D\vec{u})+\star_h\left(\D^{\perp}\n\wedge\vec{u}\right)+\vec{A}_0\cdot\vec{u}+\star_h(\vec{V}\wedge\vec{u})\right).
     \end{align}
     Compared to \cite{riviere1}, $\leb_{\n}$ only differs by lower-order terms that are in $\mathrm{L}^2$ (if $\lambda\in \mathrm{L}^{\infty}(B(0,1))$ and $\vec{u}\in \mathrm{L}^2(B(0,1))$), and one checks that it is an elliptic operator for vector-fields with values into $M^m$.
     It will be convenient to use Nash embedding to isometrically embed all the objects in $\R^{n}$. By \eqref{second_form1}, calling $\tilde{\n}:\Sigma\rightarrow \Lambda^{n-2}\R^n$ the induced unit normal of the immersion $\iota\circ\phi:B(0,1)\rightarrow \R^n$, we have 
     \begin{align*}
     	\pi_{\tilde{n}}(\D\H)=\pi_{\n}(\D\H).
     \end{align*}
     Furthermore, using the function $\vec{V}_0:B(0,1)\rightarrow \Lambda^{n-m}\R^n$ defined by 
     \begin{align*}
     	\vec{V}_0=\star_{\R^n}\left(\e_1\wedge\e_2\wedge\n\right)=\n_{\iota}\circ \phi,
     \end{align*}
     where $\n_{\iota}:M^m\rightarrow \Lambda^{n-m}\R^n$ is the unit normal of the inclusion $\iota:M^m\rightarrow \R^n$,
     we deduce that there exist $\vec{A}_1\in \mathrm{L}^{\infty}(B(0,1),M_n(\R)\otimes\R^2)$ and $\vec{V}_1\in \mathrm{L}^{\infty}(B(0,1),\Lambda^{n-2}\R^n)$ such that the operator $\mathscr{L}$ defined for all $\vec{u}\in C^{\infty}(B(0,1),\R^n)$  by 
     \begin{align*}
     	\mathscr{L}_{1}\vec{u}=\Delta\vec{u}+\dive\left(-3\,\pi_{\tilde{\n}}(\D\vec{u})+\star_{\R^n}\left(\D^{\perp}\n\wedge\vec{V}_0\wedge\vec{u}\right)+\vec{A}_1\cdot\vec{u}+\star_{\R^n}\left(\vec{V}_1\wedge \vec{u}\right) \right)\, ,
     \end{align*}
     satisfies  for all $\vec{u}\in \Gamma(TM^m)$ the identity
     \begin{align*}
     	\mathscr{L}_1(\vec{u})=\mathscr{L}_{\n}(\vec{u}). 
     \end{align*}
     However, we need to have the critical part of $\mathscr{L}_1$ self-adjoint, so we replace it by 
     \begin{align*}
     	\mathscr{L}\vec{u}=\Delta\vec{u}+\dive\left(-3\,\pi_{\tilde{\n}}(\D\vec{u})+\star_{\R^n}\left(\D^{\perp}\n\wedge\vec{V}_0\wedge\vec{u}\right)+\vec{A}_1\cdot\vec{u}+\star_{\R^n}\left(\vec{V}_1\wedge \vec{u}\right) \right)+\star_{\R^n}\left(\D^{\perp}\n\wedge \vec{V}_2\wedge \vec{u}\right)
     \end{align*}
     for some $\vec{V}_2$ to be determined later. The \emph{critical part} of $\mathscr{L}$ is defined by
     \begin{align*}
     	\mathscr{L}_0\vec{u}=\Delta\vec{u}-3\,\dive\left(\pi_{\tilde{n}}(\D\vec{u})\right)+\dive\left(\star_{\R^n}\left(\D^{\perp}\n\wedge \vec{V}_0\wedge\vec{u}\right)\right)+\star_{\R^n}\left(\D^{\perp}\n\wedge\vec{V}_2\wedge\vec{u}\right).
     \end{align*}
     Making the same computation as in the proof of  of \cite[Lemma A.$1$]{riviere1} (see (A.$28$)), we deduce that for all $\vec{v}\in C^{\infty}_c(B(0,1),\R^n)$, it holds
     \begin{align}\label{selfadjoint}
     	&\int_{B(0,1)}\s{\mathscr{L}_0\vec{u}}{\vec{v}}dx=\int_{B(0,1)}\bigg(\bs{\vec{u}}{\Delta\vec{v}}-3\,\bs{\vec{u}}{\dive\left(\pi_{\tilde{\n}}(\D\vec{u})\right)}-\bs{\star_{\R^n}\left(\D^{\perp}\n\wedge\vec{V}_0\wedge \vec{u}\right)}{\D\vec{v}}\nonumber\\
     	&+\bs{\star_{\R^n}\left(\D^{\perp}\n\wedge \vec{V}_2\wedge \vec{u}\right)}{\vec{v}}\bigg)dx\nonumber\\
     	&=\int_{B(0,1)}\bigg(\bs{\vec{u}}{\Delta\vec{v}}-3\,\bs{\vec{u}}{\dive\left(\pi_{\tilde{\n}}(\D\vec{u})\right)}+\bs{\vec{u}}{\star_{\R^n}\left(\D^{\perp}\n\wedge\vec{V}_0\wedge \vec{v}\right)}
     	-\bs{\vec{u}}{\star_{\R^n}\left(\D^{\perp}\n\wedge \vec{V}_2\wedge \vec{v}\right)}\bigg)dx\nonumber\\
     	&=\int_{B(0,1)}\left(\bs{\vec{u}}{\Delta\vec{v}-3\,\dive\left(\pi_{\tilde{\n}}(\D\vec{u})\right)+\dive\left(\star_{\R^n}\left(\D^{\perp}\n\wedge\vec{V}_0\wedge\D\vec{v}\right)-\star_{\R^n}\left(\D^{\perp}\n\wedge(\D\vec{V}_0+\vec{V}_2)\wedge \vec{v}\right)\right)}\right)dx\nonumber\\
     	&=\int_{B(0,1)}\s{\vec{u}}{\mathscr{L}_0\vec{v}}dx
     \end{align}
     if and only if 
     \begin{align*}
     	\vec{V}_2=-\frac{1}{2}\D\vec{V}_0\, .
     \end{align*}
     Notice that  $\n_{\iota}\in C^{\infty}(B(0,1),\Lambda^{n-m}\R^n)$, since $(M^m,h)$ is smooth.  Assuming without loss of generality that $\phi\in \mathrm{W}^{1,\infty}(B(0,1),M^m)$, this implies that $\D\vec{V}_0\in \mathrm{L}^{\infty}(B(0,1))$ and that $\mathscr{L}$ and $\mathscr{L}_0$ are well-defined in the distributional sense for all $\vec{u}\in \mathrm{L}^2(B(0,1),\R^n)$ since 
     \begin{align*}
     	\D^{\perp}\n\wedge\D\vec{V}_0\wedge \vec{u}\in \mathrm{L}^1(B(0,1)). 
     \end{align*}
 Next, we will study the properties of the operator $\mathscr{L}$.  
 Applying \cite[Lemma A.$2$ and Lemma A.4]{riviere1} directly  to the self-adjoint operator $\mathscr{L}_0$, one easily checks that  the arguments \cite[Lemma A.$1$ and Lemma A.3]{riviere1}  can be adapted to $\mathscr{L}$. The point is that $\mathscr{L}$  is \emph{not} self-adjoint, however $\mathscr{L}^{-1}:\mathrm{H}^{-1}(B(0,1),\R^n)\rightarrow \mathrm{W}^{1,2}_0(B(0,1),\R^n)$ is still a compact operator on $\mathrm{L}^{2}$ and this is the key to make the strategy work. Moreover,  using that $\mathscr{L}_0$ is indeed self-adjoint, one can adapt \cite[Lemma A.$8$]{riviere1}.  
     
     \begin{lemme}\label{new_A2}
     	For all $0<C_1<\infty$, there exists $\epsilon_0=\epsilon_0(n)>0$ and $C_0=C_0(n)$ with the following property. Let $2\leq m\leq n$. For all $\n_1\in \mathrm{W}^{1,2}(B(0,1),\Lambda^{m-2}\R^n)$ and $\n_2\in \mathrm{W}^{1,\infty}(B(0,1),\Lambda^{n-m}\R^n)$ letting $\n=\n_1\wedge\n_2\in \mathrm{W}^{1,2}(B(0,1),\Lambda^{n-2}\R^n)$, assume that
     	\begin{align}\label{eps_reg1}
     		&\int_{B(0,1)}|\D\n_1|^2dx+\int_{B(0,1)}|\D\n_2|^2dx\leq \epsilon_0,\nonumber\\
     		&\int_{B(0,1)}|\D\n_2|^4dx\leq C_1\,.
     	\end{align}
        Then, for all $\vec{f}\in \mathrm{H}^{-1}(B(0,1),\R^n)$, there exists a unique map $\vec{u}\in \mathrm{W}^{1,(2,\infty)}_0(B(0,1),\R^n)$ such that
        \begin{align}\label{solve}
        	\left\{\begin{alignedat}{2}
        		\Delta \vec{u}-3\,\dive\left(\pi_{\n}(\D\vec{u})\right)+\dive\left(\star(\D^{\perp}\n_1\wedge\n_2\wedge \vec{u})\right)-\frac{1}{2}\star\left(\D^{\perp}\n_1\wedge\D\n_2\wedge \vec{u}\right)&=\vec{f}\qquad&&\text{in}\;\, B(0,1)\\
        		\vec{u}&=0\qquad&&\text{on}\;\,\partial B(0,1)
        	\end{alignedat} \right.
        \end{align}
        and 
        \begin{align}\label{lemma_ineq1}
        	\np{\D \vec{u}}{2,\infty}{B(0,1)}\leq C_0\hs{\vec{f}\,}{-1}{B(0,1)}.
        \end{align}
        Furthermore, the operator $\mathscr{L}_0^{-1}:\vec{f}\rightarrow \vec{u}$ is a self-adjoint and compact operator from $\mathrm{L}^{2}(B(0,1),\R^n)$ into itself. 
     \end{lemme}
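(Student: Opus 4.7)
The plan is to follow closely Rivière's argument for \cite[Lemma A.$2$]{riviere1}, adapted to the curved ambient setting: the main novelty is the presence of the extra factor $\n_2$ modelling the Nash embedding of $M^m$ into $\R^n$, which breaks the purely bilinear Jacobian framework of \cite{riviere1} and produces trilinear contractions in $(\n_1,\n_2,\vec{u})$. The argument proceeds in three stages: (i) reveal the Jacobian structure hidden in the critical terms via a Coulomb gauge; (ii) set up a fixed point problem in $\mathrm{W}^{1,(2,\infty)}_0$ using integrability by compensation; (iii) deduce compactness from the resulting regularity.

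\textbf{Step 1 (Coulomb gauge and Jacobian structure).} By the smallness assumption $\np{\D\n_1}{2}{B(0,1)}^2 \leq \epsilon_0$, Hélein's moving frame theorem yields an orthonormal frame $(\vec{e}_1,\ldots,\vec{e}_{m-2})$ of the bundle spanned by $\n_1$ satisfying $\dive\s{\vec{e}_i}{\D\vec{e}_j} = 0$ and $\np{\D\vec{e}_i}{2}{B(0,1)} \leq C\,\np{\D\n_1}{2}{B(0,1)}$. In this Coulomb gauge, the two critical terms $-3\,\dive(\pi_{\n}(\D\vec{u}))$ and $\dive(\star(\D^{\perp}\n_1\wedge\n_2\wedge\vec{u}))$ can be rewritten, modulo sub-critical contributions lying in $\mathrm{L}^p$ for some $p>1$, as divergences of Jacobian-type expressions $\s{\D a}{\D^{\perp} b}$ with $a,b\in\mathrm{W}^{1,2}$. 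By the Coifman-Lions-Meyer-Semmes theorem, such Jacobians lie in the Hardy space $\hh^1$. The compensating term $-\tfrac{1}{2}\,\star(\D^{\perp}\n_1\wedge\D\n_2\wedge\vec{u})$ is exactly the correction making $\mathscr{L}_0$ formally self-adjoint, as verified in \eqref{selfadjoint}. The $\mathrm{W}^{1,\infty}$-regularity of $\n_2$ (from the smoothness of $(M^m,h)$) allows it to act as a harmless pointwise multiplier, while the hypothesis $\int|\D\n_2|^4\leq C_1$ is used to preserve the $\hh^1$-Jacobian structure where $\D\n_2$ interacts with $\D\vec{u}$.

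\textbf{Step 2 (Fixed point and a priori estimate).} Write any candidate solution as $\vec{u} = \vec{u}_0 + \vec{v}$ with $\vec{u}_0 = (-\Delta)^{-1}\vec{f}$ of zero Dirichlet trace, so that $\np{\D\vec{u}_0}{2,\infty}{B(0,1)} \leq C\,\hs{\vec{f}\,}{-1}{B(0,1)}$ by standard elliptic theory. Define
\begin{align*}
T:\mathrm{W}^{1,(2,\infty)}_0(B(0,1),\R^n)\longrightarrow\mathrm{W}^{1,(2,\infty)}_0(B(0,1),\R^n),\qquad T(\vec{u}) = \vec{u}_0 + (-\Delta)^{-1}\left((\Delta - \mathscr{L}_0)\vec{u}\right).
\end{align*}
Combining the Hardy space / $\mathrm{BMO}$ duality from Step 1 with the $\mathrm{L}^{2,1}/\mathrm{L}^{2,\infty}$ duality and Wente's inequality, one obtains
\begin{align*}
\np{\D(T\vec{u}_1 - T\vec{u}_2)}{2,\infty}{B(0,1)} \leq C(n,C_1)\,\sqrt{\epsilon_0}\,\np{\D(\vec{u}_1-\vec{u}_2)}{2,\infty}{B(0,1)}.
\end{align*}
Choosing $\epsilon_0=\epsilon_0(n,C_1)$ small enough makes $T$ a $\tfrac{1}{2}$-contraction on $\mathrm{W}^{1,(2,\infty)}_0$, providing a unique fixed point $\vec{u}$ solving \eqref{solve}; the estimate \eqref{lemma_ineq1} then follows by combining the contraction with the base estimate for $\vec{u}_0$.

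\textbf{Step 3 (Self-adjointness, compactness, and main obstacle).} Self-adjointness of $\mathscr{L}_0^{-1}$ is a direct consequence of the integration-by-parts identity \eqref{selfadjoint} by density. For compactness of $\mathscr{L}_0^{-1}$ on $\mathrm{L}^2(B(0,1),\R^n)$, one iterates the Jacobian/Hardy estimate using $\vec{u}\in\mathrm{L}^2$ as a test object to upgrade $\mathscr{L}_0^{-1}:\mathrm{L}^2\hookrightarrow\mathrm{H}^{-1}\to\mathrm{W}^{1,(2,\infty)}_0$ into $\mathscr{L}_0^{-1}:\mathrm{L}^2\to\mathrm{W}^{1,2}_0$; Rellich-Kondrachov then concludes. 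The main obstacle is Step 1: the multiplicative presence of $\n_2$ forces trilinear combinations and produces remainder terms involving $\D\n_2$. Their smallness in $\mathrm{L}^2$ is what makes the contraction in Step 2 close, while the $\mathrm{L}^4$ bound (\emph{not} assumed to be small) is what controls products such as $\D\n_2\cdot\D\vec{u}$ through Hölder's inequality within the Hardy space framework, separating the ``large but sub-critical'' contributions from the ``critical but small'' ones.
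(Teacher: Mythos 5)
There is a genuine gap in Step 2, and it is fatal to the proposed architecture. You set up the fixed point by inverting the flat Laplacian and treating the whole difference $(\Delta-\mathscr{L}_0)\vec{u}$ — in particular the term $-3\,\dive\left(\pi_{\n}(\D\vec{u})\right)$ — as a perturbation with contraction constant $C\sqrt{\epsilon_0}$. This cannot work: $\pi_{\n}$ is a pointwise orthogonal projection of norm $1$, so the map $\vec{u}\mapsto(-\Delta)^{-1}\left(3\,\dive\left(\pi_{\n}(\D\vec{u})\right)\right)$ has operator norm of order $3$ on $\mathrm{W}^{1,(2,\infty)}_0$ \emph{independently of} $\epsilon_0$; the smallness hypothesis controls $\D\n_1$, not $\pi_{\n}$ itself. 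Even in the model case of constant $\n$, the operator $\Delta-3\,\dive(\pi_{\n}\D\,\cdot\,)$ acts as $\Delta$ on tangential components and $-2\Delta$ on normal ones: it is invertible, but it is not reachable by a Neumann series around $\Delta$, and freezing the normal does not help either, since $\np{\D\n_1}{2}{B(0,1)}\leq\sqrt{\epsilon_0}$ gives no $\mathrm{L}^{\infty}$ closeness of $\pi_{\n}$ to a constant projection in two dimensions. Relatedly, the claim in your Step 1 that a Coulomb gauge turns $-3\,\dive\left(\pi_{\n}(\D\vec{u})\right)$ into a Jacobian modulo subcritical terms is not substantiated and is not how the invertibility of this operator is actually obtained.

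The paper's route avoids exactly this pitfall: it takes as base operator $\tilde{\mathscr{L}}_0\vec{u}=\Delta\vec{u}-3\,\dive\left(\pi_{\n}(\D\vec{u})\right)+\dive\left(\star(\D^{\perp}\n_1\wedge\n_2\wedge\vec{u})\right)$, whose invertibility with both the $\mathrm{H}^{-1}\to\mathrm{W}^{1,2}_0$ and the $\mathrm{L}^1\to\mathrm{W}^{1,(2,\infty)}_0$ estimates is imported from Rivière's Lemmas A.1--A.4 (where the delicate Hodge/Wente analysis lives), and only the genuinely new trilinear term $-\frac{1}{2}\star\left(\D^{\perp}\n_1\wedge\D\n_2\wedge\vec{u}\right)$ is treated perturbatively, via the iteration $\tilde{\mathscr{L}}_0\vec{u}_k=\frac{1}{2}\star\left(\D^{\perp}\n_1\wedge\D\n_2\wedge\vec{u}_{k-1}\right)$ and the Hölder bound $\np{\D^{\perp}\n_1\wedge\D\n_2\wedge\vec{u}_{k-1}}{1}{B(0,1)}\leq\np{\D\n_1}{2}{B(0,1)}\np{\D\n_2}{4}{B(0,1)}\np{\vec{u}_{k-1}}{4}{B(0,1)}$, where the smallness of $\np{\D\n_1}{2}{B(0,1)}$ (together with the fixed $\mathrm{L}^4$ bound on $\D\n_2$ and the Sobolev embedding of $\mathrm{W}^{1,(2,\infty)}$) is what makes the series converge. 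If you restructure your fixed point so that the perturbation consists only of this trilinear term, and quote Rivière for the base operator, your Step 3 (self-adjointness from \eqref{selfadjoint}, compactness from the compact embedding $\mathrm{W}^{1,(2,\infty)}_0(B(0,1))\hookrightarrow\mathrm{L}^2(B(0,1))$, with no need to upgrade to $\mathrm{W}^{1,2}_0$) goes through as you describe.
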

     \begin{proof}
     	First define 
     	\begin{align*}
     		\tilde{\mathscr{L}}_0\vec{u}=\Delta \vec{u}-3\,\dive\left(\pi_{\n}(\D\vec{u})\right)+\dive\left(\star(\D^{\perp}\n_1\wedge\n_2\wedge \vec{u})\right).
     	\end{align*}
     	Under the assumption \eqref{eps_reg1}, by the proofs of   \cite[Lemma A.$1$ and  Lemma A.$2$]{riviere1}, we deduce that there exists a unique $\vec{u}_0\in \mathrm{W}^{1,2}_0(B(0,1),\R^n)$ such that $\tilde{\mathscr{L}}_0\vec{u}_0=\vec{f}$ and
     	\begin{align*}
     		\np{\D\vec{u}_0}{2}{B(0,1)}\leq C_0\hs{\vec{f}}{-1}{B(0,1)}. 
     	\end{align*}
        Furthermore, by the proofs of  \cite[Lemma A.$3$ and  Lemma A.$4$]{riviere1},  there exists a unique $\vec{v}\in \mathrm{W}^{1,(2,\infty)}_0(B(0,1),\R^n)$ such that $\tilde{\mathscr{L}}_0\vec{v}=\vec{g}$ and 
        \begin{align*}
        	\np{\D\vec{v}}{2,\infty}{B(0,1)}\leq C_0\np{\vec{g}}{1}{B(0,1)}.
        \end{align*}
        Now, we will use a fixed point argument to conclude. We want to construct a function $\vec{u}$ such that 
        \begin{align*}
        	\mathscr{L}_0\vec{u}=\tilde{\mathscr{L}}_0\vec{u}_0
        \end{align*}
        or, equivalently, such that
        \begin{align*}
        	\tilde{\mathscr{L}}_0(\vec{u}-\vec{u}_0)=\frac{1}{2}\star\left(\D^{\perp}\n_1\wedge\D\n_2\wedge\vec{u}\right).
        \end{align*}
        By induction on $k\geq 1$,  we define  $\vec{u}_k$ to be the unique $\mathrm{W}^{1,(2,\infty)}_0(B(0,1),\R^n)$ function such that
        \begin{align*}
        	\mathscr{\mathscr{L}}_0\vec{u}_k=\frac{1}{2}\star\left(\D^{\perp}\n_1\wedge\D\n_2\wedge\vec{u}_{k-1}\right).
        \end{align*}
        Using that $\displaystyle \mathrm{W}^{1,(2,\infty)}(B(0,1))\hookrightarrow\bigcap_{p<\infty}\mathrm{L}^p(B(0,1))$,  by the Sobolev inequality and interpolation, we have
        \begin{align*}
        	&\np{\D\vec{u}_k}{2,\infty}{B(0,1)}\leq C_0\np{\D^{\perp}\n_1\wedge\D\n_2\wedge\vec{u}_{k-1}}{1}{B(0,1)}\\
        	&\leq C_0\np{\D\n_2}{4}{B(0,1)}\np{\D\n_1}{2}{B(0,1)}\np{\vec{u}_{k-1}}{4}{B(0,1)}
        	\leq C_0C_1C_{S}\epsilon_0\np{\D\vec{u}_{k-1}}{2,\infty}{B(0,1)},
        \end{align*}
        which implies that choosing $\epsilon_0=\min\ens{\epsilon_1,\frac{1}{2C_0C_1C_S}}$, the series   $\sum_{k=0}^{n}\vec{u}_k$ converges to some limit
        \begin{align*}
        	\vec{u}=\sum_{k=0}^{\infty}\vec{u}_k
        \end{align*}
        which satisfies
        \begin{align*}
        	\tilde{\mathscr{L}}_0\vec{u}=\tilde{\mathscr{L}}_0\vec{u}_0+\sum_{k=1}^{\infty}\frac{1}{2}\star\left(\D^{\perp}\n_1\wedge\D\n_2\wedge\vec{u}_{k-1}\right)=\vec{f}+\frac{1}{2}\star\left(\D^{\perp}\n_1\wedge\D\n_2\wedge\vec{u}\right),
        \end{align*}
        so that
        \begin{align*}
        	\mathscr{L}_0\vec{u}=\vec{f}. 
        \end{align*}
        Furthermore, we have
        \begin{align*}
        	\np{\D\vec{u}}{2,\infty}{B(0,1)}\leq 2\np{\D\vec{u}_0}{2,\infty}{B(0,1)}\leq 2C_0\hs{\vec{f}}{-1}{B(0,1)}
        \end{align*}
        which implies the needed estimate up to replacing $C_0$ by $2C_0$ in the statement of the theorem. Finally, the self-adjointness follows from the above computations in \eqref{selfadjoint}.      
     \end{proof}

     \begin{rem}
     	Notice that in fact 
     	\begin{align*}
     		\frac{1}{2}\star\left(\D^{\perp}\n_1\wedge\D\n_2\wedge\vec{u}\right)\in\bigcap_{p<2}\mathrm{L}^p(B(0,1))
     	\end{align*}
     	so we could also do the argument using Calder\'{o}n-Zgymund estimates and get a $\mathrm{W}^{1,2}_0$ solution to the original equation, but a $\mathrm{W}^{1,(2,\infty)}$ regularity suffices to our purposes. 
     \end{rem}
 
     Now, we easily obtain the following extension of Lemma \ref{new_A2}.
     
     \begin{lemme}\label{new_A1}
     	For all $0<C_1<\infty$ and $2<p<\infty$, there exists $\epsilon_1=\epsilon_1(p,n)>0$ and $C_1=C_1(n)$ with the following property. Let $2\leq m\leq n$. For all $\n_1\in \mathrm{W}^{1,2}(B(0,1),\Lambda^{m-2}\R^n)$ $\n_2\in \mathrm{W}^{1,\infty}(B(0,1),\Lambda^{n-m}\R^n)$, $\vec{A}_1\in \mathrm{L}^{\infty}(B(0,1),M_n(\R)\otimes\R^2)$ and $\vec{V}_1\in \mathrm{L}^{\infty}(B(0,1),\Lambda^{n-2}\R^n\otimes\R^2)$ letting $\n=\n_1\wedge\n_2\in \mathrm{W}^{1,2}(B(0,1),\Lambda^{n-2}\R^n)$, assume that
     	\begin{align}\label{eps_reg2}
     		\left\{\begin{alignedat}{1}
     		&\int_{B(0,1)}|\D\n_1|^2dx+\int_{B(0,1)}|\D\n_2|^2dx+\int_{B(0,1)}\left(|\vec{A}_1|^p+|\vec{V}_1|^p\right)dx\leq \epsilon_1,\\
     		&\int_{B(0,1)}|\D\n_2|^4dx\leq C_1\,.
     	\end{alignedat}\right.
     	\end{align}
     	Then, for all $\vec{f}\in \mathrm{H}^{-1}(B(0,1),\R^n)$, there exists a unique map $\vec{u}\in \mathrm{W}^{1,(2,\infty)}_0(B(0,1),\R^n)$ such that
     	\begin{align}\label{solve2}
     		\left\{\begin{alignedat}{2}
     			\Delta \vec{u}-3\,\dive\left(\pi_{\n}(\D\vec{u})\right)+\dive\left(\star(\D^{\perp}\n_1\wedge\n_2\wedge \vec{u})\right) \qquad \qquad &\\
     			-\frac{1}{2}\star\left(\D^{\perp}\n_1\wedge\D\n_2\wedge \vec{u}\right)
     			+\dive\left(\vec{A}_1\cdot\vec{u}+\star\left(\vec{V}_1\wedge\vec{u}\right)\right)&=\vec{f}\qquad&&\text{in}\;\, B(0,1)\\
     			\vec{u}&=0\qquad&&\text{on}\;\,\partial B(0,1)
     		\end{alignedat} \right.
     	\end{align}
     	and 
     	\begin{align}\label{lemma_ineq2}
     		\np{\D \vec{u}}{2,\infty}{B(0,1)}\leq C_1\hs{\vec{f}\,}{-1}{B(0,1)}.
     	\end{align}
     \end{lemme}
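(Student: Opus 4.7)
The strategy is a perturbation of Lemma \ref{new_A2}: since the additional terms $\dive(\vec{A}_1\cdot\vec{u}+\star(\vec{V}_1\wedge\vec{u}))$ are of lower order with coefficients only in $\mathrm{L}^p$, $p>2$, they can be absorbed by a Banach fixed point argument on the space $\mathrm{W}^{1,(2,\infty)}_{0}(B(0,1),\R^n)$, once one couples Lemma \ref{new_A2} (to invert $\mathscr{L}_0$) with the Sobolev embedding $\mathrm{W}^{1,(2,\infty)}(B(0,1))\hookrightarrow\bigcap_{q<\infty}\mathrm{L}^q(B(0,1))$ used in the proof of Lemma \ref{new_A2}.

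Concretely, I would first choose $\epsilon_1\le \epsilon_0$, where $\epsilon_0=\epsilon_0(n)$ is the constant of Lemma \ref{new_A2}, so that $\mathscr{L}_0^{-1}:\mathrm{H}^{-1}(B(0,1),\R^n)\rightarrow \mathrm{W}^{1,(2,\infty)}_0(B(0,1),\R^n)$ is well-defined and bounded under \eqref{eps_reg2}. Then I would define
\begin{align*}
T:\mathrm{W}^{1,(2,\infty)}_0(B(0,1),\R^n)&\rightarrow \mathrm{W}^{1,(2,\infty)}_0(B(0,1),\R^n)\\
\vec{u}&\mapsto \mathscr{L}_0^{-1}\Big(\vec{f}-\dive\big(\vec{A}_1\cdot\vec{u}+\star(\vec{V}_1\wedge\vec{u})\big)\Big),
\end{align*}
and look for a fixed point $\vec{u}=T(\vec{u})$, which will automatically solve \eqref{solve2}. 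By Lemma \ref{new_A2} applied to two elements $\vec{u}_1,\vec{u}_2$ and using $\hs{\dive(\vec{g})}{-1}{B(0,1)}\leq \np{\vec{g}}{2}{B(0,1)}$, we get
\begin{align*}
\np{\D(T(\vec{u}_1)-T(\vec{u}_2))}{2,\infty}{B(0,1)}
\leq C_0\,\big(\np{\vec{A}_1\cdot(\vec{u}_1-\vec{u}_2)}{2}{B(0,1)}+\np{\vec{V}_1\wedge(\vec{u}_1-\vec{u}_2)}{2}{B(0,1)}\big).
\end{align*}

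The main technical step is to estimate the right-hand side by a small multiple of $\np{\D(\vec{u}_1-\vec{u}_2)}{2,\infty}{B(0,1)}$. For this, let $q=\frac{2p}{p-2}\in (2,\infty)$ (this is where $p>2$ is crucial). By Hölder's inequality,
\begin{align*}
\np{\vec{A}_1\cdot(\vec{u}_1-\vec{u}_2)}{2}{B(0,1)}\leq \np{\vec{A}_1}{p}{B(0,1)}\np{\vec{u}_1-\vec{u}_2}{q}{B(0,1)},
\end{align*}
and the same bound with $\vec{V}_1$ in place of $\vec{A}_1$. Combining with the Sobolev embedding $\mathrm{W}^{1,(2,\infty)}_0(B(0,1))\hookrightarrow \mathrm{L}^q(B(0,1))$ (with some constant $C_S=C_S(q)$) and \eqref{eps_reg2}, we obtain
\begin{align*}
\np{\D(T(\vec{u}_1)-T(\vec{u}_2))}{2,\infty}{B(0,1)}\leq 2C_0C_S(q)\,\epsilon_1^{1/p}\np{\D(\vec{u}_1-\vec{u}_2)}{2,\infty}{B(0,1)}.
\end{align*}
Choosing $\epsilon_1=\epsilon_1(n,p,C_1)$ small enough so that $2C_0C_S(q)\epsilon_1^{1/p}\le 1/2$, the map $T$ becomes a contraction on $\mathrm{W}^{1,(2,\infty)}_0(B(0,1),\R^n)$. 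The Banach fixed-point theorem provides a unique solution $\vec{u}$, and applying the same estimate to $T(\vec{0})$ gives
\begin{align*}
\np{\D\vec{u}}{2,\infty}{B(0,1)}\leq 2\np{\D T(\vec{0})}{2,\infty}{B(0,1)}\leq 2C_0\hs{\vec{f}}{-1}{B(0,1)},
\end{align*}
which is the estimate \eqref{lemma_ineq2} (up to renaming the constant).

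The one delicate point, and the main obstacle, is to ensure that the Sobolev constant $C_S(q)$ for $\mathrm{W}^{1,(2,\infty)}_0\hookrightarrow\mathrm{L}^q$ is finite \emph{and} that $q=\frac{2p}{p-2}<\infty$; this is precisely why the restriction $p>2$ in the statement is needed (the borderline $p=2$ case would require a $\mathrm{L}^\infty$ estimate which fails for $\mathrm{W}^{1,(2,\infty)}$). Everything else is routine, since the quartic term $-\frac{1}{2}\star(\D^\perp\n_1\wedge\D\n_2\wedge\vec{u})$ and the other critical terms are already absorbed inside $\mathscr{L}_0$ by Lemma \ref{new_A2}, and uniqueness follows from the contraction property of $T$.
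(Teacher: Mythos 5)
Your proof is correct and follows essentially the same route as the paper: the paper also solves $\mathscr{L}_0\vec{u}_0=\vec{f}$ via Lemma \ref{new_A2} and then absorbs the perturbation $\dive\left(\vec{A}_1\cdot\vec{u}+\star\left(\vec{V}_1\wedge\vec{u}\right)\right)$ by exactly your Hölder ($\mathrm{L}^p$--$\mathrm{L}^{\frac{2p}{p-2}}$) plus Sobolev embedding estimate, with a smallness/fixed-point argument identical in substance to your contraction mapping. The only cosmetic difference is that the paper phrases the absorption as a repetition of the iterative scheme of Lemma \ref{new_A2} rather than as an explicit Banach fixed point for the operator $T$.
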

     \begin{proof}
     	By Lemma \ref{new_A2}, we deduce that there exists $\vec{u}_0\in \mathrm{W}^{1,(2,\infty)}_0(B(0,1))$ such that $\mathscr{L}_0\vec{u}_0=\vec{f}$. Then notice that by Hölder's inequality and the Sobolev inequality, we have for all $\vec{u}\in \mathrm{W}^{1,(2,\infty)}_0(B(0,1),\R^n)$
     	\begin{align*}
     		&\hs{\dive\left(\vec{A}_1\cdot\vec{u}\right)}{-1}{B(0,1)}\leq C_1\np{\vec{A}_1\cdot\vec{u}}{2}{B(0,1)}\leq C_1\np{\vec{A}_1}{p}{B(0,1)}\np{\vec{u}}{\frac{2p}{p-2}}{B(0,1)}\\
     		&\leq CC_p\np{\vec{A}_1}{p}{B(0,1)}\np{\D\vec{u}}{2,\infty}{B(0,1)}\leq C_1C_p\epsilon_1^{\frac{1}{p}}\np{\D\vec{u}}{2,\infty}{B(0,1)}.
     	\end{align*}
        Likewise, we have 
        \begin{align*}
        	\hs{\dive\left(\star\left(\vec{V}_1\wedge \vec{u}\right)\right)}{-1}{B(0,1)}\leq C_1C_p\epsilon_1^{\frac{1}{p}}\np{\D\vec{u}}{2,\infty}{B(0,1)}.
        \end{align*}
        Therefore, we deduce that for $\epsilon_1=\min\ens{\epsilon_0,\frac{1}{(4C_1C_p)^p}}$, we can use the same argument as in the proof of Lemma \ref{new_A2} to deduce that there exists $\vec{u}$ such that 
        \begin{align*}
        	\mathscr{L}\vec{u}=\mathscr{L}_0\vec{u}_0=\vec{f}
        \end{align*}
        and the estimate follows as above. 
     \end{proof}
 
     Adapting \emph{mutadis mutandis} the above results and the proof of  \cite[Lemma A.$3$]{riviere1}, we deduce the following two lemmas. 
 
      \begin{lemme}\label{new_A4}
     	For all $0<C_1<\infty$, there exists $\epsilon_0=\epsilon_0(n)>0$ and $C_0=C_0(n)$ with the following property. Let $2\leq m\leq n$. For all $\n_1\in \mathrm{W}^{1,2}(B(0,1),\Lambda^{m-2}\R^n)$ and $\n_2\in \mathrm{W}^{1,\infty}(B(0,1),\Lambda^{n-m}\R^n)$ letting $\n=\n_1\wedge\n_2\in \mathrm{W}^{1,2}(B(0,1),\Lambda^{n-2}\R^n)$, assume that
     	\begin{align}\label{eps_reg3}
     		&\int_{B(0,1)}|\D\n_1|^2dx+\int_{B(0,1)}|\D\n_2|^2dx\leq \epsilon_0,\nonumber\\
     		&\int_{B(0,1)}|\D\n_2|^4dx\leq C_1\, .
     	\end{align}
     	Then for all $\vec{f}\in \mathrm{L}^1(B(0,1),\R^n)$, there exists a unique map $\vec{u}\in \mathrm{W}^{1,(2,\infty)}_0(B(0,1),\R^n)$ such that
     	\begin{align}\label{solve3}
     		\left\{\begin{alignedat}{2}
     			\Delta \vec{u}-3\,\dive\left(\pi_{\n}(\D\vec{u})\right)+\dive\left(\star(\D^{\perp}\n_1\wedge\n_2\wedge \vec{u})\right)-\frac{1}{2}\star\left(\D^{\perp}\n_1\wedge\D\n_2\wedge \vec{u}\right)&=\vec{f}\qquad&&\text{in}\;\, B(0,1)\\
     			\vec{u}&=0\qquad&&\text{on}\;\,\partial B(0,1)
     		\end{alignedat} \right.
     	\end{align}
     	and 
     	\begin{align}\label{lemma_ineq3}
     		\np{\D \vec{u}}{2,\infty}{B(0,1)}\leq C_0\np{\vec{f}\,}{1}{B(0,1)}.
     	\end{align}
    \end{lemme}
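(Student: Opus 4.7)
The plan is to adapt Rivière's argument from \cite[Lemma A.3]{riviere1} to the self-adjoint operator $\mathscr{L}_{0}$ from Lemma \ref{new_A2}, exploiting Lorentz duality $\mathrm{L}^{2,1}/\mathrm{L}^{2,\infty}$. First I would construct the solution by approximation: choose $\vec{f}_{k}\in C^{\infty}_{c}(B(0,1),\R^{n})$ with $\vec{f}_{k}\to \vec{f}$ in $\mathrm{L}^{1}$ and $\np{\vec{f}_{k}}{1}{B(0,1)}\le 2\np{\vec{f}}{1}{B(0,1)}$. Since $\vec{f}_{k}\in \mathrm{H}^{-1}$, Lemma \ref{new_A2} yields unique solutions $\vec{u}_{k}\in \mathrm{W}^{1,2}_{0}$ to $\mathscr{L}_{0}\vec{u}_{k}=\vec{f}_{k}$, and the $\mathrm{W}^{1,(2,\infty)}$ regularity is already in hand. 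The main task is the $\vec{f}$-$\mathrm{L}^{1}$ estimate on $\nabla \vec{u}_{k}$, uniform in $k$.

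The duality step is the key: by $\mathrm{L}^{2,\infty}=(\mathrm{L}^{2,1})^{\ast}$ it is enough to bound $|\int \nabla\vec{u}_{k}\cdot\vec{g}|$ for arbitrary $\vec{g}\in C_{c}^{\infty}(B(0,1),\R^{n}\otimes \R^{2})$ with $\np{\vec{g}}{2,1}{B(0,1)}\le 1$. To realise this duality I would solve the adjoint problem $\mathscr{L}_{0}\vec{v}=-\dive(\vec{g})$ with $\vec{v}=0$ on $\partial B(0,1)$. The self-adjointness identity \eqref{selfadjoint} then gives
\begin{align*}
\int_{B(0,1)}\nabla\vec{u}_{k}\cdot\vec{g}\, dx
=-\int_{B(0,1)}\vec{u}_{k}\,\dive(\vec{g})\,dx
=\int_{B(0,1)}\vec{u}_{k}\,\mathscr{L}_{0}\vec{v}\,dx
=\int_{B(0,1)}\vec{f}_{k}\cdot\vec{v}\,dx
\le \np{\vec{f}_{k}}{1}{B(0,1)}\np{\vec{v}}{\infty}{B(0,1)}.
\end{align*}

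The main obstacle is the \emph{a priori} estimate $\np{\vec{v}}{\infty}{B(0,1)}\le C\np{\vec{g}}{2,1}{B(0,1)}$, which is the $\mathrm{L}^{2,1}$-endpoint counterpart of Lemma \ref{new_A2}. The strategy is to redo the perturbative fixed-point argument of Lemma \ref{new_A2} in $\mathrm{W}^{1,(2,1)}_{0}$ instead of $\mathrm{W}^{1,(2,\infty)}_{0}$, so that $\dive(\vec{g})\in \mathrm{W}^{-1,(2,1)}$ is mapped to $\vec{v}\in \mathrm{W}^{1,(2,1)}_{0}$; one then concludes $\vec{v}\in C^{0}\cap \mathrm{L}^{\infty}$ by the two-dimensional embedding $\mathrm{W}^{1,(2,1)}(B(0,1))\hookrightarrow C^{0}(\bar{B}(0,1))$ with the desired norm control. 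The Neumann series in this scale converges under the smallness \eqref{eps_reg3}, since the borderline perturbation $\frac{1}{2}\star(\nabla^{\perp}\n_{1}\wedge \nabla \n_{2}\wedge \vec{u})$ sits in $\mathrm{L}^{1}$ by Hölder $\mathrm{L}^{2}\cdot\mathrm{L}^{4}\cdot \mathrm{L}^{\infty}$, with $\mathrm{L}^{4}$ controlled by $C_{1}$ and $\mathrm{L}^{\infty}$ by the target norm itself (with a small prefactor coming from $\epsilon_{0}$); all other lower-order divergence terms are handled exactly as in Lemmas \ref{new_A2}–\ref{new_A1} but at the Lorentz scale, replicating Rivière's \cite[Lemma A.5]{riviere1} in the present curved-ambient framework.

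Combining the two displays yields $\np{\nabla \vec{u}_{k}}{2,\infty}{B(0,1)}\le C\np{\vec{f}}{1}{B(0,1)}$ uniformly in $k$. Since $\mathrm{L}^{2,\infty}=(\mathrm{L}^{2,1})^{\ast}$ and $\mathrm{L}^{2,1}$ is separable, I can extract a weakly-$\ast$ convergent subsequence and obtain $\vec{u}\in \mathrm{W}^{1,(2,\infty)}_{0}(B(0,1),\R^{n})$ solving $\mathscr{L}_{0}\vec{u}=\vec{f}$ distributionally and satisfying \eqref{lemma_ineq3}. Uniqueness follows from the same duality identity: if $\vec{u}\in \mathrm{W}^{1,(2,\infty)}_{0}$ satisfies $\mathscr{L}_{0}\vec{u}=0$, then for every smooth compactly supported $\vec{g}$ one has $\int \nabla\vec{u}\cdot \vec{g}=\int \vec{u}\,\mathscr{L}_{0}\vec{v}=\int \mathscr{L}_{0}\vec{u}\cdot \vec{v}=0$ (the integration by parts being legitimate because $\vec{v}\in \mathrm{L}^{\infty}$ and $\vec{u}$ can be approximated by $\mathrm{W}^{1,2}_{0}$-maps preserving the $\mathrm{L}^{2,\infty}$ bound on the gradient), hence $\nabla\vec{u}\equiv 0$ and so $\vec{u}\equiv 0$ by the vanishing trace.
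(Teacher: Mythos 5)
Your plan — prove the $\mathrm{L}^1$ statement by duality against an $\mathrm{L}^{2,1}\to\mathrm{L}^{\infty}$ estimate for the adjoint problem, using the self-adjointness \eqref{selfadjoint} — is the route Rivière takes in the flat case, but it is not the paper's route, and as written it has a gap exactly at its crux. The paper disposes of Lemma \ref{new_A4} by rerunning the proof of Lemma \ref{new_A2} verbatim with the $\mathrm{H}^{-1}$ input replaced by the $\mathrm{L}^1\to \mathrm{W}^{1,(2,\infty)}_0$ solvability of the unperturbed operator $\tilde{\mathscr{L}}_0$ imported from \cite[Lemmas A.3--A.4]{riviere1}: one solves $\tilde{\mathscr{L}}_0\vec{u}_0=\vec{f}$ and absorbs the symmetrizing term $-\frac{1}{2}\star(\D^{\perp}\n_1\wedge\D\n_2\wedge\,\cdot\,)$ by the same Neumann series, which closes because the iterates are only measured in $\mathrm{W}^{1,(2,\infty)}$, precisely the space for which $\mathrm{L}^1$ right-hand sides are admissible. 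In your scheme the Neumann series must instead close in $\mathrm{W}^{1,(2,1)}_0\cap C^0$, and there your justification fails: knowing that $\frac{1}{2}\star(\D^{\perp}\n_1\wedge\D\n_2\wedge\vec{v})$ ``sits in $\mathrm{L}^1$'' is not enough, since the solution operator (already for $\Delta$) does not map $\mathrm{L}^1$ into $\mathrm{L}^{\infty}$ nor into $\mathrm{W}^{1,(2,1)}$ — $\mathrm{L}^1$ data only yields $\mathrm{W}^{1,(2,\infty)}$, this being exactly the logarithmic endpoint. To close the loop you must use the full strength of the Hölder exponents you quote: $\D\n_1\in\mathrm{L}^2$, $\D\n_2\in\mathrm{L}^4$, $\vec{v}\in\mathrm{L}^{\infty}$ place the perturbation in $\mathrm{L}^{4/3}$, which can be rewritten as $\dive\vec{G}$ with $\vec{G}\in\mathrm{W}^{1,4/3}\hookrightarrow\mathrm{L}^4\subset\mathrm{L}^{2,1}(B(0,1))$, so that the $\dive(\mathrm{L}^{2,1})$ estimate for $\tilde{\mathscr{L}}_0$ applies; the resulting smallness threshold then depends on $C_1$, consistently with the paper's own choice of $\epsilon_0$ in Lemma \ref{new_A2}.

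The uniqueness paragraph is also incomplete as stated. To conclude $\D\vec{u}\equiv 0$ you need the identity $\int\s{\vec{u}}{\mathscr{L}_0\vec{v}}=\int\s{\mathscr{L}_0\vec{u}}{\vec{v}}$ for $\vec{u}$ merely in $\mathrm{W}^{1,(2,\infty)}_0$: smooth or $\mathrm{W}^{1,2}_0$ maps are not norm-dense in $\mathrm{L}^{2,\infty}$ (the space is non-separable), and the pairing of $\star(\D^{\perp}\n_1\wedge\n_2\wedge\vec{u})$ with $\D\vec{v}\in\mathrm{L}^{2,1}$ is not controlled by Hölder when $\vec{u}$ is only known to lie in $\bigcap_{p<\infty}\mathrm{L}^p$ — the exponents $\frac12+\frac1p+\frac12$ just exceed $1$. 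This is precisely why the paper, following Rivière's Lemma A.8, handles uniqueness through the separate regularity-of-eigenvectors argument (Lemma \ref{pre_unique}) and Lemma \ref{A81} rather than a one-line duality computation. So either import uniqueness from that scheme, or first bootstrap the regularity of $\vec{u}$ before integrating by parts; as written, this step and the adjoint $\mathrm{L}^{\infty}$ bound are the two genuine gaps, even though the duality strategy itself (suitably repaired) is viable and is in fact how the flat-case lemma you are generalising was originally proved.
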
     
     
     \begin{lemme}\label{new_A3}
     	For all $0<C_1<\infty$ and $2<p<\infty$, there exists $\epsilon_1=\epsilon_1(p,n)>0$ and $C_1=C_1(n)$ with the following property. Let $2\leq m\leq n$. For all $\n_1\in \mathrm{W}^{1,2}(B(0,1),\Lambda^{m-2}\R^n)$ $\n_2\in \mathrm{W}^{1,\infty}(B(0,1),\Lambda^{n-m}\R^n)$, $\vec{A}_1\in \mathrm{L}^{\infty}(B(0,1),M_n(\R)\otimes\R^2)$ and $\vec{V}_1\in \mathrm{L}^{\infty}(B(0,1),\Lambda^{n-2}\R^n\otimes\R^2)$ letting $\n=\n_1\wedge\n_2\in \mathrm{W}^{1,2}(B(0,1),\Lambda^{n-2}\R^n)$, assume that
     	\begin{align}\label{eps_reg4}
     		\left\{\begin{alignedat}{1}
     			&\int_{B(0,1)}|\D\n_1|^2dx+\int_{B(0,1)}|\D\n_2|^2dx+\int_{B(0,1)}\left(|\vec{A}_1|^p+|\vec{V}_1|^p\right)dx\leq \epsilon_1,\\
     			&\int_{B(0,1)}|\D\n_2|^4dx\leq C_1\,.
     		\end{alignedat}\right.
     	\end{align}
     	Then, for all $\vec{f}\in \mathrm{L}^1(B(0,1),\R^n)$, there exists a unique map $\vec{u}\in \mathrm{W}^{1,(2,\infty)}_0(B(0,1),\R^n)$ such that
     	\begin{align}\label{solve4}
     		\left\{\begin{alignedat}{2}
     			\Delta \vec{u}-3\,\dive\left(\pi_{\n}(\D\vec{u})\right)+\dive\left(\star(\D^{\perp}\n_1\wedge\n_2\wedge \vec{u})\right) \qquad \qquad&\\
     			-\frac{1}{2}\star\left(\D^{\perp}\n_1\wedge\D\n_2\wedge \vec{u}\right)
     			+\dive\left(\vec{A}_1\cdot\vec{u}+\star\left(\vec{V}_1\wedge\vec{u}\right)\right)&=\vec{f}\qquad&&\text{in}\;\, B(0,1)\\
     			\vec{u}&=0\qquad&&\text{on}\;\,\partial B(0,1)
     		\end{alignedat} \right.
     	\end{align}
     	and 
     	\begin{align}\label{lemma_ineq4}
     		\np{\D \vec{u}}{2,\infty}{B(0,1)}\leq C_1\np{\vec{f}\,}{1}{B(0,1)}.
     	\end{align}
     \end{lemme}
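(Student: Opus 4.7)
The plan is to mimic the strategy used to prove Lemma~\ref{new_A1}, with the only change being that the base case Lemma~\ref{new_A2} is replaced by the freshly established Lemma~\ref{new_A4} in order to accommodate the $\mathrm{L}^{1}$ right-hand side. More precisely, given $\vec{f}\in \mathrm{L}^{1}(B(0,1),\R^{n})$, I first apply Lemma~\ref{new_A4} (whose smallness hypothesis on $\n_{1},\n_{2}$ is already contained in \eqref{eps_reg4}) to produce a unique $\vec{u}_{0}\in \mathrm{W}^{1,(2,\infty)}_{0}(B(0,1),\R^{n})$ with $\mathscr{L}_{0}\vec{u}_{0}=\vec{f}$ and
\begin{equation*}
\np{\D\vec{u}_{0}}{2,\infty}{B(0,1)}\leq C_{0}\np{\vec{f}\,}{1}{B(0,1)}.
\end{equation*}

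Next, writing the sought solution as $\vec{u}=\vec{u}_{0}+\vec{v}$, the full equation \eqref{solve4} reduces to finding $\vec{v}\in \mathrm{W}^{1,(2,\infty)}_{0}(B(0,1),\R^{n})$ such that
\begin{equation*}
\mathscr{L}_{0}\vec{v}=-\dive\left(\vec{A}_{1}\cdot(\vec{u}_{0}+\vec{v})\right)-\dive\left(\star\left(\vec{V}_{1}\wedge(\vec{u}_{0}+\vec{v})\right)\right).
\end{equation*}
The right-hand side belongs to $\mathrm{H}^{-1}(B(0,1))$ (not necessarily to $\mathrm{L}^{1}$), so at each step of the iteration I invoke Lemma~\ref{new_A2} rather than Lemma~\ref{new_A4}: define recursively $\vec{v}_{0}=0$ and $\vec{v}_{k+1}$ as the unique $\mathrm{W}^{1,(2,\infty)}_{0}$ solution of $\mathscr{L}_{0}\vec{v}_{k+1}=-\dive(\vec{A}_{1}\cdot(\vec{u}_{0}+\vec{v}_{k}))-\dive(\star(\vec{V}_{1}\wedge(\vec{u}_{0}+\vec{v}_{k})))$ furnished by Lemma~\ref{new_A2}.

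The convergence of this iteration is obtained exactly as in the proof of Lemma~\ref{new_A1}: by H\"older's inequality together with the Sobolev embedding $\mathrm{W}^{1,(2,\infty)}_{0}(B(0,1))\hookrightarrow \mathrm{L}^{\frac{2p}{p-2}}(B(0,1))$ (valid for every $p>2$ since $\tfrac{2p}{p-2}<\infty$), one estimates
\begin{equation*}
\hs{\dive\left(\vec{A}_{1}\cdot\vec{u}\right)}{-1}{B(0,1)}\leq C\np{\vec{A}_{1}}{p}{B(0,1)}\np{\D\vec{u}}{2,\infty}{B(0,1)}\leq C\,C_{p}\,\epsilon_{1}^{1/p}\np{\D\vec{u}}{2,\infty}{B(0,1)},
\end{equation*}
and the analogous bound for the $\vec{V}_{1}$ term. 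Combining with the operator-norm bound of Lemma~\ref{new_A2} yields
\begin{equation*}
\np{\D(\vec{v}_{k+1}-\vec{v}_{k})}{2,\infty}{B(0,1)}\leq C_{0}C_{p}\,\epsilon_{1}^{1/p}\np{\D(\vec{v}_{k}-\vec{v}_{k-1})}{2,\infty}{B(0,1)}.
\end{equation*}
Choosing $\epsilon_{1}=\min\{\epsilon_{0},(4C_{0}C_{p})^{-p}\}$ produces a contraction factor $\leq 1/2$, hence $\{\vec{v}_{k}\}$ is Cauchy in $\mathrm{W}^{1,(2,\infty)}_{0}$ and converges to the desired fixed point $\vec{v}$, with $\|\D\vec{v}\|_{2,\infty}\leq \|\D\vec{u}_{0}\|_{2,\infty}$. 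Uniqueness of $\vec{u}$ follows from the same contraction applied to the difference of two solutions, and the final estimate \eqref{lemma_ineq4} is obtained by summing the geometric series:
\begin{equation*}
\np{\D\vec{u}}{2,\infty}{B(0,1)}\leq 2\np{\D\vec{u}_{0}}{2,\infty}{B(0,1)}\leq 2C_{0}\np{\vec{f}\,}{1}{B(0,1)},
\end{equation*}
after relabelling the constant as $C_{1}$.

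There is no substantive obstacle: the only point requiring care is that the perturbation terms, although built from $\mathrm{L}^{\infty}$ quantities, must be fed into the $\mathrm{H}^{-1}$-solvability theory of Lemma~\ref{new_A2} rather than the $\mathrm{L}^{1}$-theory of Lemma~\ref{new_A4}, because $\vec{A}_{1}\cdot\vec{u}$ and $\star(\vec{V}_{1}\wedge\vec{u})$ lie naturally in $\mathrm{L}^{2}$, not in $\mathrm{L}^{1}$ with a useful quantitative bound. The compatibility between the two solvability results (both producing $\mathrm{W}^{1,(2,\infty)}_{0}$ solutions) is what makes the hybrid argument go through transparently.
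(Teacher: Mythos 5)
Your proposal is correct and follows essentially the paper's intended route: the paper deduces Lemma \ref{new_A3} \emph{mutatis mutandis} from the preceding results, i.e.\ exactly by taking the base solution with $\mathrm{L}^{1}$ datum from Lemma \ref{new_A4} and then running the same $\mathrm{H}^{-1}$ smallness/contraction argument used in the proof of Lemma \ref{new_A1} (via Lemma \ref{new_A2}) to absorb the $\vec{A}_1$ and $\vec{V}_1$ perturbation terms. The hybrid use of the two solvability results, the choice of $\epsilon_1$, and the resulting estimate $\np{\D\vec{u}}{2,\infty}{B(0,1)}\leq C\np{\vec{f}\,}{1}{B(0,1)}$ all match the paper's scheme.
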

     
     Now, we can show in a similar way the improved regularity for eigenvectors of $\mathscr{L}_0$.
     
     \begin{lemme}\label{pre_unique}
     	For all $0<C_1<\infty$, there exists $\epsilon_0=\epsilon_0(n)>0$ and $C_0=C_0(n)$ with the following property. Let $2\leq m\leq n$. For all $\n_1\in \mathrm{W}^{1,(2,1)}(B(0,1),\Lambda^{m-2}\R^n)$ and $\n_2\in \mathrm{W}^{1,\infty}(B(0,1),\Lambda^{n-m}\R^n)$ letting $\n=\n_1\wedge\n_2\in \mathrm{W}^{1,2}(B(0,1),\Lambda^{n-2}\R^n)$, assume that 
     	\begin{align}\label{eps_reg5}
     		&\int_{B(0,1)}|\D\n_1|^2dx+\int_{B(0,1)}|\D\n_2|^2dx\leq \epsilon_0,\nonumber\\
     		&\np{\D\n_2}{\infty}{B(0,1)}\leq C_1.
     	\end{align}
     	Assume that $\vec{\varphi}\in \mathrm{W}^{1,(2,\infty)}_0(B(0,1),\R^n)$ is an eigen-vector of $\mathscr{L}_0$ \emph{i.e.} that there exists $\lambda\in  \R$ such that
     	\begin{align}\label{solve5}
     		\left\{\begin{alignedat}{2}
     			\Delta \vec{\varphi}-3\,\dive\left(\pi_{\n}(\D\vec{\varphi})\right)+\dive\left(\star(\D^{\perp}\n_1\wedge\n_2\wedge \vec{\varphi})\right)-\frac{1}{2}\star\left(\D^{\perp}\n_1\wedge\D\n_2\wedge \vec{\varphi}\right)&=\lambda\,\vec{\varphi}\qquad&&\text{in}\;\, B(0,1)\\
     			\vec{\varphi}&=0\qquad&&\text{on}\;\,\partial B(0,1) \,.
     		\end{alignedat} \right.
     	\end{align}
     	Then
     	\begin{align}\label{eigen3}
     		\np{\D^2\vec{\varphi}\,}{2,1}{B(0,1)}\leq C_0\np{\D\vec{\varphi}\,}{2,\infty}{B(0,1)}.
     	\end{align}
     \end{lemme}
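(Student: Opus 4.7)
The plan is to bootstrap the regularity of the eigenvector $\vec\varphi$ from $W^{1,(2,\infty)}_0$ up to $W^{2,(2,1)}$ by iteratively improving the integrability of the right-hand side of the eigen-equation. The key inputs are the refined assumption $\n_1\in W^{1,(2,1)}$ (which yields $\D\n_1\in L^{2,1}$) and the $L^\infty$ bound on $\D\n_2$. Rewriting the equation as
\[
\Delta \vec\varphi = 3\,\dive\bigl(\pi_\n(\D\vec\varphi)\bigr)-\dive\bigl(\star(\D^\perp \n_1\wedge \n_2\wedge\vec\varphi)\bigr)+\tfrac{1}{2}\,\star(\D^\perp \n_1\wedge \D\n_2\wedge\vec\varphi)+\lambda\vec\varphi,
\]
with Dirichlet boundary condition, I aim to show that each term on the right lies in $L^{2,1}(B(0,1))$ or is the divergence of an $L^{2,1}$-tensor; the Calder\'on--Zygmund theorem in Lorentz spaces then delivers $\D^2\vec\varphi\in L^{2,1}(B(0,1))$ with the quantitative estimate \eqref{eigen3}.

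The first step is to upgrade $\vec\varphi$ to $L^\infty(B(0,1))$. The Sobolev embedding $W^{1,(2,\infty)}_0\hookrightarrow L^p$ for every $p<\infty$ gives $\vec\varphi\in\bigcap_{p<\infty}L^p(B(0,1))$. Iterating Lemmas \ref{new_A2} and \ref{new_A4} and using at each stage the Lorentz H\"older inequality applied to $L^{2,1}\cdot L^\infty\cdot L^p$ (which places the divergence-form coefficient $\star(\D^\perp\n_1\wedge\n_2\wedge\vec\varphi)$ in progressively better Lorentz classes as $p\to\infty$), combined with Adams--Morrey-type gains at each step, promotes $\vec\varphi$ across the critical threshold into $L^\infty$.

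Once $\vec\varphi\in L^\infty$, the lower-order terms $\lambda\vec\varphi$ and $\star(\D^\perp\n_1\wedge\D\n_2\wedge\vec\varphi)$ lie in $L^{2,1}$ by the algebra $L^{2,1}\cdot L^\infty\cdot L^\infty\subset L^{2,1}$, and the divergence-form coefficient $\star(\D^\perp\n_1\wedge\n_2\wedge\vec\varphi)$ lies in $L^{2,1}$ for the same reason. The remaining second-order divergence term $3\dive(\pi_\n\D\vec\varphi)$ is absorbed into the principal part of $\mathscr{L}_0$ by rewriting
\[
(\mathrm{Id}-3\pi_\n)\Delta\vec\varphi=3\,\D\pi_\n\cdot\D\vec\varphi+\text{(RHS already in $L^{2,1}$)};
\]
the matrix $\mathrm{Id}-3\pi_\n$ has eigenvalues $1$ and $-2$ hence is uniformly invertible, and $\D\pi_\n\in L^{2,1}$ by the chain rule combined with $\D\n_1\in L^{2,1}$ and $\D\n_2,\n_j\in L^\infty$. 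The bilinear term $\D\pi_\n\cdot\D\vec\varphi\in L^{2,1}\cdot L^{2,\infty}$ is then handled via the self-adjointness of the principal part of $\mathscr{L}_0$ established in Lemma \ref{new_A2} together with a duality pairing against test functions produced by $\mathscr{L}_0^{-1}$, yielding $\Delta\vec\varphi\in L^{2,1}$ and hence, by the Calder\'on--Zygmund theorem in Lorentz spaces with Dirichlet boundary condition, $\D^2\vec\varphi\in L^{2,1}$.

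The main obstacle is the first step, the $L^\infty$ bootstrap: the divergence-form term is only in $W^{-1,2}$ a priori, and to push $\vec\varphi$ past the critical threshold one has to exploit the compensated-compactness (Wente) structure afforded by $\D\n_1\in L^{2,1}$ paired with the bounded factors $\n_2$ and $\vec\varphi$, so that the relevant divergences lie in Hardy space $\mathscr{H}^1\hookrightarrow W^{-1,(2,1)}$ rather than merely in $W^{-1,2}$. All constants scale linearly in $\np{\D\vec\varphi}{2,\infty}{B(0,1)}$, with a small fraction absorbed into the left-hand side via the smallness $\epsilon_0$ from \eqref{eps_reg5}, yielding \eqref{eigen3}.
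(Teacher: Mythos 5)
Your outline breaks down at the critical bilinear term, which is exactly the heart of the lemma. Writing $\dive(\pi_{\n}(\D\vec{\varphi}))=\pi_{\n}(\Delta\vec{\varphi})+\D\pi_{\n}\cdot\D\vec{\varphi}$ and inverting $\mathrm{Id}-3\pi_{\n}$ is fine, but the resulting term $\D\pi_{\n}\cdot\D\vec{\varphi}$ lies a priori only in $\mathrm{L}^{2,1}\cdot\mathrm{L}^{2,\infty}\subset \mathrm{L}^{1}$, and an $\mathrm{L}^{1}$ right-hand side gives no $\mathrm{W}^{2,(2,1)}$ (nor even $\mathrm{W}^{2,1}$) control. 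Your proposed fix --- ``self-adjointness of the principal part together with a duality pairing against test functions produced by $\mathscr{L}_0^{-1}$'' --- is not an argument: to conclude $\Delta\vec{\varphi}\in\mathrm{L}^{2,1}$ by duality you would have to pair $\D\pi_{\n}\cdot\D\vec{\varphi}$ against arbitrary $\mathrm{L}^{2,\infty}$ functions, and a product in $\mathrm{L}^{2,1}\cdot\mathrm{L}^{2,\infty}\cdot\mathrm{L}^{2,\infty}$ is in general not integrable; self-adjointness of $\mathscr{L}_0$ is used in this paper for the uniqueness statement (Lemma \ref{A81}), not to upgrade regularity in this way. A second, related gap: the absorption you invoke at the end cannot come from the smallness $\epsilon_0$ in \eqref{eps_reg5}, which only bounds the $\mathrm{L}^2$ energy of $\D\n$; to absorb one needs the \emph{$\mathrm{L}^{2,1}$} norm of $\D\n_1$ to be small, which is false globally and is only obtained locally, by absolute continuity of the $\mathrm{L}^{2,1}$ norm on small balls $B(x_0,\rho)$.

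The paper's proof supplies precisely these missing ingredients, following Rivi\`ere's Lemma A.7. After the preliminary bootstrap giving $\vec{\varphi}\in\mathrm{W}^{1,p}_0$ for all $p<\infty$ (your $\mathrm{L}^{\infty}$ step, which in your write-up is itself only sketched), one fixes $x_0$, chooses $\rho$ so that $\np{\D\n_1}{2,1}{B(x_0,\rho)}\leq\epsilon$, rescales and cuts off to get $\vec{u}=\eta\,\vec{\varphi}(x_0+\rho\,\cdot)$ with $\mathscr{L}_0\vec{u}=\vec{f}\in\mathrm{L}^{2,1}$, and then performs the Hodge decomposition $\D\vec{u}-3\,\pi_{\n}(\D\vec{u})=\D\vec{A}+\D^{\perp}\vec{B}$. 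The equations $\Delta\vec{A}=-\dive(\star(\D^{\perp}\n_1\wedge\n_2\wedge\vec{u}))+\frac{1}{2}\star(\D^{\perp}\n_1\wedge\D\n_2\wedge\vec{u})+\vec{f}$ and $\Delta\vec{B}=3\,\dive(\pi_{\n}(\D^{\perp}\vec{u}))$ are estimated in $\mathrm{L}^{2,1}$, giving $\D\vec{A},\D\vec{B}\in\mathrm{L}^{\infty}$ and $\D^2\vec{A},\D^2\vec{B}\in\mathrm{L}^{2,1}$ with a factor $\np{\D\n}{2,1}{}\np{\D\vec{u}}{\infty}{}$ on the right; since $\mathrm{Id}-3\pi_{\n}$ is invertible, $\D\vec{u}$ is controlled by $\D\vec{A},\D\vec{B}$, and the local smallness $\epsilon$ lets one absorb these terms and close the estimate, after which a covering argument yields \eqref{eigen3}. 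Without the localization and this decomposition-plus-absorption scheme (or a genuine substitute for it), your argument does not establish $\D^2\vec{\varphi}\in\mathrm{L}^{2,1}$.
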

     \begin{proof}
     	The first part of the proof goes as in \cite{riviere1} and we deduce by the Sobolev embedding
     	\begin{align*}
     		\mathrm{W}^{1,2}_0(B(0,1))\hookrightarrow\bigcap_{p<\infty}\mathrm{L}^p(B(0,1))
     	\end{align*}
        that 
        \begin{align*}
        	\vec{\varphi}\in\bigcap_{p<\infty}\mathrm{W}^{1,p}_0(B(0,1)). 
        \end{align*}
        Now, as in \cite{riviere1}, let $x_0\in B(0,1)$. Since $\D\n\in \mathrm{L}^{2,1}(B(0,1))$ and $\D\n_2\in \mathrm{L}^4(B(0,1))$, for all $\epsilon>0$, there exists $\rho>0$, there exists $\rho>0$ such that
        \begin{align*}
        	\np{\D\n_1}{2,1}{B(x_0,\rho)}+\np{\D\n_2}{2,1}{B(x_0,\rho)}\leq \epsilon.
        \end{align*}
        We now define $\vec{u}:B(0,1)\rightarrow \R^n$ by
        \begin{align*}
        	\vec{u}(x)=\vec{\varphi}(x_0+\rho\,x)\eta(x)=\vec{\psi}(x)\,\eta(x),
        \end{align*}
        where $\eta$ is a smooth cut-off function such that $\eta=1$ on $B(0,\frac{1}{2})$. Then we compute (without changing notations for the rescaled versions of $\n$)
        \begin{align*}
        	\mathscr{L}_0\vec{u}&=\lambda \vec{u}+2\D\vec{\psi}\cdot\D\eta+\vec{\psi} \Delta\eta-3\dive\left(\pi_{\n}(\vec{\psi})\D\eta+\eta\pi_{\n}(\D\vec{\psi})\right)+3\,\eta\dive\left(\pi_{\n}(\D\vec{\psi})\right)\\
        	&\qquad+\star\left(\D^{\perp}\n_1\wedge\D\n_2\wedge\vec{\psi}\right)\cdot\D\eta
        	 +\star\left(\D^{\perp}\n_1\wedge\n_2\wedge\vec{\psi}\right)\Delta\eta\\
        	&=\lambda\,\vec{u}+2\D\vec{\psi}\cdot\D\eta+\vec{\psi}\Delta\eta-3\,\D\pi_{\n}(\vec{\psi})\cdot\D\eta-6\,\dive(\pi_{\n}(\D\vec{\psi}))\cdot\D\eta-3\,\pi_{\n}(\vec{\psi})\Delta\eta\\
        	&\qquad +\star\left(\D^{\perp}\n_1\wedge\D\n_2\wedge\vec{\psi}\right)\cdot\D\eta+\star\left(\D^{\perp}\n_1\wedge\n_2\wedge\vec{\psi}\right)\Delta\eta\\
        	&=\vec{f}.
        \end{align*} 
        Since $\D\n_1\in \mathrm{L}^{2,1}(B(0,1))$, $\D\n_2\in \mathrm{L}^{\infty}(B(0,1))$ while $\vec{\psi}\in \mathrm{W}^{1,p}_0(B(0,1))$ for all $p<\infty$ (which implies in particular that $\vec{\psi}\in C^0(B(0,1))$), we deduce that $\vec{f}\in \mathrm{L}^{2,1}(B(0,1))$. Therefore, $\vec{u}\in \mathrm{W}^{1,p}_0(B(0,1),\R^n)$ (for all $p<\infty$) solves the system
        \begin{align}
        	\left\{\begin{alignedat}{2}
        		\mathscr{L}_0\vec{u}&=\vec{f}\qquad&&\text{in}\;\,B(0,1)\\
        		\vec{u}&=0\qquad&&\text{on}\;\, \partial B(0,1).
        \end{alignedat}\right.
        \end{align}
        Following \cite{riviere1}, introduce the Hodge decomposition
        \begin{align*}
        	\D\vec{u}-3\pi_{\n}(\D\vec{u})=\D\vec{A}+\D^{\perp}\vec{B},
        \end{align*}
        where $\vec{A}=0$ and $\partial_{\nu}\vec{B}=0$ on $\partial B(0,1)$. Then we directly have
        \begin{align*}
        	\Delta\vec{A}=\Delta\vec{u}-3\,\dive\left(\pi_{\n}(\D\vec{u})\right)=-\dive\left(\star\left(\D^{\perp}\n_1\wedge\n_2\wedge\vec{u}\right)\right)+\frac{1}{2}\star\left(\D^{\perp}\n_1\wedge\D\n_2\wedge\vec{u}\right)+\vec{f}
        \end{align*}
        so that
        \begin{align*}
        	\left\{\begin{alignedat}{2}
        		\Delta\vec{A}&=-\dive\left(\star\left(\D^{\perp}\n_1\wedge\n_2\wedge\vec{u}\right)\right)+\frac{1}{2}\star\left(\D^{\perp}\n_1\wedge\D\n_2\wedge\vec{u}\right)+\vec{f}\qquad&&\text{in}\;\,B(0,1)\\
        		\vec{A}&=0\qquad&&\text{on}\;\,\partial B(0,1). 
        	\end{alignedat}\right.
        \end{align*}
        Likewise, we directly obtain
        \begin{align*}
        	\left\{\begin{alignedat}{2}
        		\Delta\vec{B}&=3\,\dive\left(\pi_{\n}(\D^{\perp}\vec{u})\right)\qquad&&\text{in}\;\,B(0,1)\\
        		\vec{B}&=0\qquad&&\text{on}\;\,\partial B(0,1).
        	\end{alignedat}\right.
        \end{align*}
        By Calder\'{o}n-Zygmund estimates, interpolation, and the Sobolev embedding $\mathrm{W}^{1,(2,1)}_0(B(0,1))\hookrightarrow C^0(B(0,1))$, we deduce that 
        \begin{align*}
        	&\np{\D\vec{A}\,}{\infty}{B(0,1)}+\np{\D^2\vec{A}\,}{2,1}{B(0,1)}\leq  C\np{\Delta\vec{A}\,}{2,1}{B(0,1)}\\
        	&\leq C\left(\np{\D\n_1}{2,1}{B(0,1)}\left(\np{\D\n_2}{\infty}{B(0,1)}\np{\vec{u}}{\infty}{B(0,1)}+\np{\D\vec{u}}{\infty}{B(0,1)}\right)
        	+\np{\vec{f}\,}{2,1}{B(0,1)}\right)\\
        	&\np{\D\vec{B}\,}{\infty}{B(0,1)}+\np{\D^2\vec{B}\,}{2,1}{B(0,1)}\leq C\np{\Delta\vec{B}\,}{2,1}{B(0,1)}\leq C\np{\D\n}{2,1}{B(0,1)}\np{\D\vec{u}}{\infty}{B(0,1)}
        \end{align*}
       From this point we can follow verbatim the proof of \cite[Lemma A.$7$]{riviere1}  to conclude.  
     \end{proof} 
 
     Finally, we deduce the counterpart of \cite[Lemma A.$8$]{riviere1} for the operator $\mathscr{L}_0$. 
     
     \begin{lemme}\label{A81}
     	For all $0<C_1<\infty$, there exists $\epsilon_0=\epsilon_0(n)>0$ and $C_0=C_0(n)$ with the following property. Let $2\leq m\leq n$. For all $\n_1\in \mathrm{W}^{1,(2,1)}(B(0,1),\Lambda^{m-2}\R^n)$ and $\n_2\in \mathrm{W}^{1,\infty}(B(0,1),\Lambda^{n-m}\R^n)$ letting $\n=\n_1\wedge\n_2\in \mathrm{W}^{1,2}(B(0,1),\Lambda^{n-2}\R^n)$, assume that
     	\begin{align}\label{eps_reg6}
     		&\int_{B(0,1)}|\D\n_1|^2dx+\int_{B(0,1)}|\D\n_2|^2dx\leq \epsilon_0,\nonumber\\
     		&\np{\D\n_2}{\infty}{B(0,1)}\leq C_1.
     	\end{align}
     	Assume that $\vec{u}\in \mathrm{L}^2(B(0,1),\R^n)$ and that $\D\vec{u}$ is the sum of a 
     	compactly supported distribution in $B(0,1)$ and of a function in $\mathrm{L}^{2,\infty}(B(0,1))$. Suppose that $\vec{u}$ satisfies the following equation in the distributional sense
     	\begin{align}\label{solve6}
     		\left\{\begin{alignedat}{2}
     			\Delta \vec{u}-3\,\dive\left(\pi_{\n}(\D\vec{u})\right)+\dive\left(\star(\D^{\perp}\n_1\wedge\n_2\wedge \vec{u})\right)-\frac{1}{2}\star\left(\D^{\perp}\n_1\wedge\D\n_2\wedge \vec{u}\right)&=0\qquad&&\text{in}\;\, B(0,1)\\
     			\vec{u}&=0\qquad&&\text{on}\;\,\partial B(0,1).
     		\end{alignedat} \right.
     	\end{align}
        Then $\vec{u}=0$ on $B(0,1)$. 
     \end{lemme}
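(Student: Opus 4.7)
The plan is to mimic the proof of \cite[Lemma A.8]{riviere1}, leveraging two features of the operator $\mathscr{L}_0$ already established in the paper: it is self-adjoint on $L^2$ (this is precisely what the corrector $\vec{V}_2=-\tfrac12 D\vec{V}_0$ was engineered for in \eqref{selfadjoint}), and by Lemma \ref{new_A2} it is boundedly invertible from $H^{-1}$ to $W^{1,(2,\infty)}_0$. Rellich's theorem then makes $\mathscr{L}_0^{-1}:L^2\to L^2$ a compact self-adjoint operator. The spectral theorem yields a complete $L^2$-orthonormal system of eigenvectors $\{\vec{\varphi}_k\}_{k\ge1}\subset W^{1,(2,\infty)}_0$ with associated eigenvalues $\lambda_k\ne 0$ (no kernel, since $\mathscr{L}_0$ is injective on $W^{1,(2,\infty)}_0$).

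The next step is to upgrade the regularity of each $\vec{\varphi}_k$: Lemma \ref{pre_unique} applies (the hypothesis $\D\n_2\in L^\infty$ is stronger than what is required there), giving $D^2\vec{\varphi}_k\in L^{2,1}(B(0,1))$. Combined with $\vec{\varphi}_k|_{\partial B(0,1)}=0$ and Sobolev embeddings for Lorentz spaces, this yields
\[
D\vec{\varphi}_k\in L^{\infty}\cap L^{2,1}(B(0,1)),\qquad \vec{\varphi}_k\in C^0(\bar B(0,1)).
\]
This is exactly the regularity needed to pair $\vec{\varphi}_k$ against $\vec{u}$: since $D\vec{u}=T+g$ with $T$ compactly supported in $B(0,1)$ and $g\in L^{2,\infty}(B(0,1))$, the integral $\int\langle D\vec{u},D\vec{\varphi}_k\rangle$ is meaningful (the $T$-part pairs against $D\vec{\varphi}_k\in C^0$ restricted to a neighborhood of $\supp T$, and the $g$-part pairs via the $L^{2,\infty}/L^{2,1}$ duality). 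Likewise all lower-order terms of $\mathscr{L}_0$ applied to $\vec{\varphi}_k$ (belonging to $L^{2,1}$ up to the $L^\infty$ factor $\n_2,\D\n_2$) pair with $\vec{u}\in L^2$.

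Having justified the bilinear pairing $\langle \mathscr{L}_0 \vec{u},\vec{\varphi}_k\rangle$, the self-adjointness identity \eqref{selfadjoint} (extended by approximation from $C^\infty_c$ to $(\vec{u},\vec{\varphi}_k)$ in the above classes) gives
\[
0=\langle \mathscr{L}_0\vec{u},\vec{\varphi}_k\rangle=\langle\vec{u},\mathscr{L}_0\vec{\varphi}_k\rangle=\lambda_k\int_{B(0,1)}\langle\vec{u},\vec{\varphi}_k\rangle\,dx .
\]
Since $\lambda_k\neq 0$, each Fourier coefficient of $\vec{u}$ in the eigenbasis vanishes. Completeness of $\{\vec{\varphi}_k\}$ in $L^2(B(0,1),\R^n)$ forces $\vec{u}=0$.

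The main obstacle is the rigorous justification of the pairing identity above: the eigenvector $\vec{\varphi}_k$ is not smooth, so integration by parts against $\vec{u}$ cannot be taken for granted. The plan is to approximate $\vec{\varphi}_k$ by a sequence $\vec{\varphi}_k^{(j)}\in C^\infty_c(B(0,1))$ with $\vec{\varphi}_k^{(j)}\to\vec{\varphi}_k$ strongly in $W^{2,(2,1)}\cap W^{1,\infty}$; the distributional equation for $\vec{u}$ against $\vec{\varphi}_k^{(j)}$ is trivially legitimate, and one passes to the limit using the $L^{2,\infty}/L^{2,1}$ duality for the $g$ part of $D\vec{u}$ and the compactness of $\supp T$ for the distributional part (the latter forces the approximants to coincide with $\vec{\varphi}_k$ on a fixed open set containing $\supp T$, which can be arranged by an interior mollification followed by multiplication by an appropriate cutoff). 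Once this approximation argument is set up, the self-adjointness identity \eqref{selfadjoint} transfers directly to the limit and the conclusion follows.
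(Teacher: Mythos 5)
Your proposal is correct and is essentially the paper's own proof: the paper simply invokes Rivière's Lemma A.8 verbatim, noting that only the self-adjointness of $\mathscr{L}_0$ and the eigenvector regularity of Lemma \ref{pre_unique} are needed, and your argument (compact self-adjoint resolvent from Lemma \ref{new_A2}, spectral decomposition, regularity upgrade of eigenvectors, then pairing $\vec{u}$ against them via the $\mathrm{L}^{2,1}/\mathrm{L}^{2,\infty}$ duality and an approximation step) is exactly that scheme spelled out. No substantive difference.
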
 
     \begin{proof}
          We can follow verbatim the proof of  \cite[Lemma A.$8$]{riviere1} since only the self-adjointness of the operator and the regularity result of Lemma \ref{pre_unique} are used.
     \end{proof}
     
     Finally, we deduce that Lemma \ref{A81} holds with $\mathscr{L}_0$ replaced by $\mathscr{L}$. 
     
     \begin{lemme}\label{A82}
     	For all $0<C_1<\infty$, there exists $\epsilon_0=\epsilon_0(n)>0$ and $C_0=C_0(n)$ with the following property. Let $2\leq m\leq n$. For all $\n_1\in \mathrm{W}^{1,(2,1)}(B(0,1),\Lambda^{m-2}\R^n)$ and $\n_2\in \mathrm{W}^{1,\infty}(B(0,1),\Lambda^{n-m}\R^n)$ letting $\n=\n_1\wedge\n_2\in \mathrm{W}^{1,2}(B(0,1),\Lambda^{n-2}\R^n)$, assume that 
     	\begin{align}\label{eps_reg7}
     		&\int_{B(0,1)}|\D\n_1|^2dx+\int_{B(0,1)}|\D\n_2|^2dx\leq \epsilon_0,\nonumber\\
     		&\np{\D\n_2}{\infty}{B(0,1)}\leq C_1.
     	\end{align}
     	Assume that $\vec{u}\in \mathrm{L}^2(B(0,1),\R^n)$ and that $\D\vec{u}$ is the sum of a 
     	compactly supported distribution in $B(0,1)$ and of a function in $\mathrm{L}^{2,\infty}(B(0,1))$. Suppose that $\vec{u}$ satisfies the following equation in the distributional sense
     	\begin{align}\label{solve7}
     		\left\{\begin{alignedat}{2}
     			\Delta \vec{u}-3\,\dive\left(\pi_{\n}(\D\vec{u})\right)+\dive\left(\star(\D^{\perp}\n_1\wedge\n_2\wedge \vec{u})\right) \qquad \qquad&\\
     			-\frac{1}{2}\star\left(\D^{\perp}\n_1\wedge\D\n_2\wedge \vec{u}\right)
     			+\dive\left(\vec{A}_1\cdot\vec{u}+\star\left(\vec{V}_1\wedge\vec{u}\right)\right)&=0\qquad&&\text{in}\;\, B(0,1)\\
     			\vec{u}&=0\qquad&&\text{on}\;\,\partial B(0,1)
     		\end{alignedat} \right.
     	\end{align}
        Then $\vec{u}=0$ on $B(0,1)$.
     \end{lemme}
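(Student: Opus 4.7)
The plan is to reduce Lemma \ref{A82} to Lemma \ref{A81} by treating the lower-order terms $\dive\!\left(\vec{A}_1\cdot\vec{u}+\star(\vec{V}_1\wedge\vec{u})\right)$ as a perturbation of the self-adjoint principal part $\mathscr{L}_0$. The idea is exactly the same as the one used to pass from Lemma \ref{new_A2} to Lemma \ref{new_A1}, but applied on the level of uniqueness rather than existence.

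First, I would argue that $\vec{u}$ actually belongs to the natural energy space for $\mathscr{L}_0$. Under the hypothesis that $\vec{u}\in \mathrm{L}^{2}(B(0,1))$ with $\D\vec{u}$ the sum of a compactly supported distribution and an $\mathrm{L}^{2,\infty}$ function, the same cut-off/elliptic-regularity bootstrap that appears in the proof of Lemma \ref{pre_unique} shows that $\vec{u}\in \mathrm{W}^{1,(2,\infty)}_0(B(0,1),\R^{n})$. Indeed, rewriting
\begin{align*}
\mathscr{L}_0\vec{u}=-\dive\left(\vec{A}_1\cdot\vec{u}+\star(\vec{V}_1\wedge\vec{u})\right),
\end{align*}
the right-hand side is a divergence of an $\mathrm{L}^{2}(B(0,1))$ function (since $\vec{A}_1,\vec{V}_1\in \mathrm{L}^{\infty}$ and $\vec{u}\in \mathrm{L}^{2}$), hence lies in $\mathrm{H}^{-1}(B(0,1))$. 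Applying Lemma \ref{new_A2} to $\mathscr{L}_0$ (whose unique $\mathrm{W}^{1,(2,\infty)}_0$-solution is bounded by the $\mathrm{H}^{-1}$ norm of the data) gives a candidate $\vec{u}_{0}\in \mathrm{W}^{1,(2,\infty)}_0$ satisfying the same equation; the difference $\vec{u}-\vec{u}_0$ then enjoys the hypothesis of Lemma \ref{A81} and therefore vanishes, showing $\vec{u}=\vec{u}_0\in \mathrm{W}^{1,(2,\infty)}_0$.

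Once $\vec{u}\in \mathrm{W}^{1,(2,\infty)}_0$, uniqueness follows directly from Lemma \ref{new_A1}. Indeed, that lemma asserts the unique solvability in $\mathrm{W}^{1,(2,\infty)}_0$ of the equation $\mathscr{L}\vec{v}=\vec{f}$ for $\vec{f}\in \mathrm{H}^{-1}$, under the same smallness assumptions on $\n_1,\n_2,\vec{A}_1,\vec{V}_1$. Applying this with $\vec{f}=0$ and using that the zero function is obviously a solution, we conclude $\vec{u}=0$. Alternatively, one may make the perturbation argument explicit: let $T:\mathrm{W}^{1,(2,\infty)}_0\to \mathrm{W}^{1,(2,\infty)}_0$ be defined by $T(\vec{w})=\mathscr{L}_0^{-1}\!\left(-\dive(\vec{A}_1\cdot\vec{w}+\star(\vec{V}_1\wedge\vec{w}))\right)$; the Sobolev embedding $\mathrm{W}^{1,(2,\infty)}_0\hookrightarrow \mathrm{L}^{q}$ for every $q<\infty$ combined with the $\mathrm{L}^{p}$-smallness of $\vec{A}_1,\vec{V}_1$ makes $T$ a $\tfrac{1}{2}$-contraction (as in the proof of Lemma \ref{new_A1}), so that its only fixed point is $0$, which is precisely $\vec{u}$.

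The main conceptual point is therefore the regularity upgrade from the very weak a priori information on $\D\vec{u}$ to membership in $\mathrm{W}^{1,(2,\infty)}_0$; this is the only genuinely non-trivial step and proceeds exactly as in Rivière's argument for the self-adjoint operator, since the additional terms are subcritical and fall inside the scope of the same cut-off and Hodge decomposition machinery used in Lemma \ref{pre_unique}. Once this is achieved, injectivity of $\mathscr{L}$ from Lemma \ref{new_A1} closes the argument. No new smallness conditions beyond those already encoded in the hypotheses are needed, provided one reads the statement of Lemma \ref{A82} together with the $\mathrm{L}^{p}$-smallness of $(\vec{A}_{1},\vec{V}_{1})$ inherited from Lemma \ref{new_A1}.
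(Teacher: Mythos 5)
Your proposal is correct and follows essentially the same route as the paper: you solve the auxiliary problem $\mathscr{L}_0\vec{v}=-\dive\left(\vec{A}_1\cdot\vec{u}+\star\left(\vec{V}_1\wedge\vec{u}\right)\right)$ via Lemma \ref{new_A2}, apply Lemma \ref{A81} to the difference to upgrade $\vec{u}$ to $\mathrm{W}^{1,(2,\infty)}_0(B(0,1))$, and then invoke the uniqueness of the $\mathscr{L}$-Dirichlet problem from Lemma \ref{new_A1} (your explicit contraction argument just re-derives that uniqueness). The only cosmetic difference is your passing reference to the cut-off machinery of Lemma \ref{pre_unique}, which is not actually needed for the argument you give.
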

     \begin{proof}
        Since $\vec{u}\in \mathrm{L}^2(B(0,1))$ and $\vec{A}_1,\vec{V}_1\in \mathrm{L}^{\infty}(B(0,1))$, we deduce that 
        \begin{align*}
        	\vec{f}=\dive\left(\vec{A}_1\cdot\vec{u}+\star\left(\vec{V}_1\wedge\vec{u}\right)\right)\in \mathrm{H}^{-1}(B(0,1)). 
        \end{align*}
        Using Lemma \ref{new_A2}, we deduce that there exists a unique $\vec{v}\in \mathrm{W}^{1,2}_0(B(0,1))$ such that 
        \begin{align*}
        	\mathscr{L}_0\vec{v}=\vec{f}.
        \end{align*}
        Therefore, we get that $\vec{w}=\vec{u}-\vec{v}$ satisfies 
        \begin{align*}
        	\left\{\begin{alignedat}{2}
        		\Delta \vec{w}-3\,\dive\left(\pi_{\n}(\D\vec{w})\right)+\dive\left(\star(\D^{\perp}\n_1\wedge\n_2\wedge \vec{w})\right)-\frac{1}{2}\star\left(\D^{\perp}\n_1\wedge\D\n_2\wedge \vec{w}\right)&=0\qquad&&\text{in}\;\, B(0,1)\\
        		\vec{w}&=0\qquad&&\text{on}\;\,\partial B(0,1).
        	\end{alignedat} \right.
        \end{align*}
        Since $\vec{v}\in \mathrm{L}^{2,\infty}(B(0,1))$, we deduce that $\D\vec{w}$ is also the sum of a compactly supported distribution and of a $\mathrm{L}^{2,\infty}(B(0,1))$ function. Therefore, by Lemma \ref{A81}, we obtain that $\vec{w}=0$, \emph{i.e.}  $\vec{u}=\vec{v}\in \mathrm{W}^{1,(2,\infty)}_0(B(0,1))$. 
        Using the uniqueness of the weak solution in $\mathrm{W}^{1,(2,\infty)}_0(B(0,1))$ of the Dirichlet problem associated to $\mathscr{L}$ (see Lemma \ref{new_A1} and Lemma \ref{new_A3}), we conclude that $\vec{u}=0$. 
     \end{proof}

   Finally, we can prove the $\epsilon$-regularity result (see \cite[Theorem I.5]{riviere1} for Willmore immersions in the Euclidean space).
    
     \begin{theorem}\label{eps_reg}
     	Let $(M^m,h)$ be a smooth closed Riemannian manifold that we assume isometrically embedded in $\R^n$.. There exists constants $\epsilon_0>0$ and $\ens{C_k}_{k\in \N}\subset (0,\infty)$ with the following property. Let $\phi:B(0,1)\rightarrow (M^m,h)$ be a conformal, Lipschitz ; denote with  $\n:B(0,1)\rightarrow \Lambda^{m-2}TM^m$  the Gauss map associated to $\phi$ and assume that 
     	\begin{align}\label{eq:AssumEpsReg}
     		\int_{B(0,1)}|\D\n|^2dx+\mathrm{Area}(\phi(B(0,1)))\leq \epsilon_0\, .
     	\end{align}
        Then, for all $k\in\N$, it holds
        \begin{align*}
        	\np{\D^k\n}{\infty}{B(0,\frac{1}{2})}^2\leq C_k\int_{B(0,1)}|\D\n|^2dx.
        \end{align*}
     \end{theorem}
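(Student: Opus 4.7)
The plan is to follow the strategy Rivière developed for Willmore immersions in $\R^{n}$ (\cite{riviere1}, Theorem I.5) and to adapt it to the curved ambient setting by means of the lemmas on the elliptic operator $\mathscr{L}_{\vec{n}}$ established in \eqref{eqH2} and Lemmas \ref{new_A2}--\ref{A82} above. The crux is to exploit the conservative formulation \eqref{eqH} of the Willmore equation, which reads $\mathscr{L}_{\vec{n}} \vec{H} = F$ with $F \in \mathrm{L}^{1}_{\mathrm{loc}}$, and then invoke the solvability Lemma \ref{new_A3} together with the uniqueness Lemma \ref{A82} to upgrade the regularity of $\vec{H}$ from $\mathrm{L}^{2}_{\mathrm{loc}}$ to $\mathrm{W}^{1,(2,\infty)}_{\mathrm{loc}}$, after which a standard bootstrap yields smoothness.

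I would first localize around an arbitrary $x_{0}\in B(0,1/2)$ by rescaling and multiplying by a cutoff $\eta$, so that on the resulting small ball the $\mathrm{L}^{2}$ norm of $\nabla\vec{n}$ and the $\mathrm{L}^{p}$ norms (for some $p>2$) of the coefficients $\vec{A}_{1},\vec{V}_{1}$ appearing in $\mathscr{L}$ can be made smaller than the threshold $\epsilon_{1}(p,n)$ of Lemma \ref{new_A3}. By the pointwise bounds \eqref{operator_bounds}, the coefficients $\vec{A}_{1}$ and $\vec{V}_{1}$ are controlled by $e^{\lambda}$, and the area assumption in \eqref{eq:AssumEpsReg} combined with rescaling provides the needed $\mathrm{L}^{p}$ smallness of $e^{\lambda}$. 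The localized field $\eta\vec{H}$ then solves a Dirichlet problem of the form $\mathscr{L}(\eta\vec{H})=\widetilde F$ with $\widetilde F\in \mathrm{L}^{1}$, where $\widetilde F$ collects $F$, the commutator terms with $\eta$, and the curvature contributions in \eqref{eqH}. Lemma \ref{new_A3} produces a solution $\vec{u}\in\mathrm{W}^{1,(2,\infty)}_{0}$, and Lemma \ref{A82} identifies $\eta\vec{H}=\vec{u}$, giving $\nabla\vec{H}\in\mathrm{L}^{2,\infty}_{\mathrm{loc}}$ and hence $\vec{H}\in\mathrm{L}^{q}_{\mathrm{loc}}$ for every $q<\infty$ by Sobolev embedding.

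From this point the bootstrap is standard. The Liouville-type identity $\Delta\phi=2e^{2\lambda}\vec{H}+\mathrm{l.o.t.}$ and Calder\'on--Zygmund estimates yield $\phi\in\mathrm{W}^{2,q}_{\mathrm{loc}}$ for every $q<\infty$, hence $\vec{n}\in\mathrm{W}^{1,q}_{\mathrm{loc}}\hookrightarrow C^{0,\alpha}_{\mathrm{loc}}$. With this improved control on $\phi$ and $\vec{n}$, the coefficients and right-hand side of the fourth-order Euler--Lagrange equation \eqref{el1} become successively more regular, and a classical iteration using Schauder estimates produces $\phi\in C^{\infty}_{\mathrm{loc}}$. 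The quantitative bound $\|\nabla^{k}\vec{n}\|_{\mathrm{L}^{\infty}(B(0,1/2))}^{2}\leq C_{k}\int_{B(0,1)}|\nabla\vec{n}|^{2}\,dx$ then follows by a standard scaling and compactness contradiction argument, or equivalently by tracking constants through the iteration.

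The principal obstacle, with respect to the flat-ambient argument of \cite{riviere1}, is to handle the curvature-induced lower-order terms that now contaminate both the operator $\mathscr{L}$ and the right-hand side of \eqref{eqH}: these are of size $e^{\lambda}$ or $e^{2\lambda}$, which is exactly why the area-smallness in \eqref{eq:AssumEpsReg} is indispensable and not merely the smallness of $\int|\nabla\vec{n}|^{2}$. A more structural difficulty, already overcome by Lemmas \ref{A81} and \ref{A82}, is that $\mathscr{L}$ is not self-adjoint; one must first isolate its self-adjoint critical part $\mathscr{L}_{0}$ via the specific correction $\vec{V}_{2}=-\tfrac{1}{2}\nabla\vec{V}_{0}$ chosen in \eqref{selfadjoint}, establish Rivière-type uniqueness for $\mathscr{L}_{0}$, and then transfer this uniqueness to $\mathscr{L}$ by treating the remaining $\mathrm{L}^{\infty}$ coefficients as a controlled perturbation.
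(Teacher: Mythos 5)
Your overall strategy (conservative form \eqref{eqH}, the operator $\mathscr{L}$, solvability plus uniqueness, then bootstrap) is the same as the paper's, but as written it has two concrete gaps. First, you skip what the paper treats as Step 1: before any use of the uniqueness lemmas one must upgrade $\D\n$ from $\mathrm{L}^{2}$ to $\mathrm{L}^{2,1}_{\mathrm{loc}}$ (equivalently $\phi\in \mathrm{W}^{2,(2,1)}_{\mathrm{loc}}$). This is done in the paper by first producing the potentials $\vec{L}, S,\vec{R}$ from the conservation laws of \cite{mondinoriviere}, running Wente-type and Coifman--Lions--Meyer--Semmes estimates on the Jacobian system for $\Re(S),\Re(\vec{R})$, and then reading off $e^{\lambda}\H\in \mathrm{L}^{2,1}_{\mathrm{loc}}$ from \eqref{localHk} and Calder\'on--Zygmund for $\Delta\phi=e^{2\lambda}\H$. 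The reason this cannot be bypassed is that the uniqueness statements you invoke (Lemma \ref{A82}, resting on Lemma \ref{A81} and the eigenvector regularity Lemma \ref{pre_unique}) require $\n_1\in \mathrm{W}^{1,(2,1)}$, not merely $\mathrm{W}^{1,2}$; without the $\mathrm{L}^{2,1}$ control on $\D\n$ your identification $\eta\H=\vec{u}$ is not justified, and the whole step "Lemma \ref{new_A3} produces a solution, Lemma \ref{A82} identifies it" does not close.

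Second, your claim that the area bound in \eqref{eq:AssumEpsReg} ``combined with rescaling provides the needed $\mathrm{L}^{p}$ smallness of $e^{\lambda}$'' for the coefficients $\vec{A}_1,\vec{V}_1$ is exactly the route the paper flags and avoids: since $|\vec{A}_1|+|\vec{V}_1|\leq C e^{\lambda}$, one only gets
\begin{align*}
\int_{B(0,1)}\left(|\vec{A}_1|^p+|\vec{V}_1|^p\right)dx\leq C^p e^{(p-2)\np{\lambda}{\infty}{B(0,1)}}\,\mathrm{Area}(\phi(B(0,1)))\, ,
\end{align*}
so the smallness threshold of Lemmas \ref{new_A1}/\ref{new_A3}/\ref{A82} (and hence $\epsilon_0$ and the $C_k$) would depend on $\np{\lambda}{\infty}{B(0,1)}$, i.e.\ on the Lipschitz constant of $\phi$; rescaling does not remove this, since without an $\mathrm{L}^{\infty}$ bound on $\lambda$ the area hypothesis gives no $\mathrm{L}^{p}$ information for $p>2$. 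This would weaken the theorem precisely where its uniformity is used later (neck and bubble regions, where $\lambda_k$ is unbounded). The paper circumvents it by keeping only the critical self-adjoint part $\mathscr{L}_0$ on the left, moving $\dive\left(\eta\left(\vec{A}_1\cdot\H+\star(\vec{V}_1\wedge\H)\right)\right)$ to the right-hand side as an $\mathrm{H}^{-1}$ datum controlled by $\np{\H}{2}{B(0,1)}$, and invoking Lemmas \ref{new_A2}, \ref{new_A4} and \ref{A81} instead of \ref{new_A1}, \ref{new_A3}, \ref{A82}. Your remaining bootstrap sketch and the structural remarks on self-adjointness are consistent with the paper.
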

     \begin{proof}  
     \textbf{Step 1}. Proof that $\phi\in \mathrm{W}^{2,(2,1)}_{\mathrm{loc}}(B(0,1))$.
\\       Using \cite[Lemma $6.1$ and $6.2$]{mondinoriviere}, we deduce that there exists $\vec{L}\in \mathrm{L}^{2,\infty}(B(0,1),\C^n)$ such that $\D\Im(\vec{L})\in \mathrm{W}^{1,(2,\infty)}(B(0,1))$ and 
        \begin{align*}
        	\left\{\begin{alignedat}{2}
        		\D_z\vec{L}&=\vec{Y}=i\left(\D_z\H-3\,\D_z^{\perp}\H-i\star_h(\D_z\n\wedge \H)\right)\qquad&&\text{in}\;\, B(0,1)\\
        		\Im(\vec{L})&=0\qquad&&\text{on}\;\,\partial B(0,1). 
        	\end{alignedat}\right.
        \end{align*}
        Likewise, there exists $S\in \mathrm{W}^{1,(2,\infty)}(B(0,1),\C)$ such that $\Im(S)\in \mathrm{W}^{2,q}(B(0,1),\R)$ for all $q<2$ such that 
        \begin{align*}
        	\left \{\begin{alignedat}{2}
        		\p{z}S&=\s{\p{z}\phi}{\bar{\vec{L}}}\qquad&&\text{in}\;\, B(0,1)\\
        		\Im(S)&=0\qquad&&\text{on}\;\,\partial B(0,1)
        \end{alignedat}\right.
        \end{align*}
        and $\vec{R}\in \mathrm{W}^{1,(2,\infty)}(B(0,1),\Lambda^2\C^n)$ such that $\Im(\vec{R})\in \mathrm{W}^{2,q}(B(0,1),\Lambda^2\R^n)$ for all $q<2$ and 
        \begin{align*}
        	\left\{\begin{alignedat}{2}
             	\D_z\vec{R}&=\p{z}\phi\wedge\bar{\vec{L}}-2i\,\p{z}\phi\wedge\H\qquad&&\text{in}\;\,B(0,1)\\
            	\vec{R}&=0\qquad&&\text{on}\;\,\partial B(0,1). 
            \end{alignedat}\right.
        \end{align*}
        Furthermore, using  \cite[Proposition $6.1$]{mondinoriviere}, we deduce that 
        \begin{align*}
        	\left\{\begin{alignedat}{2}
        	\Delta\Re(S)&=\s{\D(\star_h\n)}{\D^{\perp}\vec{R}}+F\qquad\text{in}\;\, B(0,1)\\
        	\Delta\Re(\vec{R})&=(-1)^m\star_h\left(\D\n\res \D^{\perp}\Re(\vec{R})\right)-\star_h\s{\D\n}{\D^{\perp}\Re(S)}+\vec{G}\qquad\text{in}\;\, B(0,1),
        	\end{alignedat}\right. 
        \end{align*}
        where $F\in \mathrm{L}^p(B(0,1))$ for all $p<2$ and $\vec{G}\in \mathrm{L}^p(B(0,1),\Lambda^2\R^n)$ for all $p<2$. 
        Therefore, we can perform a decomposition $\Re(S)=u+v+w$, where 
        \begin{align*}
        	\left\{\begin{alignedat}{2}
        		\Delta u&=\s{\D\left(\star_ h\n\right)}{\D^{\perp}\vec{R}}\qquad&&\text{in}\;\, B(0,1)\\
        		u&=0\qquad&&\text{on}\;\,\partial B(0,1)\, 
        \end{alignedat}\right. \qquad \text{and} \qquad 
        	\left\{\begin{alignedat}{2}
        		\Delta v&=F\qquad&&\text{in}\;\, B(0,1)\\
        		v&=0\qquad&&\text{on}\;\,\partial B(0,1).
        	\end{alignedat}\right.
        \end{align*}
        Since $\vec{S},\vec{R}\in \mathrm{W}^{1,(2,\infty)}(B(0,1))$ and $\n\in \mathrm{W}^{1,2}(B(0,1))$, by the $\mathrm{L}^2$-$\mathrm{L}^{2,\infty}$ Wente estimate (\cite[Théorème $3.4.5$]{helein}), we deduce that $\D u\in \mathrm{L}^2(B(0,1))$ and that there exists a universal constant $C_1>0$ such that
        \begin{align*}
        	\np{\D u}{2}{B(0,1)}\leq C_1\np{\D\vec{R}}{2,\infty}{B(0,1)}\np{\D\n}{2}{B(0,1)}.
        \end{align*}
        By Calder\'{o}n-Zygmund estimates, we get that $v\in \mathrm{W}^{2,q}(B(0,1))$ for all $q<2$. Also, since $w$ is harmonic, we obtain (see \cite{quanta} and \cite{quantamoduli}, or  \cite[Lemma $2.2$ and $2.3$] {pointwise}) that  for all $r<1$ there exists $C_{r}<\infty$ such that
        \begin{align*}
        	\np{\D w}{2,1}{B(0,r)}+\np{\D^2w}{1}{B(0,r)}\leq C_{r}\np{\D w}{2,\infty}{B(0,1)}.
        \end{align*}
 We deduce that $\Re(S)\in \mathrm{W}^{1,2}(B(0,r))$ for all $r<1$, and a similar decomposition also yields that $\Re(\vec{R})\in \mathrm{W}^{1,2}(B(0,r))$ for all $r<1$. Using the Coifman-Lions-Meyer-Semmes compensation compactness result \cite{meyer}, we deduce that $\vec{R},\vec{S}\in \mathrm{W}^{2,1}(B(0,r))$ for all $r<1$ which implies in particular that $\vec{R},\vec{S}\in \mathrm{W}^{1,(2,1)}\cap C^0(B(0,r))$ for all $r<1$.
 
 By \cite[Identity (6.52)]{mondinoriviere}, we have
        \begin{align}\label{localHk}
        	e^{2\lambda}\H=-\Im\left(\D_z\vec{R}\res \p{\z}\phi\right)-\frac{1}{2}e^{2\lambda}\Im(\vec{L})-\Re\left(i\p{\z}\phi\,\p{z}S\right)+\Re\left(\s{\p{z}\phi}{\Im(\vec{L})}\p{\z}\phi\right)\,.
        \end{align}
        Using that $\Im(\vec{L})\in \mathrm{L}^p(B(0,1))$ for all $p<\infty$,  we deduce that 
        \begin{align*}
        	e^{\lambda}\H\in \mathrm{L}^{2,1}(B(0,r))\qquad\text{for all}\;\,r<1. 
        \end{align*}
        Recalling that $\Delta\phi=e^{2\lambda}\H$ and using that $\lambda\in \mathrm{L}^{\infty}(B(0,1))$, by Calder\'{o}n-Zygmund estimates  we deduce that $\phi\in \mathrm{W}^{2,(2,1)}(B(0,r))$ for all $r<1$, which implies in particular that $\D\n\in \mathrm{L}^{2,1}(B(0,1))$.

       \textbf{Step 2}.  Proof that  $\H\in \mathrm{W}^{1,(2,\infty)}(B(0,\frac{1}{2}))$.
       \\        To simplify notations, write $\n=\n_1$ and $\n=\n_1\wedge\n_2$, where $\n_2=\n_{\iota}\circ\phi$. Since $M^m$ is a smooth compact submanifold of $\R^n$, we have that $\n_{\iota}\in C^{\infty}(M^m,T\R^n)$ which implies that $\n_2\in \mathrm{W}^{1,\infty}(B(0,1),\Lambda^{n-m}\R^n)$, since $\phi$ is Lipschitz. 
        Next we will apply the lemmas above, exploiting the assumption \eqref{eq:AssumEpsReg}.
\\   By \eqref{operator_bounds}, we easily check that the assumptions on $\vec{A}_1$ and $\vec{V}_1$ are satisfided in Lemmas \ref{new_A1}, \ref{new_A3} and \ref{A82} for some $p>2$ small enough. Indeed, we have $|\vec{A}_1|+|\vec{V}_1|\leq C_2e^{\lambda}$, which implies that 
        \begin{align*}
        	\int_{B(0,1)}\left(|\vec{A}_1|^p+|\vec{V}_1|^p\right)dx&\leq 2^pC_2^pe^{(p-2)\np{\lambda}{\infty}{B(0,1)}}\int_{B(0,1)}e^{2\lambda}dx=C_2^pe^{(p-2)\np{\lambda}{\infty}{B(0,1)}}\mathrm{Area}(\phi(B(0,1)))\\
        	&\leq 4C_2^2(1+C_2)\epsilon_0
        \end{align*}
        for $p>2$ small enough. However, that would make 
        $\epsilon_0$ and $C_{k}$ depend also on $\np{\lambda}{\infty}{B(0,1)}$.  To circumvent that, in the argument below we will instead use the Lemmas \ref{new_A2}, \ref{new_A4} and \ref{A81}.
        By \eqref{eqH}, we have
        \begin{align*}
        	&\dive\left(\D\H-3\,\pi_{\n}(\D\H)+\star_h\left(\D^{\perp}\n\wedge\H\right)+\vec{A}_0\cdot\H+\star_h(\vec{V}\wedge\H)\right)\nonumber\\
        	&=-2e^{2\lambda}\left(\mathscr{R}_1^{\perp}(\H)-2\,\tilde{K}_h\,\H+2\,\mathscr{R}_2(d\phi)+(DR)(d\phi)-8\,\Re\left(\s{R(\e_{\z},\e_z)\e_z}{\H}\e_{\z}\right)\right)\nonumber\\
        	&\quad -\vec{A}_1\cdot \H-{3}\,\vec{A}_2(\n)\cdot\H-2\,\vec{A}_3\cdot\H-\vec{A}_4\cdot\star_h\left(\D^{\perp}\n\wedge\H\right)-\vec{A}_5\cdot \star_h\left(\vec{W}\wedge\H\right)-B(\H,\H),
        \end{align*}
        which can be rewritten as 
        \begin{align*}
        	&\mathscr{L}(\H)=\Delta\vec{H}+\dive\left(-3\,\pi_{\n}(\D\vec{H})+\star \left(\D^{\perp}\n_1\wedge\n_2\wedge\vec{H}\right)+\vec{A}_1\cdot\vec{H}+\star \left(\vec{V}_1\wedge \vec{H}\right) \right)\\
        	&-\frac{1}{2}\star\left(\D^{\perp}\n_1\wedge \D\n_2\wedge \vec{H}\right)\\
        	&=-2e^{2\lambda}\left(\mathscr{R}_1^{\perp}(\H)-2\,\tilde{K}_h\,\H+2\,\mathscr{R}_2(d\phi)+(DR)(d\phi)-8\,\Re\left(\s{R(\e_{\z},\e_z)\e_z}{\H}\e_{\z}\right)\right)\nonumber\\
        	&\quad -\vec{A}_1\cdot \H-{3}\,\vec{A}_2(\n_1)\cdot\H-2\,\vec{A}_3\cdot\H-\vec{A}_4\cdot\star\left(\D^{\perp}\n_1\wedge\n_2\wedge\H\right)-\vec{A}_5\cdot \star\left(\tilde{\vec{W}}\wedge\H\right)-B(\H,\H)\\
        	&\quad -\frac{1}{2}\left(\D^{\perp}\n_1\wedge \D\n_2\wedge \vec{H}\right)\\
	&=\vec{f}.
        \end{align*}
        Notice that by hypothesis, we have $\vec{f}\in \mathrm{L}^1(B(0,1))$. 
        
        Since $\mathscr{L}(\H)=\vec{f}$, if $\eta$ is a smooth cut-off function 
        supported on $B(0,1)$ such that $\eta=1$ on $B(0,\frac{1}{2})$, we deduce that the following identity holds in the distributional sense
        \begin{align*}
        	\dive\left(\pi_{\n}\left(\D\left(\eta\H\right)\right)\right)&=\dive\left(\eta\pi_{\n}(\D\H)+\D\eta\H\right)=\eta\,\dive\left(\pi_{\n}(\D\H)\right)+\D\eta\cdot\pi_{\n}(\D\H)+\dive\left(\D\eta\H\right)\\
        	&=\eta\dive\left(\pi_{\n}(\D\H)\right)+2\,\dive\left(\D\eta\H\right)-\Delta\,\eta\H-\D\eta\cdot\D\pi_{\n}(\H),
        \end{align*}
        thus
        \begin{align*}
        	\Delta(\eta\H)=\eta\Delta\H+2\s{\D\eta}{\D\H}+\Delta\eta\,\H=\eta\Delta\H+2\,\dive\left(\D\eta\H\right)-\Delta\eta\H\, .
        \end{align*}
        Therefore, we have
        \begin{align*}
        	\mathscr{L}(\eta\H)&=\eta\vec{f}_1+2\,\dive\left(\D\eta\H\right)-\Delta\eta\,\H-3\left(2\,\dive\left(\D\eta\H\right)-\Delta\eta\H-\D\eta\,\cdot\D\pi_{\n}(\H)\right)\\
        	&\qquad +\s{\D\eta}{\star(\D^{\perp}\n_1\wedge\n_2\wedge\H)}+\s{\D\eta}{\vec{A}_1\cdot\H}+\s{\D\eta}{\star(\vec{V}_1\wedge\H)}\\
        	&=\eta\vec{f}+2\,\Delta\eta\,\H+3\D\eta\cdot\D\pi_{\n}(\H)+\s{\D\eta}{\star(\D^{\perp}\n_1\wedge\n_2\wedge\H)}+\s{\D\eta}{\vec{A}_1\cdot\H}+\s{\D\eta}{\star(\vec{V}_1\wedge\H)}\\
        	&\qquad -4\,\dive\left(\D\eta\H\right).
        \end{align*}
        Finally, we have
        \begin{align*}
        	&\mathscr{L}_0(\eta\H)=\mathscr{L}(\H)-\dive\left(\eta\left(\vec{A}_1\cdot\H+\star(\vec{V}_1\wedge\H)\right) \right)\\
        	&=\eta\vec{f}+2\,\Delta\eta\,\H+3\D\eta\cdot\D\pi_{\n}(\H)+\s{\D\eta}{\star(\D^{\perp}\n_1\wedge\n_2\wedge\H)}+\s{\D\eta}{\vec{A}_1\cdot\H}+\s{\D\eta}{\star(\vec{V}_1\wedge\H)}\\
        	&\quad -4\,\dive\left(\D\eta\H\right)-\dive\left(\eta\left(\vec{A}_1\cdot\H+\star(\vec{V}_1\wedge\H)\right) \right)\,.
        \end{align*}
        Note that
        \begin{align*}
        	\vec{f}_1=\eta\vec{f}+2\,\Delta\eta\,\H+3\D\eta\cdot\D\pi_{\n}(\H)+\s{\D\eta}{\star(\D^{\perp}\n_1\wedge\n_2\wedge\H)}+\s{\D\eta}{\vec{A}_1\cdot\H}+\s{\D\eta}{\star(\vec{V}_1\wedge\H)}
	\end{align*}
        satisfies $\vec{f}_1\in \mathrm{L}^1(B(0,1))$ with the estimate
        \begin{align}\label{est_f1}
        	\np{\vec{f}_1}{1}{B(0,1)}\leq C_3^2\int_{B(0,1)}|\D\n||\H|dx\,.
        \end{align} 
        Analogously, 
        \begin{align*}
        	\vec{f}_2=-4\,\dive\left(\D\eta\H\right)-\dive\left(\eta\left(\vec{A}_1\cdot\H+\star(\vec{V}_1\wedge\H)\right) \right)\in \mathrm{H}^{-1}(B(0,1))
        \end{align*}
         can be estimated by
        \begin{align}\label{est_f2}
        	\hs{\vec{f}_2}{-1}{B(0,1)}\leq C_3\left(\int_{B(0,1)}|\H|^2\right)^{\frac{1}{2}}.
        \end{align}
        Applying Lemma \ref{new_A2} and \ref{new_A4} (respectively with $\vec{f}=\vec{f}_1$ and $\vec{f}=\vec{f}_2$), we obtain that there exist $\vec{u}_1,\vec{u}_2\in \mathrm{W}^{1,(2,\infty
        	)}_0(B(0,1))$ such that $\mathscr{L}\vec{u}_1=\vec{f}_1$ and $\mathscr{L}\vec{u}_2=\vec{f}_2$ on $B(0,1)$ and satisfying the estimates  (thanks to \eqref{est_f1} and \eqref{est_f2})
        \begin{align*}
        	\np{\D\vec{u}_1}{2,\infty}{B(0,1)}+\np{\D\vec{u}_2}{2,\infty}{B(0,1)}&\leq C_0C_3\left(\int_{B(0,1)}|\D\n||\H|dx+\left(\int_{B(0,1)}|\H|^2\right)^{\frac{1}{2}}\right)\\
        	&\leq C_0C_3(1+\epsilon_0)\left(\int_{B(0,1)}|\H|^2\right)^{\frac{1}{2}}.
        \end{align*}
        Invoking the uniqueness result of Lemma \ref{A81}, we deduce that $\eta\H=\vec{u}_1+\vec{u}_2\in \mathrm{W}^{1,(2,\infty)}_0(B(0,1))$
        and that
        \begin{align*}
        	\np{\D(\eta\H)}{2,\infty}{B(0,1)}\leq C\left(\int_{B(0,1)}|\H|^2dx\right)^{\frac{1}{2}}.
        \end{align*} 
        We conclude that $\H\in \mathrm{W}^{1,(2,\infty)}(B(0,\frac{1}{2}))$.  Now, one can complete the proof by following verbatim the arguments in the Euclidean $\varepsilon$-regularity theorem \cite{riviere1}.
     \end{proof}

     \section{  $\mathrm{L}^{2,\infty}$ quantization of  energy} 

  The goal of this section is to prove the following result, establishing the $\mathrm{L}^{2,\infty}$ quantization of energy. 
    
          \begin{theorem}\label{L2infty_necks}
        Let $(M^m,h)$ be a closed $m$-dimensional Riemannian manifold. 
     	There exists $\epsilon_0>0$ with the following property. Let $\ens{r_k}_{k\in\N},\ens{R_k}_{k\in\N}\subset (0,\infty)$ be such that $\limsup_{k\rightarrow \infty}R_k\in (0,\infty)$, $r_k\conv{k\rightarrow \infty}0$ and set $\Omega_k(\alpha)=B_{\alpha R_k}\setminus\bar{B}_{\alpha^{-1}r_k}(0)$ for all $0<\alpha\leq 1$.  Let $\ens{\phi_k}_{k\in\N}:B(0,R_k)\rightarrow (M^m,h)$ be a sequence of Willmore immersions such that
     	\begin{align*}
     		\Lambda=\sup_{k\in\N}\left(\np{\D\lambda_k}{2,\infty}{B(0,R_k)}+\int_{B(0,R_k)}|\D\n_k|^2dx+\mathrm{Area}(\phi_k(B(0,R_k)))\right)<\infty\, ,
     	\end{align*}
        and 
        \begin{align*}
        	\sup_{r_k<s<\frac{R_k}{2}}\int_{B_{2s}\setminus\bar{B}_s(0)}|\D\n_k|^2dx\leq \epsilon_0.
        \end{align*}
        Then 
        \begin{align*}
        	\limsup_{\alpha\rightarrow 0}\limsup_{k\rightarrow \infty}\np{|x|\D\n_k(x)}{\infty}{\Omega_k(\alpha)}=0.
        \end{align*}
        In particular, it holds
        \begin{align*}
        	\limsup_{\alpha\rightarrow 0}\limsup_{k\rightarrow \infty}\np{\D\n_k}{2,\infty}{\Omega_k(\alpha)}=0. 
        \end{align*}
     \end{theorem}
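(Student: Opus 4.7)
The plan is to argue by a concentration-compactness (blow-up) argument. Suppose, by contradiction, that the first conclusion fails: then there exist $\varepsilon>0$, a subsequence (still denoted by $k$), a sequence $\alpha_{k}\downarrow 0$, and points $z_{k}\in \Omega_{k}(\alpha_{k})$ such that $|z_{k}|\,|\D\n_{k}(z_{k})|\geq \varepsilon$. Since $z_{k}\in\Omega_{k}(\alpha_{k})$ with $\alpha_{k}\to 0$, both $|z_{k}|/r_{k}\to\infty$ and $R_{k}/|z_{k}|\to\infty$, so that the rescaled annuli $D_{k}:=\{w\in\C: z_{k}+|z_{k}|w\in\Omega_{k}(\alpha_{k}/2)\}$ exhaust $\C\setminus\{0\}$ as $k\to\infty$. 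The first step is to introduce the rescaled conformal Willmore immersions $\tilde\phi_{k}(w):=\phi_{k}(z_{k}+|z_{k}|w)$, whose Gauss map $\tilde\n_{k}(w)=\n_{k}(z_{k}+|z_{k}|w)$ satisfies by construction $|\D\tilde\n_{k}(0)|=|z_{k}|\,|\D\n_{k}(z_{k})|\geq \varepsilon$, while the hypothesis $\sup_{s}\int_{B_{2s}\setminus\bar B_{s}}|\D\n_{k}|^{2}dx\leq \varepsilon_{0}$ transfers to a uniform small-energy bound on every dyadic annulus in $D_{k}$.

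\textbf{Bubble extraction.} The plan is then to apply the $\varepsilon$-regularity Theorem \ref{eps_reg}: depending on whether the image of $\tilde\phi_{k}$ has positive limiting diameter in $M^{m}$, one works either directly in $(M^{m},h)$, or in the exponential chart at the limiting base-point $\lim_{k}\phi_{k}(z_{k})$ after rescaling the target, or after a further M\"obius inversion of the rescaled Euclidean target. Any of these options yields uniform $C^{\ell}_{\mathrm{loc}}$ bounds on $\tilde\phi_{k}$ away from a finite set of further concentration points inside $D_{k}$. Along a subsequence, $\tilde\phi_{k}$ then converges in $C^{\ell}_{\mathrm{loc}}$ of $\C$ minus finitely many points to a (possibly branched) smooth Willmore immersion $\tilde\phi_{\infty}$, with $|\D\tilde\n_{\infty}(0)|\geq \varepsilon$, hence non-constant. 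By the removable singularity theorem, $\tilde\phi_{\infty}$ extends to a branched Willmore sphere of one of the three types catalogued in the Remark following Theorem \ref{thm:MainThm}. The existence of this non-trivial bubble at the scale $|z_{k}|$ centred at $z_{k}$ contradicts the fact that $z_{k}$ lies in the neck region $\Omega_{k}(\alpha_{k})$: the bubble-neck decomposition \cite[Proposition III.$1$]{quanta} is maximal in the sense that all bubble concentration points $x_{k}^{i,j}$ and scales $r_{k}^{i,j}$ have already been extracted, so no further bubble can be detected deep inside a neck. This contradiction will yield the first conclusion.

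\textbf{$\mathrm{L}^{2,\infty}$ consequence and main obstacle.} Once the pointwise statement $|x|\,|\D\n_{k}(x)|\to 0$ uniformly in $\Omega_{k}(\alpha)$ is in hand, the $\mathrm{L}^{2,\infty}$ statement follows from the elementary bound $\np{1/|x|}{2,\infty}{B(0,R)}\leq \sqrt{\pi}$: if $|\D\n_{k}(x)|\leq \eta/|x|$ on $\Omega_{k}(\alpha)$ then $\np{\D\n_{k}}{2,\infty}{\Omega_{k}(\alpha)}\leq \sqrt{\pi}\,\eta$. The main obstacle will be to justify the trichotomy of blow-up regimes and to ensure that the rescaled limit is genuinely a smooth non-trivial Willmore immersion in each case: this requires tracking the behaviour of $\tilde\lambda_{k}(0)\sim (d_{k}+1)\log|z_{k}|$ via the Harnack inequality \eqref{new_harnarck}, controlling both the conformal factor and the position of $\phi_{k}(z_{k})\in M^{m}$ under rescaling, and invoking the $\mathrm{L}^{2,1}$ estimate of Theorem \ref{L21_necks} together with the $\varepsilon$-regularity of Theorem \ref{eps_reg} to promote the energy control into local smooth convergence of the rescaled Gauss maps $\tilde\n_{k}$.
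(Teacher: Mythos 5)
Your reduction of the second conclusion to the first is fine, and your blow-up set-up (contradiction points $z_k$ deep in the neck, rescaling at scale $|z_k|$, transfer of the dyadic small-energy hypothesis, use of the Harnack control on $\lambda_k-d_k\log|z|$ and of Theorems \ref{L21_necks} and \ref{eps_reg} to get $C^l_{\mathrm{loc}}$ convergence of the rescaled maps away from finitely many points, with $|\D\tilde\n_\infty|\geq\varepsilon$ somewhere) is essentially the same first half as the paper's proof. The gap is in the step that is supposed to produce the contradiction. You conclude that the limit is a non-trivial (branched) Willmore bubble and then invoke ``maximality'' of the bubble-neck decomposition of \cite[Proposition III.$1$]{quanta} to say that no further bubble can appear inside a neck. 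This is not a valid argument, and in fact it is circular: the decomposition extracts bubbles precisely where the energy on some dyadic annulus exceeds the threshold $\epsilon_0$, whereas your blow-up limit inherits, by scaling invariance, energy at most $\epsilon_0$ on \emph{every} dyadic annulus. So it is never a ``bubble'' that the decomposition should have detected, and nothing in \cite{quanta} forbids a non-flat limiting Willmore immersion of $\C$ minus a point with small dyadic energy. Ruling out exactly this kind of non-trivial neck limit is the content of the no-neck-energy property you are trying to prove, so it cannot be quoted as an input.

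What is missing is a Liouville-type rigidity argument for the neck blow-up limit, and this is where the paper's proof does real work. Since $z_k$ lies in the neck, $e^{\lambda_k(z_k)}|z_k|\sim|z_k|^{1+d_k}\to 0$ (recall $d_k\to d>-1$), so after the rescaling $\vec\Psi_k(w)=e^{-\lambda_k(z_k)-\log|z_k|}\left(\phi_k(|z_k|w)-\phi_k(z_k)\right)$ the image always shrinks to a point in $M^m$ and the limit lives in flat $\R^n$; there is no trichotomy of regimes here (that trichotomy belongs to the bubble analysis in the proof of Theorem \ref{thm:MainThm}, not to this neck statement). One then rescales the conservation-law quantities $\vec L_k$, $S_k$, $\vec R_k$ of Theorem \ref{thm:L2infLW} and Section \ref{last_est}, checks that all the Christoffel/curvature perturbations carry the factor $|z_k|^{1+d_k}$ and hence vanish in the limit, that $\Im(\tilde S_\infty)$ and $\Im(\tilde{\vec R}_\infty)$ are constant, and that the limiting real parts solve the pure Jacobian system; the improved Wente estimate together with $\np{\D\tilde\n_\infty}{2,\infty}{\C}\leq C\sqrt{\epsilon_0}$ then forces $\tilde{\vec R}_\infty$ and $\tilde S_\infty$ to be constant, hence $\tilde{\H}_\infty=0$ and $\D\tilde\n_\infty=0$, contradicting the lower bound on the energy of the unit annulus furnished by $\epsilon$-regularity at $z_k$. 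Without some substitute for this rigidity step, your proposal does not close the contradiction.
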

     \begin{proof}
     	By the $\epsilon$-regularity Theorem \ref{eps_reg}, we deduce that 
     	\begin{align*}
     		|\D\n_k(z)|^2\leq \frac{C_1}{|z|^2}\int_{B_{2|z|}\setminus\bar{B}_{\frac{|z|}{2}}}|\D\n_k|^2dx\leq \frac{2C_1\epsilon_0}{|z|^2}.
     	\end{align*}
        In particular, we have 
        \begin{align*}
        	\np{\D\n_k}{2,\infty}{\Omega_k(\frac{1}{2})}\leq 4\sqrt{\pi C_1 \epsilon_0}.
        \end{align*}
     Now, assume by contradiction that there exists $\epsilon_1>0$, a sequence $\{\phi_k\}_{k\in\N}\subset C^{\infty}(B(0,R_k),M^m)$ of smooth Willmore immersions and a sequence $\ens{z_k}_{k\in\N}\in \Omega_k(\frac{1}{2})$ such that
        \begin{align*}
        	\log\left|\frac{|z_k|}{r_k}\right|\conv{k\rightarrow \infty}\infty\qquad\text{and}\qquad\log\left|\frac{R_k}{|z_k|}\right|\conv{k\rightarrow \infty}\infty\, , 
        \end{align*}
        and
        \begin{align}\label{contradiction_final}
        	|z_k||\D\n_k(z_k)|\geq \epsilon_1>0\, . 
        \end{align}
        Notice that,  in particular,  $z_k\conv{k\rightarrow \infty}0\in \C$.
        Therefore, 
        applying again the $\epsilon$-regularity Theorem \ref{eps_reg}, we deduce 
        \begin{align}\label{contradiction_corollary}
        	\int_{B_{2|z_k|}\setminus\bar{B}_{\frac{|z_k|}{2}}}|\D\n_k|^2dx\geq C_0^{-1}|z_k|^2|\D\n_k(z_k)|^2\geq C_0^{-1}\epsilon_1^2\, .
        \end{align}
        Using the previous result (Theorem \ref{thm:L2infLW} and the end of Section \ref{thm:L2infLW})
         and that $\displaystyle\limsup_{k\rightarrow \infty}R_k<\infty$, we get that there exist $\alpha_0>0$,  $\vec{L}_k\in \mathrm{L}^{2,\infty}_{\lambda_k}(B(0,\alpha_0R_k),\C^n)$, $S_k\in \mathrm{W}^{1,(2,\infty)}(B(0,\alpha_0R_k),\C)$ and $\vec{R}_k\in \mathrm{W}^{1,(2,\infty)}(B(0,\alpha_0R_k),\Lambda^2\C^n)$ such that
        \begin{align}\label{first_order}
        	\left\{\begin{alignedat}{2}
        		\D_z\vec{L}_k&=i\left(\D_z\H_k-3\,\pi_{\n_k}(\D_z\vec{H}_k)-i\star_h\left(\D_z\n_k\wedge\H_k\right)\right)\qquad&&\text{in}\;\, \Omega_k(\alpha_0)\\
        		\p{z}S_k&=\s{\p{z}\phi_k}{\bar{\vec{L}_k}}\qquad&&\text{in}\;\,\Omega_k(\alpha_0)\\
        		\D_z\vec{R}_k&=\p{z}\phi_k\wedge\bar{\vec{L}_k}-2i\,\p{z}\phi_k\wedge\H_k\qquad&&\text{in}\;\,\Omega_k(\alpha_0)
        	\end{alignedat} \right.
        \end{align}
        and satisfying the bounds:
        \begin{align*}
     &   	\np{e^{\lambda_k}\vec{L}_k}{2,\infty}{\Omega_k(\alpha_0)}+\np{\D S_k}{2,1}{\Omega_k(\alpha_0)}+\np{\D\vec{R}_k}{2,1}{\Omega_k(\alpha_0)}\leq C \\
     &   	\wp{\Im(\vec{L}_k)}{1,(2,\infty)}{\Omega_k(\alpha_0)}+\wp{\Im(S_k)}{2,q}{B(0,\alpha_0R_k)}+\wp{\Im(\vec{R}_k)}{2,q}{B(0,\alpha_0R_k)}\leq C_q, \; \text{ for all $q<\frac{2}{2-\epsilon}$.}
        \end{align*}
        Furthermore, $\Im(S_k)$ and $\Im(\vec{R}_k)$ solve the equations 
        \begin{align}\label{imaginary_system}
        	\left\{\begin{alignedat}{2}
        		\Delta \Im(S_k)&=-2e^{2\lambda_k}\s{\H_k}{\Im(\vec{L}_k)}\qquad&&\text{in}\;\, \Omega_k(\alpha_0)\\
        		\Delta\Im(\vec{R}_k)&=4\,\Im\left(\D_{\z}\left(\p{z}\phi_k\wedge \vec{L}_k-2i\,\p{z}\phi_k\wedge \H_k\right)\right) -4\,\Im\left(\p{\z}\left(F_k(\vec{R}_k)\right)\right)\\
        		&\qquad -4\,\Im\left(\bar{F_k}\left(\p{z}\phi_k\wedge \Im(\vec{L}_k)-2i\,\p{z}\phi_k\wedge \H_k\right)\right)
        		\qquad&&\text{in}\;\,\Omega_k(\alpha_0). 
        	\end{alignedat} \right.
        \end{align}
        Define the function $\vec{\Psi}_k:\Omega_k(\alpha_0|z_k|^{-1})\rightarrow \R^n$ by 
        \begin{align*}
        	\vec{\Psi}_k(w)=e^{-\lambda_k(z_k)-\log|z_k|}\left(\phi_k(|z_k|w)-\phi_k(z_k)\right). 
        \end{align*}
        A direct computation shows that
        \begin{align}\label{der1}
        	\p{w}\vec{\Psi}_k(w)=|z_k|e^{-\lambda_k(z_k)-\log|z_k|}\p{z}\phi_k(|z_k|w)=e^{-\lambda_k(z_k)}\p{z}\phi_k(|z_k|w).
        \end{align}
        Therefore, the conformal parameter $\mu_k$ of $\vec{\Psi}_k$ satisfies
        \begin{align*}
        	\mu_k(w)=\lambda_k(|z_k|w)-\lambda_k(z_k)\,.
        \end{align*}
     By the uniform Harnack inequality on the conformal parameters (see Theorem \ref{neckfine}),   there exists $C>0$ independent of $k\geq N$ such that
        \begin{align*}
        	\np{\D\left(\lambda_k-d_k\log|z|\right)}{2,1}{\Omega_k(\alpha_0)}+\np{\lambda_k-d_k\log|z|}{\infty}{\Omega_k(\alpha_0)}\leq C,
        \end{align*}
        where   $    	d_k\to d\in (-1,\infty)$, as $k\to \infty$. 
        In particular, it holds
        \begin{align*}
        	|\mu_k(w)|\leq \left|\lambda_k(|z_k|w)-d_k\log|z_kw|\right|+\left|d_k\log|z_k|-\lambda_k(z_k)\right|+|d_k\log|w||\leq (|d|+1)|\log|w||+2C,
        \end{align*}
        which is uniformly bounded on any compact 
        subset $K\subset\C\setminus\ens{0}$. Now, by \eqref{der1}, we deduce that
        \begin{align}\label{blow_up3}
        	\tilde{\n}_k(w)=\n_{\vec{\Psi}_k}(w)=\n_k(|z_k|w).
        \end{align}
        Then, we compute
        \begin{align*}
        	\p{w\w}^2\vec{\Psi}(w)=|z_k|e^{-\lambda_k(z_k)}\p{z\z}^2\phi_k(|z_k|w)=|z_k|e^{-\lambda_k(z_k)}\times 2e^{2\lambda_k(|z_k|w)}\H_k(|z_k|w)=2|z_k|e^{2\lambda_k(|z_k|w)-\lambda_k(z_k)}\H_k(|z_k|w),
        \end{align*}
        which implies that 
        \begin{align}\label{blow_up4}
        	\tilde{\H}_k(w)=\frac{1}{2}e^{-2\mu_k(w)}\p{w\w}^2\vec{\Psi}_k(w)=|z_k|e^{\lambda_k(z_k)}\H_k(|z_k|w). 
        \end{align} 
        We deduce that
        \begin{align*}
        	e^{\mu_k}\tilde{\H}_k(w)=|z_k|e^{\lambda_k(z_k)}\H_k(|z_k|w)\,, 
        \end{align*}  
        which shows that, after the linear change of variable $z=|z_k|w$, it holds
        \begin{align*}
        	\int_{\Omega_k(\alpha_0|z_k|^{-1})}e^{2\mu_k(w)}|\tilde{\H}_k(w)|^2|dw|^2&=\int_{\Omega_k(\alpha_0|z_k|^{-1})}|z_k|^2e^{2\lambda_k(|z_k|w)}|\H_k(|z_k|w)|^2|dw|^2\\
        	&=\int_{\Omega_k(\alpha_0)}e^{2\lambda_k(z)}|\H_k(z)|^2|dz|^2=\int_{\Omega_k(\alpha_0)}|\H_k|^2d\vg. 
        \end{align*}
        We deduce that
        \begin{align}\label{blow_up1}
        	\p{w}\tilde{\H}_k(w)=|z_k|^2e^{\lambda_k(z_k)}\p{z}\H_k(|z_k|w)
        \end{align}
        and that
        \begin{align}\label{blow_up2}
        	|z_k|^2e^{\lambda_k(z_k)}\Gamma_{l,q}^j(\phi_k(|z_k|w))\p{z}\phi_{k}(|z_k|w)^q\H_{k}(|z_k|w)^l=\Gamma_{l,q}^j(\phi_k(|z_k|w))|z_k|\p{z}\phi_{k}(|z_k|w)^q\tilde{\H}_k(w)^l.
        \end{align}
        Since $\ens{\mu_k}_{k\in\N}$ is uniformly bounded in $\mathrm{L}^{\infty}_{\mathrm{loc}}(\C\setminus\ens{0})$, we deduce by the $\epsilon$-regularity that $\ens{\tilde{\H}_k}_{k\in\N}$ is bounded in $\mathrm{L}^{\infty}_{\mathrm{loc}}(\C\setminus\ens{0})$. Furthermore, by the Harnack inequality \eqref{new_harnarck}, we deduce that
        \begin{align*}
        	e^{-A}|z_k|^{d_k+1}|w|^{d_k}\leq |z_k||\p{z}\phi_{k,q}(|z_k|w)|\leq e^{A}|z_k|^{d_k+1}|w|^{d_k}\,.
        \end{align*}
        Since $d_k\conv{k\rightarrow \infty}d>-1$, and $z_k\conv{k\rightarrow\infty}0$, while $\Gamma_{l,q}^j$ are bounded, we conclude that for all compact subset $K\subset\C\setminus\ens{0}$,  it holds
        \begin{align}\label{blow_up5}
        	\lim\limits_{k\rightarrow \infty}\np{|z_k|^2e^{\lambda_k(z_k)}\Gamma_{l,q}^j(\phi_k(|z_k|w))\p{z}\phi_{k}(|z_k|w)^q\H_{k}(|z_k|w)^l}{\infty}{K}=0.
        \end{align}
        Now, the previous scaling considerations prompt us to introduce $\tilde{\vec{L}}_k:\Omega_k(\alpha_0|z_k|^{-1})\rightarrow \C^n$ defined by 
        \begin{align*}
        	\tilde{\vec{L}}_k(w)=|z_k|e^{\lambda_k(z_k)}\vec{L}_k(|z_k|w).
        \end{align*}
        It is immediate to check that
        \begin{align*}
        	e^{\mu_k}\tilde{\vec{L}}_k(w)=|z_k|e^{\lambda_k(|z_k|w)}\vec{L}_k(|z_k|w)=|z_k|e^{\lambda_k(|z_k|w)}\vec{V}_k(|z_k|w)+|z_k|e^{\lambda_k(|z_k|w)}\vec{W}_k(|z_k|w),
        \end{align*}
        where $\vec{W}_k\in \mathrm{W}^{1,(2,\infty
        	)}(B(0,\alpha_0R_k
        ))$ and $\vec{V}_k$ satisfy for all  $z\in \Omega_k(\alpha_0)$ the estimate
        \begin{align*}
        	e^{\lambda_k(z)}|\vec{V}_k(z)|\leq \frac{C}{|z|}
	        \end{align*}
       for some constant $C>0$ independent of $k$. We deduce that
        \begin{align*}
        	|z_k|e^{\lambda_k(|z_k|w)}\left|\vec{V}_k(|z_k|w)\right|\leq |z_k|\times \frac{C}{||z_k|w|}= \frac{C}{|w|}\,.
        \end{align*}
        Also, by defining $\tilde{\vec{W}}_k:B(0,\alpha_0|z_k|^{-1}R_k)\rightarrow \C^n$ as
        \begin{align*}
        	\tilde{\vec{W}}_k(w)=\vec{W}_k(|z_k|w)\, ,
        \end{align*}
        we have
        \begin{align*}
        	\np{\D\tilde{\vec{W}}_k}{2,\infty}{B(0,\alpha_0|z_k|^{-1}R_k)}=\np{\D\vec{W}_k}{2,\infty}{B(0,\alpha_0R_k)}\leq C\, ,
        \end{align*}
        giving that $\vec{W}_k$ is bounded in $\mathrm{W}^{1,(2,\infty)}(B(0,\alpha_0|z_k|^{-1}R_k))$. Furthermore, by the Harnack inequality \eqref{new_harnarck}, we deduce that 
        \begin{align*}
        	|z_k|e^{\lambda_k(|z_k|w)}\leq e^{A}|z_k|^{d_k+1}|w|^{d_k}\conv{k\rightarrow \infty}0\qquad\text{in}\;\, \mathrm{L}^{\infty}_{\mathrm{loc}}(\C\setminus\ens{0}). 
        \end{align*}
        Therefore, we get that 
        \begin{align*}
        	&\np{|z_k|e^{\lambda_k(|z_k|\,\cdot\,)}\tilde{W}_k}{2,\infty}{\Omega_k(\alpha_0|z_k|^{-1})}\conv{k\rightarrow \infty}0\\
        	&|z_k|e^{\lambda_k(|z_k|\,\cdot\,)}\tilde{W}_k\conv{k\rightarrow\infty}0\qquad\text{in}\;\, \mathrm{L}^{p}_{\mathrm{loc}}(\C\setminus\ens{0})\;\,\text{for all}\;\,p<\infty. 
        \end{align*}
        Then, we have 
        \begin{align*}
        	\D\Im(\tilde{\vec{L}}_k)(w)=|z_k|^2e^{\lambda_k(z_k)}\D\vec{L}_k(|z_k|w)
        \end{align*}
        which implies that 
        \begin{align*}
        	\np{\D\Im(\tilde{\vec{L}}_k)}{2,\infty}{\Omega_k(\alpha_0)}&=|z_k|e^{\lambda_k(z_k)}\np{\D\Im(\vec{L}_k)}{2,\infty}{\Omega_k(\alpha_0)}\\
        	&\leq e^{A}|z_k|^{d_k+1}\np{\D\Im(\vec{L}_k)}{2,\infty}{\Omega_k(\alpha_0)}\conv{k\rightarrow\infty}0,
        \end{align*}
        where we used $|z_k|\conv{k\rightarrow \infty}0$ and $d_k+1\conv{k\rightarrow \infty}d+1>0$. Finally, we can manipulate the equation \eqref{first_order}  by using \eqref{blow_up1} and \eqref{blow_up2} to obtain:
        \begin{align}\label{blow_up6}
        	\p{w}\tilde{\vec{L}}_k-i\left(\p{w}\tilde{\H}_k-3\,\pi_{\tilde{\n}_k}(\p{w}\H_k)-i\star_h\left(\p{z}\tilde{\n}_k\wedge\tilde{\H}_k\right)\right)=\vec{Z}_k
        \end{align}
        where 
        \begin{align*}
        	\left(\vec{Z}_k\right)_j&=\sum_{l,q=1}^{n}|z_k|^2e^{\lambda_k(z_k)}\Gamma_{j,q}^l(\phi_k(|z_k|w))\p{z}\phi_{k,q}(|z_k|w)\bigg(\vec{L}_{k,l}(|z_k|w)\\
        	&-i\,\H_{k,l}(|z_k|w)+3i\,\pi_{\n_k(|z_k|w)}(\H_{k}(|z_k|w))_l-\star_h\left(\n_{k}(|z_k|w)\wedge\H_k(|z_k|w)\right)_l\bigg)\\
        	&=\sum_{l,q=1}^{n}\Gamma_{j,q}^l(\phi_k(|z_k|w))|z_k|\p{z}\phi_{k,q}(|z_k|w)\left(\tilde{\L}_{k,l}(w)-i\,\tilde{H}_{k,l}(w)+3i\,\pi_{\tilde{n}_k(w)}(\tilde{\H}_k(w))_l\right.\\
        	&\left.-\star\left(\tilde{\n_k}(w)\wedge\tilde{\H}_k(w)\right)_l\right). 
        \end{align*}
        By the $\epsilon$-regularity  Theorem \ref{eps_reg}, as in \eqref{blow_up5}, we get that for any compact subset $K\subset \C\setminus\ens{0}$, it holds
        \begin{align*}
        	\np{\Gamma_{l,q}^j(\phi_k(|z_k|w))|z_k|\p{z}\phi_{k}(|z_k|w)^q\bigg(-i\,\tilde{H}_{k}(w)^l+3i\,\pi_{\tilde{n}_k(w)}(\tilde{\H}_k(w))^l-\star\left(\tilde{\n_k}(w)\wedge\tilde{\H}_k(w)\right)^l\bigg)}{\infty}{K}=0.
        \end{align*}
        Furthermore, 
        \begin{align*}
        	|\vec{L}_{k}(w)|\leq \frac{C}{|w|^{d_k+1}}+f_k(w)\, ,
        \end{align*}
        where $f_k$ is bounded in $\mathrm{W}^{1,(2,\infty)}$ and in particular in $\mathrm{L}^{p}$ for all $p<\infty$. Therefore, we have
        \begin{align*}
        	|z_k|\, |\p{z}\phi_{k}(|z_k|w)|\, |\vec{L}_{k}(w)|\leq \frac{|z_k|^{d_k+1}}{|w|}+|z_k|^{d_k+1}|w|^{d_k}f_k(w)
        \end{align*}
        which implies that for any compact subset $K\subset \C\setminus\ens{0}$ and any $p<\infty$, it holds 
        \begin{align*}
        	\np{\Gamma_{l,q}^j(\phi_k(|z_k|w))|z_k|\p{z}\phi_{k}(|z_k|w)^q\tilde{\L}_{k}(w)^l}{p}{K}\conv{k\rightarrow\infty}0.
        \end{align*}
        Now define $\tilde{S}_k:\Omega_k(\alpha_0|z_k|^{-1})\rightarrow \C$ and $\tilde{\vec{R}}_k:\Omega_k(\alpha_0|z_k|^{-1})\rightarrow\Lambda^2\C^n$ by 
        \begin{align*}
        	\tilde{S}_k(w)=S_k(|z_k|w),\qquad\tilde{\vec{R}}_k(w)=\vec{R}_k(|z_k|w).
        \end{align*} 
        By scaling invariance, we have
        \begin{align*}
        	&\int_{\Omega_k(\alpha_0|z_k|^{-1})}|\D\n_{\vec{\Psi}_k}|^2dx=\int_{\Omega_k(\alpha_0)}|\D\n_k|^2dx\leq C\\
        	&\np{\D\tilde{S}_k}{2,1}{\Omega_k(\alpha_0|z_k|^{-1})}=\np{\D S_k}{2,1}{\Omega_k(\alpha_0)}\leq C\\
        	&\np{\D\tilde{\vec{R}}_k}{2,1}{\Omega_k(\alpha_0|z_k|^{-1})}=\np{\D \vec{R}_k}{2,1}{\Omega_k(\alpha_0)}\leq C.
        \end{align*}
        Therefore, by the $\epsilon$-regularity of Theorem \ref{eps_reg} to deduce that for any compact subset $K\subset \C\setminus\ens{0}$, and for any $l\in \N$, there exists $C_l(K)<\infty$ such that
        \begin{align*}
        	\np{\D^l\n_{\vec{\Psi}_k}}{\infty}{K}\leq C_l(K).
        \end{align*}
        Therefore, up to a subsequence, $\vec{\Psi}_k\conv{k\rightarrow \infty}\vec{\Psi}_{\infty}$ in $C^l_{\mathrm{loc}}(\C\setminus\ens{0})$ and by lower semi-continuity and conformal invariance of the Dirichlet energy (or $\mathrm{L}^{2,1}$ norm of the gradient), we deduce that 
        \begin{align*}
        	\np{e^{\mu_{\infty}}\tilde{\vec{L}}_{\infty}}{2,\infty}{\C}+\np{\D \tilde{S}_{\infty}}{2,1}{\C}+\np{\D\tilde{\vec{R}}_{\infty}}{2,1}{\C}\leq C.
        \end{align*}
        Furthermore, recalling that $\Im(S_k),\Im(\vec{R}_k)\in \mathrm{W}^{2,q}(B(0,\alpha_0R_k))$ for all $q<\frac{2}{2-\epsilon}$, and using that $\Im(\tilde{S}_k)(w)=\Im(\tilde{S}_k)(|z|_kw)$, we deduce that for all $q<\frac{2}{1-\epsilon}$ 
        \begin{align*}
        	&\int_{B(0,\alpha_0|z_k|^{-1}R_k)}\left(|\D^2\Im(\tilde{S}_k)|^{q}+|\D^2\Im(\tilde{\vec{R}}_k)|^q\right)dx\\
        	&=|z_k|^{2(q-1)}\int_{B(0,\alpha_0R_k)}\left(|\D^2\Im({S}_k)|^{q}+|\D^2\Im({R}_k)|^q\right)dx
        	\leq 2(C_q)^q|z_k|^{2(q-1)}\conv{k\rightarrow \infty}0, 
        \end{align*}
       It follows that $\D^2\Im(\tilde{S}_{\infty})=0$ and $\D^2\Im(\tilde{\vec{R}}_{\infty})=0$ and, since $\D\Im(\tilde{S}_{\infty})\in \mathrm{L}^{2,1}(\C)$ and $\D\Im(\tilde{\vec{R}}_{\infty})\in \mathrm{L}^{2,1}(\C)$, this implies that $\Im(\tilde{S}_{\infty})$ and $\Im(\tilde{\vec{R}}_{\infty})$ are constant. 
        Recalling that \eqref{first_order} can be rewritten (see \cite[Lemma $6.2$]{mondinoriviere}) as
        \begin{align*}
        	\left\{\begin{alignedat}{1}
        	\D_z\vec{R}_k&=(-1)^{m+1}\star_h\left(\n_k\antires i\,\D_z\vec{R}\right)+i\,\p{z}S_k\,\star_h\n_k\\
        	\p{z}\vec{S}_k&=-i\s{\D_z\vec{R}_k}{\star_h\n_k},
        	\end{alignedat}\right.
        \end{align*}
        an expansion similar to the one made in \eqref{blow_up6} shows that the system passes to the limit and yields 
        \begin{align*}
        	\left\{\begin{alignedat}{1}
        		\p{z}\tilde{\vec{R}}_{\infty}&=(-1)^{m+1}\star_h\left(\tilde{\n}_{\infty}\antires i\,\p{z}\tilde{\vec{R}}_{\infty}\right)+i\,\p{z}\tilde{S}_{\infty}\,\star_h\n_k\\
        		\p{z}\tilde{S}_{\infty}&=-i\s{\p{z}\tilde{\vec{R}}_{\infty}}{\star_h\tilde{\n}_{\infty}}\,.
        	\end{alignedat}\right.
        \end{align*}
        Since both $\tilde{\vec{R}}_{\infty}$ and $\tilde{S}_{\infty}$ are real, this system can be rewritten as 
        \begin{align*}
        	\left\{\begin{alignedat}{1} 
            	\D\tilde{\vec{R}}_{\infty}&=(-1)^m\star_h\left(\tilde{\n}_{\infty}\antires \D^{\perp}\tilde{\vec{R}}_{\infty}\right)-\D^{\perp}\tilde{S}_{\infty}\,\star_h\tilde{\n}_{\infty}\\
            	\D\tilde{S}_{\infty}&=\s{\D^{\perp}\tilde{\vec{R}}_{\infty}}{\star_h\tilde{\n}_{\infty}}. 
        	\end{alignedat}\right.
        \end{align*}
        We deduce that the following Jacobian system holds:
        \begin{align*}
        	\left\{\begin{alignedat}{1}
        		\Delta\tilde{\vec{R}}_{\infty}&=(-1)^m\star_h\left(\D\tilde{\n}_{\infty}\antires\D^{\perp}\tilde{\vec{R}}_{\infty}\right)-\D^{\perp}\tilde{S}_{\infty}\cdot \D \left(\star \tilde{\n}_{\infty}\right)\\
        		\Delta\tilde{S}_{\infty}&=\s{\D^{\perp}\tilde{\vec{R}}_{\infty}}{\D\left(\star_h\tilde{\n}_{\infty}\right)}.
        	\end{alignedat} \right.
        \end{align*}
        Using an improved Wente estimate as in \cite{quanta}, we deduce that 
        \begin{align*}
        	\np{\D\tilde{\vec{R}}_{\infty}}{2,1}{\C}+\np{\D\tilde{S}_{\infty}}{2,1}{\C}&\leq C\left(\np{\D\tilde{\vec{R}}_{\infty}}{2,1}{\C}+\np{\D\tilde{S}_{\infty}}{2,1}{\C}\right)\np{\D\tilde{n}_{\infty}}{2,\infty}{\C}\\
        	&\leq C\epsilon_0 \left(\np{\D\tilde{\vec{R}}_{\infty}}{2,1}{\C}+\np{\D\tilde{S}_{\infty}}{2,1}{\C}\right)\,.
        \end{align*}
        Therefore, taking $\epsilon_0=\frac{1}{2C}$, we get  that $\tilde{\vec{R}}_{\infty}$ and $\tilde{S}_{\infty}$ are constant. 
        Since $\Im(\vec{L}_{\infty})=0$, we have the identity 
        \begin{align*}
        	e^{2\mu_{\infty}}\tilde{\H}_{\infty}&=-\Im\left(\p{z}\tilde{\vec{R}}_{\infty}\res \p{\z}\vec{\Psi}_{\infty}\right)-\Re\left(\p{z}\tilde{S}_{\infty}(i\,\p{\z}\vec{\Psi}_{\infty})\right)
        	+\Re\left(\s{\p{z}\vec{\Psi}_{\infty}}{\Im(\tilde{L}_{\infty})}\p{\z}\vec{\Psi}_{\infty}\right)=0.
        \end{align*}
        We deduce that $\tilde{\H}_{\infty}=0$ which, by the exact same proof as in \cite{quanta},  yields that $\D\tilde{\n}_{\infty}=0$, contradicting the estimate 
        \begin{align*}
        	\int_{B_{2}\setminus\bar{B}_1(0)}|\D\tilde{n}_{\infty}|^2dx\geq C_0^{-1}\epsilon_1^2
        \end{align*}
        that passed to the limit thanks to the strong convergence on $\C\setminus\ens{0}$. 
        \end{proof}

       \section{Proof of the main Theorem \ref{thm:MainThm}}

       We are finally in position to prove the main result of the paper (Theorem \ref{thm:MainThm}), namely the quantization of the Willmore energy for Willmore spheres in Riemannian manifolds. The proof will combine all the main technical results proved in the paper: the $\mathrm{L}^{2,1}$ uniform bounds on the Willmore integrand in neck regions (Theorem \ref{L21_necks}), the $\epsilon$-regularity Theorem  \ref{eps_reg}, and the $\mathrm{L}^{2,\infty}$ quantization of energy (Theorem \ref{L2infty_necks}).       
        \begin{proof}[Proof of Theorem \ref{thm:MainThm}]
   Let us first consider the case when $\displaystyle\inf_{k\in\N}\textrm{Area}(\phi_{k}(S^{2}))>0$.

Thanks to the pre-compactness Theorem \ref{thm:compS2}, we know that there exist a sequence of Lipschitz diffeomorphisms $\{f_{k}\}$ of $S^{2}$ and a weak immersion $\vec{\xi}_{\infty}$ of $(S^{2} \setminus \{ a_1,\cdots,a_N\})$, possibly branched at the finitely many points $a_1,\cdots,a_N$,  into $(M^{m},h)$ such that 
            \begin{equation}\label{eq:weakconvergenceXik}
            \vec{\xi}_{k}= \phi_{k}\circ f_{k}  \rightharpoonup  \vec{\xi}_{\infty}\quad  \text{weakly in $\mathrm{W}^{2,2}_{\mathrm{loc}} (S^{2} \setminus \{ a_1,\cdots,a_N\})$ ,}
            \end{equation}
           Thanks to  the $\epsilon$-regularity Theorem \ref{eps_reg}, we  can improve the weak $\mathrm{W}^{2,2}_{\mathrm{loc}}$ convergence  to local smooth convergence:
            \begin{align}\label{eq:phiktophiinftysmooth}
             \vec{\xi}_{k}\conv{k\rightarrow \infty} \vec{\xi}_{\infty}\qquad\text{in}\;\, C^{l}_{\mathrm{loc}}(S^2\setminus\ens{a_1,\cdots,a_N})\,\;\text{for all}\;\,l\in \N.
            \end{align}
            Following verbatim the arguments at \cite[pp.129--130]{quanta}, one can extend the map $ \vec{\xi}_{\infty}$ to the whole $S^{2}$, so that the extension $\ \vec{\xi}_{\infty}: S^{2}\rightarrow (M^{m}, h)$ realises  a Willmore immersion of $S^{2}$ into  $(M^{m}, h)$, possibly branched at the points $a_1,\cdots,a_N$. 
        \\      Now, in a neck-region conformally equivalent to $\Omega_k(\alpha)=B_{\alpha R_k}\setminus\bar{B}_{\alpha^{-1}r_k}(0)$, we can apply Theorem \ref{L21_necks} and Theorem \ref{L2infty_necks} to deduce that there exists $\alpha_0>0$ and $C>0$ independent of $k$ such that 
            \begin{align*}
            	\np{e^{\lambda_k}\H_k}{2,1}{\Omega_k(\alpha_0)}\leq C\quad\text{and}\quad \lim\limits_{\alpha\rightarrow 0}\limsup_{k\rightarrow \infty}\np{\D\n_k}{2,\infty}{\Omega_k(\alpha)}=0\, .
            \end{align*}
           By the $\mathrm{L}^{2,1}/\mathrm{L}^{2,\infty}$ duality, we deduce that for all $0<\alpha<\alpha_0$
            \begin{align*}
            	\int_{\Omega_k(\alpha)}e^{2\lambda_k}|\H_k|^2dx\leq \np{e^{\lambda_k}\H_k}{2,1}{\Omega_k(\alpha)}\np{e^{\lambda_k}\H_k}{2,\infty}{\Omega_k(\alpha)}\leq C\np{\D\n_k}{2,\infty}{\Omega_k(\alpha)},
            \end{align*}
            which implies that 
            \begin{align}\label{final_quanta1}
            	\lim\limits_{\alpha\rightarrow 0}\limsup_{k\rightarrow \infty}\int_{\Omega_k(\alpha)}|\H_k|^2d\mathrm{vol}_{g_k}=0\, .
            \end{align}
            By  \cite[Lemma V.$1$]{quanta} (which does not use the Willmore equation and is valid for any weak immersion), we also have
            \begin{align}\label{final_quanta2}
            	\lim\limits_{\alpha\rightarrow 0}\limsup_{k\rightarrow \infty}\left|\int_{\Omega_k(\alpha)}K_{g_k}d\mathrm{vol}_{g_k}\right|=0. 
            \end{align}
            Moreover, by the proof of Theorem \ref{L21_necks} 
            (see \eqref{eq:NoNeckArea}), it holds
            \begin{align}\label{final_quanta3}
            	\lim\limits_{\alpha\rightarrow 0}\limsup_{k\rightarrow \infty}\mathrm{Area}(\vec{\xi}_k(\Omega_k(\alpha)))=0\,.
            \end{align}
            Using the point-wise identity
            \begin{align*}
            	|\D\n_k|^2=4|\H_k|^2-2K_{g_k}+2K(\vec{\xi}_{k,\ast}(TS^{2}))\, ,
            \end{align*}
           together with \eqref{final_quanta1}, \eqref{final_quanta2} and  \eqref{final_quanta3}, we deduce that 
            \begin{align}\label{eq:no-neck-energy}
            	\lim\limits_{\alpha\rightarrow 0}\limsup_{k\rightarrow \infty}\int_{\Omega_k(\alpha)}|\D\n_k|^2dx=0\, . 
            \end{align} 
            This is the \emph{no-neck energy} which will give below the desired quantization result. 

             Using that $(M^m,h)$ is isometrically embedded into $\R^n$ and the conformal invariance of the Willmore energy to obtain a suitable convergence result for the energy of bubbles. Indeed, seeing $\phi_k:\Sigma\rightarrow (M^m,h)\subset \R^n$ as an immersion of $\R^n$, since by assumption $h=\iota^{\ast}g_{\R^n}$, where $\iota:M^m\hooklongrightarrow \R^n$ is the inclusion map, we deduce that for any open subset $\Omega\subset \Sigma$ it holds
            \begin{align*}
            	\int_{\Omega}\left(|\H_k|^2+K(\phi_{k,\ast}(T\Sigma))\right)d\mathrm{vol}_{g_k}=W_{(M^m,h)}(\phi_k|\Omega)
            	=W_{\R^n}(\iota \circ \phi_k|\Omega)=\int_{\Omega}|\H_{\iota\circ \phi_k}|^2d\mathrm{vol}_{g_k},
            \end{align*}
           Let 
           \begin{equation*}
    	B(i,j,\alpha,k)=B_{\alpha^{-1}\rho_k^{i,j}}(x_k^{i,j})\setminus\bigcup_{j'\in I^{i,j}}B_{\alpha \rho_k^{i,j}}(x_k^{i,j'})
    \end{equation*}
    be a bubble domain (for more details, see \cite{quanta} or the discussion after \eqref{eq:defBijak}). Recall that $\rho^{i,j}_{k}\to 0$ as $k\to \infty$ and that the indices $i,j$ vary within a finite set. The Harnack inequality \eqref{eq:HarnackBubble} (proved in  \cite[Display (VIII.$10$)]{quanta}) gives that  for all $0<\alpha<1$, there exists $A_{\alpha}=A(i,j,\alpha)\geq 0$ such that
            \begin{align}\label{harnack_final}
            	\sup_{z\in B(i,j,\alpha,k)}e^{\lambda_k(z)}\leq e^{A_{\alpha}}\inf_{z\in B(i,j,\alpha,k)}e^{\lambda_k(z)}.
            \end{align}
            Choose an arbitrary point $z_{k}^{i,j}\in B(i,j,\alpha,k)$ and set $\lambda(i,j,\alpha,k)= \lambda_k(z_{k}^{i,j})$.
          The uniform area bound implies that    
            \begin{equation}\label{eq:lambdarho<infty}
        \limsup_{k\to \infty}   e^{2\lambda(i,j,\alpha,k)} (\rho_k^{i,j})^{2} <\infty \, ,  \quad \text{for all $\alpha\in (0,1)$}.
           \end{equation}
           Thus we have two cases.
           
            \textbf{Case 1}.    $ \limsup_{k\to \infty}   e^{2\lambda(i,j,\alpha,k)} (\rho_k^{i,j})^{2} >0 \, ,$  for some $\alpha\in (0,1)$.     
            \\This case corresponds to a macroscopic bubbles forming in the region  $B(i,j,\alpha,k)$. By performing a good gauge extraction procedure along the lines of \cite[Lemma 4.1]{mondinoriviere}, we can find positive Möbius transformations $\tilde{f}_{k}$ of $S^{2}$ such that the reparametrised immersions (up to a subsequence)
            \begin{equation*}
            \tilde{\vec{\xi}}_{k}= \xi_{k} \circ \tilde{f}_{k}: S^{2}\rightarrow (M^{m}, h)
            \end{equation*}
            converge weakly in $\mathrm{W}^{2,2}$ (and then smoothly, by the $\varepsilon$-regularity Theorem) outside finitely many points $\{a^{i,j}_{1}, \ldots, a^{i,j}_{N_{i,j}}\}$ to a Willmore immersion $\vec{\Psi}_{i,j}: S^{2}\setminus \big\{a_1^{i,j},\cdots,a_{N_{i,j}}^{i,j} \big\}\rightarrow (M^{m}, h)$:
             \begin{align}\label{eq:phiktophiinftysmooth}
              \tilde{\vec{\xi}}_{k}\conv{k\rightarrow \infty} \vec{\Psi}_{i,j}\qquad\text{in}\;\, C^{l}_{\mathrm{loc}}(S^{2}\setminus\big\{a^{i,j}_1,\cdots,a^{i,j}_{N_{i,j}}\big\})\,\;\text{for all}\;\,l\in \N\,.
            \end{align}
            Following verbatim the arguments at \cite[pp. 129--130]{quanta}, one can extend the map to the whole $S^{2}$, so that the extension $\vec{\Psi}_{i,j}: S^{2}\rightarrow (M^{m}, h)$ realises  a Willmore immersion of $S^{2}$ into  $(M^{m}, h)$, possibly branched at the finitely many points $a^{i,j}_1,\cdots,a^{i,j}_{N_{i,j}}$.
            Moreover, the no neck energy identity \eqref{eq:no-neck-energy} ensures that
             \begin{align}\label{eq:WBcase1a}
            \lim_{\alpha\rightarrow 0}\lim\limits_{k\rightarrow \infty}  W_{(M,h)}\left(\vec{\xi}_{k}|B(i,j,k,\alpha)\right)=  W_{(M,h)}(\vec{\Psi}_{i,j}). 
            \end{align} 
            Such a branched Willmore immersion corresponds to a Riemannian bubble $\vec{\Psi}_{j}$, $j=1,\ldots, u\in \N$ in the statement of Theorem \ref{thm:MainThm}.

           \textbf{Case 2}.     $ \lim_{k\to \infty}   e^{2\lambda(i,j,\alpha,k)} (\rho_k^{i,j})^{2} =0 \, ,$  for all $\alpha\in (0, 1)$.
           \\  In this case, there exists a point $\bar{x}_{i,j}\in M$ such that (again, up to a subsequence in $k$)
           \begin{equation*}
           \vec{\xi}_{k}(B(i,j,\alpha,k)) \to \bar{x}_{i,j}\, \quad \text{in Hausdorff distance sense, as $k\to \infty$, for all $\alpha\in (0,1)$.}  
           \end{equation*}
   Let ${\rm Exp}_{\bar{x}_{i,j}}:B_{\epsilon}^{\R^{m}}(0)\to M$ denote the exponential map of $(M,h)$ based at the point $\bar{x}_{i,j}$.
           Consider the rescaled immersions (with values in $T_{p}M \simeq \R^{m}$)
            \begin{align*}
            	\vec{\xi}_{k}^{i,j}(w)=e^{-\lambda(i,j,\alpha,k)-\log\,\rho_k^{i,j}} \, {\rm Exp}_{\bar{x}_{i,j}}^{-1} \left(\vec{\xi}_k(\rho_k^{i,j}w+z_{k}^{i,j})-\vec{\xi}_k(z_k^{i,j})\right), \; \forall w\in (\rho_k^{i,j})^{-1} \big(B(i,j,\alpha,k)-z^{k}_{i,j} \big)\subset \C\,.
            \end{align*}
            It is easily seen that  \eqref{harnack_final}  implies
            \begin{align*}
            	e^{-A_{\alpha}-1}\leq |\p{w}\vec{\xi}_{k}^{i,j}|\leq e^{A_{\alpha}+1}\, , \quad \text{for $k\in \N$ sufficiently large},
            \end{align*}
            and that the assumption of case 2 yields $e^{-\lambda(i,j,\alpha,k)-\log\,\rho_k^{i,j}}\to + \infty$.
\\            Notice that $\vec{\xi}_{k}^{i,j}$ are Willmore immersions in $(\R^{m}, g(k,i,j))$, where the Riemannian metrics  $g(k,i,j)$ converge to the Euclidean metric as $k\to \infty$, in $C^{l}_{\mathrm{loc}}(\R^{m})$ topology, for every $l\in \N$. Moreover, the scaling invariance of the Willmore functional implies that
            \begin{equation}\label{eq:WMhWRm}
            W_{(M,h)}\left(\vec{\xi}_{k}|B(i,j,k,\alpha)\right)= W_{(\R^{m},g(k,i,j))} \left(\vec{\xi}_{k}^{i,j} | (\rho_k^{i,j})^{-1} \big(B(i,j,\alpha,k)-z^{k}_{i,j} \big)\right). 
            \end{equation}

            Using the aforementioned $C^{l}_{\mathrm{loc}}(\R^{m})$ convergence of the ambient Riemannian metrics, one can immediately adapt the proof of the $\epsilon$-regularity Theorem \ref{eps_reg} to deduce that there exists a finite set of points $\big\{a_1^{i,j},\cdots,a_{N_{i,j}}^{i,j}\big\}\subset \C$ with $N_{i,j}\in \N$ such that
            \begin{align*}
            	\vec{\xi}_{k}^{i,j}\conv{k\rightarrow \infty}\vec{\xi}_{\infty}^{i,j}\quad\text{in}\;\, C^{l}_{\mathrm{loc}}\left(\C\setminus\big\{a_1^{i,j},\cdots,a_{N_{i,j}}^{i,j} \big\} \right),\; \text{for all}\;\, l\in \N,
            \end{align*}
            where $\vec{\xi}_{\infty}^{i,j}$ is a smooth Willmore immersion of $\C\setminus \big\{a_1^{i,j},\cdots,a_{N_{i,j}}^{i,j} \big\}$ in the Euclidean space $\R^{m}$.
            \\We can now follow verbatim the arguments in \cite[pp. 130--132]{quanta} and deduce that:
             \begin{itemize}
            \item[(1)] In case $\int_{\C} |\nabla \vec{\xi}_{\infty}^{i,j}|^{2} dx<\infty$, using the stereographic projection of $S^{2}$ to $\C$, the limit map $\vec{\xi}_{\infty}^{i,j}$ extends to a smooth Willmore immersion of $S^{2}$ into $\R^m$,  possibly branched at $\big\{a_1^{i,j},\cdots,a_{N_{i,j}}^{i,j} \big\}$.  Moreover (see also \eqref{eq:WMhWRm}): 
             \begin{align}\label{eq:WBcase1}
            \lim_{\alpha\rightarrow 0}\lim\limits_{k\rightarrow \infty}  W_{(M,h)}\left(\vec{\xi}_{k}|B(i,j,k,\alpha)\right)&=	\lim_{\alpha\rightarrow 0}\lim\limits_{k\rightarrow \infty}\int_{(\rho_k^{i,j})^{-1} \big(B(i,j,\alpha,k)-z^{k}_{i,j} \big)} |\H_{\vec{\xi}_{k}^{i,j}}|^2e^{2\lambda_{\vec{\xi}_{k}^{i,j}}}dx \nonumber \\
            &= W_{\R^m}(\vec{\xi}_{\infty}^{i,j}). 
            \end{align}
            Such a branched Willmore immersion corresponds to a Euclidean bubble $\vec{\eta}_{s}$, $s=1,\ldots, p\in \N$ in the statement of Theorem \ref{thm:MainThm}.
            \item[(2)] In case $\int_{\C} |\nabla \vec{\xi}_{\infty}^{i,j}|^{2} dx=\infty$, one finds suitable inversions ${\mathcal I}_{k,i,j}, {\mathcal I}_{\infty,i,j} $ in $\R^{m}$ such that
             \begin{equation*}
             {\mathcal I}_{k,i,j}\circ \vec{\xi}_{k}^{i,j} \to   {\mathcal I}_{\infty,i,j}\circ \vec{\xi}_{\infty}^{i,j}, \qquad\text{in}\;\, C^{l}_{\mathrm{loc}}\left(\C\setminus\big\{a_1^{i,j},\cdots,a_{N_{i,j}}^{i,j} \big\} \right),\;\,\text{for all}\;\, l\in \N.
             \end{equation*}
            Furthermore, one obtains that (pre-composing with the stereographic projection)  $ {\mathcal I}_{\infty,i,j}\circ \vec{\xi}_{\infty}^{i,j}$ extends to a smooth Willmore immersion of $S^{2}$ into $\R^{m}$, possibly branched at $\big\{a_1^{i,j},\cdots,a_{N_{i,j}}^{i,j} \big\}$. Moreover, 
            \begin{align}\label{eq:WBcase2}
            \lim_{\alpha\rightarrow 0}\lim\limits_{k\rightarrow \infty}  W_{(M,h)}\left(\vec{\xi}_{k}|B(i,j,k,\alpha)\right)&=	\lim_{\alpha\rightarrow 0}\lim\limits_{k\rightarrow \infty}\int_{(\rho_k^{i,j})^{-1} \big(B(i,j,\alpha,k)-z^{k}_{i,j} \big)}|\H_{\vec{\xi}_{k}^{i,j}}|^2e^{2\lambda_{\vec{\xi}_{k}^{i,j}}}dx \nonumber \\
            &= W_{\R^m}({\mathcal I}_{\infty,i,j}\circ \vec{\xi}_{\infty}^{i,j})- 4\pi \theta_{i,j}\,,  
            \end{align}
            where $\theta_{i,j}$ is the integer density of ${\mathcal I}_{\infty,i,j}\circ \vec{\xi}_{\infty}^{i,j}$ at the image point $0\in \R^{m}$.
            
            Such an inverted (compact)  branched Willmore immersion of $S^{2}$ corresponds to a  Euclidean bubble $\vec{\zeta}_{t}$, $t=1,\ldots, q\in \N$ in the statement of Theorem \ref{thm:MainThm}.
             \end{itemize}
            The combination of \eqref{eq:phiktophiinftysmooth}, \eqref{eq:WBcase1a}, \eqref{eq:WBcase1} and \eqref{eq:WBcase2} gives the desired energy identity (up to a subsequence in $k$):
        \begin{align*}
        	\lim\limits_{k\rightarrow \infty}W_{(M^{m},h)}(\phi_k)&=W_{(M^{m},h)}({\vec{\xi}_{\infty}})+ \sum_{j=1}^{u} W_{(M^{m},h)}(\vec{\Psi}_{j}) +\sum_{s=1}^{p}W_{\R^{m}}(\vec{\eta}_s)+\sum_{t=1}^{q}\left(W_{\R^{m}}(\vec{\zeta}_t)-4\pi\theta_{0,t}\right)\, .
        \end{align*}  
        \end{proof}

        Arguing along the lines of the proof of Theorem \ref{thm:MainThm}, one can prove the following quantization result for surfaces of arbitrary genus, under the assumption of weak convergence to a limit surface and a bound on the conformal structures.

    \begin{theorem}\label{thm:MainThm2}
    	Let $\Sigma$ be a closed Riemann surface, $(M^m,h)$ be a smooth compact Riemannian manifold of dimension $m\geq 3$, and let $\{\phi_k\}_{k\in\N}\subset \mathrm{Imm}(\Sigma,M^m)$ be a sequence of conformally parametrised Willmore immersions. 
    	Assume that the conformal classes of $\ens{\phi_k^{\ast}h}_{k\in\N}$ remain within a compact region of the moduli space of $\Sigma$ and that there exists a weak, possibly branched, immersion $\phi_{\infty}: \Sigma \to (M^{m},h)$ such that
	\begin{equation}
	 \phi_k\conv{k\rightarrow \infty} \phi_{\infty}\quad \text{weakly in } \mathrm{W}^{2,2}_{\mathrm{loc}}(\Sigma\setminus\ens{a_1,\cdots,a_N}) \text { and weakly$^{*}$ in } \mathrm{W}^{1,\infty}_{\mathrm{loc}}(\Sigma\setminus\ens{a_1,\cdots,a_N})
	\end{equation}
	where $\ens{a_1,\cdots,a_N}\subset \Sigma$ is a finite set.
    	Then the following identity holds:
        \begin{align*}
        	\lim\limits_{k\rightarrow \infty}W_{(M^{m},h)}(\phi_k)&=W_{(M^{m},h)}(\phi_{\infty})+ \sum_{j=1}^{u} W_{(M^{m},h)}(\vec{\Psi}_{j}) +\sum_{s=1}^{p}W_{\R^{m}}(\vec{\eta}_s)+\sum_{t=1}^{q}\left(W_{\R^{m}}(\vec{\zeta}_t)-4\pi\theta_{0,t}\right)\, ,
        \end{align*}
        where: 
      \begin{itemize}
      \item[\rm{(1)}]  The map  $\phi_{\infty}$ is a smooth, possibly branched, Willmore immersion of $\Sigma$ into $(M^{m},h)$ and   
        \begin{equation*}
        \phi_k\conv{k\rightarrow \infty} \phi_{\infty}\quad \text{ in }C^l_{\mathrm{loc}}(\Sigma\setminus\ens{a_1,\cdots,a_N}), \quad \forall  l\in \N.
        \end{equation*}
        Furthermore, it holds
         \begin{equation*}
       \lim_{k\to \infty}W_{(M^{m},h)}(\phi_{k})= W_{(M^{m},h)}(\phi_{\infty})\;  \Longleftrightarrow \; \phi_k\conv{k\rightarrow \infty} \phi_{\infty} \text{ in }C^l(\Sigma),  \quad \forall l\in \N.
        \end{equation*}
      \item[\rm{(2)}] The maps $\vec{\Psi}_{j}:  S^2\rightarrow (M^{m},h)$ are  smooth, possibly branched, Willmore immersions.
      \item[\rm{(3)}]      The maps $\vec{\eta}_{s}: S^2\rightarrow \R^m$ and $\vec{\zeta}_t: S^2\rightarrow \R^m$ are smooth, possibly branched, Willmore immersions and $\theta_{0,t}=\theta_0(\vec{\zeta}_t,x_t)\in \N$ is the multiplicity of $\vec{\zeta}_t$ at some some point $x_t\in \R^m$.
      \item[\rm{(4)}]  The Riemannian Willmore bubbles $\vec{\Psi}_j:S^{2}\to M^{m}$ are obtained as follows: there exist a sequence of unit area and constant curvature metrics $h_{k}$ on $\Sigma$ conformally equivalent to $\vec{\xi}_{k}^{*}h$ and strongly converging in $C^{l}(\Sigma)$ such that for any $j\in \{1,\ldots,u\}$, there exists a sequence of points $x^{u}_{k}\in \Sigma$ converging to one of $\ens{a_1,\cdots,a_N}$, a sequence of radii $\rho^{j}_{k}$  converging to zero such that (in converging $h_{k}$ conformal coordinates $\varphi_{k}$ around the given point in $\ens{a_1,\cdots,a_N}$):
     \begin{equation*}
       \vec{\xi}_k \circ \varphi_{k} (\rho^{j}_{k} y+ \varphi_{k}^{-1}(x^{j}_{k}))\conv{k\rightarrow \infty} \vec{\Psi}_{j}\circ \pi^{-1}(y)\quad \text{ in }C^l_{\mathrm{loc}}(\C \setminus \big\{a_1^{j},\cdots,a^{j}_{N_{j}}\big\}), \quad \forall  l\in \N,
        \end{equation*}    
             where $\pi$ denotes the stereographic projection from $S^{2}$ into $\C$, and  $\big\{a_1^{j},\cdots,a^{j}_{N_{j}}\big\}$ is a finite set of points in the complex plane.
   \item[\rm{(5)}] The Euclidean Willmore bubbles $\vec{\eta}_s, \vec{\zeta}_t: S^{2}\to \R^{m}$ are obtained by the following blow up procedure: there exist a sequence of unit area and constant curvature metrics $h_{k}$ on $\Sigma$ conformally equivalent to $\vec{\xi}_{k}^{*}h$ and strongly converging in $C^{l}(\Sigma)$ such that for any $s\in \{1,\ldots, p\}$ (resp. for any $t\in \{1,\ldots, q\}$), there exists a point $\bar{x}^{s}\in M$ (resp.  $\bar{x}^{t}\in M$), there exists a sequence of points $x^{s}_{k}\in \Sigma$ (resp. $x^{t}_{k}\in \Sigma$) converging to one of $\ens{a_1,\cdots,a_N}$, a sequence of radii $\rho^{s}_{k}$ (resp.  $\rho^{t}_{k}$) converging to zero, a sequence of rescalings $\lambda^{s}_{k}\to \infty$ (resp. $\lambda^{t}_{k}\to \infty$) and inversions $\Xi^{t}_{k}$ of $\R^{m}$ such that (in converging $h_{k}$ conformal coordinates $\varphi_{k}$ around the given point in $\ens{a_1,\cdots,a_N}$):
     \begin{equation*}
       \lambda^{s}_{k} \cdot {\rm Exp}_{\bar{x}^{s}}^{-1}\circ \vec{\xi}_k \circ \varphi_{k} (\rho^{s}_{k} y+ \varphi_{k}^{-1}(x^{s}_{k}))\conv{k\rightarrow \infty} \vec{\eta}_{s}\circ \pi^{-1}(y)\quad \text{ in }C^l_{\mathrm{loc}}(\C \setminus \big\{a_1^{s},\cdots,a^{s}_{N_{s}}\big\}), \quad \forall  l\in \N,
        \end{equation*}    
        and, respectively,
          \begin{equation*}
  \Xi^{t}_{k}\circ     \lambda^{t}_{k} \cdot {\rm Exp}_{\bar{x}^{t}}^{-1}\circ \vec{\xi}_k \circ \varphi_{k} (\rho^{t}_{k} y+ \varphi_{k}^{-1}(x^{t}_{k}))\conv{k\rightarrow \infty} \vec{\zeta}_{t}\circ \pi^{-1}(y)\quad \text{ in }C^l_{\mathrm{loc}}(\C \setminus \ens{a_1^{t},\cdots,a^{t}_{N_{t}}}), \quad \forall  l\in \N,
        \end{equation*}   
        where $\pi$ denotes the stereographic projection from $S^{2}$ into $\C$, and  $\big\{a_1^{s},\cdots,a^{s}_{N_{s}}\big\}, \big\{a_1^{t},\cdots,a^{t}_{N_{t}}\big\}$ are finite sets of points in the complex plane.
             \end{itemize}
     \end{theorem}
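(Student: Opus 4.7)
The plan is to run the same argument as in the proof of Theorem \ref{thm:MainThm}, observing that all the crucial technical tools developed in the paper --- the $\varepsilon$-regularity Theorem \ref{eps_reg}, the $\mathrm{L}^{2,1}$ estimate of Theorem \ref{L21_necks}, and the $\mathrm{L}^{2,\infty}$ quantization Theorem \ref{L2infty_necks} --- are intrinsically local results, applied on disks conformally equivalent to $B(0,1)$ or to annuli $\Omega_k(\alpha)\subset \C$. Since the assumption of the theorem guarantees that the conformal structures $[\phi_k^*h]$ remain in a compact subset of the moduli space of $\Sigma$, one can use the continuity of the uniformisation map (see \cite{JostRS}) to produce a sequence of constant curvature unit area metrics $h_k$ on $\Sigma$, conformally equivalent to $\phi_k^*h$, converging in $C^l(\Sigma)$ for every $l\in \N$ to a smooth limit $h_\infty$. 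This gives uniform $h_k$-conformal coordinate charts $\varphi_k$ around any fixed point of $\Sigma$, and in particular around each concentration point $a_i$, which is exactly the setting needed to replay the sphere analysis.

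First, in any $h_k$-conformal chart $\varphi_k$ defined on a neighbourhood of $\Sigma\setminus\ens{a_1,\cdots,a_N}$, the weak $\mathrm{W}^{2,2}_{\mathrm{loc}}$ convergence combined with the weak-$\ast$ $\mathrm{W}^{1,\infty}_{\mathrm{loc}}$ bound lets one apply the $\varepsilon$-regularity Theorem \ref{eps_reg} on small balls where $\int |\nabla \vec{n}_k|^2 + \mathrm{Area}$ is below $\varepsilon_0$. This upgrades the weak convergence $\phi_k\rightharpoonup \phi_\infty$ to $C^l_{\mathrm{loc}}(\Sigma\setminus\ens{a_1,\cdots,a_N})$ convergence and shows that $\phi_\infty$ is a smooth Willmore immersion away from $\ens{a_1,\cdots,a_N}$. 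The extension of $\phi_\infty$ across each $a_i$ as a smooth possibly branched Willmore immersion follows verbatim from the removability arguments at \cite[pp.~129--130]{quanta}, which are purely local near $a_i$ and do not feel the global topology of $\Sigma$.

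Second, around each $a_i$ one picks a converging $h_k$-conformal chart $\varphi_k$ and performs Bernard-Rivi\`ere's bubble-neck decomposition \cite[Proposition III.1]{quanta} inside $\varphi_k^{-1}(B(0,1))$. This yields a finite collection of bubble centres $(x_k^{i,j},\rho_k^{i,j})$ and  neck annuli $\Omega_k(\alpha)$; the crucial point is that this combinatorial decomposition only uses that $\int |\nabla \vec{n}_k|^2 \leq \Lambda(h)$, which continues to hold here thanks to \eqref{embedding}. On each neck region we apply Theorem \ref{L21_necks} and Theorem \ref{L2infty_necks}, which together with the $\mathrm{L}^{2,1}/\mathrm{L}^{2,\infty}$ duality give the no-neck energy identity
\begin{align*}
\lim_{\alpha\to 0}\limsup_{k\to \infty}\int_{\Omega_k(\alpha)}\left(|\vec{H}_k|^2 + K_{h}(\phi_{k,*}T\Sigma)\right)d\mathrm{vol}_{g_k}=0,
\end{align*}
exactly as in \eqref{final_quanta1}--\eqref{eq:no-neck-energy}. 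In each bubble region $B(i,j,\alpha,k)$ one then distinguishes the two cases of Theorem \ref{thm:MainThm}: either $e^{\lambda(i,j,\alpha,k)}\rho_k^{i,j}$ stays bounded away from $0$, producing (after precomposing with appropriate M\"obius maps of $S^2$) a Riemannian Willmore bubble $\vec{\Psi}_j:S^2\to (M^m,h)$; or it degenerates to $0$, in which case one rescales through $\mathrm{Exp}_{\bar{x}^{i,j}}^{-1}$ as in \textbf{Case 2} of the proof of Theorem \ref{thm:MainThm} to obtain a Euclidean bubble, either of $\vec{\eta}_s$ type (finite Dirichlet energy) or of $\vec{\zeta}_t$ type (infinite Dirichlet energy requiring an extra inversion, producing the $-4\pi\theta_{0,t}$ correction).

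Summing the energy contributions of $\phi_\infty$, of the finitely many Riemannian bubbles $\vec{\Psi}_j$, and of the Euclidean bubbles $\vec{\eta}_s$, $\vec{\zeta}_t$ yields the claimed identity. The characterisations of the bubbles in items (4)--(5) follow directly from the above extraction procedure expressed in the $h_k$-conformal coordinates $\varphi_k$. Compared to the genus zero case, no truly new difficulty arises: the main point requiring attention is to verify that the bubble-neck decomposition and the subsequent $\varepsilon$-regularity and Lorentz-space analysis can be carried out in conformal coordinates coming from the $C^l$-converging background metrics $h_k$, which is the reason why the compactness of the conformal structures in the moduli space enters the assumptions of the theorem.
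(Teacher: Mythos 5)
Your proposal is correct and follows essentially the same route the paper intends: the paper proves Theorem \ref{thm:MainThm2} precisely by re-running the proof of Theorem \ref{thm:MainThm} in conformal charts for the $C^{l}$-converging constant curvature metrics supplied by the compactness of the conformal classes, using the same ingredients (Theorem \ref{eps_reg}, Theorem \ref{L21_necks}, Theorem \ref{L2infty_necks}, the bubble-neck decomposition of \cite{quanta}, and the two-case bubble analysis). Your observation that all these tools are local and only require the uniform curvature-energy bound \eqref{embedding} is exactly the point that makes the extension to arbitrary genus work under the stated hypotheses.
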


    \section{Appendix: Generalised Lorentz Spaces}\label{appendix}
    
   \subsection{General discussion and first example}

    Let $(X,\mu)$ be a measured space. For any $\mu$-measurable function $f:X\rightarrow \R^n$, we have 
    \begin{align*}
    	\int_{X}|f|^pd\mu=p\int_{0}^{\infty}t^{p-1}\lambda_f(t)dt\,,
    \end{align*}
    where, for all $t>0$, 
    we denote
    \begin{align*}
    	\lambda_f(t)=\mu\left(X\cap\ens{x:|f(x)|>t}\right)\, . 
    \end{align*}
    Define  the decreasing rearrangement $f_{\ast}:\R_+\rightarrow \R_+\cup\ens{\infty}$ of $f$ by 
    \begin{align*}
    	f_{\ast}(t)=\inf\left(\R_+\cap\ens{s: \lambda_f(s)\leq t}\right)\,.
    \end{align*}
 It is clear from the definitions that for all $t>0$
    \begin{align*}
    	\leb^1\left(\R_+\cap\ens{s:f_{\ast}(s)>t}\right)=\lambda_f(t)\,, \quad \text{ for all $t>0$ .}
    \end{align*}
    Applying twice the slicing formula, we deduce that 
    \begin{align*}
    	\int_{X}|f|^pd\mu=p\int_{0}^{\infty}t^{p-1}\lambda_f(t)dt=p\int_{0}^{\infty}t^{p-1}\leb^1\left(\R_+\cap\ens{s:f_{\ast}(s)>t}\right)dt=\int_{0}^{\infty}f_{\ast}^p(s)ds. 
    \end{align*}
    More generally, for all real-valued differentiable functions $\varphi,\psi:\R_+\rightarrow \R_+$, we have the integration by parts formula
    \begin{align}\label{ipp_gen}
    	\int_{0}^{\infty}\varphi(\lambda_f(t))\psi'(t)dt=\int_{0}^{\infty}\varphi'(t)\psi\left(f_{\ast}(t)\right)dt.
    \end{align}
    Since this formula is not completely standard, we give a proof of it. Let $f$ be a non-negative step function, then there exists $0<a_1<a_2<\cdots <a_n<\infty$ and pair-wise disjoint measurable sets $A_1, \cdots,A_n$ such that 
    \begin{align*}
    	f=\sum_{i=1}^{n}a_i\mathbf{1}_{A_i}.
    \end{align*}
    Following \cite{lorentz_general}, defining $\displaystyle B_i=\bigcup_{j=i}^nA_j$, we have
    \begin{align*}
    	f=\sum_{i=1}^{n}(a_i-a_{i-1})\mathbf{1}_{B_i},
    \end{align*}
    where $a_0=0$. Then it holds
    \begin{align*}
    	\lambda_f=\sum_{i=1}^{n}\mu(B_i)\mathbf{1}_{[a_{i-1},a_i]} \quad \text{and} \quad f_{\ast}=\sum_{i=1}^{n}a_i\mathbf{1}_{[\mu(B_{i+1}),\mu(B_i)]}. 
    \end{align*}
    Thanks to a discrete integration by parts with $B_{n+1}=\emptyset$, we deduce that
    \begin{align*}
    	&\int_{0}^{\infty}\varphi(\lambda_f(t))\psi'(t)dt=\sum_{i=1}^{n}\int_{a_{i-1}}^{a_i}\varphi(\mu(B_i))\psi'(t)=\sum_{i=1}^{n}\varphi(\mu(B_i))\left(\psi(a_i)-\psi(a_{i-1})\right) \\
    	&=\sum_{i=1}^{n}\psi(a_i)\left(\varphi(\mu(B_i))-\varphi(\mu(B_i+1))\right)
    	=\sum_{i=1}^n\psi(a_i)\int_{\mu(B_{i+1})}^{\mu(B_i)}\varphi'(t)dt=\sum_{i=1}^{n}\int_{\mu(B_{i+1})}^{\mu(B_i)}\varphi'(t)\psi(f_{\ast}(t))dt\\
    	&=\int_{0}^{\infty}\varphi'(t)\psi(f_{\ast}(t))dt. 
    \end{align*}
    The rest of the proof is the same as \cite{lorentz_general} if $\varphi$ is unbounded or if $(X,\mu)$ is $\sigma$-finite, and given in \cite{lorentz_mary} in the general case. Notice that the proof would hold unchanged only assuming that     $\varphi$ and $\psi$ are absolutely continuous. 
    
    We will now define a class of Lorentz spaces (which can also be seen as generalisation of Orlicz spaces \cite{orlicz}) of interest in this paper (see \cite{lorentz_general} and \cite{lorentz_mary}). Let $\mathscr{C}$ be the set of non-negative concave functions $\varphi:\R_+\rightarrow \R_+$ such that $\varphi$ is continuous at $0$, 
    \begin{align*}
    	\varphi(0)=\lim\limits_{t\rightarrow 0}\varphi(t)=0
    \end{align*}
    and $\varphi(t)>0$ for all $t>0$. For all measurable $f:X\rightarrow \R^n$ (of $f:X\rightarrow \R\cup\ens{\pm\infty}$), define the norm 
    \begin{align*}
    	\genorm{f}{N(\varphi)}=\int_{0}^{\infty}\varphi(\lambda_f(t))dt.
    \end{align*}
    Now fix some integer $n\geq 1$ and let $\mathscr{M}(X)$ be the class of measurable $\R^n$-valued function on $X$. Define
    \begin{align*}
    	N(\varphi)=\mathscr{M}(X)\cap\ens{f:\genorm{f}{N(\varphi)}<\infty}.
    \end{align*}
    Then we have the following result:
    \begin{theorem}[Steigerwalt-White \cite{lorentz_general}, Steigerwalt \cite{lorentz_mary}]\label{nspace}
    	The functional $\Vert\,\cdot\,\Vert_{N(\varphi)}$ is a norm and $N(\varphi)$ and $(N(\varphi),\Vert\,\cdot\,\Vert_{N(\varphi)})$ is a Banach space. 
    \end{theorem}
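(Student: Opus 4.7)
The plan is to proceed in two parts: first verify the norm axioms, then establish completeness via the Riesz--Fischer criterion.

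I would begin by exploiting the integration-by-parts identity \eqref{ipp_gen} with the choice $\psi(t)=t$ to rewrite the defining quantity in the equivalent form
\begin{equation*}
\Vert f\Vert_{N(\varphi)}=\int_{0}^{\infty}\varphi'(t)\,f_{\ast}(t)\,dt,
\end{equation*}
which is technically more convenient since $\varphi'$ is non-increasing and non-negative (because $\varphi$ is concave with $\varphi\geq 0$). Positive homogeneity $\Vert \alpha f\Vert_{N(\varphi)}=|\alpha|\,\Vert f\Vert_{N(\varphi)}$ then follows immediately from $(\alpha f)_{\ast}=|\alpha|\,f_{\ast}$. For definiteness, concavity together with $\varphi(0)=0$ and $\varphi(t)>0$ on $(0,\infty)$ forces $\varphi'(0^{+})>0$, so vanishing of the norm implies $f_{\ast}=0$ on a neighbourhood of $0$, and hence $f_{\ast}\equiv 0$ by monotonicity, i.e.\ $f=0$ almost everywhere.

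The triangle inequality is the main new content. For it I would combine the classical Hardy--Littlewood rearrangement inequality
\begin{equation*}
\int_{0}^{T}(f+g)_{\ast}(s)\,ds\leq\int_{0}^{T}f_{\ast}(s)\,ds+\int_{0}^{T}g_{\ast}(s)\,ds\qquad\forall\,T>0
\end{equation*}
(which itself follows from the variational characterisation $\int_{0}^{T}h_{\ast}\,ds=\sup\{\int_{E}|h|\,d\mu:\mu(E)\leq T\}$ applied to $|f+g|\leq|f|+|g|$) with the structural fact that, by concavity of $\varphi$, $-d\varphi'$ defines a positive Radon measure $\nu$ on $(0,\infty)$ with $\varphi'(t)=\varphi'(\infty)+\nu([t,\infty))$. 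Fubini then converts the norm into
\begin{equation*}
\Vert f\Vert_{N(\varphi)}=\varphi'(\infty)\int_{0}^{\infty}f_{\ast}(t)\,dt+\int_{(0,\infty)}\!\Bigl(\int_{0}^{s}f_{\ast}(t)\,dt\Bigr)\,d\nu(s),
\end{equation*}
and the Hardy--Littlewood inequality applied pointwise under the outer positive integral then yields subadditivity after integrating against $d\nu$.

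For completeness, I would apply the Riesz--Fischer criterion: given $\{f_{n}\}\subset N(\varphi)$ with $\sum_{n}\Vert f_{n}\Vert_{N(\varphi)}<\infty$, set $F_{N}=\sum_{n\leq N}|f_{n}|$ and $F=\sup_{N}F_{N}$. Subadditivity gives $\Vert F_{N}\Vert_{N(\varphi)}\leq\sum_{n\leq N}\Vert f_{n}\Vert_{N(\varphi)}$, and the Fatou-type property $\Vert F\Vert_{N(\varphi)}\leq\liminf_{N}\Vert F_{N}\Vert_{N(\varphi)}$ --- which follows from monotone convergence applied to $\lambda_{F_{N}}\uparrow\lambda_{F}$ and continuity of $\varphi$ at $0$ --- shows $F\in N(\varphi)$ and hence $F<\infty$ almost everywhere. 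Thus $\sum_{n}f_{n}$ converges pointwise a.e.\ to some $S$, and dominated convergence in $N(\varphi)$, with the dominator $|S-\sum_{n\leq N}f_{n}|\leq 2F$, concludes that the partial sums converge to $S$ in norm. The main technical obstacle throughout is the low regularity of $\varphi$: one has to treat $\varphi'$ as a non-increasing function defined almost everywhere (possibly blowing up at $0$) and justify both the integration-by-parts identity and the Fubini interchange in this reduced setting, which I would handle by approximating $\varphi$ from below by $C^{1}$ concave functions with controlled derivatives and passing to the limit using monotone convergence.
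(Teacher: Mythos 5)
The paper does not prove this statement: it is imported verbatim from Steigerwalt--White \cite{lorentz_general} and Steigerwalt's thesis \cite{lorentz_mary}, so there is no internal proof to compare against, and your argument has to stand on its own. On its own it is essentially the classical proof that the Lorentz space $\Lambda_{\varphi}$ is a Banach space, and the main steps are sound: the rewriting $\Vert f\Vert_{N(\varphi)}=\int_0^\infty\varphi'(t)f_{\ast}(t)\,dt$ (legitimate, since a concave $\varphi$ is locally absolutely continuous on $(0,\infty)$ and the paper's identity \eqref{ipp_gen} only needs that); homogeneity from $(\alpha f)_{\ast}=|\alpha|f_{\ast}$; definiteness from $\varphi'(0^+)\geq\varphi(1)>0$ (or, even more directly, from $\varphi>0$ on $(0,\infty)$ and monotonicity of $\lambda_f$ in the original definition); the triangle inequality from the disintegration $\varphi'(t)=\varphi'(\infty)+\nu((t,\infty))$ with $\nu=-d\varphi'\geq0$ -- this is exactly where concavity enters -- combined with Hardy--Littlewood subadditivity of $t\mapsto\int_0^t(f+g)_{\ast}$ and Tonelli for nonnegative integrands; and the Fatou property $\Vert F\Vert_{N(\varphi)}\leq\liminf_N\Vert F_N\Vert_{N(\varphi)}$ from $\lambda_{F_N}\uparrow\lambda_F$ and monotone convergence (note that what you need there is that $\varphi$ is nondecreasing and continuous on $(0,\infty)$, both automatic from concavity and nonnegativity, rather than continuity at $0$).

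The one step that does not survive as stated is the final appeal to ``dominated convergence in $N(\varphi)$''. These spaces do not in general have absolutely continuous norm, so no such theorem is available: for $\varphi(t)=\min(t,1)$ the norm equals $\int_0^1 f_{\ast}(s)\,ds$, and on $X=\R$ with Lebesgue measure the functions $g_n=\mathbf{1}_{[n,\infty)}$ converge to $0$ a.e., are dominated by $G=\mathbf{1}_{[0,\infty)}\in N(\varphi)$, yet $\Vert g_n\Vert_{N(\varphi)}=1$ for every $n$. Fortunately you do not need any such principle: your own ingredients close the argument. The Fatou property together with finite subadditivity gives countable subadditivity, $\bigl\Vert\sum_{n>N}|f_n|\bigr\Vert_{N(\varphi)}\leq\sum_{n>N}\Vert f_n\Vert_{N(\varphi)}$, and since $|S-\sum_{n\leq N}f_n|\leq\sum_{n>N}|f_n|$ pointwise while the functional is monotone under domination (because $\varphi$ is nondecreasing and $\lambda$ is monotone), you get $\Vert S-\sum_{n\leq N}f_n\Vert_{N(\varphi)}\leq\sum_{n>N}\Vert f_n\Vert_{N(\varphi)}\rightarrow0$ directly. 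With that replacement the proof is complete.
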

    By the integration by parts formula \eqref{ipp_gen}, we deduce that
    \begin{align*}
        \genorm{f}{N(\varphi)}=\int_{0}^{\infty}\varphi'(t)f_{\ast}(t)\, dt\, .
    \end{align*}
    Now let $1<p<\infty$ and $1\leq q< \infty$.   Define for all $t>0$
    \begin{align*}
    	f_{\ast\ast}(t)=\dashint{0}^tf_{\ast}(s)ds=\frac{1}{t}\int_{0}^tf_{\ast}(s)ds.
    \end{align*}
     Then, the Lorentz space $\mathrm{L}^{p,q}(X)$ is defined by 
    \begin{align*}
    	\mathrm{L}^{p,q}(X)\cap\ens{f:\np{f}{p,q}{X}<\infty}, \quad \text{where} \quad \np{f}{p,q}{X}=\left(\int_{0}^{\infty}t^{\frac{q}{p}-1}f_{\ast\ast}^q(t)dt\right)^{\frac{1}{q}}.
    \end{align*}
    It is a Banach space and the following semi-norm $|\,\cdot\,|_{\mathrm{L}^{p,q}(X)}$
    \begin{align*}
    	| f|_{\mathrm{L}^{p,q}(X)}=\left(\int_{0}^{\infty}t^{\frac{q}{p}-1}f_{\ast}^q(t)dt\right)^{\frac{1}{q}}
    \end{align*}
     is equivalent to $\Vert\,\cdot\,\Vert_ {\mathrm{L}^{p,q}(X)}$. In the case $q=1$, we have by Fubini's theorem
    \begin{align*}
     	\np{f}{p,1}{X}=\frac{p}{p-1}| f|_{\mathrm{L}^{p,1}(X)}.
    \end{align*}
    Using the integration by parts formula \eqref{ipp_gen},   with $\varphi(t)=\frac{p}{q}t^{\frac{p}{q}}$ and  $\psi(t)=t^q$ we deduce that 
    \begin{align*}
    	\int_{0}^{\infty}t^{\frac{p}{q}-1}f_{\ast}^q(t)dt=\int_{0}^{\infty}qt^{q-1}\frac{p}{q}\lambda_f(t)^{\frac{q}{p}}dt=p\int_{0}^{\infty}t^{q-1}\lambda_f(t)^{\frac{q}{p}}dt.
    \end{align*}
    This gives the well known fact that $\mathrm{L}^{p,p}(X)=\mathrm{L}^p(X)$ with equivalent norms. Taking instead $\varphi(t)=t^{\frac{1}{p}}$, we get that
    \begin{align*}
    	\np{f}{p,1}{X}=\frac{p}{p-1}|f|_{\mathrm{L}^{p,1}(X)}=\frac{p}{p-1}\int_{0}^{\infty}t^{\frac{1}{p}-1}f_{\ast}(t)dt=\frac{p^2}{p-1}\genorm{f}{N(\varphi)}. 
    \end{align*}
    Therefore, the spaces $N(\varphi)$ are generalisations of $\mathrm{L}^{p,1}$ spaces, but $\mathrm{L}^{p,q}$ spaces with 
    $1< q<\infty$     are not $N(\varphi)$-spaces. 
        
    Now, we will define generalisations of the weak $\mathrm{L}^p$ spaces or Marcinkiewicz spaces. Fix a $\sigma$-algebra $\mathscr{A}\subset X(\mu)$ and assume the following property: for all $A\in \mathscr{A}$, if $\mu(A)=\infty$, there exists $B\subset A$ such    that $0<\mu(B)<\infty$.  For $\varphi\in \mathscr{C}$, set
    \begin{align}\label{show1}
    	\genorm{f}{M(\varphi)}=\sup_{t>0}\ens{\frac{1}{\varphi(t)}\int_{0}^{t}f_{\ast}(s)ds}, \quad \text{for all $f\in \mathscr{M}(X)$.}
    \end{align}
    If $1\leq p<\infty$, define $\mathrm{L}^{p,\infty}(X)=M(t^{1-\frac{1}{p}})$. It is known that $\mathrm{L}^{p,\infty}(X)$ is a Banach space equipped with this norm for $1<p<\infty$ (and such a  norm is denoted by $\np{\,\cdot\,}{p,\infty}{X}$). Furthermore, the following result holds.
    \begin{theorem}[Steigerwalt-White \cite{lorentz_general}, Steigerwalt \cite{lorentz_mary}]\label{mspace}
    	Assume that $\varphi(t)=o(t)$ as $t\rightarrow \infty$. Then $M(\varphi)$ is a norm and $(M(\varphi),\Vert\,\cdot\,\Vert)$ is a Banach space.
    \end{theorem}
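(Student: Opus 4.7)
The plan is to verify the four standard norm axioms and then prove completeness by the usual trick of extracting a subsequence whose differences are summable in norm. The non-trivial ingredient on the norm side is the triangle inequality (all other axioms are immediate from basic properties of the decreasing rearrangement); the non-trivial ingredient on the completeness side is passing from norm-Cauchy to a.e.-pointwise-Cauchy on a sufficiently rich family of finite-measure sets, using the structural hypothesis on $\mathscr{A}$.

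For the norm properties, non-negativity is clear since $f_{\ast}\geq 0$ and $\varphi>0$ on $(0,\infty)$, positive homogeneity follows from $(\lambda f)_{\ast}=|\lambda|f_{\ast}$, and definiteness follows because $\Vert f\Vert_{M(\varphi)}=0$ forces $\int_{0}^{t}f_{\ast}(s)\,ds=0$ for every $t>0$, hence $f_{\ast}\equiv 0$, hence $f=0$ a.e. For the triangle inequality, the key observation I would use is the Hardy--Littlewood representation
\begin{equation*}
t\,f_{\ast\ast}(t)=\int_{0}^{t}f_{\ast}(s)\,ds=\sup\Big\{\textstyle\int_{E}|f|\,d\mu:E\in\mathscr{A},\ \mu(E)\leq t\Big\},
\end{equation*}
which, combined with the ordinary $L^{1}$ triangle inequality, yields the subadditivity $(f+g)_{\ast\ast}(t)\leq f_{\ast\ast}(t)+g_{\ast\ast}(t)$. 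Dividing by $\varphi(t)/t$ and taking a supremum in $t$ then gives $\Vert f+g\Vert_{M(\varphi)}\leq\Vert f\Vert_{M(\varphi)}+\Vert g\Vert_{M(\varphi)}$. The assumption $\varphi(t)=o(t)$ as $t\to\infty$ enters here only through the fact that it forces $f_{\ast\ast}(t)\to 0$ as $t\to\infty$ for any $f\in M(\varphi)$, which ensures that $M(\varphi)$ is a genuine (non-degenerate) function space containing no non-zero constants when $\mu(X)=\infty$.

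For completeness, let $\{f_{n}\}$ be a Cauchy sequence in $M(\varphi)$. For any $E\in\mathscr{A}$ with $\mu(E)=t<\infty$, the bound
\begin{equation*}
\int_{E}|f_{n}-f_{m}|\,d\mu\leq t\,(f_{n}-f_{m})_{\ast\ast}(t)\leq \varphi(t)\,\Vert f_{n}-f_{m}\Vert_{M(\varphi)}
\end{equation*}
shows that $\{f_{n}\}$ is Cauchy in $L^{1}(E)$. Extracting a subsequence $\{f_{n_{k}}\}$ with $\Vert f_{n_{k+1}}-f_{n_{k}}\Vert_{M(\varphi)}\leq 2^{-k}$, Beppo Levi gives pointwise a.e.~convergence on $E$. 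The structural assumption on $\mathscr{A}$ (every set of infinite measure contains a subset of finite positive measure) allows a Zorn-type exhaustion producing a countable family of finite-measure sets whose union is $\mu$-essentially all of $X$, so the subsequence converges a.e.~on $X$ to some measurable function $f$. To conclude $f\in M(\varphi)$ and $f_{n}\to f$ in $M(\varphi)$, I would apply Fatou's lemma at the level of distribution functions: $\lambda_{f}(s)\leq\liminf_{k}\lambda_{f_{n_{k}}}(s)$, which passes to the decreasing rearrangements and yields $f_{\ast\ast}(t)\leq\liminf_{k}(f_{n_{k}})_{\ast\ast}(t)$; the same reasoning applied to $f_{n_{k}}-f_{m}$ (at fixed large $m$) gives $\Vert f-f_{m}\Vert_{M(\varphi)}\leq\liminf_{k}\Vert f_{n_{k}}-f_{m}\Vert_{M(\varphi)}$, which can be made arbitrarily small, so $f_{m}\to f$ in $M(\varphi)$.

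The main obstacle I anticipate is the triangle inequality, specifically justifying the Hardy--Littlewood supremum representation of $\int_{0}^{t}f_{\ast}$ in the generality of the abstract measured space $(X,\mu)$ considered here: one direction ($\leq$) is immediate from the layer-cake formula, but the reverse requires selecting or approximating the super-level set $\{|f|>f_{\ast}(t)\}$ by sets of prescribed measure $t$, which in the absence of non-atomicity must be handled via the weak $\sigma$-finiteness assumption on $\mathscr{A}$ by approximating from below with finite-measure subsets and taking a supremum. Once this representation is established, the norm properties and the completeness argument proceed as outlined, essentially by the same Fatou-type truncation techniques that underlie the classical proof that $L^{p,\infty}$ is a Banach space.
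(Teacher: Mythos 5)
The paper does not prove this theorem; it is cited from Steigerwalt--White \cite{lorentz_general} and Steigerwalt \cite{lorentz_mary}, and the Remark immediately following the statement records their strategy, which differs from yours. They first set up the alternative norm \eqref{show2}, $\sup\big\{\frac{1}{\varphi(\mu(A))}\int_A |f|\,d\mu : A\in\mathscr{A}_1\big\}$, prove directly that $M(\varphi)$ equipped with \eqref{show2} is a Banach space (with no restriction on $\varphi$), and only then invoke $\varphi(t)=o(t)$ (or $\sigma$-finiteness) to show \eqref{show2} equals the rearrangement-based norm \eqref{show1}. You attack \eqref{show1} head-on, which is a legitimate alternative route, but your proposal contains a genuine gap.

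The gap is precisely the step you single out as the main obstacle: the identity $\int_0^t f_{\ast}(s)\,ds=\sup\{\int_E |f|\,d\mu : \mu(E)\leq t\}$. This fails in the generality of the theorem, and the remedy you sketch does not repair it. Take $X=\{a\}$ with $\mu(\{a\})=2$ and $f\equiv 1$: then $\int_0^1 f_{\ast}(s)\,ds = 1$, whereas the only measurable set of measure $\leq 1$ is $\emptyset$, so the supremum on the right is $0$. The structural assumption that every infinite-measure set has a finite-measure subset of positive measure is vacuous here and cannot manufacture intermediate-measure approximants to a super-level set. Consequently, the subadditivity $(f+g)_{\ast\ast}\leq f_{\ast\ast}+g_{\ast\ast}$ cannot be obtained by passing through the supremum representation. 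That subadditivity \emph{is} valid in every measure space, but the correct route is the distribution-function identity $\int_0^t f_{\ast}(s)\,ds = \int_0^\infty\min(\lambda_f(v),t)\,dv$, equivalently the K-functional form $\int_0^t f_{\ast}(s)\,ds = \inf_{c\geq 0}\big(tc + \int_X(|f|-c)_+\,d\mu\big)$: writing $c=c_1+c_2$ and using $(a+b)_+\leq a_++b_+$, the right-hand side is subadditive in $f$, which gives the triangle inequality with no geometric assumption on $(X,\mathscr{A},\mu)$. Separately, you understate the role of $\varphi(t)=o(t)$: in your direct approach it is not merely a non-degeneracy normalisation, it is what makes the completeness exhaustion close. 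It forces $f_{\ast}(t)\to 0$ as $t\to\infty$ and hence $\mu(\{|f|>\varepsilon\})<\infty$ for every $\varepsilon>0$, so each $f\in M(\varphi)$ is supported on a $\sigma$-finite set; without this observation a countable union of finite-measure sets cannot cover $X$ up to a $\mu$-null set, and your a.e.\ limit is undefined outside a $\sigma$-finite piece.
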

    \begin{rem}
    	In \cite{lorentz_general}, the authors first define the norm
    	\begin{align}\label{show2}
    		\genorm{f}{M(\varphi)}=\sup\ens{\frac{1}{\varphi(\mu(A))}\int_{A}|f|d\mu: A\in \mathscr{A}_1}
    	\end{align}
        where 
        \begin{align*}
        	\mathscr{A}_1=\mathscr{A}\cap\ens{A:0<\mu(A)<\infty}. 
        \end{align*}
        Then Theorem \ref{mspace} holds with this norm without any restrictions on $\varphi$, and the authors show (Theorem $3.3$) that  \eqref{show1} and \eqref{show2} coincide if either $(X,\mathscr{A},\mu)$ is $\sigma$-finite or if $\varphi(t)=o(t)$ as $t\rightarrow \infty$. Then they quote Mary Steingerwalt's PhD thesis \cite{lorentz_mary} where the result is proven without any hypothesis on $X$ or $\varphi$. Notice that this result does not contradict the fact that $\mathrm{L}^{1,\infty}$ is not a Banach space (even with $X=\R$ and $\mu=\leb^1$). Indeed, since $\mathrm{L}^{p,\infty}(X)=M(t^{1-\frac{1}{p}})$, we would have $\mathrm{L}^{1,\infty}(X)=M(1)$ but the function $\varphi(t)=1$ is not admissible (does not belong to $\mathscr{C}$) since it does not satisfy $\varphi(0)=0$. However, taking $p=\infty$, we formally get $\mathrm{L}^{\infty,\infty}(X)=M(t)=N(t)^{\ast}=\mathrm{L}^{1}(X)^{\ast}=\mathrm{L}^{\infty}(X)$, so there is no new Lorentz space corresponding to $p=q=\infty$.
    \end{rem}

    Now, it is known that for all $1<p<\infty$ and $1\leq q\leq \infty$, the dual space of $\mathrm{L}^{p,q}(X)$ is $\mathrm{L}^{p',q'}(X)$ with 
    \begin{align*}
   \frac{1}{p}+\frac{1}{p'}=1\, ,\qquad \frac{1}{q}+\frac{1}{q'}=1\, .
    \end{align*}
    Moreover,  for all measurable $f,g:X\rightarrow \R\cup\ens{\pm\infty}$ such that $f\in \mathrm{L}^{p,q}(X)$ and $g\in \mathrm{L}^{p',q'}(X)$, it holds
    \begin{align*}
    	\left|\int_{X}fg\, d\mu\right|\leq \np{f}{p,q}{X}\np{g}{p',q'}{X}.
    \end{align*}
    For the generalised Lorentz space $N(\varphi)$, we have the following duality result. 
    \begin{theorem}[Steigerwalt-White \cite{lorentz_general}, Steigerwalt \cite{lorentz_mary}]\label{thm:dualityNM}
    	For all $(f,g)\in N(\varphi)\times M(\varphi)$, it holds $fg\in \mathrm{L}^1(X,\mu)$ and
    	\begin{align*}
    		\left|\int_{X}fg\, d\mu\right|\leq \genorm{f}{N(\varphi)}\genorm{g}{M(\varphi)}. 
    	\end{align*}
	In particular, $N(\varphi)^{\ast}=M(\varphi)$.
    \end{theorem}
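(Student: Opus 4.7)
\medskip
\noindent\textbf{Proof proposal.} The plan is to split the statement into two parts: first the integral inequality, then the duality identification. For the inequality, I would rely on the classical Hardy--Littlewood rearrangement bound
\begin{align*}
\int_{X}|fg|\,d\mu\leq \int_{0}^{\infty}f_{\ast}(t)\,g_{\ast}(t)\,dt,
\end{align*}
which can be obtained by writing $|f|=\int_{0}^{\infty}\mathbf{1}_{\{|f|>s\}}ds$, applying Fubini, and using that $\mu(\{|f|>s\}\cap\{|g|>u\})\leq \min\{\lambda_{f}(s),\lambda_{g}(u)\}=\leb^{1}(\{f_{\ast}>s\}\cap\{g_{\ast}>u\})$. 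Thus the whole problem reduces to estimating the one-dimensional integral of $f_{\ast}g_{\ast}$ against the two Banach norms.

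For that estimate I would use a layer-cake decomposition in the $f$-variable. Setting $G(T):=\int_{0}^{T}g_{\ast}(s)\,ds$, the defining bound $G(T)\leq \varphi(T)\,\|g\|_{M(\varphi)}$ holds for every $T>0$. Since $f_{\ast}$ is non-increasing, the super-level set $\{t:f_{\ast}(t)>s\}$ is an interval $[0,\lambda_{f}(s))$, and Fubini gives
\begin{align*}
\int_{0}^{\infty}f_{\ast}(t)g_{\ast}(t)\,dt=\int_{0}^{\infty}\!\!\int_{0}^{\lambda_{f}(s)}g_{\ast}(t)\,dt\,ds=\int_{0}^{\infty}G(\lambda_{f}(s))\,ds\leq \|g\|_{M(\varphi)}\int_{0}^{\infty}\varphi(\lambda_{f}(s))\,ds=\|g\|_{M(\varphi)}\|f\|_{N(\varphi)},
\end{align*}
which is the desired inequality.

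For the identification $N(\varphi)^{\ast}=M(\varphi)$, the inequality just proved already realises a continuous embedding $M(\varphi)\hookrightarrow N(\varphi)^{\ast}$ with operator-norm bound $\leq 1$. To see this embedding is isometric, for fixed $g\in M(\varphi)$ I would choose, given $\varepsilon>0$, a level $T_{\varepsilon}>0$ achieving $G(T_{\varepsilon})\geq (1-\varepsilon)\varphi(T_{\varepsilon})\|g\|_{M(\varphi)}$, then test against the normalising function $f_{\varepsilon}=\mathbf{1}_{A_{\varepsilon}}\operatorname{sgn}(g)$, where $A_{\varepsilon}$ is a level set of $|g|$ with $\mu(A_{\varepsilon})=T_{\varepsilon}$; using the assumption on $\mathscr{A}$ to find such an $A_{\varepsilon}$ when $T_{\varepsilon}=\infty$ is handled by the $\sigma$-finite exhaustion provided by the hypothesis on $\mathscr{A}$. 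A direct computation gives $\|f_{\varepsilon}\|_{N(\varphi)}=\varphi(T_{\varepsilon})$, matching the pairing up to $\varepsilon$. For the surjectivity, given $\Lambda\in N(\varphi)^{\ast}$, I would apply the restriction of $\Lambda$ to simple functions supported on a $\sigma$-finite sub-$\sigma$-algebra, obtain via Radon--Nikodym a measurable density $g$, and then verify that $g\in M(\varphi)$ with $\|g\|_{M(\varphi)}\leq \|\Lambda\|$ by testing $\Lambda$ against $f=\mathbf{1}_{A}\operatorname{sgn}(g)$ for arbitrary $A$ of finite measure, which yields the required uniform bound on $\varphi(\mu(A))^{-1}\int_{A}|g|\,d\mu$.

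The main obstacle is the surjectivity step: controlling the construction of $g$ on sets of infinite measure and checking that the resulting density lies in $M(\varphi)$ rather than merely defining a linear functional on simple functions. The hypothesis that every $A\in\mathscr{A}$ with $\mu(A)=\infty$ contains a subset of finite positive measure, which is precisely the framework in which Steigerwalt and White state their duality, is what makes the Radon--Nikodym extraction and the density of simple functions in $N(\varphi)$ work simultaneously; verifying these two ingredients in the generality of the statement (no $\sigma$-finiteness, no growth restriction on $\varphi$) is where the technical care is required.
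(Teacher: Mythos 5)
The paper offers no proof of this statement: it is imported from Steigerwalt--White \cite{lorentz_general} (Theorems 4.1 and 4.4) and Steigerwalt \cite{lorentz_mary}, so the only question is whether your argument stands on its own. Your first half does. The Hardy--Littlewood bound $\int_X|fg|\,d\mu\leq\int_0^\infty f_\ast g_\ast\,dt$, followed by the layer-cake decomposition in the $f$-variable, the bound $\int_0^{T}g_\ast(s)\,ds\leq\varphi(T)\,\genorm{g}{M(\varphi)}$ from \eqref{show1}, and the identity $\genorm{f}{N(\varphi)}=\int_0^\infty\varphi(\lambda_f(s))\,ds$, gives exactly the stated inequality; this is clean and complete.

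The duality half, however, has genuine gaps. First, for the lower bound you test against $f_\varepsilon=\mathbf{1}_{A_\varepsilon}\operatorname{sgn}(g)$ with ``$A_\varepsilon$ a level set of $|g|$ with $\mu(A_\varepsilon)=T_\varepsilon$'': level sets do not come with prescribed measures, and, more seriously, the argument needs $\int_{A}|g|\,d\mu$ to nearly attain $\int_0^{\mu(A)}g_\ast(s)\,ds$, i.e.\ that the measure space is resonant. Under the sole hypothesis retained in the paper (every set of infinite measure contains a subset of finite positive measure) this can fail; this is precisely the subtlety behind the remark that the two expressions \eqref{show1} and \eqref{show2} for the $M(\varphi)$-norm coincide only when $(X,\mathscr{A},\mu)$ is $\sigma$-finite or $\varphi(t)=o(t)$ as $t\to\infty$. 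Second, your surjectivity step needs simple functions supported on finite-measure sets to be dense in $N(\varphi)$ in order to pass from the Radon--Nikodym density to the identity $\Lambda=\Lambda_g$ on all of $N(\varphi)$; this fails whenever $\varphi$ is bounded, since then $\mathbf{1}_E\in N(\varphi)$ for $\mu(E)=\infty$ with $\genorm{\mathbf{1}_E}{N(\varphi)}=\lim_{t\to\infty}\varphi(t)>0$, and such an element stays at distance at least $\lim_{t\to\infty}\varphi(t)$ from every finite-measure-supported simple function. So as written the identification $N(\varphi)^{\ast}=M(\varphi)$ is a programme rather than a proof: either impose the resonance/growth hypotheses that make your normalising-sequence and density arguments work, or switch to the set-based norm \eqref{show2} and follow the cited sources, where precisely these points are the content of the quoted theorems.
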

    Notice that those results are consistent with the $\mathrm{L}^{p,1}-\mathrm{L}^{p',\infty}$ duality: indeed $\mathrm{L}^{q,\infty}(X)=M(t^{\frac{1}{q}})$,  so that $\mathrm{L}^{p,1}(X)^{\ast}=N(t^{\frac{1}{p}})^{\ast}=M(t^{\frac{1}{p}})=\mathrm{L}^{p',\infty}(X)$. 
    
    Finally, generalising both the Lorentz spaces and the Orlicz spaces, one can add a positive weight in the definition of $N(\varphi)$ which gives a new norm $N(\varphi,\psi)$ defined by 
    \begin{align*}
    	\genorm{f}{N(\varphi,\psi)}=\int_{0}^{\infty}\varphi(\lambda_f(t))\psi(t)\, dt=\int_{0}^{\infty}\varphi'(t)\Psi(f_{\ast}(t))\, dt\, ,\quad \text{where } \Psi(t)=\int_{0}^t\psi(s)\, ds\, .
    \end{align*}
   where the second identity follows from \eqref{ipp_gen}. Here, to make sure that $f$ is a normal, we must assume that $\psi(t)>0$ for all $t>0$ and $\varphi\in \mathscr{C}$. However, even this generalisation does not permit to recover the Lorentz spaces $\mathrm{L}^{p,q}(X)$ for $1<q<\infty$.

    \subsection{Another Generalised Lorentz Space}\label{app2}  
    This appendix is linked to the content of the paper but not strictly needed; we include it for future reference.
    Let $d>0$, and  define the function $f=f_{d}:B(0,R)\rightarrow \R_+$ by
    \begin{align*}
    	f(r)=\frac{1}{r}\left(1+\frac{1}{d}\left(\left(\frac{R}{r}\right)^{d}-1\right)\right).
    \end{align*}
    We want to study a Lorentz space based on this function and obtain estimates reminiscent of Lemma \ref{lemme_holomorphe1} and uniform as $d\rightarrow 0$. Notice that when $d\rightarrow 0$, we have
    \begin{align*}
    f_{d}(r)=\frac{1}{r}\left(1+\frac{1}{d}\left(\exp\left(d\log\left(\frac{R}{r}\right)\right)-1\right)\right)\conv{d\rightarrow 0}\frac{1}{r}\left(1+\log\left(\frac{R}{r}\right)\right),
    \end{align*}
    which will correspond to the standard function in $M(\Lambda_1)$, where $\Lambda_1$ is defined in \eqref{lambda}.
    We have
    \begin{align*}
    	f'(r)=-\frac{1}{r^2}\left(1+\frac{1}{d}\left(\left(\frac{R}{r}\right)^{d}-1\right)\right)-\frac{1}{r^2}\left(\frac{R}{r}\right)^{d}<0\, ,
    \end{align*}
    which shows that $f$ is strictly decreasing. Therefore, we have for all $t>0$
    \begin{align*}
    	&\leb^2\left(B(0,R)\cap\ens{x:f(|x|)}>t\right)
    	=\leb^2\left(B(0,R)\cap\ens{x:|x|<f^{-1}(t)}\right)
    	=\pi \left(f^{-1}(t)\right)^2.
    \end{align*}
    If $u=u_d=f_d(|\,\cdot\,|)$, we deduce that
    \begin{align*}
    	\leb^2(B(0,R)\cap\ens{x:|u(x)|>s})\leq t\Longleftrightarrow \pi (f^{-1}(s))^2\leq t\Longleftrightarrow f^{-1}(s)\leq \sqrt{\frac{t}{\pi}}\Longleftrightarrow s\geq f\left(\sqrt{\frac{t}{\pi}}\right). 
    \end{align*}
   For $t\leq \pi R^2$, we infer that
    \begin{align*}
    	u_{\ast}(t)=u_{d,\ast}=f\left(\sqrt{\frac{t}{\pi}}\right)=\sqrt{\frac{\pi}{t}}\left(1+\frac{1}{d}\left(\left(\frac{\pi R^2}{t}\right)^{\frac{d}{2}}-1\right)\right),
    \end{align*}
    while $u_{\ast}(t)=0$ for $t\geq \pi R^2$.
    Therefore, we have for all $t\leq \pi R^2$
    \begin{align}\label{new_lorentz1}
    	\int_{0}^tu_{\ast}(s)ds&=\left[2\sqrt{\pi s}\left(1+\frac{1}{d}\left(\left(\frac{\pi R^2}{s}\right)^{\frac{d}{2}}-1\right)\right)\right]_0^t +\int_{0}^{t}\sqrt{\frac{\pi}{s}}\left(\frac{\pi R^2}{s}\right)^{\frac{d}{2}}ds\nonumber\\
    	&=2\sqrt{\pi t}\left(1+\frac{1}{\beta}\left(\left(\frac{\pi R^2}{t}\right)^{\frac{d}{2}}-1\right)\right)+\frac{2}{1-d}\sqrt{\pi}(\pi R^2)^{\frac{d}{2}}t^{\frac{1-d}{2}},
    \end{align}
    while for all $t\geq \pi R^2$ we have
    \begin{align}\label{new_lorentz2}
    	\int_{0}^tu_{\ast}(s)ds=2\pi R+\frac{2\pi R}{1-d}\conv{d\rightarrow 0}4\pi R<\infty\, .
    \end{align}
    Now, let $\alpha<\dfrac{1}{2}$ and define $\varphi:[0,1]\rightarrow \R_+$ by
    \begin{align*}
    	\varphi(t)=\sqrt{t}\left(\frac{1}{t^{\alpha}}-1\right)=t^{\frac{1}{2}-\alpha}-t^{\frac{1}{2}}.
    \end{align*}
    Then we have
    \begin{align*}
    	\varphi''(t)=\left(-\frac{1}{4}+\alpha^2\right)t^{-\frac{3}{2}-\alpha}+\frac{1}{4}t^{-\frac{3}{2}}<0\Longleftrightarrow t^{\alpha}<(1-(2\alpha)^2)\Longleftrightarrow t<\left(1-(2\alpha)^2\right)^{\frac{1}{\alpha}},
    \end{align*}
    which shows that $\varphi$ is concave in $[0,(1-(2\alpha)^2)^{\frac{1}{\alpha}}]$. Notice that as $\alpha\rightarrow 0$, we have
    \begin{align*}
    	(1-(2\alpha)^2)^{\frac{1}{\alpha}}=e^{\frac{1}{\alpha}\log\left(1-(2\alpha)^2\right)}=e^{-4\alpha+O(\alpha^2)}\conv{\alpha\rightarrow 0}1. 
    \end{align*}
    Therefore, for all $0<\beta<\dfrac{1}{4}$, define the concave function $K_{\beta}:\R_+\rightarrow \R_+$ by 
    \begin{align*}
    	K_{\beta}(t)&=\sqrt{t}\left(1+\frac{1}{2\beta}\left(\left(\frac{\pi R^2}{t}\right)^{\beta}-1\right)\mathbf{1}_{[0,(1-(2\beta)^2)^{\frac{1}{\beta}}\pi R^2]}(t)\right)+\frac{(\pi R^2 t)^{\frac{1}{2}-\beta}}{1-2\beta }\,.
    \end{align*}
    It  is continuous at $0$, satisfies $K_{\beta}(0)=0$, and $K_{\beta}(t)>0$ for all $t>0$. It is continuous everywhere except in $(1-(2\beta)^2)^{\frac{1}{\beta}}\pi R^2$, but the function used in the definition of Lorentz spaces only needs to be continuous at $0$. Furthermore, the function $K_{\beta}$ could be replaced by its continuous counter-part 
    \begin{align*}
    	&K_{\beta}^{\ast}(t)=\sqrt{t}\left(1+\frac{1}{2\beta}\left(\left(\frac{\pi R^2}{t}\right)^{\beta}-1\right)\mathbf{1}_{[0,(1-(2\beta)^2)^{\frac{1}{\beta}}\pi R^2]}(t)+\frac{1}{2\beta}\left(\left(\frac{1}{1-(2\beta)^2}-1\right)\mathbf{1}_{\ens{t>(1-(2\beta)^2)^{\frac{1}{\beta}}}}\right)\right)\\
    	&+\frac{(\pi R^2 t)^{\frac{1}{2}-\beta}}{1-2\beta }\\
    	&=\sqrt{t}\left(1+\frac{1}{2\beta}\left(\left(\frac{\pi R^2}{t}\right)^{\beta}-1\right)\mathbf{1}_{[0,(1-(2\beta)^2)^{\frac{1}{\beta}}\pi R^2]}(t)+\left(\frac{2\beta}{1-(2\beta)^2}\mathbf{1}_{\ens{t>(1-(2\beta)^2)^{\frac{1}{\beta}}\pi R^2}}\right)\right)\\
    	&+\frac{(\pi R^2 t)^{\frac{1}{2}-\beta}}{1-2\beta }
    \end{align*}
    and the results stated below would be unchanged. 
    Furthermore, by \eqref{new_lorentz1}, we deduce that for all $0<t\leq (1-d^2)^{\frac{2}{d}}\pi R^2$
    \begin{align*}
    	\frac{1}{K_{\frac{d}{2}}(t)}\int_{0}^tu_{\ast}(s)ds=2\sqrt{\pi},
    \end{align*}
    and for all $(1-d^2)^{\frac{2}{d}}\pi R^2<t\leq \pi R^2$, we have 
    \begin{align*}
    	&\frac{1}{K_{\frac{d}{2}}(t)}\int_{0}^{t}u_{\ast}(s)ds=\frac{1}{\sqrt{t}+\frac{(\pi R^2t)^{\frac{1-d}{2}}}{1-d}}\left(2\sqrt{\pi t}\left(1+\frac{1}{d}\left(\left(\frac{\pi R^2}{t}\right)^{\frac{d}{2}}-1\right)\right)+\frac{2\sqrt{\pi}}{1-d}(\pi R^2)^{\frac{d}{2}}t^{\frac{1-d}{2}}\right)\\
    	&\leq \frac{1}{(1-d^2)^{\frac{1}{d}}\sqrt{\pi}R+\frac{(\pi R^2)^{1-d}}{(1-d)^2(1+d)}(1-d^2)^{\frac{1}{d}}}\left(2\pi R\left(1+\frac{1}{d}\left(\frac{1}{(1-d^2)}-1\right)\right)+\frac{2\pi R}{1-d}\right).
    \end{align*}
    We  have
    \begin{align*}
    	\frac{1}{d}\left(\frac{1}{(1-d^2)}-1\right)=d+O(d^3)\conv{d\rightarrow 0}0\,.
    \end{align*}
    As above we have
    \begin{align*}
    	(1-d^2)^{d}=e^{d\log(1-d^2)}=1-d^3+O(d^5)\conv{d\rightarrow 0}1,
    \end{align*}
    which implies that 
    \begin{align*}
    	\frac{1}{(1-d^2)^{\frac{1}{d}}\sqrt{\pi}R+\frac{(\pi R^2)^{1-d}}{(1-d)^2(1+d)}(1-d^2)^{\frac{1}{d}}}\left(2\pi R\left(1+\frac{1}{d}\left(\frac{1}{(1-d^2)}-1\right)\right)+\frac{2\pi R}{1-d}\right)&\conv{d\rightarrow 0}\frac{4\pi R}{\sqrt{\pi}R+\pi R^2}\\
    	&=\frac{4\sqrt{\pi}}{1+\sqrt{\pi}R}<\infty. 
    \end{align*}
    Thus, for all $t>\pi R^2$, we have
    \begin{align*}
    	\frac{1}{K_{\frac{d}{2}}(t)}\int_{0}^tu_{\ast}(s)ds&=\frac{2\pi R}{\sqrt{t}+\frac{(\pi R^2t)^{\frac{1-d}{2}}}{1-d}}2\pi R\left(1+\frac{1}{1-d}\right)\\
    	&\leq \frac{2\pi R}{\sqrt{\pi}R+\frac{(\pi R^2)^{{1-d}}}{1-d}}\left(1+\frac{1}{1-d}\right)
    	\conv{d\rightarrow 0}\frac{4\pi R}{\sqrt{\pi}R+\pi R^2}=\frac{4\sqrt{\pi}}{1+\sqrt{\pi}R}<\infty.
    \end{align*}   
    Thanks to \eqref{new_lorentz2}, we have
    \begin{align*}
    	\lim\limits_{t\rightarrow \infty}\frac{1}{K_{\frac{d}{2}}(t)}\int_{0}^tu_{\ast}(s)ds=0. 
    \end{align*}
    We deduce that 
    \begin{align*}
    	\limsup_{d\rightarrow 0}\sup_{t>0}\frac{1}{K_{\frac{d}{2}}(t)}\int_{0}^{t}u_{\ast}(s)ds\leq 4\sqrt{\pi}<\infty,
    \end{align*}
    showing that $u=u_d$ is uniformly bounded in the Lorentz space $M(K_{\frac{d}{2}})$ as $d\rightarrow 0$,
    where for all $0<\beta<\frac{1}{4}$, we have 
    \begin{align*}
    	M(K_{\beta})=\mathrm{L}^{1}_{\mathrm{loc}}(B(0,R))\cap\ens{f:\Vert f\Vert_{M(K_{\beta})}=\sup_{t>0}\left(\frac{1}{K_{\beta}(t)}\int_{0}^tf_{\ast}(s)ds\right)<\infty}. 
    \end{align*}
    Notice that when $\beta\rightarrow 0$, for all $t>0$, we have
    \begin{align*}
    	K_{\beta}(t)&=\sqrt{t}\left(1+\frac{1}{2\beta}\left(e^{\beta\log\left(\frac{\pi R^2}{t}\right)}-1\right)\mathbf{1}_{[0,(1-(2\beta)^2)^{\frac{1}{\beta}}\pi R^2]}(t)\right)+\frac{(\pi R^2t)^{\frac{1}{2}-\beta}}{1-2\beta}\\
    	&=\sqrt{t}\left(1+\frac{1}{2}\log\left(\frac{\pi R^2}{t}\right)\mathbf{1}_{[0,(1-(2\beta)^2)^{\frac{1}{\beta}}\pi R^2]}(t)+O\left(\beta \log^2\left(\frac{\pi R^2}{t}\right)\right)\right)+\frac{(\pi R^2t)^{\frac{1}{2}-\beta}}{1-2\beta}\\
    	&\conv{\beta\rightarrow 0}\sqrt{t}\left(1+\sqrt{\pi}R+\frac{1}{2}\log\left(\frac{\pi R^2}{t}\right)_+\right)=\sqrt{t}\left(1+\log_+\left(R\sqrt{\frac{\pi}{t}}\right)\right)+\sqrt{\pi} R\sqrt{t}=\Lambda_1(t)+\sqrt{\pi}R\sqrt{t},
    \end{align*}
    showing that $M(K_{0})=M(\Lambda_{1})$, where $\Lambda_1$ is defined in \eqref{lambda}. Therefore, we introduce the notation
    \begin{align*}
    	\mathrm{L}^{2,\infty}_{\log,\beta}(B(0,R))=M(K_{\beta}).
    \end{align*}
    Likewise, we define the space $\mathrm{L}^{2,1}_{\log,\beta}(B(0,R))=N(K_{\beta})$ by 
    \begin{align*}
    	N(K_{\beta})=\mathrm{L}^{1}_{\mathrm{loc}}(B(0,R))\cap\ens{f:\Vert f\Vert_{N(K_{\beta})}=\int_{0}^{\infty}K_{\beta}\left(\leb^2\left(B(0,R)\cap\ens{x:|f(x)|>t}
    		\right)\right)dt<\infty
    	}. 
    \end{align*}
    Now, we will show a result similar to Lemma \ref{lemme_holomorphe1} (with a uniform estimate as $\beta\rightarrow 0$).
    Fix $0<\beta<\dfrac{1}{4}$, let $x\in B(0,\frac{R}{2})$ and $0<r<\frac{R}{4}$. By the co-area formula, we have
    \begin{align*}
    	\int_{B_{2r}\setminus\bar{B}_r(x)}|u(x)|dx=\int_{r}^{2r}\left(\rho\int_{\partial B_{\rho}(x)}|u|d\mathscr{H}^1\right)\frac{d\rho}{\rho}\leq \log(2)\inf_{r<\rho<2r}\left(\rho\int_{\partial B_{\rho}(x)}|u|d\mathscr{H}^1\right).
    \end{align*}
    Therefore, there exists $\rho\in (r,2r)$ such that 
    \begin{align*}
    	\rho\int_{\partial B_{\rho}(x)}|u|d\mathscr{H}^1&\leq \frac{1}{\log(2)}\int_{B_{2r}\setminus\bar{B}_r(x)}|u(x)|dx.
    \end{align*}  
    We will show that if $u$ is a holomorphic function on $B(0,R)$, then a $\mathrm{L}^{2,\infty}_{\log^{\beta}}$ control implies a $\mathrm{W}^{1,1}$ control on $B(0,\alpha R)$ for all $\alpha<1$. Since it seems not standard to us, we give a full proof of this claim (in fact, we know not whether such spaces where ever used in the past). 
    
    Using the $\mathrm{L}^{2,1}_{\log,\beta}/\mathrm{L}^{2,\infty}_{\log,\beta}$ duality (Theorem $4.4$ \cite{lorentz_general}), we deduce that 
    \begin{align*}
    	\int_{B_{2r}\setminus \bar{B}_{r}(x)}|u(x)|dx\leq \Vert 1\Vert_{\mathrm{L}^{2,1}_{\mathrm{log},{\beta}}(B_{2r}\setminus \bar{B}_r(0))}\Vert u\Vert_{\mathrm{L}^{2,\infty}_{\mathrm{log},{\beta}}(B_{2r}\setminus\bar{B}_r(0))}.
    \end{align*}
    Notice that we have
    \begin{align*}
    	\lambda_{1}(t)=\leb^2\left(B_{2r}\setminus\bar{B}_r(x)\cap\ens{x:1>t}\right)=\left\{\begin{alignedat}{2}
    		&3\pi r^2\qquad&&\text{if}\;\,t<1\\
    		&0\qquad&&\text{if}\;\,t\geq 1.
    	\end{alignedat}\right.
    \end{align*}
    We have by definition
    \begin{align*}
    	\Vert 1\Vert_{\mathrm{L}^{2,1}_{\mathrm{log}}(B_{2r}\setminus \bar{B}_r(x))}&=\int_{0}^{\infty
    	}\left((\lambda_1(t))^{\frac{1}{2}}\left(1+\frac{1}{2\beta}\left(\left(\frac{\pi R^2}{\lambda_1(t)}\right)^{\beta}-1\right)\mathbf{1}_{[0,(1-(2\beta)^2)^{\frac{1}{\beta}}\pi R^2]}(\lambda_1(t))\right)\right.\\
    	&\left.+\frac{(\pi R^2 \lambda_1(t))^{\frac{1}{2}-\beta}}{1-2\beta }\right)dt\\
    	&=\sqrt{3\pi}r\left(1+\frac{1}{2\beta}\left(\left(\frac{1}{\sqrt{3}}\frac{R}{r}\right)^{2\beta}\right)-1\right)\mathbf{1}_{\ens{\sqrt{3}r<(1-(2\beta)^2)^{\frac{1}{2\beta}}R}}+\frac{\pi (\sqrt{3}rR)^{1-2\beta}}{1-2 \beta}. 
    \end{align*}
    Finally, we deduce that 
    \begin{align*}
    	\rho\int_{\partial B_{\rho}(x)}|u|d\mathscr{H}^1&\leq \bigg(\frac{\sqrt{3\pi}}{\log(2)}r\left(1+\frac{1}{2\beta}\left(\left(\frac{1}{\sqrt{3}}\frac{R}{r}\right)^{2\beta}-1\right)\mathbf{1}_{\ens{\sqrt{3}r<(1-(2\beta)^2)^{\frac{1}{2\beta}}R}}\right)\\
    	&+\frac{\pi}{\log(2)}\frac{ (\sqrt{3}rR)^{1-2\beta}}{1-2 \beta}\bigg)\znp{u}{2,\infty}{\log,\beta}{B_{2r}\setminus\bar{B}_r(x)},
    \end{align*}
    and \emph{a fortiori} that for all $x\in B(0,\frac{R}{2})$ and $0<r\leq \frac{R}{2}$ such that $B(x,2r)\subset B(0,R)$, there exists $\rho\in[r,2r]$ such that 
    \begin{align}\label{estimate_lebesgue3}
    	\int_{\partial B_{\rho}(x)}|u|d\mathscr{H}^1&\leq \bigg(\frac{\sqrt{3\pi}}{\log(2)}\left(1+\frac{1}{2\beta}\left(\left(\frac{1}{\sqrt{3}}\frac{R}{r}\right)^{2\beta}-1\right)\mathbf{1}_{\ens{\sqrt{3}r<(1-(2\beta)^2)^{\frac{1}{2\beta}}R}}\right)\nonumber\\
    	&+\frac{\pi}{\log(2)}\frac{1}{r^{2\beta}}\frac{ (\sqrt{3}R)^{1-2\beta}}{1-2 \beta}\bigg)\znp{u}{2,\infty}{\log,\beta}{B_{2r}\setminus\bar{B}_r(x)}.
    \end{align}
    Thanks to this result, we will now be able to show a variant of a lemma presented first in \cite{quantamoduli}. 
    \begin{lemme}\label{lemme_holomorphe12}
    	Let $u:B(0,R)\rightarrow \C$ be a holomorphic function and fix some $0\leq \alpha<1$ and $0\leq \beta<\dfrac{1}{4}$. Assume that $u\in \mathrm{L}^{2,\infty}_{\log,{\beta}}(B(0,R))$. Then $u\in \mathrm{L}^2(B(0,\alpha R))$ and there exists a universal constant $\Gamma_0$ {\rm(}independent of $\alpha$ and $\beta${\rm)} such that 
    	\begin{align*}
    		\np{u}{2}{B(0,\alpha R)}\leq \Gamma_1\frac{\alpha}{(1-\alpha)^{1+2\beta}}\left(1+R^{1-4\beta}+\frac{1}{2\beta}\left(\left(\frac{2}{\sqrt{3}}\frac{1}{1-\alpha}\right)^{2\beta}-1\right)\right)\znp{u}{2,\infty}{\log,{\beta}}{B(0,R)}. 
    	\end{align*}
    \end{lemme}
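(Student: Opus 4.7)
The plan is to adapt the proof of Lemma \ref{lemme_holomorphe1} almost verbatim, with the crucial difference that the pointwise trace-type estimate \eqref{estimate_lebesgue3} replaces \eqref{estimate_lebesgue}. First I would expand $u$ as a power series $u(z)=\sum_{n=0}^{\infty}a_{n}z^{n}$ and introduce the holomorphic primitive $v(z)=\sum_{n=1}^{\infty}\frac{a_{n-1}}{n}z^{n}$, so that $u=\partial_{z}v$ and $|\nabla v|^{2}=4|\partial_{z}v|^{2}=4|u|^{2}$ by holomorphy. This reduces matters to estimating $\nabla v$ on the disk $B(0,\alpha R)$.

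Next, for a fixed $z\in\partial B(0,\alpha R)$, I would choose $r=\frac{1}{2}(1-\alpha)R$ so that $B(z,2r)\subset B(0,R)$, and invoke estimate \eqref{estimate_lebesgue3} applied to the holomorphic function $u=\partial_{z}v$: it yields some $\rho\in[r,2r]$ with a circle-integral control of $|u|$ on $\partial B_{\rho}(z)$ by $\znp{u}{2,\infty}{\log,\beta}{B_{2r}\setminus\bar{B}_{r}(z)}$, up to the explicit constant appearing in \eqref{estimate_lebesgue3}. Combining this with the mean-value formula
\[
|\nabla v(z)|=2|\partial_{z}v(z)|=\left|\frac{1}{\pi\rho}\int_{\partial B(z,\rho)}\partial_{\zeta}v(\zeta)\,d\zeta\right|
\]
gives an $\mathrm{L}^{\infty}$ bound on $\nabla v$ on $\partial B(0,\alpha R)$ of the form
\[
\np{\nabla v}{\infty}{\partial B(0,\alpha R)}\leq \frac{C}{(1-\alpha)R}\,\bigl(1+\mathscr{A}_{\beta,\alpha,R}\bigr)\,\znp{u}{2,\infty}{\log,\beta}{B(0,R)},
\]
where $\mathscr{A}_{\beta,\alpha,R}$ collects the two extra ingredients from \eqref{estimate_lebesgue3}: the $\tfrac{1}{2\beta}((\tfrac{2}{\sqrt{3}(1-\alpha)})^{2\beta}-1)$ term, and the $r^{-2\beta}R^{1-2\beta}/(1-2\beta)$ term, which after replacing $r=\tfrac{(1-\alpha)R}{2}$ becomes $(1-\alpha)^{-2\beta}R^{1-4\beta}/(1-2\beta)$.

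Then I would use the divergence theorem and the harmonicity of $v$ and $\bar v$ to integrate by parts:
\[
\int_{B(0,\alpha R)}|u|^{2}dx=\tfrac{1}{2}\int_{B(0,\alpha R)}|\nabla v|^{2}dx=\tfrac{1}{2}\int_{\partial B(0,\alpha R)}\overline{(v-v_{\alpha R})}\,\partial_{\nu}v\,d\mathscr{H}^{1},
\]
with $v_{\alpha R}$ the circle mean. I would then bound this by $\np{v-v_{\alpha R}}{1}{\partial B(0,\alpha R)}\np{\nabla v}{\infty}{\partial B(0,\alpha R)}$ and apply the Sobolev embedding $\mathrm{H}^{1/2}(S^{1})\hookrightarrow\mathrm{L}^{1}(S^{1})$, together with the standard trace $\hs{v}{1/2}{\partial B(0,\alpha R)}\leq C\np{\nabla v}{2}{B(0,\alpha R)}$, which effectively gives
\[
\np{\nabla v}{2}{B(0,\alpha R)}^{2}\leq C\,\alpha R\,\np{\nabla v}{2}{B(0,\alpha R)}\,\np{\nabla v}{\infty}{\partial B(0,\alpha R)}.
\]
Dividing once yields the $\mathrm{L}^{2}$ bound on $u$ with the claimed prefactor.

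The only real technical subtlety, and the one I expect to require the most care, is the bookkeeping of the $\beta\to 0$ and $\alpha\to 1$ behaviour of the constant produced by \eqref{estimate_lebesgue3}: one must track that the indicator $\mathbf{1}_{\{\sqrt{3}r<(1-(2\beta)^{2})^{1/(2\beta)}R\}}$ and the $(1-2\beta)^{-1}$ factor combine into the announced expression $\tfrac{\alpha}{(1-\alpha)^{1+2\beta}}\bigl(1+R^{1-4\beta}+\tfrac{1}{2\beta}\bigl((\tfrac{2}{\sqrt{3}(1-\alpha)})^{2\beta}-1\bigr)\bigr)$, with a universal constant $\Gamma_{1}$ independent of both $\alpha\in[0,1)$ and $\beta\in[0,\tfrac{1}{4})$. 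Once the explicit constant from \eqref{estimate_lebesgue3} is substituted, no further analysis is needed: the argument is otherwise a word-for-word transcription of the proof of Lemma \ref{lemme_holomorphe1}.
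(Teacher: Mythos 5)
Your proposal is correct and follows essentially the same route as the paper: expand $u$, pass to the holomorphic primitive $v$, obtain the boundary $\mathrm{L}^{\infty}$ bound on $\D v$ from the mean-value formula combined with \eqref{estimate_lebesgue3} at radius $r=\tfrac12(1-\alpha)R$, integrate by parts using harmonicity, and close with the $\mathrm{H}^{1/2}(S^1)\hookrightarrow\mathrm{L}^1$ embedding before dividing by $\np{\D v}{2}{B(0,\alpha R)}$. The constant bookkeeping you flag is exactly what the paper's proof carries out, so no gap remains.
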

    \begin{proof}
    	Write 
    	\begin{align*}
    		u(z)=\sum_{n=0}^{\infty}a_nz^n.
    	\end{align*}
    	Notice that $u=\p{z}v$, where 
    	\begin{align}
    		v(z)=\sum_{n=1}^{\infty}b_nz^n=\sum_{n=1}^{\infty}\frac{a_{n-1}}{n}z^n.
    	\end{align}
    	First, using 
    	the estimate \eqref{estimate_lebesgue2} applied to $\D v$ at a point $z\in \partial B(0,\alpha R)$ with $r=\frac{1}{2}(1-\alpha)R$, we deduce by the mean-value formula that for some $\rho\in[\frac{1}{2}(1-\alpha)R,(1-\alpha)R]$, we have 
    	\begin{align}\label{neue_id02}
    		&|\D v(z)|=|2\,\p{z}v(z)|=\left|\frac{1}{\pi \rho}\int_{\partial B(z,\rho)}\p{\zeta}v(\zeta)d\zeta\right|\nonumber\\
    		&\leq \left(\frac{1}{\log(2)}\sqrt{\frac{3}{\pi}}\frac{1}{\rho}\left(1+\frac{1}{2\beta}\left(\left(\frac{2}{\sqrt{3}}\frac{1}{1-\alpha}\right)^{2\beta}-1\right)\right)+\frac{1}{\log(2)}\frac{1}{\rho\, r^{2\beta}}\frac{ (\sqrt{3}R)^{1-2\beta}}{1-2 \beta}\right)\znp{\D v}{2,\infty}{\log,\beta}{B_{2r}\setminus\bar{B}_r(x)}\nonumber\\
    		&\leq 
    		\frac{8}{\log(2)}\left(\sqrt{\frac{3}{\pi}}\frac{1}{(1-\alpha)R}\left(1+\frac{1}{2\beta}\left(\left(\frac{2}{\sqrt{3}}\frac{1}{1-\alpha}\right)^{2\beta}-1\right)\right)+\frac{1}{((1-\alpha)R)^{1+2\beta}}\frac{ (\sqrt{3}R)^{1-2\beta}}{1-2 \beta}\right)\znp{u}{2,\infty}{\log,\beta}{B_{2r}\setminus\bar{B}_r(x)},
    	\end{align}
    	where we used $|\D v|^2=4|\p{z}v|^2$ by the holomorphy of $v$. Therefore, by integrating by parts, we deduce since $u$ and $\bar{u}$ are harmonic that 
    	\begin{align}\label{neue_id12}
    		\int_{B(0,\alpha R)}|u(z)|^2|dz|^2&\leq \frac{1}{2}\int_{B(0, \alpha R)}|\D v|^2dx=\frac{1}{2}\int_{B(0, \alpha R)}\dive\left(\bar{v}\D v\right)=\frac{1}{2}\int_{\partial B(0, \alpha R)}\bar{v}\,{\partial_{\nu}{v}}\,d\mathscr{H}^1\nonumber\\
    		&=\frac{1}{2}\int_{\partial B(0,\alpha R)}\bar{(v-v_{\alpha R})}\partial_{\nu} v\,d\mathscr{H}^1,
    	\end{align}
    	where for all $0<\rho<R$, we have
    	\begin{align*}
    		v_{\rho}=\dashint{\partial B_{\rho}(0)}v\,d\mathscr{H}^1.
    	\end{align*}
    	Thanks to the $\mathrm{L}^{\infty}$ bound \eqref{neue_id0} and the Sobolev embedding $\mathrm{H}^{\frac{1}{2}}(S^1)\hookrightarrow \mathrm{L}^1(B(0,1))$, there exists a uniform constant $C_0>0$ such that 
    	\begin{align*}
    		&\np{\D v}{2}{B(0,\alpha R)}^2=\left|\int_{\alpha R}\bar{(v-{v}_{\partial B_{\alpha R}})}\partial_{\nu}v\,d\mathscr{H}^1\right|\leq \np{v-v_{\alpha R}}{1}{\partial B_{\alpha R}(0)}\np{\D v}{\infty}{\partial B_{\alpha R(0)}}\\
    		&\leq C_0\alpha R\hs{v}{\frac{1}{2}}{\partial B_{\alpha R}(0)}\times \frac{8}{\log(2)}\bigg(\sqrt{\frac{3}{\pi}}\frac{1}{(1-\alpha)R}\left(1+\frac{1}{2\beta}\left(\left(\frac{2}{\sqrt{3}}\frac{1}{1-\alpha}\right)^{2\beta}-1\right)\right)\\
    		&+\frac{1}{((1-\alpha)R)^{1+2\beta}}\frac{ (\sqrt{3}R)^{1-2\beta}}{1-2 \beta}\bigg)\znp{\D v}{2,\infty}{\log,\beta}{B_{2r}\setminus\bar{B}_r(x)}\\
    		&\leq \frac{8C_0}{\log(2)}\bigg(\sqrt{\frac{3}{\pi}}\frac{\alpha}{(1-\alpha)}\left(1+\frac{1}{2\beta}\left(\left(\frac{2}{\sqrt{3}}\frac{1}{1-\alpha}\right)^{2\beta}-1\right)\right)
    		+\frac{\alpha}{(1-\alpha)^{1+2\beta}}\frac{ 3^{\frac{1}{2}-\beta}R^{1-4\beta}}{1-2 \beta}\bigg)\\
    		&\times \np{\D v}{2}{B(0,\alpha R)}\znp{\D v}{2,\infty}{\log,\beta}{B_{2r}\setminus\bar{B}_r(x)}.
    	\end{align*}
    	Therefore, we get
    	\begin{align*}
    		\np{u}{2}{B(0,\alpha R)}&\leq \frac{8C_0}{\log(2)}\bigg(\sqrt{\frac{3}{\pi}}\frac{\alpha}{(1-\alpha)}\left(1+\frac{1}{2\beta}\left(\left(\frac{2}{\sqrt{3}}\frac{1}{1-\alpha}\right)^{2\beta}-1\right)\right)\\
    		&
    		+\frac{\alpha}{(1-\alpha)^{1+2\beta}}\frac{ 3^{\frac{1}{2}-\beta}R^{1-4\beta}}{1-2 \beta}\bigg)\znp{\D v}{2,\infty}{\log,\beta}{B_{2r}\setminus\bar{B}_r(x)}
    	\end{align*}
    	which concludes the proof of the lemma since $\beta<\dfrac{1}{4}$.    
    \end{proof}
    
    \begin{cor}\label{second_lorentz_space}
    	Let $\ens{\beta_k}_{k\in \N}\subset (0,\frac{1}{4})$ such that $\beta_k\conv{k\rightarrow \infty}0$. 
    	Assume that $\ens{u_k}_{k\in \N}:B(0,R)\rightarrow \C$ is a sequence of holomorphic functions bounded in $\mathrm{L}^{2,\infty}_{\log,\beta_k}(B(0,R))$, \emph{i.e.} such that
    	\begin{align*}
    		\Lambda=\limsup_{k\in \N}\znp{u_k}{2,\infty}{\log,\beta_k}{B(0,R)}<\infty.
    	\end{align*}
    	Then $\ens{u_k}_{k\in \N}$ is bounded in $\mathrm{L}^2(B(0,\alpha R))$ for all $0<\alpha<1$ and for all $0<\alpha<1$, we have
    	\begin{align}\label{unif_est_lorentz2}
    		\limsup_{k\rightarrow \infty}\np{u_k}{2}{B(0,\alpha R)}\leq \Gamma_1\frac{\alpha}{1-\alpha}\left(1+\log(2)+R+\log\left(\frac{1}{1-\alpha}\right)\right)\Lambda<\infty. 
    	\end{align}
    \end{cor}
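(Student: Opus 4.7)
The plan is a direct application of Lemma \ref{lemme_holomorphe12} followed by passing to the limit $k \to \infty$ in each of the three factors appearing in the bound.

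First, I would fix $0 < \alpha < 1$ and apply Lemma \ref{lemme_holomorphe12} with $\beta = \beta_k$ to each $u_k$, which is legitimate since $\beta_k \in (0, 1/4)$ for $k$ large enough. This yields
\begin{align*}
\np{u_k}{2}{B(0,\alpha R)} \leq \Gamma_1 \frac{\alpha}{(1-\alpha)^{1+2\beta_k}}\left(1 + R^{1-4\beta_k} + \frac{1}{2\beta_k}\left(\left(\frac{2}{\sqrt{3}}\frac{1}{1-\alpha}\right)^{2\beta_k} - 1\right)\right) \znp{u_k}{2,\infty}{\log,\beta_k}{B(0,R)}.
\end{align*}
The last factor is bounded by $\Lambda$ (up to $o(1)$) by hypothesis, so the whole question reduces to controlling the three terms inside the brackets and the exponent $(1-\alpha)^{1+2\beta_k}$ as $\beta_k \to 0$.

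Next, I would pass to the limit: since $\beta_k \to 0$, we have $(1-\alpha)^{1+2\beta_k} \to 1-\alpha$ and $R^{1-4\beta_k} \to R$. The only nontrivial limit is
\begin{align*}
\lim_{k\to\infty} \frac{1}{2\beta_k}\left(\left(\frac{2}{\sqrt{3}(1-\alpha)}\right)^{2\beta_k} - 1\right) = \log\left(\frac{2}{\sqrt{3}(1-\alpha)}\right),
\end{align*}
which follows from a Taylor expansion of $e^{2\beta_k x} = 1 + 2\beta_k x + O(\beta_k^2)$ with $x = \log(\tfrac{2}{\sqrt{3}(1-\alpha)})$. Combining these three limits gives
\begin{align*}
\limsup_{k\to\infty} \np{u_k}{2}{B(0,\alpha R)} \leq \Gamma_1 \frac{\alpha}{1-\alpha} \left(1 + R + \log\left(\frac{2}{\sqrt{3}(1-\alpha)}\right)\right) \Lambda.
\end{align*}

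Finally, to match the form of the estimate stated in \eqref{unif_est_lorentz2}, I would simply bound $\log(\tfrac{2}{\sqrt{3}(1-\alpha)}) = \log(2/\sqrt{3}) + \log(\tfrac{1}{1-\alpha}) \leq \log(2) + \log(\tfrac{1}{1-\alpha})$, which yields the claimed inequality (after possibly adjusting $\Gamma_1$). There is no genuine obstacle here: the corollary is essentially the observation that Lemma \ref{lemme_holomorphe12} degenerates smoothly as $\beta \to 0$ into the analogous estimate for the limiting space $\mathrm{L}^{2,\infty}_{\log}(B(0,R)) = M(\Lambda_1)$, with the logarithmic factor emerging from the Taylor expansion of the $\beta$-dependent correction term. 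The only care needed is to keep track of the dependence of all constants on $\beta_k$ and verify that none of them blow up in the limit.
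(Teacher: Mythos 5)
Your proposal is correct and follows essentially the same route as the paper: apply Lemma \ref{lemme_holomorphe12} with $\beta=\beta_k$ and pass to the limit $\beta_k\to 0$, computing $\frac{1}{2\beta_k}\bigl(\bigl(\tfrac{2}{\sqrt{3}(1-\alpha)}\bigr)^{2\beta_k}-1\bigr)\to\log\bigl(\tfrac{2}{\sqrt{3}(1-\alpha)}\bigr)\leq\log(2)+\log\bigl(\tfrac{1}{1-\alpha}\bigr)$ via the exponential expansion. Your write-up is only slightly more explicit than the paper's (which leaves the limits of $(1-\alpha)^{1+2\beta_k}$ and $R^{1-4\beta_k}$ implicit), but there is no substantive difference.
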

    \begin{proof}
    	For all fixed $0<\alpha<1$, as $\beta\rightarrow 0$, we have
    	\begin{align*}
    		\frac{1}{2\beta}\left(\left(\frac{2}{\sqrt{3}}\frac{1}{1-\alpha}\right)^{2\beta}-1\right)=\frac{1}{2\beta}\left(e^{2\beta\log\left(\frac{2}{\sqrt{3}}\frac{1}{1-\alpha}\right)}-1\right)\conv{\beta\rightarrow 0}\log\left(\frac{2}{\sqrt{3}}\frac{1}{1-\alpha}\right)\leq \log(2)+\log\left(\frac{1}{1-\alpha}\right)
    	\end{align*}
    	which concludes the proof using Lemma \ref{lemme_holomorphe12}. 
    \end{proof}

    \nocite{}
    \bibliographystyle{plain}
    \bibliography{biblio_full}

\end{document}